\tikzset{>=stealth}
\tikzset{blacknode/.style={circle, draw, fill=black, text=white, minimum size=0.7cm}}
\tikzset{whitenode/.style={circle, draw, fill=white, text=black, minimum size=0.7cm}}
\tikzset{smallblacknode/.style={circle, draw, fill=black, text=white, minimum size=0.5cm}}
\tikzset{smallwhitenode/.style={circle, draw, fill=white, text=black, minimum size=0.5cm}}
\tikzset{miniblacknode/.style={circle, draw, fill=black, minimum size=0cm}}
\tikzset{miniwhitenode/.style={circle, draw, fill=white, minimum size=0cm}}
\tikzset{graphedge/.style={draw=black,very thick}}
\newtheorem{thm}{Theorem}
\newtheorem{theorem}{Theorem}[section]
\newtheorem{lemma}[theorem]{Lemma}
\newtheorem{prop}[theorem]{Proposition}
\newtheorem{corollary}[theorem]{Corollary}
\newtheorem*{con}{Construction}
\theoremstyle{definition}
\newtheorem{definition}[theorem]{Definition}
\newtheorem{remark}[theorem]{Remark}
\renewcommand{\*}{\mathbf}
\newcommand{\mf}{\mathfrak}
\newcommand{\mc}{\mathcal}
\newcommand{\ul}{\underline}
\newcommand{\ol}{\overline}
\newcommand{\N}{{\mathbb N}}
\newcommand{\Z}{{\mathbb Z}}
\newcommand{\C}{{\mathbb C}}
\newcommand{\V}{{\mathbb V}}
\newcommand{\lmod}{\text{-mod}}
\DeclareMathOperator{\Stab}{Stab}
\begin{document}

\title{Classifying quotients on Coxeter groups by isomorphism in Bruhat order}
\author{Joseph Newton}
\maketitle

\begin{abstract}
We classify all quotients $W/W_J$ up to isomorphism in Bruhat order, with $(W,S)$ a Coxeter system and $W_J$ a parabolic subgroup of $W$. In particular, the non-trivial isomorphisms fall into a small number of cases which are highly restricted; all have $W$ finite and $W_J$ a maximal parabolic. This has the immediate application of classifying dominant and antidominant blocks of category $\mc O$ for Kac-Moody algebras up to equivalence.
\end{abstract}

\section{Introduction}

This paper extends a result from Coulembier in \cite{Co} which classifies all quotients on finite Coxeter groups up to isomorphism in Bruhat order. This is motivated by the study of blocks of the BGG category $\mc O$ for finite semisimple Lie algebras, where it is shown that two blocks are equivalent precisely when their corresponding posets via the Weyl group are isomorphic. Here we extend the classification of Bruhat posets to all Coxeter systems, finite and infinite, and subsequently the classification of blocks of category $\mc O$ extends to more general Kac-Moody algebras.

Our main result is as follows:

\begin{thm}\label{thm:one}
Suppose $W,U$ are irreducible Coxeter groups with parabolic subgroups $W_J,U_K$ respectively. Then $W/W_J$ and $U/U_K$ are isomorphic as posets with the Bruhat order if and only if the pairs $(W,W_J)$ and $(U,U_K)$ are one of the following cases:
\begin{enumerate}
	\item $(I_2(n),A_1)\leftrightarrow(A_{n-1},A_{n-2})$ for $n\geq4$;
	\item $(B_n,B_{n-1})\leftrightarrow(A_{2n-1},A_{2n-2})$ for $n\geq3$;
	\item $(B_n,B_{n-1})\leftrightarrow(I_2(2n),A_1)$ for $n\geq3$;
	\item $(B_n,A_{n-1})\leftrightarrow(D_{n+1},A_n)$ for $n\geq3$;
	\item $(H_3,H_2)\leftrightarrow(D_6,D_5)$ (denoting $H_2=I_2(5)$);
	\item $W=W_J$ and $U=U_K$;
	\item There is a group isomorphism $W\to U$ that restricts to an isomorphism $W_J\to U_K$.
\end{enumerate}
\end{thm}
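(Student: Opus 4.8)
The plan is to reduce to the already-known finite case and then treat the genuinely new infinite case by reconstructing the Coxeter data from the poset. First I would record the basic structural facts about the quotient poset $W^J$ of minimal-length coset representatives: it is graded by length with minimum $\hat{0}=e$, its atoms are the simple reflections in $I:=S\setminus J$, and it is finite precisely when $W$ is finite (for infinite irreducible $W$ every proper standard parabolic has infinite index), reducing to a single point exactly when $W=W_J$. Since finiteness and cardinality are poset invariants, any isomorphism $W/W_J\cong U/U_K$ forces one of three situations: both quotients are single points, whence $W=W_J$ and $U=U_K$ (case (6)); both $W,U$ are finite; or both are infinite. The finite–finite situation is exactly the content of Coulembier's classification \cite{Co}, which yields the exceptional coincidences (1)–(5) together with the genuine group isomorphisms (7); the reverse direction, that (1)–(5) really are isomorphisms, is checked by exhibiting the explicit maps as in \cite{Co}. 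Thus the new work lies entirely in the infinite–infinite case, where I must show that only (7) occurs.

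For the infinite case the strategy is reconstruction: recover as much of the pair $(W,W_J)$ as possible from the abstract poset. The corank $|S|-|J|$ is the number of atoms, hence an invariant. For two atoms $s,s'$ the parabolic sub-poset $W_{\{s,s'\}}\subseteq W^J$ is a dihedral polygon, and its height recovers $m(s,s')$, with $m=\infty$ detected by the two atom-chains never merging; this reconstructs the labelled Coxeter graph restricted to $I$. The adjacency between $I$ and the hidden set $J$ is then read off at rank $2$: a length-$2$ element of $W^J$ covering a \emph{unique} atom $s$ is necessarily of the form $rs$ with $r\in J$ and $m(r,s)\ge 3$, so the number of such elements equals the number of $J$-neighbours of $s$. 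Pushing this local analysis to higher ranks is designed to recover the remaining edge labels and the internal structure of $J$; combined with the equality of rank generating functions $P_W/P_{W_J}=P_U/P_{U_K}$ forced by the isomorphism, I expect this to determine $(W,W_J)$ up to diagram isomorphism whenever the corank is at least $2$, so that the isomorphism is a group isomorphism and case (7) holds.

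The main obstacle is the corank-$1$ case, that is $W_J$ a maximal parabolic, which is precisely where all the exceptional coincidences (1)–(5) live (all have $W_J$ maximal). Here $W^J$ has a single atom and is thin at the bottom, so the local reconstruction above carries almost no information and one must argue globally. The approach I would take is to combine the identity $P_W/P_{W_J}=P_U/P_{U_K}$ — whose coefficients record the width of the poset at every rank and whose growth rate is a further invariant — with the detailed shape of the poset at the first ranks where it branches, and to run these invariants against the classification of infinite irreducible Coxeter groups. The aim is to show that for maximal parabolics these data already pin down $(W,W_J)$ up to group isomorphism, and in particular that none of the numerical coincidences underlying (1)–(5) survives into the infinite range; for example, a width-one (infinite chain) quotient forces $P_W/P_{W_J}=1/(1-q)$ and hence $W$ to be the infinite dihedral group. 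Carrying this out uniformly across the infinite families, rather than through an unbounded case analysis, is the crux of the argument.
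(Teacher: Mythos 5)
Your skeleton --- split into trivial/finite/infinite via poset invariants, cite the finite classification, and reconstruct the Coxeter data in the infinite case --- is a reasonable framing, and the appeal to Deodhar for ``$W^J$ finite iff $W$ finite'' is a legitimate shortcut (the paper only obtains that dichotomy \emph{a posteriori}, as a corollary of its full analysis, and remarks that it is disproportionately hard to prove directly). Two caveats even at this level: the non-crystallographic finite cases ($I_2(m)$ for $m\neq3,4,6$, $H_3$, $H_4$) are \emph{not} covered by \cite{Co}, which treats Weyl groups, so the finite--finite case is not purely a citation; and the exceptional coincidences (1)--(5) all equate the rank generating functions of the two quotients, so $P_W/P_{W_J}$ is demonstrably not a complete invariant and cannot carry the weight you later assign to it.

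The genuine gap is the infinite case, which is the entire point of the theorem. First, your endgame for corank $1$ --- ``run these invariants against the classification of infinite irreducible Coxeter groups'' --- is not available: infinite irreducible Coxeter groups are not classified, so there is no list to check invariants against; any proof must be a uniform reconstruction argument valid for an arbitrary Coxeter matrix. Second, the reconstruction you sketch stops exactly where the difficulty begins. Counting length-$2$ elements over an atom $s$ gives the number of $J$-neighbours of $s$, but says nothing about the labels $m(r,s)$, about which neighbours are shared between atoms, or about edges and labels inside $J$; and ``pushing this local analysis to higher ranks'' is precisely the hard part. The obstruction is that a labelled edge $m(s,t)\geq4$ makes a covering chain in $W^J$ double back ($\dots sts_0$, $\dots stst\, s_0$, \dots) and thereby imitate a longer unlabelled path --- this is exactly the mechanism behind (1)--(5) --- so one must decide, from the abstract order alone, whether a given chain of coverings corresponds to a genuine path in the graph or to such a doubling. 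The paper's Sections 3--4 exist to solve this: chainlike elements and their trichotomy into forms (I)/(II)/(III), the detector and $M$, $N$ functions that recover $m(s,t)$ order-theoretically, and the ``basket'' analysis that isolates the one family of automorphisms permuting the forms. Nothing in your proposal substitutes for this machinery, and your claim that corank $\geq2$ is settled by rank-$2$ data is unsupported for the same reason. As written, the argument establishes the easy reductions but not the theorem.
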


In short, we have the remarkable result that an irreducible Coxeter group can be recovered from only the Bruhat order on a single non-trivial quotient, with only a few exceptions given by (1)-(5). We prove that these exceptional pairs are isomorphisms in Section~\ref{sec:coxeter}, and prove that there are no other isomorphisms in Sections~\ref{sec:chainlike} and \ref{sec:sudoku}. This also extends to reducible Coxeter systems as follows:

\begin{thm}\label{thm:reducible}
Suppose $(W,S)$, $(U,T)$ are Coxeter systems and $J\subseteq S$, $K\subseteq T$. Then $W/W_J\cong U/U_K$ as posets with the Bruhat order if and only if there are disjoint unions $S=S_1\sqcup\dots\sqcup S_n$ and $T=T_1\sqcup\dots\sqcup T_n$ with $n\in\N$ (and possibly with some $S_i$ or $T_i$ empty) so that each of $(W_{S_i},S_i)$ and $(U_{T_i},T_i)$ are Coxeter systems that are irreducible or trivial and $W_{S_i}/W_{S_i\cap J}\cong U_{T_i}/U_{T_i\cap K}$ as posets for all $1\leq i\leq n$.
\end{thm}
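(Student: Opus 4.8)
The plan is to reduce the reducible case to the product structure of Bruhat posets and then invoke unique factorization of posets under the direct product. First I would record the standard structural fact that the Bruhat order behaves multiplicatively: if $S = C_1 \sqcup \dots \sqcup C_m$ is the decomposition of $S$ into connected components of the Coxeter diagram, then $W = W_{C_1} \times \dots \times W_{C_m}$, the parabolic factors as $W_J = \prod_a W_{C_a \cap J}$, and the Bruhat order on the quotient is the direct product order:
\[
W/W_J \;\cong\; \prod_{a=1}^m W_{C_a}/W_{C_a \cap J},
\]
where the right-hand side carries the product order. (This follows from the additivity of length on a direct product together with the minimal-coset-representative description of the quotient order.) With this in hand the ``if'' direction is immediate: given matching decompositions $S = \bigsqcup_i S_i$, $T = \bigsqcup_i T_i$ with $W_{S_i}/W_{S_i\cap J} \cong U_{T_i}/U_{T_i\cap K}$, taking products over $i$ yields $W/W_J \cong U/U_K$, since the point-posets coming from empty $S_i$ or $T_i$ act as the identity factor.

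For the ``only if'' direction I would isolate the two facts that make the correspondence rigid. Each quotient $W_{C_a}/W_{C_a\cap J}$ is a connected poset (every element lies above the identity coset, so the comparability graph is connected) possessing a least element, and $S$ is finite so there are finitely many factors. I would then show that whenever $C_a \cap J \subsetneq C_a$, i.e.\ the factor is nontrivial, the poset $W_{C_a}/W_{C_a\cap J}$ is \emph{directly indecomposable}: not isomorphic to a product of two posets each having more than one element. Granting this, the displayed decomposition exhibits $W/W_J$ as a finite product of directly indecomposable connected posets with a least element, the trivial factors being single points and hence absorbable. Hashimoto's unique factorization theorem for such posets then guarantees that the multiset of indecomposable factors of $W/W_J$ is an isomorphism invariant. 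Applying this to both sides of an isomorphism $W/W_J \cong U/U_K$ forces the multiset $\{W_{C_a}/W_{C_a\cap J}\}$ of nontrivial component-quotients of $W$ to agree, up to poset isomorphism, with the corresponding multiset for $U$. Pairing the matched factors, and padding the shorter list with empty sets $S_i = \emptyset$ or $T_i = \emptyset$ (whose quotients are the one-point poset) so that both lists attain a common length $n$, produces exactly the decompositions demanded by the statement.

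The main obstacle is the claim of direct indecomposability for a nontrivial irreducible quotient poset, which is precisely what prevents spurious matchings that split or merge components. Since the posets may be infinite, naive rank-counting is unavailable, so I would argue structurally near the bottom of the order. The atoms of $W_{C_a}/W_{C_a\cap J}$ are the cosets $sW_{C_a\cap J}$ with $s \in C_a \setminus J$, and in a product $P_1 \times P_2$ of posets with least elements the atom set splits as the disjoint union of the atom sets of the factors, with any two atoms from \emph{different} factors bounded above by their product, a unique element of rank two, whereas atoms within a single factor interact only through that factor. I would translate this separation into a partition of the generators that is compatible with the edges of the Coxeter diagram, taking care that generators in $J$ may link two atoms non-adjacently, and use the irreducibility (connectedness) of $(W_{C_a}, C_a)$ together with the description of rank-two intervals in the quotient Bruhat order to derive a contradiction unless one part is empty. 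This local analysis, rather than the global factorization bookkeeping, is the delicate step; the remainder is a formal consequence of unique factorization.
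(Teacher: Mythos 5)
Your route is genuinely different from the paper's: you reduce everything to the product decomposition $W^J\cong\prod_a W_{C_a}^{C_a\cap J}$ over connected components and then invoke Hashimoto's unique factorization theorem for connected posets, whereas the paper never appeals to unique factorization. Instead it shows the decomposition is \emph{order-definable}: the proposition in Section~\ref{subsec:reducible} recovers the partition of the atoms $S\setminus J$ into components purely from the poset (via the relation $\leftrightarrow$, built from chainlike elements and $\sim$), identifies each component quotient with the intrinsic subposet $W^J_i=\{w\mid s\leq w\Rightarrow s\in S_i\}$, and concludes that any isomorphism must match these subposets up. Your ``if'' direction and the overall bookkeeping with empty $T_i$'s are fine, and Hashimoto does apply (the posets are connected, having a least element, and your factorization is finite). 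So the entire weight of your argument rests on the lemma that a nontrivial irreducible quotient $W_{C_a}^{C_a\cap J}$ is directly indecomposable --- and that is exactly where there is a genuine gap.

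The sketch you give for indecomposability --- partition the atoms according to the hypothetical factors and inspect rank-two intervals --- cannot work as stated. If $W^J\cong P\times Q$ nontrivially, the atoms split as $A_P\sqcup A_Q$ and the rank-two analysis only yields $m(s,t)=2$ for $s\in A_P$, $t\in A_Q$. But two atoms joined through a chain of generators in $J$ (e.g.\ $(A_3,\{s_2\})$ with atoms $s_1,s_3$) also satisfy $m(s_1,s_3)=2$, so the interval $[e,s_1s_3]$ is a diamond exactly as in a true product; locally near the bottom nothing distinguishes the two situations, and connectedness of the Coxeter graph is not contradicted. You flag this (``generators in $J$ may link two atoms non-adjacently'') but supply no argument for it, and this is precisely the content the paper spends its machinery on. A longer-range obstruction is needed. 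One that works: take $s\in A_P$, $t\in A_Q$ at minimal distance in the Coxeter graph, so they are joined by an induced path $s=x_0-x_1-\dots-x_d=t$ with all interior vertices in $J$; then $u=x_{d-1}\cdots x_1s$ and $v=x_1\cdots x_{d-1}t$ are simple chainlike elements (Lemma~\ref{lem:leftmost}) supported on different factors of the putative product, so any common upper bound must have length at least $\ell(u)+\ell(v)$; but the paper's formula $L(u,v)=\ell(u)+\ell(v)-m$ with $m\geq 1$ (they share the letter $x_1$) gives $L(u,v)<\ell(u)+\ell(v)$, a contradiction. Without this step, or an equivalent, your proof of the ``only if'' direction is incomplete; with it, your Hashimoto-based argument would go through, though it ends up needing essentially the same local analysis that the paper's canonical-decomposition proof uses, plus an external theorem the paper avoids.
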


This is proven in \cite[Theorem 3.2.1]{Co}, but we also provide a proof in Section~\ref{subsec:reducible}. Figure \ref{fig:reducible} shows an example of an isomorphism between posets of reducible systems (refer to Section~\ref{sec:coxeter} for the notation). Following the same methods in \cite{Co}, we now have an immediate application:

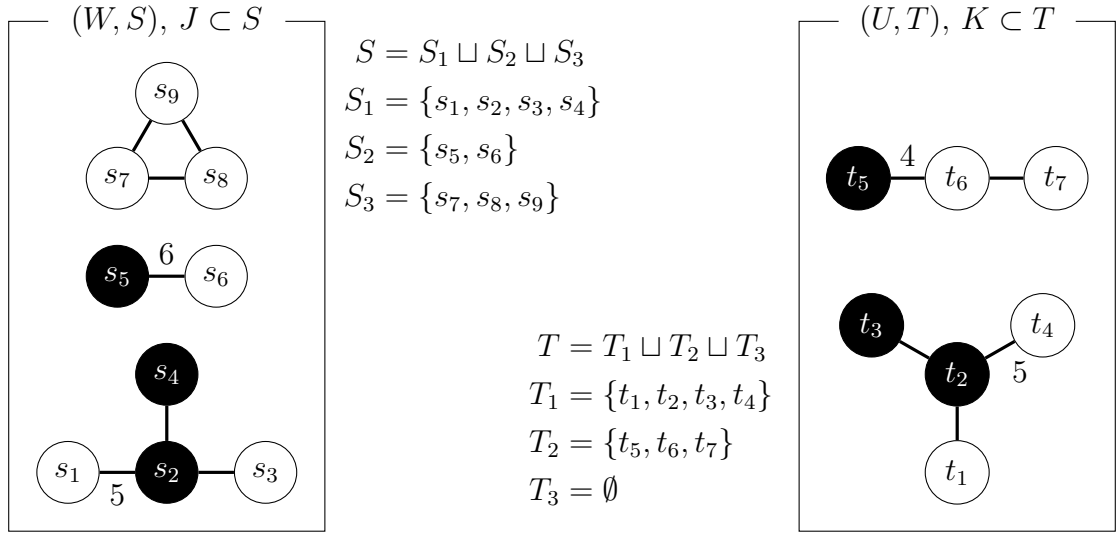
\begin{figure}[h]
\begin{center}
\begin{tikzpicture}[scale=1.3]
\def\marg{0.6}
\def\sep{1.2}
\begin{scope}[shift=({-4,0})]
\draw (\sep,4+\marg) -- (1+\marg,4+\marg) -- (1+\marg,-\marg) -- (-1-\marg,-\marg) -- (-1-\marg,4+\marg) -- (-\sep,4+\marg);
\node at (0,4+\marg) {$(W,S),\,J\subset S$};
\node[whitenode] (s1) at (-1,0) {$s_1$};
\node[blacknode] (s2) at (0,0) {$s_2$};
\node[whitenode] (s3) at (1,0) {$s_3$};
\node[blacknode] (s4) at (0,1) {$s_4$};
\node[blacknode] (s5) at (-0.5,2) {$s_5$};
\node[whitenode] (s6) at (0.5,2) {$s_6$};
\node[whitenode] (s7) at (-0.5,3) {$s_7$};
\node[whitenode] (s8) at (0.5,3) {$s_8$};
\node[whitenode] (s9) at (0,3.866) {$s_9$};
\begin{scope}[every edge/.style=graphedge]
\path (s2) edge node[below] {$5$} (s1) edge (s3) edge (s4);
\path (s5) edge node[above] {$6$} (s6);
\path (s7) edge (s8) edge (s9); \path (s8) edge (s9);
\end{scope}
\end{scope}
\begin{scope}[shift=({4,0})]
\draw (\sep,4+\marg) -- (1+\marg,4+\marg) -- (1+\marg,-\marg) -- (-1-\marg,-\marg) -- (-1-\marg,4+\marg) -- (-\sep,4+\marg);
\node at (0,4+\marg) {$(U,T),\,K\subset T$};
\node[whitenode] (t1) at (0,0) {$t_1$};
\node[blacknode] (t2) at (0,1) {$t_2$};
\node[blacknode] (t3) at (-0.866,1.5) {$t_3$};
\node[whitenode] (t4) at (0.866,1.5) {$t_4$};
\node[blacknode] (t5) at (-1,3) {$t_5$};
\node[whitenode] (t6) at (0,3) {$t_6$};
\node[whitenode] (t7) at (1,3) {$t_7$};
\begin{scope}[every edge/.style=graphedge]
\path (t2) edge (t1) edge (t3) edge node[below right] {$5$} (t4);
\path (t6) edge node[above] {$4$} (t5) edge (t7);
\end{scope}
\end{scope}
\node at (-0.9,3.5) {$\begin{aligned}S&=S_1\sqcup S_2\sqcup S_3\\S_1&=\{s_1,s_2,s_3,s_4\}\\S_2&=\{s_5,s_6\}\\S_3&=\{s_7,s_8,s_9\}\end{aligned}$};
\node at (0.9,0.5) {$\begin{aligned}T&=T_1\sqcup T_2\sqcup T_3\\T_1&=\{t_1,t_2,t_3,t_4\}\\T_2&=\{t_5,t_6,t_7\}\\T_3&=\emptyset\end{aligned}$};
\end{tikzpicture}
\end{center}
\caption{Coxeter systems and subgroups with isomorphic quotient posets $W^J\cong U^K$ by Theorem~\ref{thm:reducible}. Generators in $J,K$ are drawn in white, others are drawn in black. The isomorphisms $W_{S_i}/W_{S_i\cap J}\cong U_{T_i}/U_{T_i\cap K}$ for $i=1,2,3$ are examples of Theorem~\ref{thm:one} cases (7), (3), (6) respectively.}\label{fig:reducible}
\end{figure}

\begin{thm}\label{thm:two}
Let $\mf g\supset\mf b\supset\mf h$ and $\mf g'\supset\mf b'\supset\mf h'$ be symmetrizable Kac-Moody algebras over $\C$ with corresponding Borel and Cartan subalgebras, and $\mc W,\mc W'$ their Weyl groups. Suppose $\mc O_\Lambda$, $\mc O'_{\Lambda'}$ are blocks of the categories $\mc O$, $\mc O'$ for $\mf g$, $\mf g'$ respectively, that are outside the critical hyperplanes and such that there are weights $\lambda\in\Lambda\subset\mf h^*$ and $\lambda'\in\Lambda'\subset(\mf h')^*$ which are either both dominant or both antidominant. Then there is an equivalence of categories $\mc O_\Lambda\simeq\mc O'_{\Lambda'}$ if and only if $\mc W(\Lambda)/\Stab(\lambda)\cong\mc W'(\Lambda')/\Stab(\lambda')$ as posets with the Bruhat order.
\end{thm}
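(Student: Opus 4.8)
The plan is to follow the strategy of \cite{Co}, reducing the statement to the combinatorial classification of Theorems~\ref{thm:one} and~\ref{thm:reducible} via Soergel's combinatorial description of category $\mc O$, in the form extended to symmetrizable Kac-Moody algebras by Fiebig. First I would reduce each block $\mc O_\Lambda$ to its integral data: since $\lambda$ lies outside the critical hyperplanes, the block depends only on the integral Weyl group $\mc W(\Lambda)$, which is itself a Coxeter group, together with the orbit $\Lambda=\mc W(\Lambda)\cdot\lambda$ under the dot-action. For $\lambda$ dominant (the antidominant case being entirely parallel), the stabilizer $\Stab(\lambda)$ is the parabolic subgroup of $\mc W(\Lambda)$ generated by the reflections fixing $\lambda$, so that the indexing poset of simple objects is exactly the Bruhat poset $\mc W(\Lambda)/\Stab(\lambda)$. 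The point of the hypotheses is precisely that they place us in the range where Fiebig's equivalence identifies $\mc O_\Lambda$ with a category of modules over the structure algebra of the Bruhat moment graph attached to this data, so that the block is controlled by the same Soergel-theoretic combinatorics as in the finite case.

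For the forward direction (equivalence implies poset isomorphism), I would argue that the Bruhat poset is a categorical invariant of $\mc O_\Lambda$. The underlying set is the set of isomorphism classes of simple objects, which any equivalence preserves; and the order is recovered intrinsically as the essential order of the highest weight structure, generated by non-vanishing of $\mathrm{Ext}^1$ between simples. By Kazhdan--Lusztig theory the composition multiplicities $[\Delta(w):L(y)]$ of standard modules are nonzero exactly when $y\le w$ in $\mc W(\Lambda)/\Stab(\lambda)$, so this essential order coincides with the Bruhat order; being defined purely homologically, it transports across any equivalence. Hence an equivalence $\mc O_\Lambda\simeq\mc O'_{\Lambda'}$ forces $\mc W(\Lambda)/\Stab(\lambda)\cong\mc W'(\Lambda')/\Stab(\lambda')$.

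For the converse, suppose the two Bruhat posets are isomorphic. Decomposing $\mc W(\Lambda)$ and $\mc W'(\Lambda')$ into irreducible components and applying Theorem~\ref{thm:reducible}, then Theorem~\ref{thm:one} component-by-component, the defining pairs $(\mc W(\Lambda),\Stab(\lambda))$ and $(\mc W'(\Lambda'),\Stab(\lambda'))$ must be related by one of the cases (1)--(7). In cases (6) and (7) the group isomorphism carries one set of Soergel/Fiebig combinatorics to the other verbatim, so the combinatorial categories, and hence the blocks, are equivalent. In the exceptional cases (1)--(5) the groups are finite, and \cite{Co} already establishes that the corresponding finite blocks are equivalent; transporting along Fiebig's identification---under which a block with a given integral Weyl group and stabilizer is independent of the ambient Kac-Moody algebra---yields the desired equivalence $\mc O_\Lambda\simeq\mc O'_{\Lambda'}$.

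The main obstacle is this converse direction in the exceptional cases. A poset isomorphism is a priori much weaker than an isomorphism of the underlying Coxeter data, whereas the block depends on the full Kazhdan--Lusztig/Soergel-bimodule combinatorics and not merely on the Bruhat order; so one genuinely has to check that the isomorphisms of (1)--(5) match this finer data, which is the content of reducing to \cite{Co}. Making the reduction rigorous requires care on two further points: verifying that Fiebig's combinatorial equivalence depends only on the pair $(\mc W(\Lambda),\Stab(\lambda))$ together with the dominant/antidominant choice, so that the finite-case equivalences of \cite{Co} transport; and confirming that the essential order used in the forward direction really does equal the Bruhat order for these possibly-singular, possibly-infinite blocks, including a check that the relevant parabolic Kazhdan--Lusztig polynomials are non-vanishing exactly on Bruhat-comparable pairs.
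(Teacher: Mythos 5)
Your proposal is correct and follows essentially the same route as the paper: the forward direction rests on the intrinsic recovery of the Bruhat order from the highest weight structure (the paper cites the uniqueness of the poset structure for finite and upper/lower finite highest weight categories with simple-preserving duality from \cite{Co}, \cite{Co2}, combined with Theorem~\ref{thm:semiinfinite}), and the converse uses Fiebig's result that the block depends only on the pair $(\mc W(\Lambda),\Stab(\lambda))$ together with the reduction of the exceptional cases (1)--(5) to the finite-dimensional equivalences established in \cite{Co}. The one detail worth noting is that the paper disposes of the exceptional cases more cheaply than you anticipate: case (5) and most instances of cases (1) and (3) never arise because $H_3$ and $I_2(m)$ for $m\notin\{3,4,6\}$ are not crystallographic, leaving only $B_2$, $G_2$ and the known $B$--$A$ and $B$--$D$ equivalences to verify.
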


We follow the notation in \cite{Fi} and \cite{BS} for the statement of this theorem, with more details in Section~\ref{subsec:categoryproof}. In the notation of \cite{Co} this may be written more succinctly as
\[\mc O(W,W_J)\simeq\mc O(U,U_K)\iff W/W_J\cong U/U_K\]
where $(W,S),(U,T)$ are any crystallographic Coxeter systems and $J\subseteq S$, $T\subseteq K$. The proof of Theorem~\ref{thm:two} follows the same method as in \cite{Co} and is given in Section~\ref{subsec:categoryproof}.

\subsection*{Acknowledgement} This research was completed during an honours project supervised by Kevin Coulembier; the author gives many thanks for his guidance and discussions.

\section{Background}\label{sec:coxeter}

The study of Coxeter systems presented here is sourced from \cite{BB}, and we will use their notation throughout.

\subsection{Coxeter systems}  Let $S$ be a finite set. A Coxeter matrix is a function $m:S\times S\to\{1,2,\dots,\infty\}$ that is symmetric and has $m(s,t)=1\iff s=t$ for all $s,t\in S$. The Coxeter group $W$ corresponding to $S$ and $m$ has generator presentation
\[W=\langle S\mid (st)^{m(s,t)}=e\text{ for all }s,t\in S\rangle\]
where $e$ is the group identity. We write $\ell(w)$ for the minimal number $n\in\N$ so that $w\in W$ can be written as a product of $n$ generators $s_1\dots s_n$, and such a product is called a reduced expression. The Bruhat order is the partial order on $W$ where $u<v$ means that there are reflections $t_1,\dots,t_n\in\{wsw^{-1}\mid w\in W,s\in S\}$ with $v=ut_1\dots t_n$ and $\ell(ut_1\dots t_i)<\ell(ut_1\dots t_{i+1})$ for all $i$. We denote a covering relation in Bruhat order by $\vartriangleleft$. Denote by $W_J$ the subgroup generated by $J\subseteq S$, called a parabolic subgroup (conjugates of $W_J$ are also called parabolic, but these can also be written in the form $W_J$ for some choice of $S$ and $J$). The left cosets of $W_J$ each have a minimal-length representative, and we denote the set of these representatives $W^J$. The Bruhat order restricted to $W^J$ then induces an order on the quotient $W/W_J$.

Let us now rewrite all of the above in a more combinatorial language which will be used throughout this paper. $W$ is the quotient on the free group of words $S^*$ by the equivalence relation generated by:
\begin{itemize}
	\item $ss=e$ for each $s\in S$ (called \textit{nil-moves} when performed in the direction $ss\to e$);
	\item $\underbrace{ststs\dots}_{m(s,t)\text{ terms}}=\underbrace{tstst\dots}_{m(s,t)\text{ terms}}$ for distinct $s,t\in S$ with $m(s,t)\neq\infty$ (called \textit{braid-moves}).
\end{itemize}
We will sometimes distinguish elements of $S^*$ from elements of $W$ by writing them with an underline, e.g. $\ul w$. We recall a number of results from \cite{BB} which relate the Bruhat order to reduced expressions:
\begin{theorem}
Let $w,u\in W$.
\begin{enumerate}[itemsep=5pt]
	\item Word Property: Any expression for $w$ can be transformed into a reduced expression for $w$ by a sequence of nil-moves and braid-moves, and any two reduced expressions for $w$ are connected via a sequence of braid-moves.
	\item Subword Property: If $u\leq w$ and $w=s_1s_2\dots s_q$ is a reduced expression, then there exists a reduced expression $u=s_{i_1}s_{i_2}\dots s_{i_k}$ with $1\leq i_1<\dots<i_k\leq q$ (called a subword). Conversely, if there is a reduced expression for $u$ that is a subword of a reduced expression for $w$ then $u\leq w$.
	\item $u\vartriangleleft w$ if and only if $u<w$ and $\ell(w)=\ell(u)+1$, that is any reduced expression for $w$ can be turned into a reduced expression for $u$ by removing one generator.
	\item Chain Property: If $u<w$, then there exist elements $x_0,\dots,x_k\in W$, $k\geq1$ with $u=x_0\vartriangleleft x_1\vartriangleleft \dots\vartriangleleft x_k=w$.
	\item $w\in W^J$ if and only if the rightmost generator in every reduced expression for $w$ is not in $J$.
	\item Properties (1), (2), (3) and (4) hold in $W^J$ also.
\end{enumerate}
\end{theorem}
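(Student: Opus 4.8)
The plan is to treat these six statements as the standard foundational results of Coxeter theory and to derive them all from a single combinatorial engine, the Strong Exchange Property, which I would establish first. To reach it I would work in the geometric (reflection) representation of $W$ on $\R^S$ together with its root system $\{w\alpha_s\}$, and prove the basic sign lemma: for $w\in W$ and $s\in S$ one has $\ell(ws)>\ell(w)$ if and only if $w$ sends the simple root $\alpha_s$ to a positive root, equivalently $\ell(w)$ equals the number of positive roots sent by $w$ to negative roots. This lemma is the crux on which everything rests; proving it requires knowing that the geometric representation is faithful and tracking how right-multiplication by a single generator changes the inversion set of roots, and I expect this to be the main technical obstacle of the whole development.

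From the sign lemma I would deduce the Strong Exchange Property: if $w=s_1\cdots s_k$ is any expression and $t$ is a reflection with $\ell(wt)<\ell(w)$, then $wt=s_1\cdots\widehat{s_i}\cdots s_k$ for some $i$ (with $i$ unique when the expression is reduced). Specialising $t$ to a generator gives the ordinary Exchange Property, and applying it twice to a non-reduced word gives the Deletion Property, that a non-reduced word admits the deletion of two of its letters. The Deletion Property immediately yields the reduction half of statement~(1), since any repeated letter can be brought together by braid-moves and cancelled by a nil-move. For the braid-connectivity half (Matsumoto's theorem) I would induct on $\ell(w)$: given two reduced words, if they share a last letter strip it and recurse; if they end with distinct $s,t$, then both $s$ and $t$ are right descents of $w$, and a dihedral-subgroup analysis of the $\{s,t\}$-coset forces $m(s,t)<\infty$ and produces a reduced word for $w$ ending in the alternating string of length $m(s,t)$, which bridges the two words by a single braid-move followed by the inductive hypothesis.

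Next I would prove the Lifting Property: if $u<w$, $s\in S$, $\ell(ws)<\ell(w)$ and $\ell(us)>\ell(u)$, then $us\le w$ and $u\le ws$. This drops out of the Exchange Property by a short case analysis and is the cleanest route to the Subword Property~(2): both implications come out by induction on $\ell(w)$, peeling off a right descent of $w$ and using Lifting to relate subwords of $w$ and $ws$. With~(2) in hand, statements~(3) and~(4) are largely bookkeeping. The Subword Property shows that $u<w$ forces $\ell(u)<\ell(w)$, so a covering relation is exactly a length-one step, giving~(3); and for the Chain Property~(4) I would show that one can always insert a single letter back into a reduced subword so as to obtain an element covering $u$ while remaining below $w$, which establishes that the order is graded by $\ell$ and hence admits saturated chains.

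Finally, statement~(5) is a direct consequence of the Exchange Property: a reduced word ends in $s\in J$ exactly when $\ell(ws)<\ell(w)$, and $w\in W^J$ means precisely $\ell(ws)>\ell(w)$ for all $s\in J$, where one uses that each coset $wW_J$ has a unique minimal-length representative via the length-additive factorisation $W=W^JW_J$ with $\ell(w)=\ell(w^J)+\ell(w_J)$. For~(6) I would transport properties~(1)--(4) to $W^J$ through the projection $W\to W^J$ sending $w$ to the minimal representative of $wW_J$: the parabolic analogues of the Exchange and Lifting properties hold for elements of $W^J$, and the same inductions then reproduce the Word, Subword, covering and Chain Properties inside $W^J$. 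The delicate point here, and the secondary obstacle, is verifying that the order induced on $W^J$ really is the restriction of the Bruhat order and that length-additivity makes the parabolic Exchange step go through; once this is confirmed the arguments run parallel to the full-group case.
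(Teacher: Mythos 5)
The paper does not prove this theorem at all: it is stated as background recalled verbatim from \cite{BB}, and your outline is precisely the standard development given there (geometric representation $\to$ Strong Exchange $\to$ Deletion/Word Property, Lifting $\to$ Subword Property, then the graded/chain and parabolic-quotient statements). Your proposal is correct and matches the canonical route the paper is implicitly relying on, so there is nothing to compare beyond noting that the paper delegates the entire argument to the reference.
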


Throughout this paper we will focus on the set $W^J$ as opposed to the quotient $W/W_J$, taking (5) as its definition. Some particular cases of the Word Property which we'll use very frequently are as follows:
\begin{corollary}
Let $w\in W$ and $\ul w$ be an expression for $w$.
\begin{itemize}
	\item If no braid-moves or nil-moves can be performed on $\ul w$, then it must be reduced.
	\item If $\ul w$ is reduced then $s\in S$ appears in $\ul w$ if and only if $s\leq w$, that is all reduced expressions for $w$ are comprised of the same set of generators.
\end{itemize}
\end{corollary}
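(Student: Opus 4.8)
The plan is to derive both statements directly from the Word Property and Subword Property stated in the preceding theorem, as each is an essentially immediate consequence.

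For the first statement I would argue by contraposition, proving that if $\ul w$ is not reduced then some move is applicable to it. Given a non-reduced expression $\ul w$ for $w$, the Word Property supplies a sequence of nil-moves and braid-moves transforming $\ul w$ into a reduced expression $\ul w'$. Since braid-moves preserve the number of letters in a word while a reduced expression has strictly fewer letters than $\ul w$, this sequence must be nonempty and must in fact contain at least one nil-move. In particular, the first move in the sequence is a nil-move or braid-move applied to $\ul w$ itself, so some move can be performed on $\ul w$. This is exactly the contrapositive of the claim: if no move can be performed on $\ul w$, then $\ul w$ is reduced.

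For the second statement, assume $\ul w = s_1\dots s_q$ is reduced. For the forward direction, suppose $s\in S$ appears in $\ul w$, say $s = s_i$. Then the single letter $s_i$ is a subword of $\ul w$, and since $s$ is itself a reduced expression of length one, the converse part of the Subword Property yields $s\leq w$. For the reverse direction, suppose $s\leq w$. The element $s$ has the unique reduced expression given by the single generator $s$, so by the Subword Property this expression must occur as a subword of $\ul w$; that is, $s$ appears as one of the letters of $\ul w$. This establishes that the set of generators occurring in $\ul w$ is precisely $\{s\in S : s\leq w\}$.

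Finally, for the assertion that all reduced expressions for $w$ use the same set of generators, I would invoke the Word Property once more: any two reduced expressions for $w$ are connected by a sequence of braid-moves, and each braid-move replaces an alternating word $ststs\cdots$ by the alternating word $tstst\cdots$ in the same two generators $s,t$, hence leaves the set of generators appearing in the expression unchanged. Therefore every reduced expression for $w$ is comprised of the same set of generators, which by the previous paragraph equals $\{s\in S : s\leq w\}$. I anticipate no genuine obstacle here; the only point requiring a moment's care is verifying that the transforming sequence in the first statement is nonempty, which follows from the comparison of word lengths.
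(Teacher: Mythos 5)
Your proof is correct and follows exactly the intended derivation: the paper states this corollary without proof as an immediate consequence of the Word Property and Subword Property, and your arguments (contraposition via the nonempty transforming sequence for the first item, and the two directions of the Subword Property plus invariance of the generator set under braid-moves for the second) are precisely the routine verifications the paper leaves to the reader.
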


\subsection{Graphs}

We will often notate the information in $m$ using a \textit{Coxeter graph}, which has a node corresponding to each generator and edges between pairs of generators $s,t$ with $m(s,t)\geq3$. If $m(s,t)\geq4$, we label the corresponding edge with $m(s,t)$. Following the notation in \cite{Co}, we notate a pair $(W,W_J)$ of a Coxeter group and parabolic subgroup by colouring the nodes corresponding to elements of $J$ white and all others black, and call the coloured graph a \textit{bw-Coxeter graph}.

To display a poset $W^J$ we draw a graph with a node for each element and an edge for each covering relation $u\vartriangleleft v$. We will always arrange elements vertically by length with the identity element at the bottom. Note that in the literature a \textit{Bruhat graph} also includes edges between $u$ and $v$ whenever $v=tu$ for some reflection $t$ conjugate to $s\in S$; for simplicity we will omit these additional edges.

\subsection{Terminology}

We call a Coxeter system:
\begin{itemize}
	\item \textit{finite} if $W$ is a finite group;
	\item \textit{crystallographic} if $m(s,t)\in\{2,3,4,6,\infty\}$ for all distinct $s,t\in S$;
	\item \textit{simply-laced} if $m(s,t)\in\{2,3\}$ for all distinct $s,t\in S$ (that is, the Coxeter graph has no labelled edges);
	\item \textit{right-angled} if $m(s,t)\in\{2,\infty\}$ for all distinct $s,t\in S$.
\end{itemize}
The finite Coxeter groups are all named, and provided in the Appendix for reference. The notation $(W,W_J)$ can occasionally be ambiguous, for example $(B_3,A_1)$ could refer to 3 different pairs depending on which generator of $B_3$ is taken as the generator of $A_1$. It is easy to check, however, that the cases in Theorem~\ref{thm:one} are not ambiguous in this sense and for each there is only one choice of parabolic subgroup up to relabelling.

\subsection{Isomorphisms of finite Bruhat posets}\label{subsec:classification}

For irreducible systems $(W,S)$ and $(U,T)$ clearly an isomorphism $W^J\cong U^K$ occurs whenever the bw-Coxeter graphs are identical, or more formally when there is a bijection $\sigma:S\to T$ with $m(s_1,s_2)=m(\sigma(s_1),\sigma(s_2))$ and $s_1\in J\iff\sigma(s_1)\in K$ for all $s_1,s_2\in S$, so in this case we call the pairs $(W,W_J)$ and $(U,U_K)$ \textit{isomorphic}. This gives case (7) in Theorem~\ref{thm:one}, while case (6) is where $W^J$ and $U^K$ are trivial, that is each poset only contains the identity.

Now we show that cases (1)-(5) produce isomorphic posets. The crystallographic cases are considered in \cite{Co} and follow from the application to representation theory, but we also present a purely combinatorial approach for these below. There are two new cases not considered in \cite{Co} since they are non-crystallographic; the pair $(I_2(m),A_1)$ may be seen as a generalisation of the cases $(B_2,A_1)$ and $(G_2,A_1)$, while $(H_3,H_2)$ is exceptional but has a relatively simple poset to describe.

\begin{prop}
The poset $W^J$ where the pair $(W,W_J)$ is $(I_2(m),A_1)$, $(A_n,A_{n-1})$ or $(B_k,B_{k-1})$ is the total order on $|W^J|$ elements, with $|W^J|$ equal to $m$, $n+1$ or $2k-1$ respectively. That is, we have the isomorphisms (1), (2) and (3) in Theorem~\ref{thm:one}.
\end{prop}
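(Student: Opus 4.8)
The plan is to show each of the three families $W^J$ is a total order by exhibiting the minimal-length coset representatives $W^J$ explicitly and verifying they form a chain. The key tool throughout is property (5) of the Word Property theorem: $w\in W^J$ iff the rightmost generator of every reduced expression for $w$ lies outside $J$. Since in each case $J$ is a maximal parabolic (omitting exactly one generator from $S$), I expect $W^J$ to consist of very constrained elements, and the strategy is to list them, confirm $|W^J|$ matches the claimed count, and then check that the Bruhat order on this list is a total order by using the Subword Property.

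First I would handle $(I_2(m),A_1)$. Here $S=\{s,t\}$ with $m(s,t)=m$, and $W_J=\langle s\rangle$ say. The reduced expressions are the alternating words of length $\le m$, and $w\in W^J$ precisely when its reduced expression ends in $t$; these are $e, t, st, tst, \dots$, giving exactly $m$ elements $\underbrace{\dots sts}_{k}$ ending in $t$ of lengths $0,1,\dots,m-1$. Each is a subword of the next, so by the Subword Property they form a chain, i.e. the total order on $m$ elements. For $(A_n,A_{n-1})$, realising $W=S_{n+1}$ with generators $s_1,\dots,s_n$ (adjacent transpositions) and $J=\{s_1,\dots,s_{n-1}\}$, the representatives in $W^J$ must end in $s_n$; the standard description gives $W^J=\{e, s_n, s_{n-1}s_n, \dots, s_1s_2\cdots s_n\}$, which has $n+1$ elements, each a subword of the next, hence again a total order.

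The case $(B_k,B_{k-1})$ is the one I expect to be the main obstacle, both because the count $2k-1$ is less obvious and because $B_k$ has a labelled edge. Here $S=\{s_1,\dots,s_k\}$ with $m(s_1,s_2)=4$ and the remaining edges simply-laced, and $W_J$ is the standard $B_{k-1}$ obtained by deleting the appropriate end node so that $W^J$ records cosets of a $B_{k-1}$ inside $B_k$. I would identify the minimal coset representatives explicitly: using the signed-permutation model of $B_k$, the quotient $B_k/B_{k-1}$ is the orbit of a coordinate vector, which has $2k$ signed values but one representative is the identity, and the representatives assemble into a single chain of length $2k-1$. Concretely I would write each representative as an explicit reduced word and show each covers the previous by deleting one generator, invoking property (3)/(6), so the poset is the total order on $2k-1$ elements. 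The delicate points are getting the exact list of reduced words right across the order-$4$ edge and confirming no two representatives are Bruhat-incomparable, which I would settle by checking that the $j$-th representative is literally a subword of the $(j{+}1)$-th and appealing to the Subword Property to rule out any branching.

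Finally, having established that all three posets are total orders with $m$, $n+1$, and $2k-1$ elements respectively, the isomorphisms in Theorem~\ref{thm:one} follow by matching cardinalities: setting $n+1=m$ gives case (1) ($(I_2(n),A_1)\leftrightarrow(A_{n-1},A_{n-2})$); setting $2n-1 = (2n-1)+1-1$ trivially matches $(B_n,B_{n-1})$ with $(A_{2n-1},A_{2n-2})$ for case (2); and setting $2n-1 = 2n$ corrected to the $I_2$ count matches case (3), where $(I_2(2n),A_1)$ has $2n$ elements — so I would double-check the indexing so that the cardinalities agree exactly, since any off-by-one in the representative counts would break the stated correspondences.
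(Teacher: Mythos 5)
Your proposal is correct and follows essentially the same route as the paper: list the minimal coset representatives explicitly in each case, observe that each is a subword of the next so the Subword Property gives a chain, and match cardinalities to obtain isomorphisms (1)--(3). Regarding the off-by-one you flag at the end: your count of $2k$ elements for $B_k/B_{k-1}$ (lengths $0$ through $2k-1$) is the right one and makes all three cardinalities in cases (2) and (3) equal to $2n$; the figure $2k-1$ in the statement is the length of the longest element rather than $|W^J|$, so your indexing concern is resolved in your favour.
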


\begin{figure}[h]
\begin{minipage}{0.32\textwidth}\centering
\begin{tikzpicture}[scale=1]
\node[smallblacknode] (1) at (0,0) {\small{$1$}};
\node[smallwhitenode] (2) at (0,1.1) {\small{$2$}};
\node[smallwhitenode] (3) at (0,2.2) {\small{$3$}};
\node[smallwhitenode] (4) at (0,3.3) {\small{$4$}};
\node[smallwhitenode] (5) at (0,4.4) {\small{$5$}};
\begin{scope}[every edge/.style=graphedge]
\path (2) edge (1) edge (3);
\path (4) edge (3) edge (5);
\end{scope}
\begin{scope}[shift={(1.5,-0.2)}]
\node (e) at (0,0) {\small{$e$}};
\node (1) at (0.0,1) {\small{$1$}};
\node (21) at (0.0,2) {\small{$21$}};
\node (321) at (0.0,3) {\small{$321$}};
\node (4321) at (0.0,4) {\small{$4321$}};
\node (54321) at (0.0,5) {\small{$54321$}};
\path (1) edge (21) edge (e);
\path (321) edge (21);
\path (4321) edge (321) edge (54321);
\end{scope}
\end{tikzpicture}
\[(A_5,A_4)\]
\end{minipage}\hfill
\begin{minipage}{0.32\textwidth}\centering
\begin{tikzpicture}[scale=1]
\node[smallblacknode] (1) at (0,0) {\small{$1$}};
\node[smallwhitenode] (2) at (0,1.1) {\small{$2$}};
\node[smallwhitenode] (3) at (0,2.2) {\small{$3$}};
\begin{scope}[every edge/.style=graphedge]
\path (2) edge (1) edge node[left] {$4$} (3);
\end{scope}
\begin{scope}[shift={(1.5,-0.2)}]
\node (e) at (0,0) {\small{$e$}};
\node (1) at (0.0,1) {\small{$1$}};
\node (21) at (0.0,2) {\small{$21$}};
\node (321) at (0.0,3) {\small{$321$}};
\node (2321) at (0.0,4) {\small{$2321$}};
\node (12321) at (0.0,5) {\small{$12321$}};
\path (1) edge (21) edge (e);
\path (321) edge (21);
\path (2321) edge (321) edge (12321);
\end{scope}
\end{tikzpicture}
\[(B_3,B_2)\]
\end{minipage}\hfill
\begin{minipage}{0.32\textwidth}\centering
\begin{tikzpicture}[scale=1]
\node[smallblacknode] (1) at (0,0) {\small{$1$}};
\node[smallwhitenode] (2) at (0,1.1) {\small{$2$}};
\begin{scope}[every edge/.style=graphedge]
\path (1) edge node[left] {$6$} (2);
\end{scope}
\begin{scope}[shift={(1.5,-0.2)}]
\node (e) at (0,0) {\small{$e$}};
\node (1) at (0.0,1) {\small{$1$}};
\node (21) at (0.0,2) {\small{$21$}};
\node (121) at (0.0,3) {\small{$121$}};
\node (2121) at (0.0,4) {\small{$2121$}};
\node (12121) at (0.0,5) {\small{$12121$}};
\path (1) edge (21) edge (e);
\path (121) edge (21);
\path (2121) edge (121) edge (12121);
\end{scope}
\end{tikzpicture}
\[(I_2(6),A_1)\]
\end{minipage}
\caption{Examples of posets where the Bruhat order is a total order.}
\end{figure}
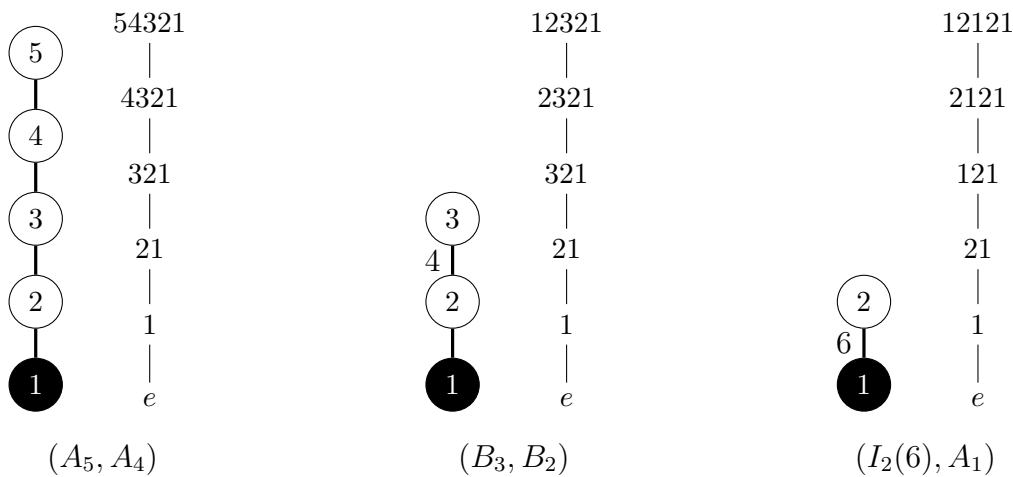

\begin{proof}
$(I_2(m),A_1)$ with $S=\{s,t\}$, $J=\{t\}$ gives $W^J$ consisting of
\[e<s<ts<sts<tsts<\dots\]
with the greatest element having length $m-1$ (or $W^J$ has no greatest element if $m=\infty$). $(A_n,A_{n-1})$ with $S=\{s_1,\dots,s_n\}$, $J=\{s_2,\dots,s_n\}$ gives $W^J$ consisting of
\[e<s_1<s_2s_1<s_3s_2s_1<\dots<s_n\cdots s_1.\]
Finally, $(B_k,B_{k-1})$ with $S=\{s_1,\dots,s_k\}$, $J=\{s_2,\dots,s_k\}$ gives $W^J$ consisting of
\[e<s_1<s_2s_1<s_3s_2s_1<\dots<s_k\cdots s_1<s_{k-1}s_k\cdots s_1<\dots<s_1\cdots s_k\cdots s_1.\]
It is not difficult to check that adding any other generators to these expressions gives elements not in $W^J$. The first three isomorphisms in Theorem~\ref{thm:one} then follow from counting the number of elements.
\end{proof}

\begin{prop}\label{prop:bd}
Suppose $(W,W_J)=(B_n,A_{n-1})$ and $(U,U_K)=(D_{n+1},A_n)$ with $n\geq3$. Then $W^J\cong U^K$ as posets, that is we have the isomorphism (4) in Theorem~\ref{thm:one}.
\end{prop}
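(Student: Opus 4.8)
The plan is to realise both posets as one and the same explicitly described lattice and then read off the isomorphism. Conceptually, $(B_n,A_{n-1})$ and $(D_{n+1},A_n)$ are both maximal parabolic quotients at a \emph{minuscule} node (the short-root node $s_n$ of $B_n$, carrying the spin representation, and a fork node of $D_{n+1}$, carrying a half-spin representation), and for such quotients the Bruhat order is a distributive lattice whose shape is the lattice of order ideals of the associated minuscule poset. Both minuscule posets are the same shifted staircase, so I expect $W^J$ and $U^K$ to each be isomorphic to the \emph{shifted Young's lattice} $SY_n$: the poset of strict partitions $\lambda=(\lambda_1>\lambda_2>\dots>\lambda_k)$ with $\lambda_1\le n$, ordered by containment, whose covering relations add a single box (either increasing one part $\lambda_i$ by $1$ keeping strictness, or adjoining a new part equal to $1$). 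As an immediate sanity check, counting minimal coset representatives by length gives, on both sides, the factorisation $\prod_{i=1}^n(1+q^i)=\sum_{\lambda}q^{|\lambda|}$, so in particular $|W^J|=|U^K|=2^n$ and the top rank is $\binom{n+1}{2}$ on both sides, matching $SY_n$.

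The core of the proof is to construct, for each system, an explicit rank-preserving bijection from strict partitions to minimal coset representatives and to check it is an order isomorphism. For type $B_n$ I would use ascending runs ending at the special generator: set $R(p)=s_{n+1-p}s_{n+2-p}\cdots s_n$ and assign to $\lambda$ the word $v_\lambda=R(\lambda_k)R(\lambda_{k-1})\cdots R(\lambda_1)$ (smallest part first). One checks directly for small $n$ that this reproduces $W^J$; e.g. for $n=3$ it gives $e,\ s_3,\ s_2s_3,\ \{s_1s_2s_3,\,s_3s_2s_3\},\ s_3s_1s_2s_3,\ s_2s_3s_1s_2s_3,\ s_3s_2s_3s_1s_2s_3$, the ``chain--diamond--chain'' on eight elements. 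For $D_{n+1}$ I would use an analogous but suitably modified description of the runs emanating from the black fork generator, where the two branches past the trivalent node account for the two incomparable elements at the lowest branching rank. In each case, using property (5) I would verify that $v_\lambda$ is genuinely a minimal coset representative, i.e.\ that every reduced expression ends in the black generator, and that distinct $\lambda$ give distinct elements; since there are exactly $2^n$ strict partitions with parts $\le n$ and $|W^J|=|U^K|=2^n$, these maps are forced to be bijections. Finally, by the Subword Property in the quotient a covering relation $u\vartriangleleft w$ is exactly a single-letter deletion from a reduced word, and I would match these deletions with the box-removal covers of $SY_n$; composing $W^J\cong SY_n\cong U^K$ then yields the claim.

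The hard part is the reduced-word bookkeeping at the ``special'' generators, and this is where most of the care goes. On the $B_n$ side the relation $m(s_{n-1},s_n)=4$ is the source of trouble: the braid move $s_{n-1}s_ns_{n-1}s_n=s_ns_{n-1}s_ns_{n-1}$ shows that a word ending (as written) in $s_n$ may have another reduced expression ending in $s_{n-1}\in J$, so that element is \emph{not} in $W^J$. One must show that exactly the words $v_\lambda$ survive this test and that no two coincide. On the $D_{n+1}$ side the analogous delicacy comes from the trivalent fork: the two commuting fork generators together with the branch relations $m(\cdot,u_{\text{center}})=3$ control which runs remain reduced and minimal, and these braid identities are precisely what one must track to show the single-letter deletions realise the $SY_n$ covers correctly. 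Verifying that the intended words are exactly the minimal coset representatives, and that the Hasse diagrams agree box-for-box, is the crux.

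Should the explicit words become unwieldy for general $n$, an alternative is induction on $n$, exploiting the self-similar structure of these lattices: $SY_n$ is built from two copies of the parts-$\le n{-}1$ lattice glued along the sublattice of partitions not using the part $n$, and both $W^J$ and $U^K$ admit the matching recursive splitting according to how many times a reduced word uses the special generator. Pairing the two recursions against the directly verified base case $n=3$ would then close the argument.
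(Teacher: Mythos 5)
Your strategy coincides in substance with the paper's: both rest on writing each minimal coset representative as a concatenation of runs toward the black node of strictly increasing lengths, so that $W^J$ is indexed by strict partitions with parts at most $n$ (equivalently subsets of $\{1,\dots,n\}$); your $v_\lambda=R(\lambda_k)\cdots R(\lambda_1)$ is exactly the paper's normal form $(s_{k_m}\cdots s_1)\cdots(s_{k_1}\cdots s_1)$ up to relabelling, and the detour through the shifted Young's lattice is a harmless intermediary for the paper's direct bijection $W^J\to U^K$ via the Subword Property. Your counting shortcut (injectivity of $\lambda\mapsto v_\lambda$ together with $|W^J|=2^n$ forcing surjectivity) would even let you bypass the paper's harder direction, which shows by a ``blocking generator'' argument that \emph{every} element of $W^J$ has the run form. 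The sanity checks (Poincar\'e polynomial $\prod_{i=1}^n(1+q^i)$, top rank $\binom{n+1}{2}$, the $n=3$ chain--diamond--chain) are all correct.

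The concrete gap is on the $D_{n+1}$ side, where ``an analogous but suitably modified description of the runs'' carries all the weight without being stated. The modification is not minor bookkeeping: with the black node $t_1$ a tip of the fork and the other tip $t_0$ white, the descending runs $t_{k_j}\cdots t_2 t_{(1\text{ or }0)}$ must terminate in $t_1$ and $t_0$ \emph{alternately} from right to left, since otherwise a braid move on $t_1t_2t_1$ or $t_0t_2t_0$ lets a generator escape to the right end and the word fails to be a reduced minimal representative. This alternation rule is precisely what makes the $D$-quotient match the $B$-quotient run-for-run (the terminal $s_1$ of each $B$-run corresponds to whichever of $t_0,t_1$ is forced next), and it is the one place where the reduced-word combinatorics of the two sides genuinely differ; until it is stated and verified you have not exhibited the $D$-side normal form, so you cannot match the covers box-for-box nor conclude $U^K\cong SY_n$. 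Similarly, on the $B$ side the claim that each $v_\lambda$ is reduced, lies in $W^J$, and that distinct $\lambda$ give distinct elements is exactly the $m(s_{n-1},s_n)=4$ braid analysis you flag as ``the crux''; the paper's proof is essentially that analysis written out. None of this means the approach fails --- it is the right one and it closes --- but the parts you defer constitute the entire content of the paper's argument.
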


\begin{figure}[h]
\begin{minipage}{0.49\textwidth}\centering
\begin{tikzpicture}
\node[smallblacknode] (1) at (0,0) {\scriptsize{$1$}};
\node[smallwhitenode] (2) at (0,1) {\scriptsize{$2$}};
\node[smallwhitenode] (3) at (0,2) {\scriptsize{$3$}};
\node[smallwhitenode] (4) at (0,3) {\scriptsize{$4$}};
\node[smallwhitenode] (5) at (0,4) {\scriptsize{$5$}};
\begin{scope}[every edge/.style=graphedge]
\path (2) edge node[left] {$4$} (1) edge (3);
\path (4) edge (3) edge (5);
\end{scope}
\begin{scope}[shift={(3.2,-0.3)}, xscale=2, yscale=0.6]
\node (e) at (0.0,0) {\tiny{$e$}};
\node (1) at (0.0,1) {\tiny{$1$}};
\node (21) at (0.0,2) {\tiny{$21$}};
\node (321) at (-0.5,3) {\tiny{$321$}};
\node (121) at (0.5,3) {\tiny{$121$}};
\node (1321) at (0.5,4) {\tiny{$1321$}};
\node (4321) at (-0.5,4) {\tiny{$4321$}};
\node (54321) at (-1.0,5) {\tiny{$54321$}};
\node (14321) at (0.0,5) {\tiny{$14321$}};
\node (21321) at (1.0,5) {\tiny{$21321$}};
\node (214321) at (0.0,6) {\tiny{$214321$}};
\node (121321) at (1.0,6) {\tiny{$121321$}};
\node (154321) at (-1.0,6) {\tiny{$154321$}};
\node (2154321) at (-1.0,7) {\tiny{$2154321$}};
\node (1214321) at (1.0,7) {\tiny{$1214321$}};
\node (3214321) at (0.0,7) {\tiny{$3214321$}};
\node (32154321) at (-1.0,8) {\tiny{$32154321$}};
\node (13214321) at (1.0,8) {\tiny{$13214321$}};
\node (12154321) at (0.0,8) {\tiny{$12154321$}};
\node (132154321) at (0.0,9) {\tiny{$132154321$}};
\node (213214321) at (1.0,9) {\tiny{$213214321$}};
\node (432154321) at (-1.0,9) {\tiny{$432154321$}};
\node (1432154321) at (-1.0,10) {\tiny{$1432154321$}};
\node (2132154321) at (0.0,10) {\tiny{$2132154321$}};
\node (1213214321) at (1.0,10) {\tiny{$1213214321$}};
\node (12132154321) at (0.5,11) {\tiny{$12132154321$}};
\node (21432154321) at (-0.5,11) {\tiny{$21432154321$}};
\node (321432154321) at (-0.5,12) {\tiny{$321432154321$}};
\node (121432154321) at (0.5,12) {\tiny{$121432154321$}};
\node (1321432154321) at (0.0,13) {\tiny{$1321432154321$}};
\node (21321432154321) at (0.0,14) {\tiny{$21321432154321$}};
\node (121321432154321) at (0.0,15) {\tiny{$121321432154321$}};
\path (1) edge (21) edge (e);
\path (321) edge (21);
\path (121) edge (21);
\path (1321) edge (321) edge (121);
\path (4321) edge (321);
\path (54321) edge (4321);
\path (14321) edge (4321) edge (1321);
\path (21321) edge (1321);
\path (214321) edge (14321) edge (21321);
\path (121321) edge (21321);
\path (154321) edge (54321) edge (14321);
\path (2154321) edge (154321) edge (214321);
\path (1214321) edge (214321) edge (121321);
\path (3214321) edge (214321);
\path (32154321) edge (2154321) edge (3214321);
\path (13214321) edge (3214321) edge (1214321);
\path (12154321) edge (2154321) edge (1214321);
\path (132154321) edge (32154321) edge (12154321) edge (13214321);
\path (213214321) edge (13214321);
\path (432154321) edge (32154321);
\path (1432154321) edge (432154321) edge (132154321);
\path (2132154321) edge (132154321) edge (213214321);
\path (1213214321) edge (213214321);
\path (12132154321) edge (2132154321) edge (1213214321);
\path (21432154321) edge (1432154321) edge (2132154321);
\path (321432154321) edge (21432154321);
\path (121432154321) edge (21432154321) edge (12132154321);
\path (1321432154321) edge (321432154321) edge (121432154321);
\path (21321432154321) edge (1321432154321);
\path (121321432154321) edge (21321432154321);
\end{scope}
\end{tikzpicture}
\[(B_5,A_4)\]
\end{minipage}\hfill
\begin{minipage}{0.49\textwidth}\centering
\begin{tikzpicture}
\node[smallblacknode] (1) at (0,0) {\scriptsize{$1$}};
\node[smallwhitenode] (2) at (0,1) {\scriptsize{$2$}};
\node[smallwhitenode] (3) at (0,2) {\scriptsize{$3$}};
\node[smallwhitenode] (4) at (0,3) {\scriptsize{$4$}};
\node[smallwhitenode] (5) at (0,4) {\scriptsize{$5$}};
\node[smallwhitenode] (0) at (-1,1) {\scriptsize{$0$}};
\begin{scope}[every edge/.style=graphedge]
\path (2) edge (1) edge (3) edge (0);
\path (4) edge (3) edge (5);
\end{scope}
\begin{scope}[shift={(3.2,-0.3)}, xscale=2, yscale=0.6]
\node (e) at (0.0,0) {\tiny{$e$}};
\node (1) at (0.0,1) {\tiny{$0$}};
\node (21) at (0.0,2) {\tiny{$21$}};
\node (021) at (0.5,3) {\tiny{$021$}};
\node (321) at (-0.5,3) {\tiny{$321$}};
\node (0321) at (0.5,4) {\tiny{$0321$}};
\node (4321) at (-0.5,4) {\tiny{$4321$}};
\node (04321) at (0.0,5) {\tiny{$04321$}};
\node (54321) at (-1.0,5) {\tiny{$54321$}};
\node (20321) at (1.0,5) {\tiny{$20321$}};
\node (204321) at (0.0,6) {\tiny{$204321$}};
\node (120321) at (1.0,6) {\tiny{$120321$}};
\node (054321) at (-1.0,6) {\tiny{$054321$}};
\node (2054321) at (-1.0,7) {\tiny{$2054321$}};
\node (1204321) at (1.0,7) {\tiny{$1204321$}};
\node (3204321) at (0.0,7) {\tiny{$3204321$}};
\node (32054321) at (-1.0,8) {\tiny{$32054321$}};
\node (13204321) at (1.0,8) {\tiny{$13204321$}};
\node (12054321) at (0.0,8) {\tiny{$12054321$}};
\node (132054321) at (0.0,9) {\tiny{$132054321$}};
\node (213204321) at (1.0,9) {\tiny{$213204321$}};
\node (432054321) at (-1.0,9) {\tiny{$432054321$}};
\node (1432054321) at (-1.0,10) {\tiny{$1432054321$}};
\node (0213204321) at (1.0,10) {\tiny{$0213204321$}};
\node (2132054321) at (0.0,10) {\tiny{$2132054321$}};
\node (02132054321) at (0.5,11) {\tiny{$02132054321$}};
\node (21432054321) at (-0.5,11) {\tiny{$21432054321$}};
\node (021432054321) at (0.5,12) {\tiny{$021432054321$}};
\node (321432054321) at (-0.5,12) {\tiny{$321432054321$}};
\node (0321432054321) at (0.0,13) {\tiny{$0321432054321$}};
\node (20321432054321) at (0.0,14) {\tiny{$20321432054321$}};
\node (120321432054321) at (0.0,15) {\tiny{$120321432054321$}};
\path (1) edge (21) edge (e);
\path (021) edge (21);
\path (321) edge (21);
\path (0321) edge (321) edge (021);
\path (4321) edge (321);
\path (04321) edge (4321) edge (0321);
\path (54321) edge (4321);
\path (20321) edge (0321);
\path (204321) edge (04321) edge (20321);
\path (120321) edge (20321);
\path (054321) edge (54321) edge (04321);
\path (2054321) edge (054321) edge (204321);
\path (1204321) edge (204321) edge (120321);
\path (3204321) edge (204321);
\path (32054321) edge (2054321) edge (3204321);
\path (13204321) edge (3204321) edge (1204321);
\path (12054321) edge (2054321) edge (1204321);
\path (132054321) edge (32054321) edge (12054321) edge (13204321);
\path (213204321) edge (13204321);
\path (432054321) edge (32054321);
\path (1432054321) edge (432054321) edge (132054321);
\path (0213204321) edge (213204321);
\path (2132054321) edge (132054321) edge (213204321);
\path (02132054321) edge (2132054321) edge (0213204321);
\path (21432054321) edge (1432054321) edge (2132054321);
\path (021432054321) edge (21432054321) edge (02132054321);
\path (321432054321) edge (21432054321);
\path (0321432054321) edge (321432054321) edge (021432054321);
\path (20321432054321) edge (0321432054321);
\path (120321432054321) edge (20321432054321);
\end{scope}
\end{tikzpicture}
\[(D_6,A_5)\]
\end{minipage}
\caption{Example of the isomorphism $B_n/A_{n-1}\cong D_{n+1}/A_n$ with $n=5$.}
\end{figure}
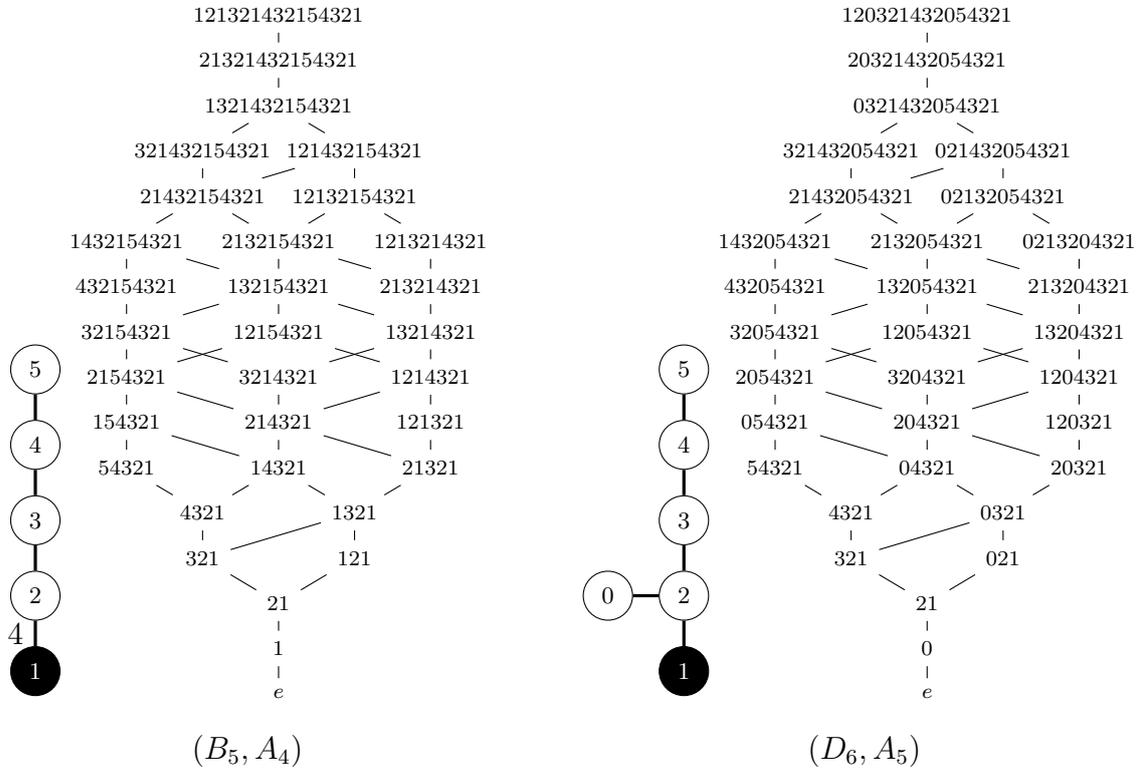

\begin{proof}
Denote the generators of $(W,S)$ and $(U,T)$ as in the diagram below.
\begin{center}
%\begin{tikzpicture}[scale=0.5]
%\begin{scope}
%	\node[circle, draw, fill=black] (0) at (0,0) {}; \node at (0,-1) {$s_0$};
%    \node[circle, draw] (1) at (2,0) {}; \node at (2,-1) {$s_1$};
%    \node[circle, draw] (2) at (4,0) {}; \node at (4,-1) {$s_2$};
%    \node (d) at (5.5,0) {$\cdots$};
%    \node[circle, draw] (n) at (7,0) {}; \node at (7,-1) {$s_{n-1}$};
%\end{scope}
%\begin{scope}[every edge/.style={draw=black,very thick}]
%    \path (0) edge node[above] {$4$} (1);
%    \path (1) edge (2);
%    \path (d) edge (2) edge (n);
%\end{scope}
%\node at (3.5,-2.2) {$(W,W_J)=(B_n,A_{n-1})$};
%\begin{scope}[shift={(12,0)}]
%\begin{scope}
%	\node[circle, draw, fill=black] (0) at (0,0) {}; \node at (0,-1) {$t_0$};
%    \node[circle, draw] (1) at (2,0) {}; \node at (2,-1) {$t_1$};
%    \node[circle, draw] (2) at (4,0) {}; \node at (4,-1) {$t_2$};
%    \node (d) at (5.5,0) {$\cdots$};
%    \node[circle, draw] (n) at (7,0) {}; \node at (7,-1) {$t_{n-1}$};
%    \node[circle, draw] (m) at (2,2) {}; \node at (3,2) {$t_n$};
%\end{scope}
%\begin{scope}[every edge/.style={draw=black,very thick}]
%    \path (0) edge (1);
%    \path (1) edge (2) edge (m);
%    \path (d) edge (2) edge (n);
%\end{scope}
%\node at (3.5,-2.2) {$(U,U_K)=(D_{n+1},A_n)$};
%\end{scope}
%\end{tikzpicture}
\begin{tikzpicture}[scale=0.7]
\node[blacknode] (1) at (0,0) {$s_1$};
\node[whitenode] (2) at (2,0) {$s_2$};
\node[whitenode] (3) at (4,0) {$s_3$};
\node (d) at (5.5,0) {$\cdots$};
\node[whitenode] (n) at (7,0) {$s_n$};
\begin{scope}[every edge/.style=graphedge]
	\path (2) edge node[above] {$4$} (1) edge (3);
	\path (d) edge (3) edge (n);
\end{scope}
\node at (3.5,-2.2) {$(W,W_J)=(B_n,A_{n-1})$};
\begin{scope}[shift={(12,0)}]
\node[blacknode] (1) at (0,0) {$t_1$};
\node[whitenode] (2) at (2,0) {$t_2$};
\node[whitenode] (3) at (4,0) {$t_3$};
\node (d) at (5.5,0) {$\cdots$};
\node[whitenode] (n) at (7,0) {$t_n$};
\node[whitenode] (0) at (2,2) {$t_0$};
\begin{scope}[every edge/.style=graphedge]
	\path (2) edge (1) edge (0) edge (3);
	\path (d) edge (3) edge (n);
\end{scope}
\node at (3.5,-2.2) {$(U,U_K)=(D_{n+1},A_n)$};
\end{scope}
\end{tikzpicture}
\end{center}
We can in fact explicitly describe elements of $B_n/A_{n-1}$ as follows. Fix $w\in W^J$; any reduced expression $\ul w$ is of the form $\ul w_1s_{k_1}s_{k_1-1}\dots s_1$ for some product of generators $\ul w_1$ and $1\leq k_1\leq n$. Choose $\ul w$ so that $k_1$ is maximal, and for $1\leq i\leq n$ let $s^{(i)}$ be the furthest-right $s_i$ term in $\ul w_1$ if it exists. Fix $i>1$ and assume for a contradiction that $s^{(i)}$ commutes with every generator to the right of it in $\ul w_1$.
\begin{itemize}
\item If $i<k_1$, then shifting $s^{(i)}$ into $s_{k_1}\dots s_1$ with braid moves shows $w\not\in W^J$, since
\[s_is_{i+1}s_i\dots s_1=s_{i+1}s_is_{i+1}s_{i-1}\dots s_1=s_{i+1}\dots s_1s_{i+1}.\]
\item If $i=k_1$, then we can perform a nil-move on $s^{(i)}$ and $s_{k_1}$ contradicting $\ul w$ reduced.
\item If $i=k_1+1$, then we have a reduced expression ending in $s_{k_1+1}\dots s_1$ contradicting maximality of $k_1$.
\item If $i>k_1+1$, then we can shift $s^{(i)}$ to the right end of $\ul w$ contradicting $w\in W^J$.
\end{itemize}
Thus there must be some `blocking' generator to the right of $s^{(i)}$ in $\ul w_1$. Assume this generator is $s_{i+1}$, so $s^{(i+1)}$ is to the right of $s^{(i)}$. By the same reasoning there must be a blocking generator to the right of $s^{(i+1)}$, which must be $s_{i+2}$ since $s^{(i)}$ is left of $s^{(i+1)}$. Repeating this we reach a contradiction at $s^{(n)}$, thus our assumption was false and the blocking generator for $s^{(i)}$ is $s_{i-1}$. Repeating on $s^{(i-1)},s^{(i-2)},\dots$ we see that $\ul w_1$ is either trivial or of the form $\ul w_2s_{k_2}\dots s_1$. Again choose $\ul w$ so that $k_2$ is maximal, and it is not hard to check that $k_2<k_1$ (otherwise move the generators $s_{k_2}\dots s_1$ as far right as possible and apply a series of braid moves). We can then repeat the entire process above, showing that $w=(s_{k_m}\dots s_1)(s_{k_{m-1}}\dots s_1)\dots(s_{k_1}\dots s_1)$ for some integers $1\leq k_1<\dots<k_m\leq n$.

The same process works for $D_{n+1}/A_n$ as well, except that when we consider $t^{(2)}$ we can choose either $t_1$ or $t_0$ to be the blocking generator. We must choose whichever we didn't take for the previous sequence $t_{k_j}\dots t_2t_{(1\text{ or }0)}$, or else we can perform a braid move on $t_1t_2t_1$ or $t_0t_2t_0$ and obtain a contradiction. Thus we see that there is a bijection $W^J\to U^K$ in which given $w=(s_{k_m}\dots s_1)(s_{k_{m-1}}\dots s_1)\dots(s_{k_1}\dots s_1)\in W^J$ we replace each $s_i$ with $t_i$ for $i>1$, and replace each $s_1$ with either $t_1$ or $t_0$ alternating from right to left. By the Subword Property, this is an isomorphism of Bruhat order.
\end{proof}

\begin{prop}
$H_3/H_2$ and $D_6/D_5$ are isomorphic as posets, that is we have the isomorphism (5) in Theorem~\ref{thm:one}.
\end{prop}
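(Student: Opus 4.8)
The plan is to avoid an explicit element-by-element matching and instead identify both posets through their rank generating functions, which is feasible because each quotient has only $12$ elements and the Bruhat order on $W^J$ is graded by length (covering relations raise length by one, and the Chain Property holds in $W^J$). Writing $[n]_q = 1+q+\dots+q^{n-1}$ and $P^J(q)=\sum_{w\in W^J}q^{\ell(w)}$, I would first compute $P^J$ on each side from the standard factorisation $P^J(q)=W(q)/W_J(q)$, where $W(q)=\prod_i [d_i]_q$ runs over the degrees $d_i$ of the finite group.

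For $(H_3,H_2)$ the degrees are $2,6,10$ and $2,5$, so
\[ P^J(q)=\frac{[2]_q[6]_q[10]_q}{[2]_q[5]_q}=\frac{[10]_q}{[5]_q}\,[6]_q=(1+q^5)(1+q+q^2+q^3+q^4+q^5). \]
For $(D_6,D_5)$ the degrees are $2,4,6,6,8,10$ and $2,4,5,6,8$ (here $D_5$ is the parabolic obtained by deleting the far end of the tail of the $D_6$ diagram, not a fork node, which would instead give $A_5$), so every factor cancels except $[6]_q[10]_q/[5]_q$, producing the same polynomial. Hence both posets have rank sizes $1,1,1,1,1,2,1,1,1,1,1$ across lengths $0$ through $10$.

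The second step is purely order-theoretic. Each poset has a unique minimum $e$ and, since the coefficient of $q^{10}$ is $1$, a unique maximum; it is graded of height $10$ and satisfies the Chain Property in $W^J$. Because every rank other than rank $5$ is a singleton, the Chain Property applied to $e<y$ and to $y<w_0^J$ for each length-$5$ element $y$ forces both length-$5$ elements to cover the unique length-$4$ element and to be covered by the unique length-$6$ element, while the remaining ranks form chains. Thus each poset is the chain of length $10$ with a single diamond inserted at rank $5$, an isomorphism type determined entirely by the rank-size sequence, and therefore $H_3/H_2\cong D_6/D_5$.

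I expect the only delicate point to be the rigidity in this last step: it is essential that there is \emph{exactly one} rank of size $2$ (two separated size-$2$ ranks would not pin down the poset) and that the Chain Property excludes any rank-skipping covering relations. As an independent check I would exhibit the $H_3$ side explicitly, writing $a=s_1,b=s_2,c=s_3$ with $m(a,b)=5$, $m(b,c)=3$, $ac=ca$: one finds $e\vartriangleleft c\vartriangleleft bc\vartriangleleft abc\vartriangleleft babc$, with the diamond given by the two length-$5$ elements $ababc$ and $cbabc$ (both above $babc$ and below $cababc$), continuing up to $w_0^J$. Verifying that none of these words admits a reduced expression ending in $a$ or $b$ (equivalently, their only right descent is $c$) confirms they lie in $W^J$ and reproduces the predicted diagram.
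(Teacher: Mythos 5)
Your argument is correct, but it is a genuinely different route from the paper's: the paper simply enumerates both twelve-element posets by direct computation and displays them side by side (Figure~4), whereas you derive the common rank generating function $[6]_q[10]_q/[5]_q=(1+q^5)[6]_q$ from the degree factorisation $P^J(q)=W(q)/W_J(q)$ and then argue that the rank sequence $1,1,1,1,1,2,1,1,1,1,1$ together with gradedness rigidly determines the poset. Your computations check out (degrees $2,6,10$ over $2,5$ for $(H_3,H_2)$; $2,4,6,6,8,10$ over $2,4,5,6,8$ for $(D_6,D_5)$, with the correct choice of $D_5\subset D_6$), and the rigidity step is sound: property (3) of the background theorem rules out rank-skipping covers, and the Chain Property forces each singleton rank to cover the one below. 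What your approach buys is a conceptual explanation with no element listing, and it makes transparent why the coincidence happens ($[10]_q/[5]_q=1+q^5$ in both quotients); what it costs is reliance on two standard facts not quoted in the paper (the Chevalley--Solomon degree formula and the factorisation $W(q)=W_J(q)W^J(q)$, both available in [BB]). The one point worth tightening is your passage from ``the coefficient of $q^{10}$ is $1$'' to ``unique maximum'': for an abstract graded poset this does not follow, and you need either the standard fact that $W^J$ has Bruhat maximum $w_0w_{0,J}$ for finite $W$, or the directedness of $W^J$ (BB Proposition 2.2.9, already invoked in the paper), to conclude that \emph{both} rank-$5$ elements lie below the unique rank-$6$ element rather than only the one supplied by a chain from $e$ to it. With that reference added the proof is complete.
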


\begin{proof}
Direct computation gives the posets shown in Figure~\ref{fig:h3}.
\end{proof}

\begin{figure}[h]
\begin{center}
\begin{tikzpicture}[scale=0.65]
\begin{scope}
    \node[whitenode] (3) at (0,2) {$3$};
    \node[whitenode] (2) at (0,0) {$2$};
    \node[blacknode] (1) at (0,-2) {$1$};
\end{scope}
\begin{scope}[every edge/.style=graphedge]
    \path (3) edge node[left] {$5$} (2);
    \path (2) edge (1);
\end{scope}
\end{tikzpicture}\hspace{50pt}
\begin{tikzpicture}[scale=0.6]
\begin{scope}
	\node (e) at (0,-3) {\small{$e$}};
	\node (1) at (0,-2) {\small{$1$}};
	\node (21) at (0,-1) {\small{$21$}};
	\node (321) at (0,0) {\small{$321$}};
	\node (2321) at (0,1) {\small{$2321$}};
	\node (12321) at (-1,2) {\small{$12321$}};
	\node (32321) at (1,2) {\small{$32321$}};
	\node (132321) at (0,3) {\small{$132321$}};
	\node (2132321) at (0,4) {\small{$2132321$}};
	\node (32132321) at (0,5) {\small{$32132321$}};
	\node (232132321) at (0,6) {\small{$232132321$}};
	\node (1232132321) at (0,7) {\small{$1232132321$}};
\end{scope}
\begin{scope}[every edge/.style={draw=black}]
	\path (e) edge (1);
	\path (1) edge (21);
	\path (21) edge (321);
	\path (321) edge (2321);
	\path (2321) edge (12321) edge (32321);
	\path (132321) edge (12321) edge (32321);
	\path (132321) edge (2132321);
	\path (2132321) edge (32132321);
	\path (32132321) edge (232132321);
	\path (232132321) edge (1232132321);
\end{scope}
\end{tikzpicture}\hspace{80pt}
\begin{tikzpicture}[scale=0.65]
\begin{scope}
    \node[whitenode] (5) at (0,6) {$5$};
    \node[whitenode] (4) at (0,4) {$4$};
    \node[whitenode] (3) at (0,2) {$3$};
    \node[whitenode] (2) at (0,0) {$2$};
    \node[blacknode] (1) at (0,-2) {$1$};
    \node[whitenode] (6) at (2,4) {$6$};
\end{scope}
\begin{scope}[every edge/.style=graphedge]
    \path (5) edge (4);
    \path (4) edge (3) edge (6);
    \path (3) edge (2);
    \path (2) edge (1);
\end{scope}
\end{tikzpicture}\hspace{30pt}
\begin{tikzpicture}[scale=0.6]
\begin{scope}
	\node (e) at (0,-3) {\small{$e$}};
	\node (1) at (0,-2) {\small{$1$}};
	\node (21) at (0,-1) {\small{$21$}};
	\node (321) at (0,0) {\small{$321$}};
	\node (4321) at (0,1) {\small{$4321$}};
	\node (54321) at (-1,2) {\small{$54321$}};
	\node (64321) at (1,2) {\small{$64321$}};
	\node (564321) at (0,3) {\small{$564321$}};
	\node (4564321) at (0,4) {\small{$4564321$}};
	\node (34564321) at (0,5) {\small{$34564321$}};
	\node (234564321) at (0,6) {\small{$234564321$}};
	\node (1234564321) at (0,7) {\small{$1234564321$}};
\end{scope}
\begin{scope}[every edge/.style={draw=black}]
	\path (e) edge (1);
	\path (1) edge (21);
	\path (21) edge (321);
	\path (321) edge (4321);
	\path (4321) edge (54321) edge (64321);
	\path (564321) edge (54321) edge (64321);
	\path (564321) edge (4564321);
	\path (4564321) edge (34564321);
	\path (34564321) edge (234564321);
	\path (234564321) edge (1234564321);
\end{scope}
\end{tikzpicture}
\end{center}
\caption{Coxeter graph and poset for $(H_3,H_2)$ (left) and $(D_6,D_5)$ (right).}\label{fig:h3}
\end{figure}
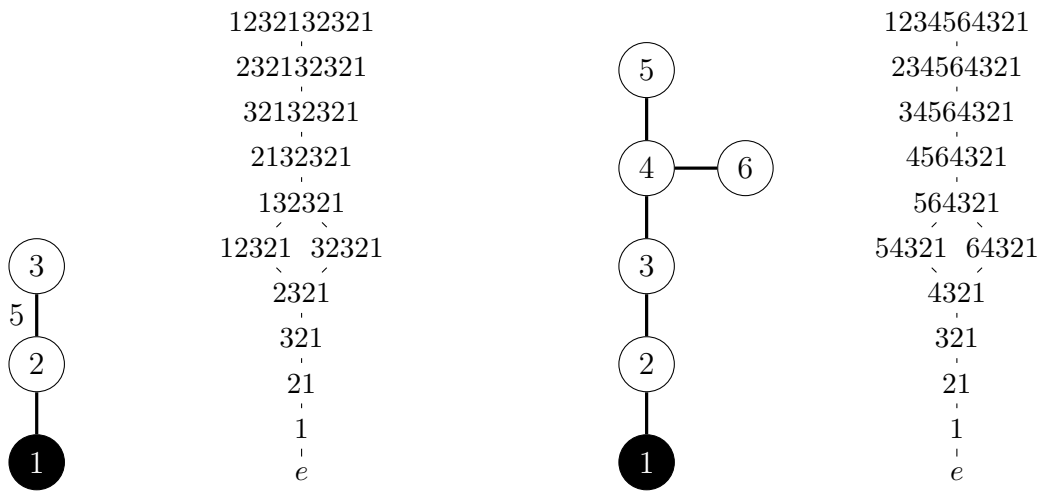

\section{Machinery for the classification}\label{sec:chainlike}

The goal is now to reconstruct the Coxeter graph given only the Bruhat order $\leq$ on $W^J$. If we can achieve this then two Coxeter pairs can only produce isomorphic posets if they are isomorphic as Coxeter pairs (barring the exceptional cases (1)-(6) in Theorem~\ref{thm:one}). There are certain features of the quotient $W^J$ which we can derive entirely using the structure of the poset, the length function $\ell$ being a simple example:

\begin{con}
Denote the unique least element of $W^J$ by $e$ and denote $\ell(e)=0$. Define the length $\ell(w)$ of each element inductively by setting $\ell(w)=n$ if all elements less than $w$ have length less than $n$.
\end{con}

In this section we define and derive the properties of several tools which will help in separating non-isomorphic posets.

\subsection{Chainlike elements}

The main tool we'll use is what we will call \textit{chainlike elements}, which form a subset that closely resembles the structure of the Coxeter graph.

\begin{definition}
Call an element $w\in W^J$ \textit{semi-chainlike} if it covers exactly one element in $W^J$. We denote this element $w'\vartriangleleft w$. Call $w$ \textit{chainlike} if $w$ is semi-chainlike and $w'$ is either chainlike or $e$, inductively. In other words, $w$ is chainlike if and only if there is a unique chain of coverings $e\vartriangleleft\dots\vartriangleleft w$. We denote the set of chainlikes $C(W^J)$.
\end{definition}

\begin{figure}[h]\centering
\begin{center}
\begin{tikzpicture}[scale=0.9]
\begin{scope}
    \node[blacknode] (0) at (0,0.2) {$0$};
    \node[whitenode] (1) at (-1,1) {$1$};
    \node[whitenode] (2) at (-1,2) {$2$};
    \node[whitenode] (3) at (-1,3) {$3$};
    \node[whitenode] (4) at (1,1) {$4$};
    \node[whitenode] (5) at (1,2) {$5$};
    \node[whitenode] (6) at (0.3,3) {$6$};
    \node[whitenode] (7) at (1.7,3) {$7$};
\end{scope}
\begin{scope}[every edge/.style=graphedge]
    \path (0) edge (1) edge (4);
    \path (1) edge (2);
    \path (2) edge (3);
    \path (4) edge (5);
    \path (5) edge (6) edge (7);
\end{scope}
\end{tikzpicture}\hspace{2em}
\begin{tikzpicture}[scale=0.9]
\begin{scope}
    \node (0) at (0,0.2) {$0$};
    \node (1) at (-1,1) {$10$};
    \node (2) at (-1,2) {$210$};
    \node (3) at (-1,3) {$3210$};
    \node (4) at (1,1) {$40$};
    \node (5) at (1,2) {$540$};
    \node (6) at (0.3,3) {$6540$};
    \node (7) at (1.7,3) {$7540$};
\end{scope}
\begin{scope}[every edge/.style={draw=black}]
    \path (0) edge (1) edge (4);
    \path (1) edge (2);
    \path (2) edge (3);
    \path (4) edge (5);
    \path (5) edge (6) edge (7);
\end{scope}
\end{tikzpicture}
\end{center}
\caption{A simply-laced Coxeter quotient (left) and all of its chainlike elements (right).}\label{fig:tree}
\end{figure}
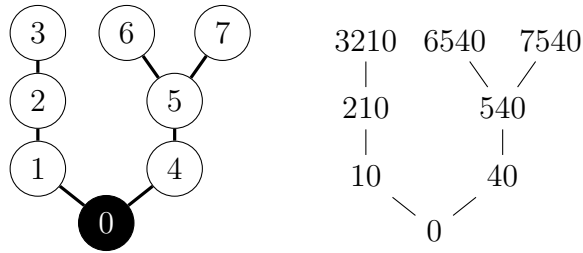

\begin{remark}
The Bruhat graph of $C(W^J)\cup\{e\}$ naturally has a tree structure. Figure~\ref{fig:tree} demonstrates why we are focusing on chainlike elements; in certain special cases (when $|J|=|S|-1$ and the Coxeter graph has no labelled edges or cycles) the graph of $C(W^J)$ has exactly the same shape as the Coxeter graph. This will be proven via the study of simple elements in the next section.
\end{remark}

\begin{prop}\label{prop:unique}
Suppose $w\in W^J$ is semi-chainlike.
\begin{enumerate}[label=(\alph*)]
\item We have $w=sw'$ for some $s\in S$, and every reduced expression for $w$ begins with $s$ (or equivalently, the leftmost generator $s$ cannot be moved with braid-moves).
\item If $w$ is chainlike then it has a unique reduced expression.
\end{enumerate}
\end{prop}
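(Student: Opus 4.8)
The plan is to handle (a) in full and then obtain (b) by a short induction on length, so that essentially all the content sits in (a); and the heart of (a) is a single lemma relating the left descents of $w$ (computed in $W$) to the elements that $w$ covers in $W^J$. Concretely, the lemma I would isolate is: \emph{if $w\in W^J$ and $s\in S$ satisfies $sw<w$, then $sw\in W^J$.} To prove it I would use the characterisation (5) in the form ``$w\in W^J$ iff $w$ has no right descent in $J$'' (equivalently $wt>w$ for every $t\in J$). Assuming $sw\notin W^J$, there is some $t\in J$ with $\ell(swt)=\ell(sw)-1=\ell(w)-2$. Writing $wt=s(swt)$ gives the bound $\ell(wt)\le \ell(swt)+1=\ell(w)-1$, and since $\ell(wt)=\ell(w)\pm1$ this forces $\ell(wt)=\ell(w)-1$, i.e. $wt<w$. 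That contradicts $w\in W^J$ with $t\in J$, proving the lemma.

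For (a) I would then argue by counting covers. Since a semi-chainlike element covers one element it is not $e$, so any reduced expression $w=t_1\cdots t_q$ has first letter $t_1$ a left descent; hence the set of left descents is nonempty, and the map $s\mapsto sw$ is clearly injective on it. By the lemma each such $sw$ lies in $W^J$, and as $sw<w$ with $\ell(w)=\ell(sw)+1$, property (3) (valid in $W^J$ by (6)) makes each $sw$ an element covered by $w$ in $W^J$. Being semi-chainlike, $w$ covers exactly one element, so there is exactly one left descent $s$, the unique covered element is $w'=sw$, and therefore $w=sw'$. Finally, the first letter of every reduced expression is a left descent, and $s$ is the only one, so every reduced expression for $w$ begins with $s$; this is precisely the statement that the leftmost generator cannot be displaced by braid-moves.

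For (b) I would induct on $\ell(w)$, the base case $\ell(w)=1$ being a single generator with an obvious unique reduced expression. For the step, $w$ chainlike gives, by definition, $w'$ chainlike (or $e$), and by (a) we have $w=sw'$ with every reduced expression of $w$ beginning with $s$; note $sw=w'$. The inductive hypothesis supplies a unique reduced expression $\underline{w'}$, and deleting the leading $s$ from any reduced expression of $w$ yields a reduced expression of $sw=w'$ of length $\ell(w')$, which must equal $\underline{w'}$. Hence $s\,\underline{w'}$ is the unique reduced expression for $w$. The main obstacle is the descent lemma above, since it is what reconciles covers computed in $W$ with covers computed in $W^J$; once it is available, the cover-counting in (a) and the induction in (b) are routine, and the only remaining care needed is to keep the ambient-$W$ notions (left descents, reduced words) distinct from the induced order on $W^J$.
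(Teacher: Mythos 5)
Your proof is correct, but it takes a genuinely different route from the paper's. You isolate the descent lemma ``$w\in W^J$ and $sw<w$ imply $sw\in W^J$'' (a standard closure property of minimal coset representatives under left descents), prove it by the clean length computation $wt=s(swt)$, and then finish (a) by counting: every left descent of $w$ yields an element of $W^J$ covered by $w$, the map $s\mapsto sw$ is injective, and semi-chainlikeness caps the count at one, so $w$ has a unique left descent, which is necessarily the first letter of every reduced expression. The paper instead works directly with words: it takes a reduced expression of $w$ containing a reduced expression $s_1\cdots s_n$ of $w'$ as a subword, writes $w=s_1\cdots s_i s s_{i+1}\cdots s_n$, and shows that if $i\geq 1$ then deleting $s_1$ produces a second reduced element of $W^J$ covered by $w$, contradicting semi-chainlikeness; the verification that this deletion stays reduced and in $W^J$ is done by a braid-/nil-move argument. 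Your version buys a reusable, cleanly stated lemma and avoids the slightly delicate ``the same moves could be performed for $w=s_1u$'' step; the paper's version stays entirely within the word-combinatorial toolkit it sets up in Section 2 and does not need the exchange-property reformulation of condition (5) as ``no right descent in $J$.'' Part (b) is essentially identical in both treatments.
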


\begin{proof}
(a) Any reduced expression for $w$ must contain a reduced expression for $w'$, which we denote $w'=s_1\dots s_n$. Then $w=s_1\dots s_iss_{i+1}\dots s_n$ for some $0\leq i\leq n$ and $s\in S$. If $i=0$ then $w=sw'$. Otherwise, let $u=s_2\dots s_iss_{i+1}\dots s_n$. We have $u\in W^J$ and $u$ is reduced, since otherwise there would be a sequence of braid- and nil-moves resulting in $u$ being shorter or ending in an element of $J$, and the same moves could be performed for $w=s_1u$ contradicting $w\in W^J$ reduced. Thus $u\vartriangleleft w$, so for $w$ to be semi-chainlike we require $u=w'$ and so $w=s_1u=s_1w'<w'$, a contradiction. Thus the only possibility is $w=sw'$, and every reduced expression for $w$ must be of this form.

(b) All $w\in W^J$ with $\ell(w)=1$ are chainlike and have a unique reduced expression. The result then follows from (a) and induction on $\ell(w)$.
\end{proof}

\subsection{Simple elements}

First we characterise chainlike elements in simply-laced Coxeter groups and outline a construction which takes the Bruhat poset and determines the Coxeter graph. This will be the foundation for the construction for general Coxeter groups, so note that it is \textit{not} assumed that $W$ is simply-laced throughout this section.

\begin{definition}
We call an element $w\in W^J$ \textit{simple} if it has an expression of the form $w=s_ks_{k-1}\dots s_1s_0$ with $k\geq0$ satisfying the following conditions:
\begin{itemize}
	\item $s_0,\dots,s_k\in S$ are all distinct;
	\item $s_0\in S\setminus J$ and $s_1,\dots,s_k\in J$;
	\item $m(s_i,s_j)\geq3$ if and only if $|i-j|=1$ for all $i,j\in\{0,\dots,k\}$.
\end{itemize}
By the Word Property this is the unique reduced expression for $w$.
\end{definition}

\begin{theorem}\label{thm:simple}
We have:
\begin{enumerate}[label=(\alph*)]
	\item If $w\in W^J$ is simple, then $w$ is chainlike.
	\item If $W$ is simply-laced then $w\in W^J$ is simple if and only if $w$ is chainlike.
\end{enumerate}
\end{theorem}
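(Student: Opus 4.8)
The plan is to prove both parts by induction on $\ell(w)$, reducing everything to an analysis of which single-generator deletions of a reduced expression remain in $W^J$. The key preliminary observation is that the reduced expression $s_ks_{k-1}\dots s_0$ of a simple element is \emph{rigid}: any two word-adjacent letters $s_{i+1},s_i$ are path-neighbours, so $m(s_{i+1},s_i)\ge3$ and they do not commute, while each generator occurs exactly once, so no braid- or nil-move is ever available. Hence this is literally the unique reduced expression, and by the subword and chain properties in $W^J$ the elements covered by $w$ are exactly those words $s_k\dots\widehat{s_i}\dots s_0$ (one letter deleted) that remain reduced and lie in $W^J$.

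For part (a) I would induct on $k$, the base case $w=s_0$ being immediate. For the step I examine the $k+1$ candidate deletions of $w=s_ks_{k-1}\dots s_0$. Deleting $s_0$ yields $s_k\dots s_1$, a non-identity element of the parabolic subgroup $W_J$, which therefore does not lie in $W^J$. Deleting an interior $s_i$ with $1\le i\le k-1$ frees $s_{i+1}$, which commutes with all of $s_{i-1},\dots,s_0$ (their indices differ from $i+1$ by at least $2$) and so slides to the right-hand end; the resulting reduced expression ends in $s_{i+1}\in J$, so this element also fails to lie in $W^J$. Only the deletion of $s_k$ survives, giving $s_{k-1}\dots s_0$, which is again simple and of smaller length, hence chainlike by induction. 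Thus $w$ covers exactly this one element and is itself chainlike.

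For part (b) the forward implication is part (a), so I treat the converse for simply-laced $W$ by induction on $\ell(w)$. By Proposition~\ref{prop:unique} a chainlike $w$ has a unique reduced expression and satisfies $w=sw'$ with $w'$ chainlike; by induction $w'$ is simple, say $w'=s_k\dots s_0$, and $w=ss_k\dots s_0$ is the unique reduced expression. It remains to locate $s$ relative to the path $s_0-\dots-s_k$. First, $s\ne s_j$ for every $j$: otherwise commuting the leftmost $s=s_j$ rightwards to its nearest non-commuting neighbour produces either a nil-move (if $j=k$) or a braid triple $s_js_{j+1}s_j$, and the braid-move then gives a second reduced expression, contradicting uniqueness. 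Next, $m(s,s_k)=3$, since if $m(s,s_k)=2$ then $s$ commutes past $s_k$ and again yields a second reduced expression. Finally I invoke the semi-chainlike hypothesis, that $w$ covers only $w'$, to force $s$ to be adjacent to no other $s_j$ and to satisfy $s\in J$: in either forbidden situation the deletion of $s_k$ produces a \emph{second}, distinct cover in $W^J$. Indeed, if $s\notin J$ then (with $s$ adjacent only to $s_k$) after deleting $s_k$ the letter $s$ commutes to the far right and the expression ends in $s\notin J$, so the element lies in $W^J$; while if $s$ is also adjacent to some $s_b$ with $b<k$ then after deleting $s_k$ the letter $s$ is trapped against $s_b$ and the rightmost letter remains $s_0\notin J$, again landing in $W^J$. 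Each such element contains $s$ rather than $s_k$, so it differs from $w'$, contradicting semi-chainlikeness. Hence $\{j:m(s,s_j)=3\}=\{k\}$ and $s\in J$, and relabelling $s=s_{k+1}$ exhibits $w$ as simple.

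The main obstacle is the last step: certifying that the candidate second cover obtained by deleting $s_k$ genuinely lies in $W^J$, that is, that \emph{no} reduced expression of it ends in a generator of $J$. This is not automatic for a subword of a $W^J$-element and requires the commutation bookkeeping showing that, in these near-linear words, only $s_0$ (and, when $s\notin J$, the letter $s$) can ever reach the rightmost position. One must also confirm that the surviving deletion is reduced of length $\ell(w)-1$ and distinct from $w'$, so that it is really a new cover. Both facts follow from the rigidity of the words established at the outset, but they are the points that need to be spelled out with care.
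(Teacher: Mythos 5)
Your proposal is correct and follows essentially the same route as the paper's proof: induction on length, using the uniqueness/rigidity of the reduced expression of a simple element to control single-letter deletions for (a), and for (b) using Proposition~\ref{prop:unique} together with the semi-chainlike condition to rule out, in turn, $s=s_j$ (via a braid move contradicting uniqueness), $s\notin J$, and $s$ adjacent to some $s_b$ with $b<k$ (each of the latter two by exhibiting the deletion of $s_k$ as a second cover in $W^J$). The commutation bookkeeping you flag as the delicate point is exactly the argument the paper gives, so there is no gap.
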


\begin{proof}
All elements in $S\setminus J$ are chainlike elements in $W^J$. For $\ell(w)=2$ we see that chainlike elements must be of the form $s_1s_0$ with $s_0\in S\setminus J$, $s_1\in J$ and $s_1s_0\neq s_0s_1$. We cannot have $s_1\not\in J$ since we would have $s_0,s_1\vartriangleleft s_1s_0$ and $s_1s_0$ not semi-chainlike. With these base cases complete we now continue by induction on $\ell(w)$.

For (a), let's suppose $w=s_k\dots s_0$ is simple and show that $w$ is chainlike. Let $u\vartriangleleft w$ be obtained by removing a generator $s_i$. If $s_i\neq s_k$ then $s_{i+1}$ commutes with every generator to the right of it in $u$, hence a reduced word for $u$ is $s_k\dots s_{i+2}s_{i-1}\dots s_0s_{i+1}$ which ends in a generator in $J$ and so $u\not\in W^J$. If instead $s_i=s_k$ then $u=s_{k-1}\dots s_0$ is chainlike by induction, so $w$ covers only one element $w'=u$ and $w$ is chainlike.

Now let $W$ be simply-laced. For (b) suppose $w$ is chainlike so that $w'=s_k\dots s_0\in W^J$ is simple by induction with $k\geq1$. Proposition~\ref{prop:unique} gives $w=sw'$ with $m(s,s_k)\geq3$.
\begin{itemize}
\item Suppose $s<w'$, that is $s=s_i$ for some $i$. Since $m(s,s_k)\geq3$ we must have $s=s_{k-1}$, so $w=s_{k-1}w'=s_{k-1}s_ks_{k-1}\dots s_0$. Since $W$ is simply-laced we have the braid-move $s_{k-1}s_ks_{k-1}\to s_ks_{k-1}s_k$, violating Proposition~\ref{prop:unique}.
\item Suppose $s\not<w'$, and either $s\not\in J$ or $m(s_i,s)\geq3$ for some $i\in\{0,\dots,k-1\}$. Then removing $s_k$ from $w$ gives a reduced expression for $sw''$, which is in $W^J$ as either $s\not\in J$ or $s$ cannot be moved past $s_i$ with braid-moves. But then $sw''\vartriangleleft w$ with $w'\neq sw''$, so $w$ is not semi-chainlike. 
\item The only case left is $s\not<w'$, $s\in J$, and $m(s_i,s)\geq3\iff s_i=s_k$ for all $s_i<w'$. Then $w$ is simple.
\end{itemize}
This covers all cases, so we have shown inductively that any chainlike $w$ is simple.
\end{proof}

Intuitively, each simple element with length greater than 1 corresponds to a path starting at a black node and travelling through a sequence of white nodes such that no selection of nodes in the path forms a cycle. This means that when there are cycles or multiple black nodes there may be multiple paths to each white node. To account for this we attempt to collect simple elements into equivalence classes based on their leftmost generator, as exemplified in Figure~\ref{fig:equivalence}. The next definition achieves this.

\begin{definition}
For $u,v\in W^J$, we denote
\[L(u,v)=\min\{\ell(w)\mid w\in W^J,\,u\leq w,\,v\leq w\}\]
which is well defined since $W^J$ is a directed poset (see Proposition 2.2.9 in \cite{BB}). We then define the relation $\sim$ on $C(W^J)$ by
\[u\sim v\ \iff\ L(u,v)=L(u',v)=L(u,v').\]
\end{definition}

\begin{figure}[h]\centering
\begin{center}
\begin{tikzpicture}[scale=0.8]
\begin{scope}
	\node[blacknode] (0) at (0,0) {$0$};
	\node[blacknode] (1) at (-1.73,1) {$1$};
	\node[whitenode] (2) at (-1.73,-1) {$2$};
	\node[whitenode] (3) at (-3.73,-1) {$3$};
	\node[whitenode] (4) at (-3.73,1) {$4$};
	\node[whitenode] (5) at (-5.46,0) {$5$};
	\node[whitenode] (6) at (2,0) {$6$};
	\node[whitenode] (7) at (3.73,-1) {$7$};
	\node[whitenode] (8) at (3.73,1) {$8$};
	\node[whitenode] (9) at (5.46,0) {$9$};
\end{scope}
\begin{scope}[every edge/.style=graphedge]
	\path (0) edge (1) edge (2) edge (6);
	\path (1) edge (2) edge (4);
	\path (3) edge (2) edge (4);
	\path (5) edge (3) edge (4);
	\path (7) edge (6) edge (8) edge (9);
	\path (8) edge (6) edge (9);
\end{scope}
\end{tikzpicture}\vspace{2em}

\begin{tikzpicture}[scale=1.1]
\def\r{0.8}
\begin{scope}
	\draw (0,0) circle (\r);
	\draw (-1.73,1) circle (\r);
	\draw (-1.73,-1) circle (\r);
	\draw (-3.73,-1) circle (\r);
	\draw (-3.73,1) circle (\r);
	\draw (-5.46,0) circle (\r);
	\draw (2,0) circle (\r);
	\draw (3.73,-1) circle (\r);
	\draw (3.73,1) circle (\r);
	\draw (5.46,0) circle (\r);
\end{scope}
\begin{scope}
	\node (0) at (0,0) {$0$};
	\node (1) at (-1.73,1) {$1$};
	\node (20) at (-1.73+\r*0.35,-1+\r*0.35) {$20$};
	\node (21) at (-1.73-\r*0.35,-1-\r*0.35) {$21$};
	\node (321) at (-3.73+\r*0.1,-1-\r*0.5) {$321$};
	\node (320) at (-3.73-\r*0.3,-1+\r*0.1) {$320$};
	\node (341) at (-3.73+\r*0.25,-1+\r*0.5) {$341$};
	\node (41) at (-3.73+\r*0.4,1+\r*0.4) {$41$};
	\node (4320) at (-3.73-\r*0.2,1-\r*0.4) {$4320$};
	\node (541) at (-5.46-\r*0.2,\r*0.5) {$541$};
	\node (5320) at (-5.46+\r*0.4,0) {$5320$};
	\node (5321) at (-5.46-\r*0.2,-\r*0.5) {$5321$};
	\node (60) at (2,0) {$60$};
	\node (760) at (3.73,-1) {$760$};
	\node (860) at (3.73,1) {$860$};
	\node (9760) at (5.46,-\r*0.4) {$9760$};
	\node (9860) at (5.46,\r*0.4) {$9860$};
\end{scope}
\begin{scope}[every edge/.style={draw=black}]
	\path (0) edge (20) edge (60);
	\path (1) edge (21);
	\path (321) edge (21) edge (5321);
	\path (320) edge (20) edge (4320) edge (5320);
	\path (41) edge (1) edge (341) edge (541);
	\path (860) edge (60) edge (9860);
	\path (760) edge (60) edge (9760);
\end{scope}
\end{tikzpicture}
\end{center}
\caption{A simply-laced Coxeter pair (top), and all of its chainlike elements collected into equivalence classes by their leftmost generator (bottom). Edges between chainlike elements denote covering relations. Note that the edges 1-0 and 7-8 are not represented in the covering relations; these cases are considered in step 2 (b) of Theorem~\ref{thm:construction}.}\label{fig:equivalence}
\end{figure}
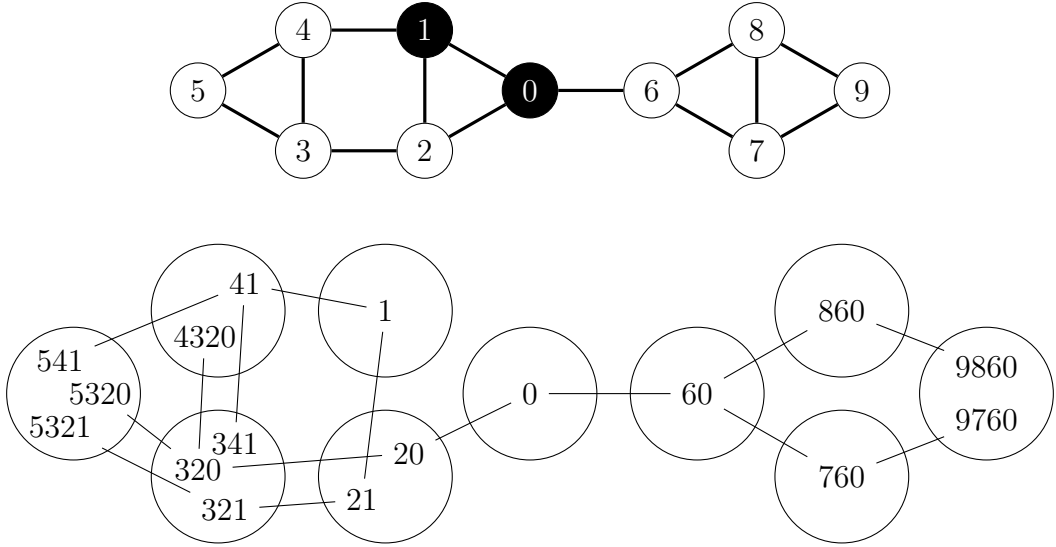

\begin{prop}
Let $u=s_1\dots s_n$ and $v=t_1\dots t_k$ be the unique reduced expressions for $u,v\in C(W^J)$. Suppose $\ul r=r_1\dots r_m$ is a (possibly degenerate) maximal-length subword of both $u$ and $v$, that is $m\geq0$ is maximal with $r_1,\dots,r_m\in S$ satisfying
\begin{alignat*}{9}
	r_1&={}&s_{i_1},\ r_2&={}&s_{i_2},\ &\dots{}&,\ r_m&={}&s_{i_m}\text{ with }1&\leq{}&i_1&<{}&i_2<&\dots{}&<i_m&\leq{}&n;\\
	r_1&={}&t_{j_1},\ r_2&={}&t_{j_2},\ &\dots{}&,\ r_m&={}&t_{j_m}\text{ with }1&\leq{}&j_1&<{}&j_2<&\dots{}&<j_m&\leq{}&k.
\end{alignat*}
Then we have $L(u,v)=\ell(u)+\ell(v)-\ell(\ul r)=\ell(u)+\ell(v)-m$.
\end{prop}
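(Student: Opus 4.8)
The statement is exactly the assertion that $L(u,v)$, the minimal Bruhat-length of a common upper bound of $u$ and $v$, equals the length of a shortest common supersequence of the two \emph{unique} reduced words $\ul u=s_1\cdots s_n$ and $\ul v=t_1\cdots t_k$, which by the classical formula is $\ell(u)+\ell(v)-m$ with $m$ the length of a longest common subword. The plan is to prove the two inequalities separately, the lower bound being elementary and the upper bound requiring an induction.

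For the lower bound $L(u,v)\ge\ell(u)+\ell(v)-m$, I would fix any $w\in W^J$ with $u\le w$ and $v\le w$, and fix \emph{one} reduced expression $\ul w$ of $w$. By the Subword Property each of $\ul u,\ul v$ --- which are the unique reduced expressions of $u,v$ by Proposition~\ref{prop:unique} --- embeds as a subword of this single $\ul w$, at index sets $A$ and $B$. At every index of $A\cap B$ the corresponding letter of $\ul w$ is simultaneously some $s_i$ and some $t_j$, so reading $A\cap B$ from left to right produces a word that is a common subword of $\ul u$ and $\ul v$; hence $|A\cap B|\le m$ and $\ell(w)=|\ul w|\ge|A\cup B|=\ell(u)+\ell(v)-|A\cap B|\ge\ell(u)+\ell(v)-m$. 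Minimising over $w$ gives the bound. The essential point making this clean is that the Subword Property lets me realise \emph{both} embeddings inside the same reduced word of $w$.

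For the upper bound I would induct on $\ell(u)+\ell(v)$, building a reduced common upper bound in $W$ of length exactly $\ell(u)+\ell(v)-m$ and then pushing it into $W^J$ by the minimal-coset-representative projection $\pi^J$. Write $u=s_1u'$ and $v=t_1v'$, where $u',v'$ are the covered elements (chainlike or trivial). The longest-common-subword recursion gives $m=1+\mathrm{LCS}(u',v')$ when $s_1=t_1$, and $m=\max(\mathrm{LCS}(u',v),\mathrm{LCS}(u,v'))$ when $s_1\neq t_1$; I apply the inductive hypothesis to $(u',v')$, respectively to whichever of $(u',v),(u,v')$ attains the maximum, producing $w'\in W$ lying above the two smaller elements with $\ell(w')=\ell(u)+\ell(v)-m-1$, and I set $w:=s_1w'$ (resp.\ $t_1w'$). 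Once $w$ is known to be reduced, the Subword Property on the reduced word $s_1\ul{w'}$ yields $u\le w$ and $v\le w$, and the length is as required; then $\pi^J(w)\in W^J$ satisfies $u=\pi^J(u)\le\pi^J(w)$, $v\le\pi^J(w)$ and $\ell(\pi^J(w))\le\ell(w)$ because $\pi^J$ is order-preserving, fixes $W^J$, and never increases length \cite{BB}, giving $L(u,v)\le\ell(u)+\ell(v)-m$.

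The single real obstacle is therefore the reducedness of the prepended word $w=s_1w'$. Suppose it failed, i.e.\ $s_1w'<w'$; the case $u'=w'$ is impossible since $u=s_1u'>u'$, so $u'<w'$. Then as $s_1u'=u>u'$ while $u'\le w'$, the Lifting Property \cite{BB} forces $u=s_1u'\le w'$, and the same reasoning (or the fact that $v\le w'$ already in the $s_1\ne t_1$ branch) gives $v\le w'$. Thus $w'$ would be a common upper bound whose projection $\pi^J(w')\in W^J$ has length $\le\ell(w')=\ell(u)+\ell(v)-m-1$, contradicting the lower bound proved above. Hence $w$ is reduced, closing the induction. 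The key device is precisely this: the lower bound is established first and then fed back through $\pi^J$ to certify reducedness at each prepend step, so no separate combinatorial analysis of cancellations in the shuffle is needed.
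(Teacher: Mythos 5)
Your proof is correct, but the existence half takes a genuinely different route from the paper. The paper writes down the full set $S^*(u,v)$ of shuffles of $\ul u$ and $\ul v$ along a maximal common subword --- i.e.\ all shortest common supersequences --- and verifies directly that this set admits no nil-moves and is closed under braid-moves, so that each such word is a reduced expression of an element of $W^J$ of length $\ell(u)+\ell(v)-m$; the lower bound is then implicit in the remark that these are \emph{all} the minimal-length words containing $\ul u$ and $\ul v$ as subwords (your $|A\cap B|\le m$ count makes that step explicit, which is a welcome addition). You instead construct a single witness by induction on $\ell(u)+\ell(v)$ via the longest-common-subword recursion, certify reducedness of each prepended letter by playing the Lifting Property off against the already-established lower bound, and finish with the order-preserving, length-non-increasing projection $\pi^J$; this avoids the case analysis of braid-move closure entirely and is arguably cleaner. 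What it gives up is the explicit description of \emph{all} minimal common upper bounds, which the paper effectively reuses later when computing the sets $X(u,x)$ in Proposition~\ref{prop:detector} by the same shuffle analysis. Two minor points to tidy: extend the statement and induction to allow $u'$ or $v'$ to equal $e$ (where the formula is trivial), and in the non-reduced case spell out that $u'=w'$ and $v'=w'$ are both excluded by length considerations before invoking the Lifting Property, as you do for $u'$.
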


\begin{proof}
Denote $i_0=j_0=0$, $i_{m+1}=n+1$, $j_{m+1}=k+1$. Let $S^*(u,v)$ be the set of words of the form $\ul x_0r_1\ul x_1r_2\dots r_k\ul x_k$ where $\ul r=r_1\dots r_m=s_{i_1}\dots s_{i_n}=t_{j_1}\dots t_{j_n}$ is any maximal-length subword of both $u$ and $v$, and each $\ul x_p$ is a permutation of the elements $s_i$ for $i_p<i<i_{p+1}$ and $t_j$ for $j_p<j<j_{p+1}$ such that the order of the $s_i$ terms is preserved and the order of the $t_j$ terms is also preserved. Since $m$ is maximal, these are all the words of minimal length containing both $u$ and $v$ as subwords, and their length is $\ell(\ul w)=\ell(u)+\ell(v)-m$. These words also contain no pairs $s_it_j$ or $t_js_i$ with $s_i=t_j$, and every $\ul w\in S^*(u,v)$ ends with either $s_n$ or $t_k$ which are both in $S\setminus J$. To show these are reduced expressions of elements of $W^J$ it then suffices show $S^*(u,v)$ is closed under the braid-moves of $(W,S)$.

We can see that any braid-moves we can apply to $\ul w\in S^*(u,v)$ cannot include an $s$ and $t$ from the same $\ul x_p$ term; if they did we would either have a nil-move in $u$ or $v$, or $s_i=t_j$ for some $i,j$ in the same $\ul x_p$ term contradicting maximality of $m$. Suppose now that we can apply a braid-move to a sequence containing the consecutive terms $t_{j_p-2}t_{j_p-1}r_p s_{i_p+1}$. This means $r_ps_{i_p+1}=t_{j_p-2}t_{j_p-1}$, so $r_1\dots r_p s_{i_p+1}r_{p+1}\dots r_m$ is a subword of both $u$ and $v$, contradicting maximality of $m$. The same holds if there are 2 terms to the right of $r_p$ or if $s$ and $t$ are swapped in the expression above.

Now suppose we have $t_{j_p-1}=s_{i_p+1}$ and can apply a length 3 braid-move to $t_{j_p-1}r_p s_{i_p+1}$ to obtain $\ul w^\dagger$. Then set $\ul r^\dagger$ to be equal to $\ul r$ but with $r_p$ replaced with $t_{j_p-1}=s_{i_p+1}$, and we see $\ul r^\dagger$ is also a maximal-length subword of $u,v$ and $\ul w^\dagger\in S^*(u,v)$. The same holds for $s_{i_p-1}r_p t_{j_p+1}$, and so $S^*(u,v)$ is closed under braid-moves.
\end{proof}

\begin{prop}\label{prop:link}
Suppose $u,v\in C(W^J)$ and $s,t\in S$ with $u=su'$ and $v=tv'$.
\begin{enumerate}[label=(\alph*)]
	\item $u\sim v$ implies $s=t$.
	\item If $u,v$ are simple, then $u\sim v$ if and only if $s=t$.
\end{enumerate}
\end{prop}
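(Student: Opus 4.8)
The plan is to reinterpret $L$ and the relation $\sim$ entirely in terms of longest common subwords. Writing the unique reduced expressions (Proposition~\ref{prop:unique}) as $u=s_1\cdots s_n$ and $v=t_1\cdots t_k$, we have $s=s_1$ and $t=t_1$, and the covered elements have unique reduced expressions $u'=s_2\cdots s_n$ and $v'=t_2\cdots t_k$ (read as the empty word when $n=1$ or $k=1$). Let $m$ be the length of a maximal common subword of $u$ and $v$ — i.e. a longest common subsequence of the two strings — and define $m',m''$ the same way for the pairs $(u',v)$ and $(u,v')$. The preceding proposition (with $L(e,w)=\ell(w)$ covering the degenerate cases) gives $L(u,v)=n+k-m$, $L(u',v)=(n-1)+k-m'$ and $L(u,v')=n+(k-1)-m''$, and substituting into the definition of $\sim$ immediately yields $u\sim v\iff m'=m-1$ and $m''=m-1$.

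I would then record two elementary facts about longest common subwords. First, deleting a single letter from a string changes the longest-common-subword length by at most one, so $m',m''\in\{m-1,m\}$. Second, a common subword of $u$ and $v$ avoids position $1$ of $u$ (the letter $s$) exactly when it is a common subword of $u'$ and $v$; hence $m'=m-1$ holds iff \emph{every} maximal common subword of $u,v$ uses position $1$ of $u$, which — since matched positions strictly increase — forces its first matched letter $r_1$ to equal $s$. Symmetrically, $m''=m-1$ iff every maximal common subword uses position $1$ of $v$, forcing $r_1=t$.

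For part (a), if $u\sim v$ then $m'=m-1\ge 0$, so $m\ge 1$ and a maximal common subword $\ul r=r_1\cdots r_m$ exists; the two facts above give $r_1=s$ and $r_1=t$, whence $s=t$. For part (b) the forward direction is exactly (a), so I only need the converse: assume $u,v$ are simple with $s=t$. A simple element's reduced expression consists of \emph{distinct} generators, so the letter $s=t$ occurs exactly once in each of $u$ and $v$, namely in the leftmost position. Therefore any common subword not using $s$ lies entirely in positions $\ge 2$ of both words, and prepending $s$ produces a strictly longer common subword; consequently every maximal common subword must use $s$, i.e. position $1$ of both strings. By the characterisations in the previous paragraph this gives $m'=m-1$ and $m''=m-1$, so $u\sim v$.

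The main obstacle — and the reason part (b) assumes simplicity — is this ``prepend $s$'' step, which requires that $s$ occur only once, at the front, in each of $u$ and $v$. For a general chainlike element the unique reduced expression may repeat generators (as in the $H_3/H_2$ poset computed earlier), so $s$ could recur later in the word; a maximal common subword could then match that later occurrence while skipping position $1$, giving $m'=m$ and hence $u\not\sim v$ even when $s=t$. Part (a), by contrast, never uses distinctness of generators and so holds for all chainlike $u,v$.
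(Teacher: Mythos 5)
Your proof is correct and follows essentially the same route as the paper's: both translate $L$ into longest-common-subword lengths via the preceding proposition, show in (a) that $u\sim v$ forces every maximal common subword to begin at position $1$ of both words (so $r_1=s=t$), and in (b) use distinctness of the generators of a simple element to run the ``prepend $s$'' argument showing $L(u',v)=L(u,v')=L(u,v)$. Your explicit bookkeeping of $m',m''\in\{m-1,m\}$ and the remark on why (b) fails for non-simple chainlikes are just a cleaner packaging of the same idea.
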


\begin{proof}
(a) Construct a maximal length subword $\ul r=r_1\dots r_m$ of $u$ and $v$, and suppose $L(u,v)=L(u',v)=L(u,v')$. Assume $r_1\neq s$, so that $\ul r$ is a subword of $u'$ also. But then
\[L(u',v)\leq \ell(u')+\ell(v)-m=\ell(u)-1+\ell(v)-m=L(u,v)-1\]
and we have a contradiction. Thus $r_1=s$, and similarly $r_1=t$, so $t=s$.

(b) Suppose $s=t$ and construct a maximal length subword $\ul r=r_1\dots r_m$ of $u$ and $v$. If $r_1\neq s$ then $s\ul r$ is a longer subword of $u$ and $v$, a contradiction, so $r_1=s=t$. But then $r_2\dots r_m$ is a maximal length subword of $u'$ and $v$; if there were a longer subword $\ul r^\dagger$, then $s\ul r^\dagger$ would be longer than $\ul r$ and a subword of $u$ and $v$, a contradiction. Thus
\[L(u',v)=\ell(u')+\ell(v)-(m-1)=\ell(u)+\ell(v)-m=L(u,v)\]
By the same reasoning we also have $L(u,v')=L(u,v)$, so $u\sim v$.
\end{proof}

Thus $\sim$ is an equivalence relation on simple elements, which comprise $C(W^J)$ if $W$ is simply-laced. We can then reconstruct the Coxeter graph by assigning a node to each equivalence class and drawing edges wherever there are coverings. First, we make a general statement that applies to non-simply-laced groups as well:

\begin{lemma}\label{lem:leftmost}
If $s_n,\dots,s_0$ satisfies $s_n,\dots,s_1\in J$, $s_0\in S\setminus J$ and $s_i,s_{i+1}$ are connected by an edge for $0\leq i<n$, then there is a simple chainlike in $C(W^J)$ starting with $s_n$ and ending with $s_0$. In particular, if $s\in S$ is in a connected component of the graph of $(W,W_J)$ that contains at least one element of $S\setminus J$, then $s$ is the leftmost generator in the unique reduced expression of some simple chainlike in $C(W^J)$.
\end{lemma}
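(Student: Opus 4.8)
The plan is to realise the required element as a shortest walk of a suitable type in the Coxeter graph. Call a walk $v_\ell,v_{\ell-1},\dots,v_0$ in the Coxeter graph of $(W,S)$ \emph{admissible} if $v_\ell,\dots,v_1\in J$, $v_0\in S\setminus J$, and consecutive vertices are joined by an edge (so $m(v_{i+1},v_i)\geq3$ for $0\leq i<\ell$). The hypothesis hands us one admissible walk from $s_n$ to $s_0$, namely $s_n,s_{n-1},\dots,s_0$ itself, so the collection of admissible walks from $s_n$ to $s_0$ is non-empty (when $n=0$ the walk is the single black vertex $s_0$, which is already simple with $k=0$, so assume $n\geq1$). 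First I would fix a \emph{shortest} admissible walk $P=(v_\ell,\dots,v_0)$ with $v_\ell=s_n$ and $v_0=s_0$.

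The crux is to show that such a shortest $P$ is automatically a chordless path, for then the word $v_\ell v_{\ell-1}\dots v_0$ satisfies exactly the defining conditions of a simple element. I would run a single shortcut argument twice. If some vertex of $P$ repeated, deleting the segment between its two occurrences gives a strictly shorter walk whose vertex set is contained in that of $P$; since the only vertex lying in $S\setminus J$ is the terminal $v_0$, and it is not removed from the end, the shortened walk is still admissible, contradicting minimality. Hence $P$ has distinct vertices. Likewise, if there were a chord $v_iv_j$ with $i>j+1$, replacing $v_i,\dots,v_j$ by the single edge $v_iv_j$ yields a strictly shorter walk in which every surviving vertex other than $v_0$ still has index $\geq1$ and therefore lies in $J$; this walk is again admissible, again contradicting minimality. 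Consequently, for $i\neq j$ we get $m(v_i,v_j)\geq3$ iff $|i-j|=1$, the $v_i$ are distinct, $v_0\in S\setminus J$, and $v_1,\dots,v_\ell\in J$. These are precisely the conditions for $v_\ell\dots v_0$ to be simple, so by Theorem~\ref{thm:simple}(a) it is chainlike, and by construction it begins with $s_n$ and ends with $s_0$.

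The step requiring the most care is the admissibility bookkeeping in the shortcut argument: I must verify that no shortcut ever deletes the unique black vertex $s_0$ from the terminal position nor pushes a black vertex into the interior. This is exactly where the hypothesis that $s_0\in S\setminus J$ while $s_1,\dots,s_n\in J$ is used, since it forces $s_0$ to differ from every other $s_i$ and hence to be the only non-$J$ generator occurring in $P$. For the ``in particular'' claim, if $s\in S\setminus J$ then $s$ itself is a simple chainlike ($k=0$) with leftmost generator $s$; and if $s\in J$, connectedness of its component to some $b\in S\setminus J$ supplies a path from $s$ to $b$, which, truncated at the first black vertex it meets, is an admissible walk $s=s_n,\dots,s_1,s_0$ with $s_0\in S\setminus J$ and $s_1,\dots,s_n\in J$. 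Applying the first part to this walk produces a simple chainlike whose leftmost generator is $s$, as required.
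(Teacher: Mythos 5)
Your proof is correct and follows essentially the same route as the paper: take a minimal-length admissible sequence, use a shortcut argument to rule out chords (and repeated vertices), conclude the word is simple and hence chainlike by Theorem~\ref{thm:simple}(a), and obtain the ``in particular'' statement by truncating a path at the nearest black node. Your version is slightly more explicit about the distinctness of vertices and the admissibility bookkeeping, but the substance is identical.
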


\begin{proof}
Take $s_n,\dots,s_0\in S,n\geq0$ to be of minimal length among all sequences starting with $s_n$ and ending with $s_0$ satisfying the above conditions. If $m(s_i,s_j)=2$ for some $i<j$ with $|i-j|\geq2$, then $s_n,\dots,s_j,s_i,\dots,s_0$ is a shorter sequence, a contradiction. Hence $s_n\dots s_0$ is simple and thus chainlike. Then if $s$ is in a connected component of the graph containing a black node, take $s_n=s$ and $s_0$ a closest black node to $s$.
\end{proof}

\begin{theorem}\label{thm:construction}
If $W$ is simply-laced and irreducible and $J\neq S$, then the following construction on $W^J$ produces the bw-Coxeter graph of the pair $(W,W_J)$:
\begin{enumerate}
	\item Draw a node for every equivalence class of $\sim$, colouring any nodes corresponding to length 1 elements black.
	\item For each pair of distinct equivalence classes $U,V\subseteq C(W^J)$, connect the corresponding nodes with an edge if there are $u\in U,v\in V$ with either:
	\begin{itemize}
		\item[(a)] $u=v'$ or $v=u'$, or
		\item[(b)] $u'=v'$ and there are exactly 2 elements in $W^J$ that cover both $u$ and $v$.
	\end{itemize}
\end{enumerate}
\end{theorem}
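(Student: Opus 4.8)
The plan is to show that the construction recovers three pieces of data: the generating set $S$ as the nodes, the partition $S=J\sqcup(S\setminus J)$ as the colouring, and the relation $m(\cdot,\cdot)=3$ as the edge set. Since $W$ is simply-laced, Theorem~\ref{thm:simple}(b) identifies $C(W^J)$ with the set of simple elements, and for a simple element $u=s_k\cdots s_0$ the proof of Theorem~\ref{thm:simple}(a) gives $u'=s_{k-1}\cdots s_0$. By Proposition~\ref{prop:link}(b) the classes of $\sim$ are exactly the fibres of the ``leftmost generator'' map on simple chainlikes, and Lemma~\ref{lem:leftmost}, together with irreducibility of $W$ and $J\neq S$, guarantees that every $s\in S$ occurs as such a leftmost generator; so step~1 draws one node per generator. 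A class is a singleton length-$1$ element $\{s\}$ precisely when $s\in S\setminus J$ (a simple chainlike of length $\geq2$ has leftmost generator in $J$), so the length-$1$ colouring rule reproduces exactly the black nodes $S\setminus J$.

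It remains to prove that the two nodes are joined if and only if $m(a,b)=3$. Consider step~2(a) first: it fires for the classes of $a,b$ exactly when some simple chainlike has $a,b$ as its two leftmost generators in some order, because $v=u'$ forces $u$ to begin $ab\cdots$ and $u=v'$ forces $v$ to begin $ba\cdots$. Since consecutive generators of a simple element are adjacent, whenever 2(a) fires we have $m(a,b)=3$; in particular 2(a) never fires for a non-adjacent pair. Conversely, if $m(a,b)=3$ with exactly one of $a,b$ black, say $a\in S\setminus J$ and $b\in J$, then $ba$ is simple and hence a chainlike in the class of $b$ with $(ba)'=a$, so 2(a) fires.

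For step~2(b) I would fix $u=aw$ and $v=bw$ with $u'=v'=w=s_{k-1}\cdots s_0$ and enumerate the $x\in W^J$ covering both. Any such $x$ has length $\ell(w)+2$ and support $\{a,b\}\cup\mathrm{supp}(w)$, so its reduced word uses each of these generators once; as these letters are distinct, no braid- or nil-move applies and $x$ has a unique reduced word (Word Property). Since $x$ covers $aw$, this word is obtained by inserting the single letter $b$ into the unique reduced word $a\,s_{k-1}\cdots s_0$ of $aw$. If $b$ is inserted strictly inside $w$, then deleting $a$ again yields a word of distinct letters, which is the unique reduced word of an element that is not $bw$, so $x$ fails to cover $bw$. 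Thus $x$ is $abw$ or $baw$; both are reduced and cover $u$ and $v$, and they are distinct precisely when $m(a,b)=3$. Hence, among pairs with $u'=v'$, step~2(b) fires if and only if $m(a,b)=3$, and never when $m(a,b)=2$.

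Finally I would verify that every adjacent pair not already caught by 2(a) is caught by 2(b). If $a,b\in S\setminus J$ are both black, they are length-$1$ chainlikes with $a'=b'=e$, and by the count above exactly $ab,ba$ cover both, so 2(b) fires. If $a,b\in J$ are both white, choose a shortest path $a=a_0,a_1,\dots,a_r$ from $a$ to $S\setminus J$ with $r=d(a,S\setminus J)\leq d(b,S\setminus J)$; being geodesic it is induced, so $a\,a_1\cdots a_r$ is simple. If $b$ is adjacent to none of $a_1,\dots,a_r$, then $b\,a\,a_1\cdots a_r$ is simple and its derived element lies in the class of $a$, so 2(a) fires; otherwise minimality of $r$ forces $b$ adjacent to $a_1$ only, giving simple chainlikes $u=a(a_1\cdots a_r)$ and $v=b(a_1\cdots a_r)$ with $u'=v'$ and $m(a,b)=3$, so 2(b) fires. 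This exhausts the colour cases and completes the identification with the bw-Coxeter graph. I expect the main obstacle to be the reduced-word bookkeeping behind the coverer count in 2(b) and the geodesic dichotomy for two white generators; the recurring device making both rigorous is that a word with no repeated letters admits no braid- or nil-move, hence is the unique reduced expression of its element (cf. Proposition~\ref{prop:unique}(b)).
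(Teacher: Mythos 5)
Your overall strategy coincides with the paper's: identify the nodes with the fibres of the leftmost-generator map via Proposition~\ref{prop:link} and Lemma~\ref{lem:leftmost}, read off the colouring from the length-$1$ classes, and show that steps 2(a) and 2(b) fire exactly for the adjacent pairs by a case analysis on colours (your geodesic dichotomy for two white generators is a clean repackaging of the paper's four-way case split). Most of this is carried out correctly.

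There is, however, a genuine flaw in the device you rely on for the step-2(b) coverer count, namely the repeated claim that ``a word with no repeated letters admits no braid- or nil-move, hence is the unique reduced expression of its element.'' In the paper's conventions a braid-move includes the commutation $st=ts$ when $m(s,t)=2$, so a distinct-letter word is certainly reduced, but it need not be the unique reduced word of its element (already $s_1s_3=s_3s_1$ in $A_3$); this is not what Proposition~\ref{prop:unique}(b) says, which concerns chainlike elements and is proved by a different mechanism. The error is load-bearing where you exclude insertions of $b$ ``strictly inside $w$'': you delete $a$ and assert that the resulting distinct-letter word is \emph{the unique reduced word} of an element other than $bw$, concluding that $x$ does not cover $v$. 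But if $b$ commutes with the letters of $w$ to its left, that deleted word \emph{does} represent $bw$ and the corresponding $x$ is just $abw$ presented by another reduced expression, so the step as written derives a false intermediate statement; and if $b$ does not so commute, you still owe an argument that no reduced word of the deleted element equals $bs_{k-1}\cdots s_0$. The conclusion (every common cover is $abw$ or $baw$) is true and the repair is short: since $v=bw$ is simple, $b$ commutes with every letter of $w$ except $s_{k-1}$, so any reduced word of $x$ in which $b$ sits strictly to the right of $s_{k-1}$ can be rewritten by commutations to end in $b\in J$, forcing $x\notin W^J$; alternatively, use that commutations cannot move $b$ leftward past $s_{k-1}$ together with the uniqueness of the reduced expression of the chainlike $v$. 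You should make one of these arguments explicit and excise the ``unique reduced word of a distinct-letter word'' principle wherever it appears, keeping only the (correct) conclusion that such a word is reduced.
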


\begin{proof}
We need to check that if $s,t\in S$ have an edge in the bw-Coxeter graph then an edge is drawn by the construction. If $s,t\in S\setminus J$, then an edge is drawn by step 2 (b) since $s'=t'=e$. If $s\in J$ and $t\in S\setminus J$ have an edge, then $t\vartriangleleft st$ and step 2 (a) draws an edge. Now suppose $s,t\in J$, and let $ss_n\dots s_0=sw$ be a simple chainlike as given by Lemma~\ref{lem:leftmost}. If $t\leq w$, we must have $t=s_n$ since $m(s,t)\geq3$. Then step 2 (a) draws an edge and we are done, so assume $t\not\leq w$:
\begin{itemize}
	\item If $m(t,s_i)=3$ for some $i\leq n-2$, then assuming $i$ minimal we have $sts_i\dots s_0$ chainlike and we are done.
	\item If $m(t,s_i)=2$ for $i\leq n-2$ and $m(t,s_{n-1})=3$, then $stw'$ is chainlike instead.
	\item If $m(t,s_i)=2$ for $i\leq n-1$ and $m(t,s_n)=3$ (so we have a triangle between $s$, $t$ and $s_n$ in the graph), then $sw$ and $tw$ satisfy the requirements of step 2 (b) in the construction, since the 2 elements of length $\ell(w)+1$ are $stw\neq tsw$.
	\item If $m(t,s_i)=2$ for all $i$, then $tsw$ is chainlike and we are done.
\end{itemize}
This covers all cases. Finally, we check that no erroneous edges are added in step 2 (b). Suppose $u'=v'$, $u=su'$, $v=tu'$, and $s,t$ are not connected by an edge. Then $m(s,t)=2$, so there is only 1 element of length $\ell(u)+1$ greater than both $u,v$, which is $stu'=tsu'$. Thus step 2 (b) correctly does not add an edge between $s$ and $t$.
\end{proof}

\subsection{Classifying all chainlikes}

Now we extend our theory of chainlike elements to all Coxeter groups. This introduces some non-simple chainlike elements which are impostors for the construction in Theorem~\ref{thm:construction}, as exemplified in Figure~\ref{fig:impostor}, which we will classify.

\begin{figure}[h]
\begin{center}
\begin{tikzpicture}
\node at (0,-1) {\phantom{p}Coxeter graph\phantom{d}};
\node[whitenode] (t) at (0,0) {$t$};
\node[blacknode] (s) at (0,2) {$s$};
\node[whitenode] (u) at (0,4) {$u$};
\begin{scope}[every edge/.style=graphedge]
	\path (t) edge (s);
	\path (s) edge node[left] {$4$} (u);
\end{scope}
\node at (4,-1) {\phantom{p}Bruhat poset\phantom{d}};
\begin{scope}[shift={(4,0)}, scale=0.7]
\node (e) at (0,0) {$e$};
\node (s) at (0,1) {$s$};
\node (ts) at (-1,2) {$ts$};
\node (us) at (1,2) {$us$};
\node (tus) at (-1,3) {$tus$};
\node (sus) at (1,3) {$sus$};
\node (tsus) at (-1,4) {$tsus$};
\node (stus) at (1,4) {$stus$};
\node (stsus) at (-1,5) {$stsus$};
\node (ustus) at (1,5) {$ustus$};
\node (ustsus) at (0,6) {$ustsus$};
\node (sustsus) at (0,7) {$sustsus$};
\path (e) edge (s);
\path (s) edge (ts) edge (us);
\path (tus) edge (ts) edge (us);
\path (sus) edge (us);
\path (tsus) edge (tus) edge (sus);
\path (stus) edge (tus) edge (sus);
\path (stsus) edge (tsus) edge (stus);
\path (ustus) edge (stus);
\path (ustsus) edge (stsus) edge (ustus) edge (sustsus);
\end{scope}
\node at (8,-1) {\phantom{p}Chainlike elements\phantom{d}};
\begin{scope}[shift={(8,0)}, scale=1]
\node (s) at (0,1) {$s$};
\node (ts) at (-1,2) {$ts$};
\node (us) at (1,2) {$us$};
\node (sus) at (1,3) {$sus$};
\path (s) edge (ts) edge (us);
\path (sus) edge (us);
\end{scope}
\node at (12,-1) {\phantom{p}Construction~\ref{thm:construction}\phantom{d}};
\begin{scope}[shift={(12,0)}, scale=0.8]
\node[whitenode] (t) at (0,0) {$ts$};
\node[blacknode] (s) at (0,2) {$s$};
\node[whitenode] (u) at (0,4) {$us$};
\node[whitenode] (v) at (0,6) {$sus$};
\begin{scope}[every edge/.style=graphedge]
	\path (t) edge (s);
	\path (u) edge (s) edge (v);
\end{scope}
\end{scope}
\end{tikzpicture}
\end{center}
\caption{Since Proposition~\ref{prop:link} (b) does not extend to non-simple elements, the construction in Theorem~\ref{thm:construction} may produce additional impostor nodes when there are labelled edges. In this example the impostor is $sus$, a form (II) element as in Theorem~\ref{thm:forms}.}\label{fig:impostor}
\end{figure}
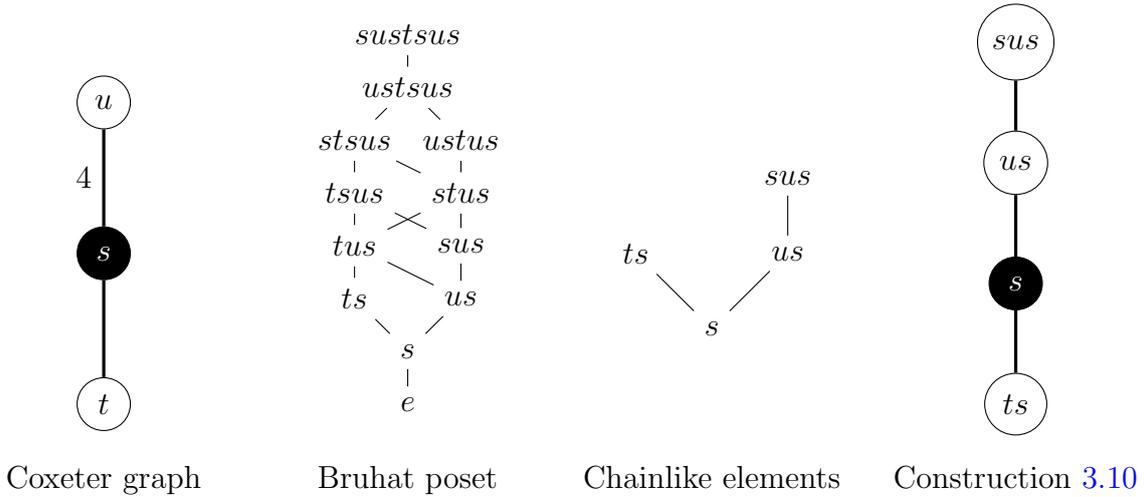

\begin{theorem}\label{thm:forms}
$w\in W^J$ is chainlike if and only if there are $s_0,\dots,s_k\in S$ such that $s_k\dots s_0$ is a simple chainlike and $w$ is in one of the following forms:
\begin{enumerate}[label=(\Roman*)]
	\item $w=s_k\dots s_0$, that is $w$ is simple.
	\item $w=s_ls_{l+1}\dots s_k\dots s_0$ with $0\leq l<k$, $m(s_k,s_{k-1})\geq4$ and $m(s_i,s_{i+1})=3$ for $l\leq i<k-1$.
	\item $w=\underbrace{\dots s_{k-1}s_ks_{k-1}s_ks_{k-1}}_{m\text{ terms}}s_{k-2}\dots s_0$ with $k\geq2$ and $4\leq m<m(s_k,s_{k-1})$.
\end{enumerate}
By the Word Property this is the unique reduced expression for $w$.
\end{theorem}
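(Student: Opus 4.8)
The plan is to prove both implications simultaneously by induction on $\ell(w)$, the forward (``if'') direction showing each listed form is chainlike and the reverse (``only if'') direction showing every chainlike is one of these forms; both inductions reduce to deleting or prepending a single generator, and the base cases ($\ell(w)\leq 2$) are exactly the simple elements already handled in the proof of Theorem~\ref{thm:simple}. Before either direction I would dispatch the final sentence of the statement, namely that the displayed word is the unique reduced expression and lies in $W^J$. By the corollary to the Word Property it suffices to check that neither a nil-move nor a braid-move applies to the word as written: no two equal generators are ever adjacent, and the only maximal contiguous alternating subword in a pair $s,t$ with $m(s,t)\geq 3$ sits at the top of the path, being $s_{k-1}s_ks_{k-1}$ in form (II) (too short to braid since $m(s_k,s_{k-1})\geq 4$) and the length-$m$ block in form (III) (unable to braid since $m<m(s_k,s_{k-1})$). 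Hence the word is reduced and, since any two reduced expressions are connected by braid-moves, it is the unique one; as it ends in $s_0\in S\setminus J$, the criterion ``$w\in W^J$ iff every reduced expression ends outside $J$'' gives $w\in W^J$.

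For the forward direction I would show each form (II) and (III) element is \emph{semi-chainlike} with covered element equal to the word obtained by deleting its leftmost generator, and that this covered element is a strictly shorter element of form (I), (II) or (III). Concretely, deleting the leftmost generator sends form (II) with parameter $l$ to form (II) with parameter $l+1$, or to the simple element $s_k\cdots s_0$ when $l=k-1$; and it sends form (III) with parameter $m$ to form (III) with parameter $m-1$, or to form (II) with $l=k-1$ when $m=4$. These are chainlike by the inductive hypothesis, and form (I) is chainlike by Theorem~\ref{thm:simple}(a), so it only remains to rule out further coverings. Using the unique reduced expression together with the Subword Property, I would check that deleting any \emph{non}-leftmost generator $a_i$ either creates a nil-move (so $\ell$ drops by two and no covering arises) or yields a word every reduction of which ends in a generator of $J$ (so the result leaves $W^J$); this pins down a single covering and proves $w$ chainlike.

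For the reverse direction I would take a chainlike $w$, write $w=sw'$ with $w'$ chainlike by Proposition~\ref{prop:unique}(a), put $w'$ in one of the three forms by the inductive hypothesis, and note that the same proposition forces $m(s,a_1)\geq 3$ for the leftmost generator $a_1$ of $w'$. The argument then mirrors the simply-laced case of Theorem~\ref{thm:simple}(b) but splits according to the form of $w'$ and the position of $s$ relative to the path $s_0-\cdots-s_k$. The admissible prepends turn out to be exactly: prepending $s_{k-1}$ to a form (I) element when $m(s_k,s_{k-1})\geq 4$ (giving form (II), $l=k-1$); extending the ascending head of a form (II) element by $s_{l-1}$ (giving form (II), $l-1$); prepending $s_k$ to form (II) with $l=k-1$ when $4<m(s_k,s_{k-1})$ (giving form (III), $m=4$); extending the alternating block of a form (III) element when $m+1<m(s_k,s_{k-1})$ (giving form (III), $m+1$); together with the prepends of Theorem~\ref{thm:simple}(b) that keep $w$ simple. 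For every remaining choice of $s$ I would show $w=sw'$ is not chainlike, either because a braid-move becomes available and violates uniqueness of the reduced expression via Proposition~\ref{prop:unique}(b) — this is precisely what excludes $m(s_k,s_{k-1})=3$ in form (II) and $m+1=m(s_k,s_{k-1})$ in form (III) — or because a second covered element of $W^J$ is exhibited by an explicit braid- or nil-move, destroying semi-chainlikeness.

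The main obstacle I anticipate is the completeness and bookkeeping of this final case analysis. For each disallowed prepend one must actually produce the competing reduced expression or the second covered element through an explicit sequence of braid- and nil-moves, and one must verify that \emph{every} generator $s$ with $m(s,a_1)\geq 3$ has been accounted for, including generators adjacent to $a_1$ but lying off the path $s_0-\cdots-s_k$. The delicate points are the interactions at the labelled edge $m(s_k,s_{k-1})$ and the boundary between forms (II) and (III), which is governed by whether the alternating length reaches $m(s_k,s_{k-1})$; handling these boundary equalities correctly is exactly what guarantees that the three forms are both exhaustive and non-overlapping.
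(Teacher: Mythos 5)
Your proposal is correct and follows essentially the same route as the paper: induction on $\ell(w)$ with the $\ell(w)\le 2$ base case from Theorem~\ref{thm:simple}, the forward direction by showing only deletion of the leftmost generator yields a covering in $W^J$, and the reverse direction by writing $w=sw'$ via Proposition~\ref{prop:unique} and running the case analysis on the form of $w'$ and the position of $s$. The one organizational difference is that the paper packages your ``exhibit a second covered element'' step for generators $s\not<w'$ into Lemma~\ref{lem:semi}, which is exactly the bookkeeping device you anticipated needing.
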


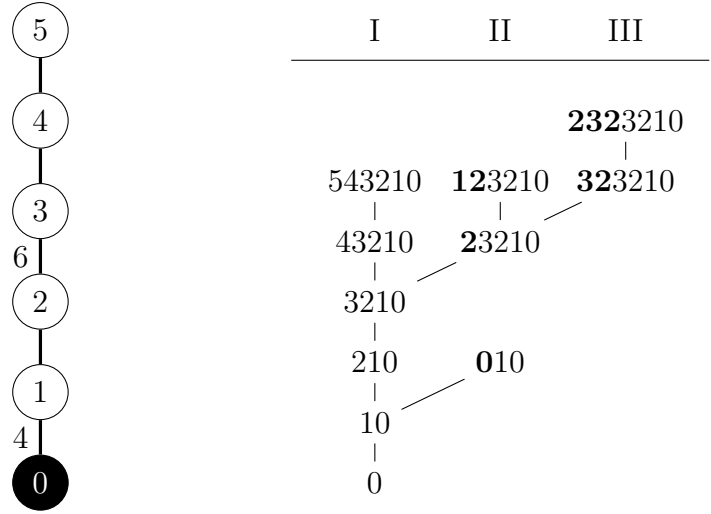
\begin{figure}[h]
\begin{center}
\begin{tikzpicture}[xscale=1.1,yscale=0.8]
\begin{scope}
    \node[blacknode] (0) at (0,0) {$0$};
    \node[whitenode] (1) at (0,1.5) {$1$};
    \node[whitenode] (2) at (0,3) {$2$};
    \node[whitenode] (3) at (0,4.5) {$3$};
    \node[whitenode] (4) at (0,6) {$4$};
    \node[whitenode] (5) at (0,7.5) {$5$};
\end{scope}
\begin{scope}[every edge/.style=graphedge]
    \path (0) edge node[left] {$4$} (1);
    \path (2) edge (1) edge node[left] {$6$} (3);
    \path (4) edge (3) edge (5);
\end{scope}
\begin{scope}[shift={(4,0)}]
\begin{scope}
    \node (0) at (0,0) {$0$};
    \node (10) at (0,1) {$10$};
    \node (210) at (0,2) {$210$};
    \node (3210) at (0,3) {$3210$};
    \node (43210) at (0,4) {$43210$};
    \node (543210) at (0,5) {$543210$};
    \node (010) at (1.5,2) {$\*{0}10$};
    \node (23210) at (1.5,4) {$\*{2}3210$};
    \node (123210) at (1.5,5) {$\*{12}3210$};
    \node (323210) at (3,5) {$\*{32}3210$};
    \node (2323210) at (3,6) {$\*{232}3210$};
\end{scope}
\begin{scope}[every edge/.style={draw=black}]
    \path (0) edge (10);
    \path (10) edge (210) edge (010);
    \path (210) edge (3210);
    \path (3210) edge (43210) edge (23210);
    \path (43210) edge (543210);
    \path (23210) edge (123210) edge (323210);
    \path (323210) edge (2323210);
\end{scope}
\draw (-1,7) -- (4,7);
\node at (0,7.5) {I};
\node at (1.5,7.5) {II};
\node at (3,7.5) {III};
\end{scope}
\end{tikzpicture}
\end{center}
\caption{A Coxeter quotient (left) and all of its chainlike elements (right). Elements of forms (II) and (III) have the part left of $s_k\dots s_0$ highlighted in bold.}\label{fig:forms}
\end{figure}

\begin{remark}
Figure~\ref{fig:forms} shows an example illustrating the three forms. The branching points where $u,v\in C(W^J)$ have $u'=v'$ are very limited and comprise three cases:
\begin{itemize}
	\item $u=s_ks_{k-1}\dots s_0$, $v=ts_{k-1}\dots s_0$ with $t\in S$, that is $u,v$ are both simple;
	\item $u=s_ks_{k-1}\dots s_0$, $v=s_{k-2}s_{k-1}\dots s_0$, that is $u$ is simple and $v$ is of form (II);
	\item $u=s_ks_{k-1}s_k\dots s_0$, $v=s_{k-2}s_{k-1}s_k\dots s_0$, that is $u,v$ are of forms (III) and (II).
\end{itemize}
Also note a particular oddity of form (II) elements, which is that the edges between generators in the part left of $s_k\dots s_0$ must be unlabelled. So in the example in Figure~\ref{fig:forms}, $0123210$ is not chainlike because of the label between 0 and 1. In particular, it covers the two distinct elements $123210$ and $023210=232010$ and so is not semi-chainlike.
\end{remark}

Many proofs later on will require looking for semi-chainlike elements, so before proving the theorem above let's state a lemma which will be useful for identifying these:

\begin{lemma}\label{lem:semi}
Suppose that $w\in W^J$ is semi-chainlike and $w=sw'=sty$ for some $s,t\in S$ and $y\in W^J$ with $y\vartriangleleft w'$. Then $t\leq y$ implies $s\leq y$.
\end{lemma}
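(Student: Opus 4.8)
The plan is to argue by contraposition: I assume $t\le y$ together with $s\not\le y$ and derive a contradiction with $w$ being semi-chainlike. Throughout I lean on Proposition~\ref{prop:unique}(a), which applies since $w$ is semi-chainlike: cancelling $s$ in $w=sw'=sty$ gives $w'=ty$, and \emph{every} reduced expression for $w$ must begin with $s$ --- this is the statement I will ultimately contradict. I write $\mathrm{supp}(y)=\{g\in S\mid g\le y\}$, which by the Corollary is exactly the set of generators occurring in any reduced expression for $y$. Since $s\not\le y$ we have $\ell(sy)=\ell(y)+1$, so deleting the middle letter $t$ from the reduced word $sty$ leaves a reduced word $s\,y_1\cdots y_p$ for $sy$ (writing $y=y_1\cdots y_p$). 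As this is a subword of $sty$, the Subword Property gives $sy\le w$, and since $\ell(sy)=\ell(w)-1$ we get $sy\vartriangleleft w$ in $W$.

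First I would eliminate the possibility $sy\in W^J$: were it so, then $sy\vartriangleleft w$ in $W^J$, and semi-chainlikeness would force $sy=w'=ty$, hence $s=t$, contradicting that $sty$ is reduced (so $s\ne t$). Thus $sy\notin W^J$, giving some $r\in J$ with $syr<sy$. Now $y\in W^J$ yields $yr>y$, and $s\not\le y$ yields $sy>y$; I claim the relation $syr\vartriangleleft sy$ forces $sy=yr$. Indeed, by the covering characterization (part (3) of the Theorem) a reduced expression for $syr$ arises by deleting one letter of $s\,y_1\cdots y_p$; deleting some $y_i$ would produce an expression for $yr$ of length $\ell(y)-1$, contradicting $\ell(yr)=\ell(y)+1$, so the deleted letter is the leading $s$ and $syr=y$, i.e. $sy=yr$. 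Comparing the generators on both sides, $\{s\}\cup\mathrm{supp}(y)=\mathrm{supp}(y)\cup\{r\}$, and since $s\notin\mathrm{supp}(y)$ this forces $s=r\in J$ and therefore $sy=ys$.

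With $sy=ys$ and $s\notin\mathrm{supp}(y)$ in hand, I would invoke the commutation fact for supports: a generator $s\notin\mathrm{supp}(y)$ commutes with $y$ only if $m(s,g)=2$ for every $g\le y$. Since $t\le y$ by hypothesis this gives $m(s,t)=2$, i.e. $st=ts$. But then $w=sty=tsy$, which is a word of length $\ell(y)+2=\ell(w)$ representing $w$ and hence reduced, yet it begins with $t\ne s$, contradicting Proposition~\ref{prop:unique}(a). This contradiction rules out $s\not\le y$, so $t\le y$ implies $s\le y$.

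The hard part will be justifying the commutation fact, that $sy=ys$ with $s\notin\mathrm{supp}(y)$ forces $s$ to commute with every generator appearing in $y$. In the reflection representation this is clean: $sy=ys$ gives $s(y\alpha_s)=-y\alpha_s$, so $y\alpha_s$ lies on the line $\R\alpha_s$; since $y\in W_{\mathrm{supp}(y)}$ fixes the $\alpha_s$-coefficient at $1$, this yields $y\alpha_s=\alpha_s$, and an element whose support is $\mathrm{supp}(y)$ can fix $\alpha_s$ only when $\alpha_s$ is orthogonal to each $\alpha_g$ with $g\le y$, i.e. $m(s,g)=2$. A purely combinatorial proof is also available by tracking the single occurrence of $s$ through a sequence of braid-moves connecting $s\,y_1\cdots y_p$ to $y_1\cdots y_p\,s$. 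I expect isolating a clean, self-contained argument for this commutation fact to be the only real subtlety; the reduction to it via the covering characterization and semi-chainlikeness is routine.
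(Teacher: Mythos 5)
Your argument is sound and its skeleton matches the paper's: both proofs assume $t\leq y$ and $s\nleq y$, observe that $sy$ is reduced with $sy\vartriangleleft w$, and note that $sy\in W^J$ would force $sy=w'=ty$ and $s=t$, which is absurd. You diverge in how the remaining case $sy\notin W^J$ is killed. The paper splits on whether $s\in J$ and argues directly that $sy\in W^J$ always holds (for $s\in J$ it first extracts some $a\leq w'=ty$ with $m(s,a)\geq3$ from $w=sw'\in W^J$, then asserts that ``the single $s$ cannot be moved past $a$ in any expression''). You instead use the exchange condition to show that $sy\notin W^J$ forces $sy=yr$ with $r=s\in J$, hence $sy=ys$, and then your commutation fact gives $m(s,t)=2$ and the reduced word $tsy_1\cdots y_p$ for $w$, contradicting Proposition~\ref{prop:unique}(a). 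These are two sides of the same coin: the paper's quoted phrase is exactly the contrapositive of your commutation fact, so neither route avoids that lemma --- the paper leaves it implicit while you isolate it, which is if anything a service to the reader. The only incomplete step is the last clause of your sketch, that $y\alpha_s=\alpha_s$ with $s\notin\mathrm{supp}(y)$ forces $(\alpha_s,\alpha_g)=0$ for all $g\leq y$; this does close in a few lines. If $g$ is the last letter of a reduced word for $y$, then $y\alpha_g$ is a negative root lying in the span of $\{\alpha_h\mid h\leq y\}$, so $(\alpha_s,\alpha_g)=(y\alpha_s,y\alpha_g)=(\alpha_s,y\alpha_g)$ is minus a nonnegative combination of the quantities $(\alpha_s,\alpha_h)\leq0$ and hence is $\geq0$; since $(\alpha_s,\alpha_g)\leq0$ for distinct simple roots, it vanishes, so $g$ fixes $\alpha_s$ and one peels off letters inductively to cover all of $\mathrm{supp}(y)$. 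With that inserted your proof is complete; it is a little longer than the paper's but makes rigorous a point the paper's own proof glosses over.
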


\begin{proof}
Assume $t\leq y$ and $s\nleq y$. If $s\not\in J$ then $sy\in W^J$, but then $sy\vartriangleleft w$ so $sy=w'$ as $w$ is semi-chainlike. Then $s=t$ and $w=sty=y$, a contradiction. If instead $s\in J$ then in order to have $w'\in W^J$ we must have $m(s,a)\geq3$ for some generator $a\leq w'$. But then since $w'=ty$ either $a\leq y$, or $a=t$ in which case $a\leq y$ also, so $sy\in W^J$ as the single $s$ cannot be moved past $a$ in any expression. Then $sy=w'$, again a contradiction.
\end{proof}

\begin{proof}[Proof of Theorem~\ref{thm:forms}]
The base case $\ell(w)\leq2$ is the same as in the proof of Theorem~\ref{thm:simple}. We now continue by induction on $\ell(w)$.

Let's suppose $w$ is of one of the forms above.
\begin{itemize}
	\item If $w$ is of form (I), it is simple and therefore chainlike by Theorem~\ref{thm:simple}.
	\item Suppose $w=s_l\dots s_k\dots s_0$ is of form (II) and let $u\vartriangleleft w$ be obtained by removing a generator $s$. If $s$ is right of $s_k$ then $u\not\in W^J$ since $s_k\dots s_0$ is simple. If $s=s_k$, then we have a nil-move $s_{k-1}s_{k-1}$ and so $\ell(u)<\ell(w)-1$, a contradiction. If $s=s_i$ is between the first $s_l$ and $s_k$, then using $m(s_{i-1},s_i)=3$ we have
	\begin{align*}
		&&u=s_l\dots s_{i-1}s_{i+1}\dots s_k\dots s_0&=s_l\dots s_{i-2}s_{i+1}\dots s_k\dots s_{i+1}s_{i-1}s_is_{i-1}s_{i-2}\dots s_0\\
		&&&=s_l\dots s_{i-2}s_{i+1}\dots s_k\dots s_{i+1}s_is_{i-1}s_is_{i-2}\dots s_0\\
		&&&=s_l\dots s_{i-2}s_{i+1}\dots s_k\dots s_0s_i\not\in W^J
	\end{align*}
	The only remaining option is the leftmost generator $s=s_l$ giving $u$ chainlike by induction, and so $w$ is chainlike with $w'=u$.
	\item Suppose $w=\dots s_{k-1}s_ks_{k-1}s_k\dots s_0$ is of form (III) and let $u\vartriangleleft w$ be obtained by removing a generator $s$. If $s$ is right of the rightmost $s_k$ then $u\not\in W^J$ since $s_k\dots s_0$ is simple. If $s$ is any other generator except the first then we have a nil-move $s_ks_k$ or $s_{k-1}s_{k-1}$ and so $\ell(u)<\ell(w)-1$, a contradiction. The only remaining option is the leftmost generator giving $u$ chainlike by induction, so $w$ is chainlike with $w'=u$.
\end{itemize}

Now suppose $w$ is chainlike so that $w'$ is in one of the required forms by induction. Proposition~\ref{prop:unique} gives us that $w=sw'$ with $m(s,s_j)\geq3$, where $s_j$ is the leftmost generator in the expression for $w'$.
\begin{itemize}
	\item Suppose $w'$ is of form (I). If $s\nleq w$ then as in the proof of Theorem~\ref{thm:simple}, the only possibility for $w$ to be chainlike is if $w$ is simple, giving form (I). The only remaining possibility is $s=s_{k-1}$, for which if $m(s_k,s_{k-1})=3$ we can swap $s_k$ and $s_{k-1}$ with a braid move, violating Proposition~\ref{prop:unique}. Hence $m(s_k,s_{k-1})\geq4$, and we have form (II).
	\item Suppose $w'=s_l\dots s_k\dots s_0$ is of form (II).
	\begin{itemize}
		\item If $s=s_{l-1}$, then suppose for a contradiction that $m(s_{l-1},s_l)\geq4$. Consider removing $s_l$ to get $sw''=s_{l-1}s_{l+1}\dots s_k\dots s_0$. This is in $W^J$ since $s_{l-1}$ cannot move past $s_l$ or be replaced with a braid move, so $sw''\vartriangleleft w$. But $sw''\neq w'$, so $w$ is not semi-chainlike. Hence $m(s_{l-1},s_l)=3$ and $w$ is of form (II).
		\item If $s=s_{l+1}$ and $l<k-1$, then $w=s_{l+1}s_ls_{l+1}s_{l+2}\dots s_k\dots s_0$ and we have the braid move $s_{l+1}s_ls_{l+1}\to s_ls_{l+1}s_l$, violating Proposition~\ref{prop:unique}.
		\item If $s=s_{l+1}$ and $l=k-1$, then $w=s_ks_{k-1}s_ks_{k-1}\dots s_0$. If $m(s_k,s_{k-1})=4$, then we have the braid-move $s_ks_{k-1}s_ks_{k-1}\to s_{k-1}s_ks_{k-1}s_k$, violating Proposition~\ref{prop:unique}. Otherwise, $w$ is of form (III).
		\item If $s\not<w'$, then $w$ is not semi-chainlike by Lemma~\ref{lem:semi}.
	\end{itemize}
	\item Suppose $w'=s_j\dots s_{k-1}s_ks_{k-1}s_k\dots s_0$ is of form (III), with $s_j\in\{s_k,s_{k-1}\}$.
	\begin{itemize}
		\item Let $s\in\{s_k,s_{k-1}\}$ with $s\neq s_j$. If the number of $s_k,s_{k-1}$ terms in $w$ is at least $m(s_k,s_{k-1})$, then we have the braid move between $s_k$ and $s_{k-1}$, and so $w$ is not chainlike by Proposition~\ref{prop:unique}. Otherwise, $w$ is of form (III).
		\item If $s_j=s_{k-1}$ and $s=s_{k-2}$, then $s_{k-2}w''=s_{k-2}s_ks_{k-1}\dots s_k\dots s_0$ is in $W^J$ since the only possible braid-move is swapping $s_k$ and $s_{k-2}$. But $sw''\neq w'$, so $w$ is not semi-chainlike.
		\item If $s\not<w'$, then $w$ is not semi-chainlike by Lemma~\ref{lem:semi}.
	\end{itemize}
\end{itemize}
This covers all cases, so we have shown inductively that any chainlike $w$ is in one of the required forms.
\end{proof}

The problem of determining the Coxeter graph from the Bruhat poset can now be reduced to identifying the type of each chainlike element, since given this information we can apply the construction in Theorem~\ref{thm:construction} to the form (I) elements and then use the form (II) and (III) elements to determine the edge labels. We formalise this below. First let us generalise step 2 (b) in Theorem~\ref{thm:construction} to allow for labelled edges by introducing a function $M(u,v)$ which matches the value of $m(s,t)$, where $s,t$ are the leftmost generators in $u,v$ with $u'=v'$ (later we will generalise $M$ to apply to more pairs $u,v$).

\begin{definition}\label{def:mstart}
Suppose $u,v\in C(W^J)$ with $u'=v'$. We denote:
\begin{align*}
	M_1(u,v)&=\{u,v\};\\
	M_i(u,v)&=\{w\in W^J\mid z\vartriangleleft w\text{ for all }z\in M_{i-1}(u,v)\}\text{ for integers }i\geq2;\\
	M(u,v)&=\min\{i\geq2\mid |M_i(u,v)|=1\}
\end{align*}
with $M(u,v)=\infty$ if $|M_i(u,v)|\neq1$ for all $i\geq2$.
\end{definition}

\begin{prop}\label{prop:M}
Suppose $u,v\in C(W^J)$ are simple with $u'=v'$, and $s,t\in S$ with $u=su'$ and $v=tu'$. Then $m(s,t)=M(u,v)$.
\end{prop}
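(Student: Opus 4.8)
The plan is to determine every set $M_i(u,v)$ explicitly. Write $r=m(s,t)$ and, for $0\le i\le r$, set
\[a_i=\underbrace{sts\cdots}_{i\text{ terms}}u',\qquad b_i=\underbrace{tst\cdots}_{i\text{ terms}}u',\]
so that $a_0=b_0=u'$, $a_1=u$, $b_1=v$, and (right-cancelling $u'$) $a_i=b_i$ precisely when $\underbrace{sts\cdots}_i=\underbrace{tst\cdots}_i$, i.e. when $i\ge r$. I will show by induction on $i$ that $M_i(u,v)=\{a_i,b_i\}$ for all $2\le i\le r$; granting this, $a_i\ne b_i$ gives $|M_i|=2$ for $2\le i<r$ while $a_r=b_r$ gives $|M_r|=1$, so $M(u,v)=r=m(s,t)$ (the cases $r=2$ and $r=\infty$ being the obvious degenerations).

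First I would record the basic structure of the $a_i,b_i$. Because $u,v$ are simple, $s=s_k$ and $t$ both commute with $s_{k-2},\dots,s_0$ and satisfy $m(\,\cdot\,,s_{k-1})\ge3$, while $\underbrace{sts\cdots}_i$ is reduced in the dihedral group $\langle s,t\rangle$ for $i\le r$. A short check with nil- and braid-moves (the only non-commuting junction being at $s_{k-1}$) then shows that for $i<r$ each $a_i$ and $b_i$ is reduced, lies in $W^J$ (its rightmost letter is $s_0\in S\setminus J$), and has a unique reduced expression, and that $a_r=b_r\in W^J$ as well. Deleting the first letter of $a_i$ yields $b_{i-1}$ and deleting its last alternating letter yields $a_{i-1}$, so by the Subword Property $a_i$ covers both $a_{i-1}$ and $b_{i-1}$; symmetrically for $b_i$. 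This gives the inclusion $\{a_i,b_i\}\subseteq M_i(u,v)$.

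The crux is the reverse inclusion, and here I would exploit the computation of $L$. Since $a_{i-1},b_{i-1}$ have unique reduced expressions and lie in $W^J$, the preceding proposition applies verbatim (its proof uses only these two facts, not chainlikeness), and a direct count of the longest common subword of $\underbrace{sts\cdots}_{i-1}u'$ and $\underbrace{tst\cdots}_{i-1}u'$ gives $L(a_{i-1},b_{i-1})=\ell(u')+i=\ell(a_{i-1})+1$. Consequently an element covering both $a_{i-1}$ and $b_{i-1}$ is exactly a common upper bound of minimal possible length, i.e. an element whose reduced expression lies in the set $S^*(a_{i-1},b_{i-1})$ constructed in that proof. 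It therefore suffices to enumerate $S^*(a_{i-1},b_{i-1})$. The two alternating words $\underbrace{sts\cdots}_{i-1}$ and $\underbrace{tst\cdots}_{i-1}$ have exactly two longest common subwords, namely $\underbrace{sts\cdots}_{i-2}$ and $\underbrace{tst\cdots}_{i-2}$, and for each the leftover letters are forced (a single letter at each end, with all interior interleaving slots empty), so $S^*(a_{i-1},b_{i-1})$ consists of precisely $a_i$ and $b_i$. This completes the induction. The base step $i=2$ is the same computation with common subword $\underline r=u'$: the two leftover letters $s,t$ now share a single interleaving slot, again producing exactly $a_2=stu'$ and $b_2=tsu'$.

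The step I expect to be most delicate is the reverse inclusion, i.e. ruling out any $w\in M_i$ other than $a_i,b_i$ — in particular excluding upper bounds that introduce a generator outside $\{s,t,s_{k-1},\dots,s_0\}$. Framing $M_i$ through $S^*(a_{i-1},b_{i-1})$ disposes of this cleanly, since the $L$-proposition already guarantees that every minimal common upper bound has a reduced expression of the controlled interleaved shape; the remaining work is the purely combinatorial identification of the longest common subwords of two complementary alternating words, which I would verify by a one-line induction. A secondary point to be careful about is legitimately invoking that proposition for $a_{i-1},b_{i-1}$, which are not chainlike when $i-1\ge2$; I would note explicitly that its proof requires only uniqueness of reduced expressions and membership in $W^J$, both established in the first step.
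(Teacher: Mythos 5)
Your proof is correct and follows essentially the same route as the paper: both arguments compute $M_i(u,v)$ explicitly by induction as the pair of alternating words $\{\underbrace{sts\cdots}_{i}u',\underbrace{tst\cdots}_{i}u'\}$, with the reverse inclusion coming from the Subword Property forcing any cover of both elements of $M_{i-1}$ to interleave their (unique) reduced expressions. The only difference is presentational: you route the reverse inclusion through the $S^*(\cdot,\cdot)$ construction from the $L$-proposition and explicitly justify applying it to the non-chainlike $a_{i-1},b_{i-1}$, whereas the paper asserts the same subword argument directly and says ``by the same argument inductively.''
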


\begin{proof}
The reduced expressions of elements of $M_2(u,v)$ must contain $su'$ as a subword, or more precisely all the generators comprising the unique reduced word for $su'$ in order, and similarly for $tu'$. Thus $M_2(u,v)=\{tsu',stu'\}$. If $m(t,s)=2$ then $tsu'=stu'$ and $M(u,v)=2$, and otherwise $tsu',stu'$ have unique reduced expressions. Thus by the same argument inductively, we have
\[M_i(u,v)=\{\underbrace{\dots tstst}_{i\text{ terms}}u',\ \underbrace{\dots ststs}_{i\text{ terms}}u'\}\ \text{ for }i<m(s,t)\]
If $m(s,t)=\infty$ then $|M_i(u,v)|=2$ for all $i\geq2$. If instead $m(s,t)$ is finite then at $i=m(s,t)$ we have $\underbrace{ststs\dots}_{i\text{ terms}}=\underbrace{tstst\dots}_{i\text{ terms}}$, and so $|M_{m(s,t)}|=1$.
\end{proof}

\begin{theorem}\label{thm:graph}
Suppose that $W$ is irreducible and for each chainlike $w\in C(W^J)$ it is known whether $w$ is of form (I), (II) or (III). Then the following construction on $W^J$ produces the bw-Coxeter graph of the pair $(W,W_J)$:
\begin{enumerate}
	\item Restrict $\sim$ to the form (I) elements and draw a node for each equivalence class, colouring any nodes corresponding to length 1 elements black.
	\item For each pair of distinct equivalence classes $U,V\subseteq C(W^J)$, connect the corresponding nodes with an edge if there are $u\in U,v\in V$ with either:
	\begin{itemize}
		\item[(a)] $u=v'$ or $v=u'$, or
		\item[(b)] $u'=v'$ and $M(u,v)\geq3$.
	\end{itemize}
	If (b) holds and $M(u,v)\geq4$, then label the edge with $M(u,v)$.
	\item For each form (II) element $u$ such that $u'$ is of form (I), let $q\geq0$ be the number of form (III) elements greater than $u$, and label the edge between the nodes corresponding to the classes containing $u'$ and $u''$ with $q+4$.
\end{enumerate}
\end{theorem}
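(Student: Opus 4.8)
The plan is to verify the three steps in turn, showing that (i) step~1 reproduces the vertex set $S$ with the correct colouring, (ii) step~2 reproduces the edge set and correctly labels every edge it detects through its clause~(b), and (iii) step~3 supplies the labels on the remaining edges. Throughout I assume $J\neq S$, so that $W^J$ is non-trivial (if $J=S$ then $W^J$ is a point and there are no chainlikes, a degenerate case best excluded). For step~1, Theorem~\ref{thm:simple} identifies the form~(I) elements as exactly the simple elements, and Proposition~\ref{prop:link}(b) shows that $\sim$ restricted to them is precisely the relation ``same leftmost generator''. Since $W$ is irreducible its graph is connected and contains a black node, so Lemma~\ref{lem:leftmost} guarantees every $s\in S$ occurs as the leftmost generator of some simple element; hence the classes of step~1 are in bijection with $S$. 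A class contains a length~$1$ element exactly when its generator lies in $S\setminus J$, so the black/white colouring is correct.

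For step~2 I would run the argument of Theorem~\ref{thm:construction}, now drawing the witnesses $u,v$ from the form~(I) classes (legitimate because the forms are given, which avoids the impostor phenomenon of Figure~\ref{fig:impostor}) and replacing the criterion ``exactly two elements cover both $u$ and $v$'' by ``$M(u,v)\geq 3$''. Proposition~\ref{prop:M} is what validates this substitution: for simple $u=su'$, $v=tu'$ with $u'=v'$ one has $M(u,v)=m(s,t)$, so clause~(b) fires exactly when $m(s,t)\geq 3$, and the label $M(u,v)$ it records equals $m(s,t)$. In particular $m(s,t)=2$ yields $M(u,v)=2<3$, so no spurious edge appears, while clause~(a) only ever joins a generator to an adjacent one by Proposition~\ref{prop:unique}. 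The upshot is that after step~2 the graph has the correct edge set, and is fully correct except that some genuine edges with $m\geq 4$ have been drawn by clause~(a) without a label.

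It remains to see that step~3 gives each such edge $s_{k-1}$--$s_k$ with $m(s_k,s_{k-1})\geq 4$ exactly the label $m(s_k,s_{k-1})$. I would split into two cases. If $s_{k-1}$ and $s_k$ share a common neighbour $g$ admitting simple elements $s_{k-1}g\cdots$ and $s_k g\cdots$ (the triangle situation), then step~2(b) already labels the edge with $m(s_k,s_{k-1})$ via Proposition~\ref{prop:M}. Otherwise, choosing the endpoint nearer the black set as the second vertex and extending it by a shortest (hence chordless) path to a black node, then prepending the other endpoint, produces an induced path $s_k s_{k-1}s_{k-2}\cdots s_0$; a shortest-path argument shows this prepending stays chordless unless the two endpoints share the next vertex, i.e.\ exactly the triangle case already dispatched. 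By Lemma~\ref{lem:leftmost} this induced path is a simple base, and prepending $s_{k-1}$ yields a form~(II) element $u=s_{k-1}s_k s_{k-1}s_{k-2}\cdots s_0$ with $u'=s_k\cdots s_0$ and $u''=s_{k-1}\cdots s_0$ both of form~(I), so step~3 labels precisely the edge between the classes of $s_k$ and $s_{k-1}$.

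The main obstacle, and where I expect the real work, is justifying the count $q=m(s_k,s_{k-1})-4$. I would show that the form~(III) elements exceeding this $u$ are exactly $z_{m'}=\underbrace{\cdots s_{k-1}s_k s_{k-1}}_{m'\text{ terms}}s_{k-2}\cdots s_0$ for $4\leq m'\leq m(s_k,s_{k-1})-1$: each contains $u$ as a subword and is chainlike of form~(III) by Theorem~\ref{thm:forms}, giving $m(s_k,s_{k-1})-4$ of them, while at $m'=m(s_k,s_{k-1})$ the braid relation destroys uniqueness and the element fails to be chainlike. The delicate direction is the converse, that no form~(III) element on a \emph{different} base can exceed $u$; I would extract this from the Subword Property, noting that any $z>u$ must contain all of $s_0,\dots,s_k$, and then matching the rigid ``alternating-then-strictly-descending'' shape of a form~(III) word against the subword $u$ to force its alternating pair to be $\{s_k,s_{k-1}\}$ and its descending tail to be $s_{k-2}\cdots s_0$. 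Finally I would observe that any two form~(II) elements attached to the same edge both give $q+4=m(s_k,s_{k-1})$, so the labelling is well defined and consistent with any label already assigned in step~2.
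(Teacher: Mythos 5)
Your proposal is correct and follows essentially the same route as the paper: steps 1 and 2 are justified by Lemma~\ref{lem:leftmost}, Proposition~\ref{prop:link}, Proposition~\ref{prop:M} and the case analysis of Theorem~\ref{thm:construction} restricted to form (I) elements, and step 3 by exhibiting a form (II) witness $s_{k-1}s_ks_{k-1}\dots s_0$ for each edge not already labelled in step 2(b) and counting the $m(s_k,s_{k-1})-4$ form (III) elements above it via Theorem~\ref{thm:forms}. The only place you do more work than the paper is the ``delicate converse'' that no form (III) element on a different base lies above $u$; your Subword-Property matching works, though it follows more directly from the observation that the elements below a chainlike element form a single chain of unique reduced expressions, so any $z>u$ with $z$ chainlike forces $u$ to appear as an iterated $'$ of $z$, pinning down the base.
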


An example of this construction is demonstrated in Figure~\ref{fig:const}.

\begin{figure}[h]
\begin{center}
\begin{tikzpicture}[xscale=1.6,yscale=1]
\begin{scope}
	\node[circle,draw] (e) at (0,0) {};
	\node[draw] (0) at (0,1) {$u''$ (I)};
	\node[draw] (10) at (0,2) {$u'$ (I)};
	\node[draw] (210) at (-1,3) {$v$ (I)};
	\node[draw] (010) at (1,3) {$u$ (II)};
	\node[circle,draw] (0210) at (-1,4) {};
	\node[draw] (1010) at (1,4) {(III)};
	\node[circle,draw] (21010) at (-2,5) {};
	\node[circle,draw] (10210) at (0,5) {};
	\node[draw] (01010) at (2,5) {(III)};
	\node[circle,draw] (121010) at (-2,6) {};
	\node[circle,draw,label=above:{$\vdots$}] (021010) at (0,6) {};
	\node[circle,draw] (010210) at (2,6) {};
\end{scope}
\begin{scope}[every edge/.style={draw=black}]
	\path (0) edge (e) edge (10);
	\path (210) edge (10) edge (0210);
	\path (010) edge (10) edge (0210) edge (1010);
	\path (0210) edge (10210) edge (21010);
	\path (1010) edge (10210) edge (21010) edge (01010);
	\path (10210) edge (121010) edge (010210);
	\path (21010) edge (121010) edge (021010);
	\path (01010) edge (010210) edge (021010);
\end{scope}
\end{tikzpicture}\hspace{1em}
\begin{tikzpicture}[scale=1]
\begin{scope}
    \node[blacknode] (0) at (0,0) {$u''$};
    \node[whitenode] (1) at (0,2) {$u'$};
    \node[whitenode] (2) at (0,4) {$v$};
    \node at (-3.5,2) {\huge{$\rightarrow$}};
    \node at (-3.5,3) {Construction~\ref{thm:graph}};
\end{scope}
\begin{scope}[every edge/.style=graphedge]
    \path (1) edge node[left] {$6$} (0) edge (2);
\end{scope}
\end{tikzpicture}
\end{center}
\caption{Part of a Bruhat poset $W^J$ (left) with chainlike elements and forms identified. The construction produces a graph (right) with a node for each equivalence class of form (I) elements, and the labelled edge 6 is added by step 3.}\label{fig:const}
\end{figure}
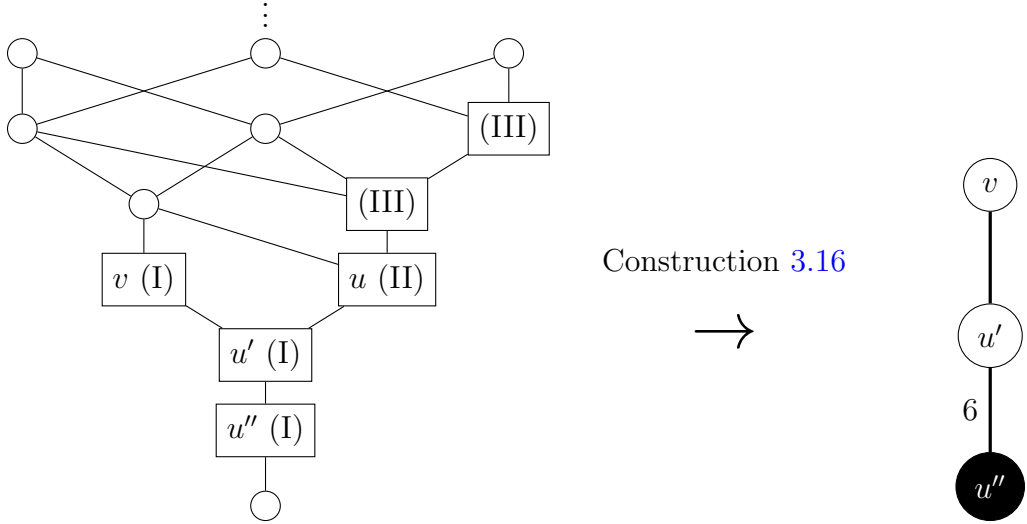

\begin{proof}
As in Theorem~\ref{thm:construction}, steps 1 and 2 in the construction correctly produce all nodes and edges in the bw-Coxeter graph of $W$, and we now check edge labels. For every pair $s,t\in S$ with $m(s,t)\geq3$, by Theorem~\ref{thm:construction} we have at least one of the following:
\begin{itemize}
	\item There are $u,v\in C(W^J)$ with leftmost generators $s,t$ respectively and $u'=v'$. Then by Proposition~\ref{prop:M} step 2 correctly labels the edge.
	\item There is $w\in C(W^J)$ with $w=stw''$. Then by Theorem~\ref{thm:forms} $u=tstw''$ is chainlike of form (II) if and only if $m(s,t)\geq4$, and if this holds then there are exactly $m(s,t)-4$ form (III) chainlikes above $u$, so step 3 correctly labels the edge.
	\item There is $w\in C(W^J)$ with $w=tsw''$, so we have the same result as above but with $s$ and $t$ reversed.
\end{itemize}
Hence all labelled edges match those in the bw-Coxeter graph.
\end{proof}

\subsection{Reducing to irreducibles}\label{subsec:reducible}

Let us now prove Theorem~\ref{thm:reducible} so that Theorem~\ref{thm:graph} can be extended to reducible Coxeter systems as well.

\begin{prop} We have the following results:
\begin{enumerate}[label=(\alph*)]
\item For distinct $s_1,s_2\in S\setminus J$, write $s_1\leftrightarrow s_2$ to mean either $m(s_1,s_2)\geq3$ or there exist chainlikes $w_1\geq s_1,w_2\geq s_2$ with $w_1\sim w_2$ (or both). Then $s_a,s_b\in S\setminus J$ are in the same connected component of the graph of $(W,W_J)$ if and only if there is a sequence $s_a=s_1\leftrightarrow s_2\leftrightarrow\dots\leftrightarrow s_n=s_b$ for some $n\in\N$.
\item Let $S=S_1\sqcup\dots\sqcup S_n$ be the connected components of the graph of $(W,W_J)$. For each $1\leq i\leq n$, the poset $(W_{S_i})^{S_i\cap J}$ is equal to the subposet
\[W^J_i=\{w\in W^J\mid s\leq w\implies s\in S_i\text{ for all }s\in S\setminus J\}.\]
\end{enumerate}
\end{prop}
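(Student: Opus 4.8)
The plan is to prove the two parts separately: part (a) identifies the transitive closure of $\leftrightarrow$ with the connected-component relation on black nodes, and part (b) realizes each component's quotient as a subposet.

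\textbf{Part (a).} First I would dispatch the easy direction, that a single step $s_1\leftrightarrow s_2$ keeps us inside one connected component. If $m(s_1,s_2)\geq3$ this is the definition of an edge. Otherwise there are chainlikes $w_1\geq s_1$, $w_2\geq s_2$ with $w_1\sim w_2$. By Theorem~\ref{thm:forms} the generators appearing in any chainlike are exactly those of its underlying simple chainlike $s_k\dots s_0$, and these form a connected path in the graph; hence every generator of $w_j$ (including its leftmost one) lies in the component of $s_j$. Proposition~\ref{prop:link}(a) forces $w_1$ and $w_2$ to have a common leftmost generator $r$, which therefore lies in the component of $s_1$ and of $s_2$, so these components coincide. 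As ``same component'' is transitive, any $\leftrightarrow$-chain keeps $s_a,s_b$ in one component.

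For the converse I would take a path $s_a=r_0,r_1,\dots,r_k=s_b$ in the Coxeter graph with $m(r_i,r_{i+1})\geq3$, where intermediate $r_i$ may lie in $J$, and show that the successive black nodes along it are $\leftrightarrow$-related. Between two consecutive black nodes joined only through white nodes, say $s_i,t_1,\dots,t_p,s_j$ with $p\geq1$, I would apply Lemma~\ref{lem:leftmost}: the path $t_p,\dots,t_1,s_i$ yields a simple chainlike $w_1$ with leftmost generator $t_p$ and $w_1\geq s_i$, while the edge $t_p,s_j$ gives the simple chainlike $w_2=t_ps_j\geq s_j$, also with leftmost $t_p$. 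Since both are simple with equal leftmost generator, Proposition~\ref{prop:link}(b) gives $w_1\sim w_2$, so $s_i\leftrightarrow s_j$; the case $p=0$ is the direct edge $m(s_i,s_j)\geq3$. Concatenating these steps produces a $\leftrightarrow$-chain from $s_a$ to $s_b$.

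\textbf{Part (b).} The inclusion $(W_{S_i})^{S_i\cap J}\subseteq W^J_i$ is routine: such a $w$ is a product of generators in $S_i$, so every $s\leq w$ lies in $S_i$, and since reduced expressions within the parabolic subgroup $W_{S_i}$ remain reduced in $W$, the rightmost-generator characterization of $W^J$ together with $S_i\cap J = J\cap S_i$ gives $w\in W^J$. The substantive inclusion is the reverse. Given $w\in W^J_i$, I must show that \emph{every} generator appearing in $w$, not only the black ones, lies in $S_i$. The key point is that generators from distinct connected components commute: if $t\leq w$ with $t\in S_{i'}$ for $i'\neq i$, then $t\in J$, and all generators of $S_{i'}$ occurring in $w$ commute with every other generator, so they can be gathered and pushed to the right to yield a reduced expression for $w$ ending in a generator of $S_{i'}\cap J\subseteq J$ --- contradicting $w\in W^J$, since a nonidentity element of $W^J$ must end in $S\setminus J$. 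Hence $w\in W_{S_i}$, and the rightmost-generator characterization again upgrades this to $w\in(W_{S_i})^{S_i\cap J}$. Finally, because the Bruhat order on $W_{S_i}$ is the restriction of the Bruhat order on $W$, the identification holds at the level of posets and not merely sets.

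I expect the main obstacle to be the converse direction of part (a): correctly producing the shared leftmost generator through Lemma~\ref{lem:leftmost} and Proposition~\ref{prop:link}(b) when the two black nodes are separated by a chain of white nodes. The commutation-and-push argument in part (b) that rules out foreign-component white generators is the second delicate point, but it follows cleanly once the cross-component commutativity is made explicit.
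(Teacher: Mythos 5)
Your proposal is correct and follows essentially the same route as the paper: the forward direction of (a) via Lemma~\ref{lem:leftmost} and Proposition~\ref{prop:link}(b) applied to a white node adjacent to consecutive black nodes, the reverse via Theorem~\ref{thm:forms} (connectivity of a chainlike's generators) plus Proposition~\ref{prop:link}(a), and (b) via the commute-and-push-to-the-right argument contradicting $w\in W^J$. No gaps to report.
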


\begin{proof}
For (a), suppose $s_a,s_b$ are in the same connected component. Then there is a sequence $s_a=s_1,s_2,\dots,s_n=s_b$ with each pair $s_i,s_{i+1}$ connected by either an edge, in which case $m(s_i,s_{i+1})\geq3$, or by a sequence of nodes in $J$, in which case selecting one of these nodes and using Lemma~\ref{lem:leftmost} and Proposition~\ref{prop:link} gives chainlikes $w_i\sim w_{i+1}$ with $w_i>s_i$, $w_{i+1}>s_{i+1}$. Thus $s_1\leftrightarrow\dots\leftrightarrow s_n$. Conversely if $s_a=s_1\leftrightarrow\dots\leftrightarrow s_n=s_b$ then each pair $s_i,s_{i+1}$ either has an edge or there are chainlikes $w_i\sim w_{i+1}$ with $w_i\geq s_i$, $w_{i+1}\geq s_{i+1}$. By Theorem~\ref{thm:forms} adjacent generators in the reduced expression for a chainlike element have $m\geq3$, and the leftmost generators in $w_i,w_{i+1}$ are the same by Proposition~\ref{prop:link}, so the expressions for $w_i$ and $w_{i+1}$ give a sequence of nodes connecting $s_i$ and $s_{i+1}$. Thus each pair $s_i,s_{i+1}$ is in the same connected component, and so are $s_a,s_b$.

For (b), we clearly have $(W_{S_i})^{S_i\cap J}\subseteq W^J_i$ since $s\leq w\in W_{S_i}$ only if $s\in S_i$. Now suppose a reduced expression for $w\in W^J_i$ contains a generator $s\in S\setminus S_i$ (and so necessarily $s\in J$), and assume $s$ is the rightmost such generator. Then $s$ can be moved to the right end of the word with braid-moves as it commutes with elements of $S_i$, a contradiction. Thus $W^J_i\subseteq(W_{S_i})^{S_i\cap J}$ and we are done.
\end{proof}

\begin{proof}[Proof of Theorem~\ref{thm:reducible}]
In the isomorphism $W^J\cong U^K$ each non-trivial subposet $W^i$ must map to a unique $U^j$. Moreover, $W^i$ is trivial if and only if the connected component corresponding to $W^i$ only contains generators in $J$, and similarly for each $U^j$. Thus for each trivial $W^i$ we can add an empty set to the disjoint union of connected components $T=T_1\sqcup\dots\sqcup T_{n_2}$ and conversely for each trivial $U^j$ add an empty set to $S=S_1\sqcup\dots\sqcup S_{n_1}$, bringing the unions to the same number of components $n$ and satisfying $(W_{S_i})^{S_i\cap J}\cong (U_{T_i})^{T_i\cap K}$ for each $1\leq i\leq n$.
\end{proof}

\subsection{Detectors and recovering $m(s,t)$}

The next step is to identify the form of $w\in C(W^J)$ using only its position in the poset. Since certain quotients have isomorphic posets there will be cases where this cannot be achieved, and Section~\ref{sec:sudoku} is devoted to identifying these cases. Here we set up key pieces of machinery which we use throughout Section~\ref{sec:sudoku}.

Firstly, we can extend the domain of the function $M$ to include all pairs of chainlikes $u,x$ such that $x$ branches off from the chain $\dots\to u''\to u'\to u$, that is $x'<u'$ and $x\not<u$ (the case $x'=u'$ is already covered by Definition~\ref{def:mstart}). Below we show that for these pairs $(u,x)$ we can define $M$ in terms of the poset so that $M(u,x)=m(s,t)$, where $s,t$ are the leftmost generators in $u,x$ respectively. Since we are most interested in cases where $m(s,t)\geq3$, we give this a special name and say that $x$ detects $u$ (defined more rigorously below). For example, in Figure~\ref{fig:forms} the element $x=010$ detects $u=123210$, since the leftmost generators in each (0 and 1) do not commute.

In most cases (in particular when there are no cycles in the graph) detected elements are always form (II) or (III), hence this method provides a way of revealing `impostor' chainlikes; in Figure~\ref{fig:impostor} for example, we can use the fact that $ts$ detects $sus$ to show that $sus$ is non-simple. Making this rigorous however will take the entirety of Section~\ref{sec:sudoku}.

\begin{definition}
For $u,x\in W^J$ we denote
\[X(u,x)=\{w\in W^J\mid u\leq w,\,x\leq w,\,\ell(w)=L(u,x)\}\]
Suppose $u,x\in C(W^J)$ with $x'<u'$ and $x\not<u$. If $|X(u,x)|=|X(u',x)|+1$, then we say $u$ is \textit{detected by $x$}.
\end{definition}

\begin{prop}\label{prop:detector}
Suppose $u,x\in C(W^J)$ with $x'<u'$ and $x\not<u$. If $u=su'$ and $x=tx'$ with $s,t\in S$, then $u$ is detected by $x$ if and only if $m(s,t)\geq3$. Moreover, if this holds then there is exactly 1 element in $X(u',x)$ covered by 2 distinct elements in $X(u,x)$, and these elements are $stu'\vartriangleright tu'\vartriangleleft tsu'$.
\end{prop}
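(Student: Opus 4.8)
The plan is to work with the unique reduced expressions of $u$ and $x$ given by Proposition~\ref{prop:unique}(b), writing $u=su'$ and $x=tx'$, and to analyse the set $X(u,x)$ of minimal-length elements above both $u$ and $x$ directly in terms of subwords. Since $x'<u'$ and $x\not<u$, the element $x$ branches off the unique chain below $u$; in particular $u'$ lies above both $x'$ and the chain $\dots\vartriangleleft u'\vartriangleleft u$, so the structure is that $u$ and $x$ share the common ``stem'' $u'$ with $x'\leq u'$. First I would compute $L(u,x)$ and describe $X(u,x)$ explicitly. Because $u=su'$ and $x=tx'$ with $x'\leq u'$, the shortest words containing both $u$ and $x$ as subwords are obtained by prepending the generators $s$ and $t$ (in some order, and possibly with a braid interaction) to $u'$. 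The key observation is that $X(u',x)$ consists of minimal-length elements above $u'$ and $x$, and the passage from $X(u',x)$ to $X(u,x)$ measures exactly how many new ways the extra generator $s$ of $u=su'$ can be inserted.

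The main case split is on whether $m(s,t)=2$ or $m(s,t)\geq3$. If $m(s,t)=2$, then $s$ and $t$ commute, so prepending $s$ to an element of $X(u',x)$ produces exactly one new minimal element for each old one (the single element $stu'=tsu'$ arising from the relevant branch point), giving $|X(u,x)|=|X(u',x)|$; here I would argue carefully that no extra element appears, so $u$ is \emph{not} detected. If instead $m(s,t)\geq3$, then $stu'$ and $tsu'$ are distinct reduced elements of $W^J$ by the Word Property, and both cover $tu'\in X(u',x)$; this is the source of the ``$+1$''. Concretely, I would show that every element of $X(u',x)$ lifts to a unique element of $X(u,x)$ by prepending $s$, \emph{except} for the single element $tu'$, which is covered by the two distinct elements $stu'$ and $tsu'$. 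This yields $|X(u,x)|=|X(u',x)|+1$ and simultaneously identifies the branch point and the two covering elements as claimed in the ``moreover''.

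The main obstacle I expect is establishing that $tu'\in X(u',x)$ and that it is the \emph{only} element of $X(u',x)$ with two lifts — in other words, controlling the combinatorics of $X(u',x)$ well enough to see that all other minimal elements above $u'$ and $x$ admit exactly one prepending of $s$ that stays minimal-length and reduced. This requires knowing that $t$ (the leftmost generator of $x$) appears as the ``last obstruction'' sitting directly above $u'$ in the relevant minimal completions, and that the chainlike/simple structure forces $tu'$ to be genuinely in $X(u',x)$ with $L(u',x)=\ell(u')+1$. I would handle this using the subword description of $L$ from the earlier proposition (the formula $L(u,v)=\ell(u)+\ell(v)-m$ via maximal common subwords), computing the maximal common subword of $x$ and $u'$ versus that of $x$ and $u$, and verifying that lengthening $u'$ to $u=su'$ increases $L$ by exactly one while creating exactly one branching of the covering relation. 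Once the count $|X(u,x)|=|X(u',x)|+1\iff m(s,t)\geq3$ is pinned down, the identification of the two covering elements as $stu'\vartriangleright tu'\vartriangleleft tsu'$ follows immediately from the same subword analysis.
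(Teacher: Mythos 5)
Your proposal follows essentially the same route as the paper's proof: establish $L(u',x)=\ell(u')+1$ and $L(u,x)=\ell(u)+1$ via maximal common subwords (using $x\not<u$ to see the common subword is $x'$), describe $X(u',x)$ and $X(u,x)$ explicitly as insertions of $t$ into the reduced words, and observe that prepending $s$ maps $X(u',x)$ injectively into $X(u,x)$ with the single extra element $tsu'$ present precisely when $m(s,t)\geq3$, which also identifies $stu'\vartriangleright tu'\vartriangleleft tsu'$. One intermediate sentence of yours is not literally correct --- the elements of $X(u,x)$ are not all obtained by prepending $s$ and $t$ to $u'$, since with $u=ss_k\dots s_0$ and $x=ts_l\dots s_0$ the generator $t$ may be inserted at any position left of $s_l$, giving the paper's list $\{ss_k\dots s_{i+1}ts_i\dots s_0\}$ --- but your subsequent lifting argument does not depend on that misstatement and matches the paper's counting.
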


\begin{proof}
Since $x'<u'$, let $u$ and $x$ have reduced expressions $u=ss_k\dots s_0$ (not necessarily simple) and $x=ts_l\dots s_0$ with $-1\leq l<k$ (where $l=-1$ means $x=t\in S$). We have $t\neq s$ and also $t\neq s_i$ for $l<i\leq k$ since otherwise $x<u$. We then have $L(u',x)=\ell(u')+1$ since $\ul r=x'$ is a maximal length subword of $u'$ and $x$. Then if $w\in W^J$ with $\ell(w)=\ell(u')+1$, $u'\leq w$ and $x\leq w$, by the Subword Property $w$ can be obtained by inserting $t$ into $s_k\dots s_0$ to the left of $s_l$ (or anywhere if $x=t$), so
\[X(u',x)=\{w_i=s_k\dots s_{i+1}ts_i\dots s_0\mid l\leq i\leq k\}\]
We have $w_i=w_{i-1}$ if and only if $m(t,s_i)\geq3$, so starting at $i=l$ we have one element $w_l$, and then as we increment $i$ we encounter a new element each time $m(t,s_i)\geq3$. Hence,
\[|X(u',x)|=1+\big|\{i\in\N\mid l<i\leq k,\,m(t,s_i)\geq3\}\big|.\]
Repeating the above but exchanging $u'$ for $u$, we obtain 
\[|X(u,x)|=1+\big|\{i\in\N\mid l<i\leq k+1,\,m(t,s_i)\geq3\}\big|\text{ where }s_{k+1}=s\]
and thus $|X(u,x)|=|X(u',x)|+1$ if and only if $m(s,t)\geq3$. For the second claim, consider $w_i$ as above. We have $sw_i\in X(u,x)$, and $sw_i\vartriangleright w_j\implies w_i=w_j$ for any $i,j$, since otherwise $w_j$ would have an expression beginning with $s$, a contradiction. The only element of $X(u,x)$ not of the form $sw_i$ is $tu=tsu'$, which covers $w_k=tu'$. 
\end{proof}

\begin{definition}\label{def:mend}
Suppose $u,x\in C(W^J)$ with $x'<u'$ and $x\not<u$. If $u$ is not detected by $x$, we set $M(u,x)=2$. Otherwise, let $w_1,w_2$ be the unique pair of elements of $X(u,x)$ with $w_1\vartriangleright w\vartriangleleft w_2$ for some $w\in X(u',x)$ as given by Proposition~\ref{prop:detector}, and we denote:
\begin{align*}
	M_2(u,x)&=\{w_1,w_2\};\\
	M_i(u,x)&=\{w\in W^J\mid z\vartriangleleft w\text{ for all }z\in M_{i-1}(u,x)\}\text{ for integers }i\geq3;\\
	M(u,x)&=\min\{i\geq2\mid |M_i(u,x)|=1\}
\end{align*}
with $M(u,x)=\infty$ if $|M_i(u,x)|\neq1$ for all $i\geq2$.
\end{definition}

\begin{thm}
Suppose $u,x\in C(W^J)$ with $x'\leq u'$ and $x\not<u$, and let $s,t$ be the leftmost generators in the unique reduced expressions for $u,x$ respectively. Then
\[M(u,x)=m(s,t)\]
where $M(u,x)$ is given by Definition~\ref{def:mstart} if $x'=u'$ or Definition~\ref{def:mend} if $x'<u'$.
\end{thm}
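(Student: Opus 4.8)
The plan is to show that, in every case, the sets $M_i$ defining $M(u,x)$ form the sequence of alternating braid pairs over the fixed suffix $u'$, and that this sequence collapses to a single element exactly at the length of the braid relation between $s$ and $t$. Write $s,t$ for the leftmost generators of $u,x$, and recall from Proposition~\ref{prop:unique}(b) that every chainlike element has a unique reduced expression, so $u=su'$ and $x=tx'$ with rigid leftmost generators. The target is to prove that $M_i=\{\,\underbrace{\cdots sts}_{i}\,u',\; \underbrace{\cdots tst}_{i}\,u'\,\}$ for $2\le i<m(s,t)$, collapsing to one element precisely when the braid relation $\underbrace{sts\cdots}_{m(s,t)}=\underbrace{tst\cdots}_{m(s,t)}$ first applies; reading off $M(u,x)=\min\{i:|M_i|=1\}$ then gives $m(s,t)$, with $M(u,x)=\infty=m(s,t)$ when no relation applies.

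I would then split into the three sub-cases matching the two definitions. When $x'=u'$ the value is computed by Definition~\ref{def:mstart} from $M_1=\{u,x\}=\{su',tu'\}$, which is exactly the setting of Proposition~\ref{prop:M}; here I would rerun that proposition's argument, the only adaptation being that $u'$ is now an arbitrary chainlike of form (I), (II) or (III) rather than simple. When $x'<u'$ the value is computed by Definition~\ref{def:mend}. If $u$ is not detected by $x$, Proposition~\ref{prop:detector} gives $m(s,t)=2$ while the definition sets $M(u,x)=2$, so they agree. If $u$ is detected, Proposition~\ref{prop:detector} both forces $m(s,t)\ge3$ and identifies the base pair $M_2(u,x)=\{stu',tsu'\}$ as the two elements of $X(u,x)$ covering the single element $tu'\in X(u',x)$; from $M_2$ onward the recursion is literally the one appearing in Proposition~\ref{prop:M} over the same suffix $u'$, and since $M_i$ depends only on the poset $W^J$ it produces the same sequence and the same collapse point.

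The step carrying the real weight, common to both cases and requiring genuine care precisely because $u'$ may be non-simple, is the inductive identification of $M_i$ as the alternating pair with \emph{no} spurious elements. The hazard is that $s$ or $t$ may already occur inside $u'$ (as happens for forms (II) and (III)), or may commute with the leading generator of $u'$, so that a priori a common upper cover of the two elements of $M_{i-1}$ might be formed by inserting a generator into $u'$ rather than prepending one. I would exclude this by a length-and-subword argument: any $w$ covering both elements of $M_{i-1}$ has length $\ell(u')+i$ and must contain each of them as a subword, and since those two elements share the entire generator content of $u'$ lying to the right of their length-$(i-1)$ alternating prefixes, the single extra generator is forced to the very front, leaving exactly the two length-$i$ alternating prefixes times $u'$. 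The rigidity of $u=su'$ (Proposition~\ref{prop:unique}) together with $m(s,t)\ge3$ then keeps these two elements distinct until the braid relation forces $stst\cdots=tsts\cdots$, which is exactly the first $i$ with $|M_i|=1$. Verifying that the two intermediate elements remain distinct \emph{as elements of} $W^J$ even when their reduced expressions are no longer unique is the subtlest point, and is where I expect the main difficulty to lie.
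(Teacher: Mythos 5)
Your proposal is correct and follows essentially the same route as the paper: reduce the not-detected case to $m(s,t)=2$ via Proposition~\ref{prop:detector}, seed $M_2(u,x)$ from the detector structure, and then run the alternating-pair induction of Proposition~\ref{prop:M} over the (possibly non-simple) suffix $u'$, with the collapse occurring exactly at the braid relation. The one minor divergence is the sub-case $x'=u'$ with a non-simple element, where the paper shortcuts by citing the classification of branching points (the remark after Theorem~\ref{thm:forms}) to see that the leftmost generators are $s_k$ and $s_{k-2}$ with $m(s_k,s_{k-2})=2$, whereas you rerun the general subword argument --- both work, and the rigidity issue you flag for the intermediate elements is handled in the paper only by the same appeal to ``the same reasoning as Proposition~\ref{prop:M}'' that you make.
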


\begin{proof}
If $u=ss_k\dots s_0$ is detected by $x=ts_l\dots s_0$ (both not necessarily simple), then by the same reasoning as in the proof of Proposition~\ref{prop:M} we have for $i\leq m(t,s)$,
\[M_i(u,x)=\{\underbrace{\dots tstst}_{i\text{ terms}}s_k\dots s_0,\ \underbrace{\dots ststs}_{i\text{ terms}}s_k\dots s_0\}\]
and $M(u,x)=m(t,s)$. If $u$ is not detected by $x$ then $m(t,s)=2$ and $M(u,x)=2$ by definition. The only case not checked is $u'=x'$ with either $u$ or $x$ not simple, for which we have $\{u,x\}$ either $\{s_k\dots s_0,s_{k-2}s_{k-1}\dots s_0\}$ or $\{s_ks_{k-1}s_k\dots s_0,s_{k-2}s_{k-1}s_k\dots s_0\}$, and $M(u,x)=2=m(s_k,s_{k-2})=m(s,t)$ in both cases.
\end{proof}

\begin{remark}\label{rem:detectors}
Suppose $u,x\in C(W^J)$ with $x'<u'$ and $x\not<u$. Using the classification in Theorem~\ref{thm:forms} we can list all cases where $u$ is detected by $x$; most of Section~\ref{sec:sudoku} will be spent looking for other features in the posets to distinguish these. The table below shows all cases where $u$ is detected by $x$ with $\ell(x)\geq2$. All differently labelled generators (including the unlabelled $s$) are distinct.

\begin{center}\begin{tabular}{l|l|l}
$u$ & $x$ & Conditions \\\hline\hline
$s_k\dots s_0$ (I) & $ss_j\dots s_0$ (I) & $m(s,s_k)\geq3$, $0\leq j\leq k-2$\\\hline
$s_l\dots s_k\dots s_0$ (II) & $ss_j\dots s_0$ (I) & $m(s,s_l)\geq3$, $0\leq j\leq l$ \\
$s_l\dots s_k\dots s_0$ (II) & $s_{l-1}s_l\dots s_0$ (II) & $m(s_l,s_{l-1})\geq4$, $l\neq0$ \\\hline
$s_k\dots s_ks_{k-1}s_k\dots s_0$ (III) & $ss_j\dots s_0$ (I) & $m(s,s_k)\geq3$, $0\leq j\leq k$ \\\hline
$s_{k-1}\dots s_ks_{k-1}s_k\dots s_0$ (III) & $ss_j\dots s_0$ (I) & $m(s,s_{k-1})\geq3$, $0\leq j\leq k-1$ \\
$s_{k-1}\dots s_ks_{k-1}s_k\dots s_0$ (III) & $s_{k-2}s_{k-1}\dots s_0$ (II) & $m(s_{k-1},s_{k-2})\geq4$, $k\geq2$ \\
$s_{k-1}\dots s_ks_{k-1}s_k\dots s_0$ (III) & $s_{k-2}s_{k-1}s_k\dots s_0$ (II) & $m(s_{k-1},s_{k-2})=3$, $k\geq2$ \\\hline
\end{tabular}\end{center}\vspace{5pt}

Notice that there is only one case for which $u$ is simple, and for it to occur there must be a cycle in the graph, $s-s_j-s_{j+1}-\dots-s_k-s$. We will see later in Lemma~\ref{lem:A} that this case can frequently be separated from the others.
\end{remark}

If $x=s\in S\setminus J$ then $u$ is detected by $x$ if and only if $m(t,x)\geq3$ where $u=tu'$. We can say a little more about this case:

\begin{prop}\label{prop:black}
Suppose $u$ is detected by $s\in S\setminus J$. If there is $v\in C(W^J)$ of length 2 with $v'=s$ and $u\sim v$, then $u$ is simple.
\end{prop}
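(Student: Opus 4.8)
The plan is to distill the three hypotheses into the single combinatorial fact that the leftmost generator $a$ of $u$ occurs exactly once in its unique reduced expression, and then read off from Theorem~\ref{thm:forms} that this forces $u$ to be simple.

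First I would record what each hypothesis provides. Since $u$ is detected by $s$, the definition of detection (applied with the detector $x=s$, so that $x'=e$) gives $s\not\leq u$ and $e<u'$, hence $\ell(u)\geq 2$; in particular $u'$ is itself chainlike and so has a unique reduced expression. Next, $v$ is chainlike of length $2$ with $v'=s$, so by the base case analysis in Theorem~\ref{thm:simple} its reduced expression is $v=bs$ with $b$ its leftmost generator. Finally, $u\sim v$ together with Proposition~\ref{prop:link}(a) forces the leftmost generators of $u$ and $v$ to coincide; writing $a$ for the leftmost generator of $u$ we obtain $v=as$.

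The core of the argument is a pair of length computations using the subword formula $L(\cdot,\cdot)=\ell(\cdot)+\ell(\cdot)-m$, where $m$ is the maximal length of a common subword. Because $s\not\leq u$ and $v=as$ involves only the letters $a$ and $s$, the only common subword of $u$ and $v$ is the single letter $a$, so $L(u,v)=\ell(u)+1$. For $L(u',v)$ the letter $s$ still cannot appear, as $u'<u$, so the value depends entirely on whether $a\leq u'$: if $a\leq u'$ then $L(u',v)=\ell(u')+1=\ell(u)=L(u,v)-1$, whereas if $a\not\leq u'$ then $L(u',v)=\ell(u')+2=\ell(u)+1=L(u,v)$. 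Since $u\sim v$ requires $L(u',v)=L(u,v)$, this forces $a\not\leq u'$; equivalently, the leftmost generator $a$ appears exactly once in the reduced expression of $u=au'$.

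It then remains to convert ``the leftmost generator appears exactly once'' into simplicity, which is a direct appeal to Theorem~\ref{thm:forms}. The leftmost generator $s_l$ of a form (II) element $s_l\cdots s_k\cdots s_0$ reappears in the descending tail, since $l<k$, and the leftmost generator of a form (III) element reappears inside the alternating block of length at least $4$; in both cases it occurs at least twice. Hence a chainlike element whose leftmost generator occurs only once must be of form (I), i.e. simple, which is the desired conclusion. I expect the only delicate point to be the bookkeeping in the two $L$-computations, namely verifying that no longer common subword is ever available and that the dichotomy $a\leq u'$ versus $a\not\leq u'$ is exactly the non-simple/simple distinction for chainlikes; everything else is routine use of the established machinery.
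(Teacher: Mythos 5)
Your proposal is correct and follows essentially the same route as the paper: both identify $v=as$ with $a$ the leftmost generator of $u$ via Proposition~\ref{prop:link}(a), compute $L(u,v)$ and $L(u',v)$ with the common-subword formula using $s\nleq u$, and conclude that $u\sim v$ forces the leftmost generator to occur exactly once, which by Theorem~\ref{thm:forms} rules out forms (II) and (III). The paper phrases this as a contradiction while you argue directly, but the substance is identical.
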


\begin{proof}
We have $v=ts$ where $t$ is the leftmost generator of $u$. If $u$ is not simple, then $t$ appears at least twice in the expression for $u$, so $\ul r=t$ is a maximal subword of both pairs $\{u,v\}$ and $\{u',v\}$. Thus $L(u,v)=L(u',v)+1$, contradicting $u\sim v$.
\end{proof}

We introduce one more general piece of machinery in this section for distinguishing posets: in the case where $u'=v'$ and $M(u,v)=2$ we define sets $N_i$ of semi-chainlike elements above $u$ and $v$, whose generator expressions have similarities with chainlike elements. The sets $N_i$ turn out to each only have one element, but we denote them as sets in advance of proving this fact.

\begin{definition}
For $u,v\in C(W^J)$ with $u'=v'$ and $M(u,v)=2$, we denote:
\begin{align*}
	N_0(u,v)&=M_2(u,v)=\{w\in W^J\mid u\vartriangleleft w,\,v\vartriangleleft w\}\quad\text{(which only has 1 element)};\\
	N_i(u,v)&=\{w\in W^J\mid w\text{ semi-chainlike with }w'\in N_{i-1}(u,x)\}\text{ for integers }i\geq1;\\
	N(u,v)&=\min\{i\geq1\mid N_i(u,v)=\emptyset\}
\end{align*}
where we take $N(u,v)=\infty$ if $N_i(u,v)$ is non-empty for all $i\geq1$.
\end{definition}

\begin{prop}\label{prop:N}
Suppose $u,v\in C(W^J)$ with $u'=v'$.
\begin{enumerate}[label=(\alph*)]
	\item If $u=ss_k\dots s_0$ and $v=ts_k\dots s_0$ are both simple with $m(s,t)=2$, then suppose $l\in\{0,\dots,k\}$ is minimal with $m(s_i,s_{i+1})=3$ for $l\leq i<k$. Then
	\begin{align*}
		N(u,v)&=\begin{cases}k-l+2&\text{if }m(s,s_k)=m(t,s_k)=3\\1&\text{otherwise}\end{cases}
	\shortintertext{\item If $u=s_{k-1}s_k\dots s_0$ is of form (II) and $v=ss_k\dots s_0$ is simple, then}
		N(u,v)&=\begin{cases}3&\text{if }m(s,s_k)=3\text{ and }m(s_k,s_{k-1})=4\\1&\text{otherwise}\end{cases}
	\shortintertext{\item If $u=s_ks_{k-1}s_k\dots s_0$ is of form (III) and $v=s_{k-2}s_{k-1}s_k\dots s_0$ is of form (II), then}
		N(u,v)&=\begin{cases}5&\text{if }m(s_k,s_{k-1})=5\\1&\text{otherwise}\end{cases}
	\end{align*}
\end{enumerate}
\end{prop}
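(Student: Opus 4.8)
The plan is to prove all three cases by the same method: exhibit the chain of semi-chainlike elements counted by $N$ explicitly, and verify it directly against the definition. In each case $N_0(u,v)=M_2(u,v)$ is the single element obtained by prepending the leftmost generator of one of $u,v$ to the other, and since $M(u,v)=2$ these two leftmost generators commute, so the two prepends agree. This gives $w_0=sts_k\cdots s_0$ in (a), $w_0=ss_{k-1}s_k\cdots s_0$ in (b), and $w_0=s_{k-2}s_ks_{k-1}s_ks_{k-1}s_{k-2}\cdots s_0$ in (c). I would record each $w_0$ together with its reduced expression, which is unique by the Word Property.

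The engine of the computation is the following reduction, valid by Proposition~\ref{prop:unique}(a): an element $w$ is semi-chainlike with $w'=z$ if and only if $w=gz$ for some $g\in S$ with $\ell(gz)=\ell(z)+1$, with $gz\in W^J$, and with $gz$ covering only $z$. Hence, starting from $N_{i-1}=\{z\}$, computing $N_i$ amounts to running over generators $g$, forming $gz$, and testing these three conditions by nil- and braid-manipulation of words together with Lemma~\ref{lem:semi}. I claim the successful prepends build the chains
\[w_j=s_{k-j+1}\cdots s_k\,w_0\quad(1\le j\le k-l+1)\]
in (a), the two prepends $w_1=s_kw_0,\ w_2=s_{k-1}s_kw_0$ in (b), and the four successive prepends by $s_{k-1},s_k,s_{k-1},s_{k-2}$ in (c); the stated value of $N$ is then one more than the number of prepends. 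That each $w_j$ covers only $w_{j-1}$ is checked by removing generators: deleting any generator of $w_j$ other than its leftmost either creates a nil-move, so the length drops by more than one, or, after braid-moves, yields a word ending in $J$, so it leaves $W^J$; this is exactly where simpleness of the tail $s_k\cdots s_0$ and the hypotheses on the relevant $m(\,\cdot\,,\cdot\,)$ enter.

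The two genuinely delicate points, and where I expect the real work, are uniqueness of each $N_i$ and termination. For uniqueness I must show no other generator $g$ yields a semi-chainlike cover of $z=w_{j-1}$. If $g$ commutes with the leftmost generator $p$ of $z$, then $gz$ has a reduced expression beginning with $p$ as well, so it is not semi-chainlike by Proposition~\ref{prop:unique}(a); thus only neighbours of $p$ need be examined. For a path-neighbour on the wrong side (such as $s_{k-j+3}$) a length-$3$ braid $ps_{k-j+3}p$ again exposes a second reduced expression, whereas for a competing prong (such as $t$ in case (a)) a short braid-and-commute computation drives a $J$-generator to the right end, giving $gz\notin W^J$; in every subcase exactly one $g$ survives. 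Termination is the same mechanism read at the top of the chain: when $l>0$ in (a) (respectively $m(s_k,s_{k-1})\ge 5$ in (b) or $\ge 6$ in (c)) the next prepend meets a multi-bond and the resulting element picks up a second lower cover that is a form-(II) or form-(III) element supplied by Theorem~\ref{thm:forms}, so it is not semi-chainlike; when $l=0$ the last chain element is the top coset representative and the attempted prepend braids with a simple bond, again failing Proposition~\ref{prop:unique}(a). This identical mechanism also disposes of the ``$N=1$'' alternatives: once a marked bond exceeds its threshold, the unique element above $w_0$ already covers a second form-(II) or (III) element, so no semi-chainlike element lies over $w_0$ and $N_1=\emptyset$. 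Finally I would remark that any generator of $W$ outside $\{s,t,s_0,\dots,s_k\}$ can only add further lower covers to a prepended word, and so never enlarges an $N_i$; this is what makes the foregoing local computation valid for an arbitrary ambient $W$.
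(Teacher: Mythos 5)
Your proposal follows essentially the same route as the paper's proof: it exhibits the same explicit singleton chains $N_i$, restricts candidate covers to prepended generators that do not commute with the current leftmost letter via Proposition~\ref{prop:unique}(a) and Lemma~\ref{lem:semi}, and terminates by producing a second covered element of $W^J$ exactly where the paper does. The only slip is cosmetic: the extra covered elements that destroy semi-chainlikeness (e.g.\ $s_{l-1}s_{l+1}\dots s_ksts_k\dots s_0$ in case (a)) are in general just elements of $W^J$ rather than chainlikes of form (II) or (III), but this does not affect the argument.
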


Examples of all of these cases are given in Figure~\ref{fig:N} at the end of this section. This $N$ function will be used in the last part of Section~\ref{sec:sudoku}. For example, in Figure~\ref{fig:h3} we can see that in the $H_3$ poset we have $N(12321,32321)=5$, which is case (b) in Proposition~\ref{prop:N}, while in the $D_6$ poset we have $N(54321,64321)=k-l+2=3-0+2=5$, which is case (a). This provides an explanation for why the exceptional pair $(H_3,H_2)\leftrightarrow(D_6,D_5)$ does not extend to an infinite family of isomorphisms; for other Coxeter pairs $(D_n,D_{n-1})$ we obtain a different value than 5 for $k-l+2$. This particular case is formalised in Proposition~\ref{prop:n5}.

\begin{figure}\centering
\begin{tikzpicture}
\begin{scope}[shift={(-3,0)}]
\node[smallblacknode] (0) at (0,0) {\scriptsize{$0$}};
\node[smallwhitenode] (1) at (0,1) {\scriptsize{$1$}};
\node[smallwhitenode] (2) at (0,2) {\scriptsize{$2$}};
\node[smallwhitenode] (3) at (-0.5,2.866) {\scriptsize{$3$}};
\node[smallwhitenode] (4) at (0.5,2.866) {\scriptsize{$4$}};
\begin{scope}[every edge/.style=graphedge]
\path (0) edge node[left] {$4$} (1);
\path (2) edge (1) edge (3) edge (4);
\end{scope}
\end{scope}
\begin{scope}[shift={(0,0)}, xscale=1.3, yscale=0.7, every node/.style={align=center}]
\node (e) at (0,0) {\tiny{$e$}};
\node (0) at (0.0,1) {\tiny{$0$}};
\node (10) at (0.0,2) {\tiny{$10$}};
\node (210) at (-0.5,3) {\tiny{$210$}};
\node (010) at (0.5,3) {\tiny{$010$}};
\node (0210) at (1.0,4) {\tiny{$0210$}};
\node (4210) at (-1.0,4) {\tiny{$\*{4210}$}};
\node (3210) at (0.0,4) {\tiny{$\*{3210}$}};
\node (34210) at (-1.5,5) {\tiny{$\*{34210}$}};
\node (03210) at (0.5,5) {\tiny{$03210$}};
\node (04210) at (-0.5,5) {\tiny{$04210$}};
\node (10210) at (1.5,5) {\tiny{$10210$}};
\node (104210) at (0.0,6) {\tiny{$104210$}};
\node (103210) at (1.0,6) {\tiny{$103210$}};
\node (010210) at (2.0,6) {\tiny{$010210$}};
\node (034210) at (-1.0,6) {\tiny{$034210$}};
\node (234210) at (-2.0,6) {\tiny{$\*{234210}$}};
\node (1234210) at (-2.5,7) {\tiny{$\*{1234210}$}};
\node (0234210) at (-1.5,7) {\tiny{$0234210$}};
\node (1034210) at (-0.5,7) {\tiny{$1034210$}};
\node (0104210) at (1.5,7) {\scriptsize{$\cdots$}}; %{\tiny{$0104210$}};
%\node (0103210) at (2.0,7) {\tiny{$0103210$}};
%\node (2103210) at (3.0,7) {\tiny{$2103210$}};
\node (2104210) at (0.5,7) {\tiny{$2104210$}};
\node (32104210) at (-1,8) {\scriptsize{$\cdots$}}; %{\tiny{$32104210$}};
%\node (02104210) at (-0.5,8) {\tiny{$02104210$}};
%\node (42103210) at (2.5,8) {\tiny{$42103210$}};
%\node (02103210) at (3.5,8) {\tiny{$02103210$}};
%\node (01034210) at (1.5,8) {\tiny{$01034210$}};
%\node (21034210) at (0.5,8) {\tiny{$21034210$}};
\node (10234210) at (-2,8) {\tiny{$10234210$}};
\node (01234210) at (-3,8) {\tiny{$01234210$}};
\path (0) edge (10) edge (e);
\path (210) edge (10);
\path (010) edge (10);
\path (0210) edge (210) edge (010);
\path (4210) edge (210);
\path (3210) edge (210);
\path (34210) edge (4210) edge (3210);
\path (03210) edge (3210) edge (0210);
\path (04210) edge (4210) edge (0210);
\path (10210) edge (0210);
\path (104210) edge (04210) edge (10210);
\path (103210) edge (03210) edge (10210);
\path (010210) edge (10210);
\path (034210) edge (34210) edge (04210) edge (03210);
\path (234210) edge (34210);
\path (1234210) edge (234210);
\path (0234210) edge (234210) edge (034210);
\path (1034210) edge (034210) edge (104210) edge (103210);
%\path (0104210) edge (104210) edge (010210);
%\path (0103210) edge (103210) edge (010210);
%\path (2103210) edge (103210);
\path (2104210) edge (104210);
%\path (32104210) edge (2104210) edge (1034210);
%\path (02104210) edge (2104210) edge (0104210);
%\path (42103210) edge (2103210) edge (1034210);
%\path (02103210) edge (2103210) edge (0103210);
%\path (01034210) edge (1034210) edge (0104210) edge (0103210);
%\path (21034210) edge (1034210) edge (0234210) edge (2104210) edge (2103210);
\path (10234210) edge (0234210) edge (1234210) edge (1034210);
\path (01234210) edge (1234210) edge (0234210);
\end{scope}
\begin{scope}[shift={(5,0)}]
\node[smallblacknode] (0) at (0,0) {\scriptsize{$0$}};
\node[smallwhitenode] (1) at (0,1) {\scriptsize{$1$}};
\node[smallwhitenode] (2) at (0,2) {\scriptsize{$2$}};
\node[smallwhitenode] (3) at (-0.5,2.866) {\scriptsize{$3$}};
\node[smallwhitenode] (4) at (0.5,2.866) {\scriptsize{$4$}};
\begin{scope}[every edge/.style=graphedge]
\path (0) edge node[left] {$4$} (1);
\path (2) edge (1) edge node[below left] {$4$} (3) edge (4);
\end{scope}
\end{scope}
\begin{scope}[shift={(7.5,0)}, xscale=1.1, yscale=0.85]
\node (e) at (0,0) {\tiny{$e$}};
\node (0) at (0.0,1) {\tiny{$0$}};
\node (10) at (0.0,2) {\tiny{$10$}};
\node (210) at (-0.5,3) {\tiny{$210$}};
\node (010) at (0.5,3) {\tiny{$010$}};
\node (0210) at (1.0,4) {\tiny{$0210$}};
\node (4210) at (-1.0,4) {\tiny{$\*{4210}$}};
\node (3210) at (0.0,4) {\tiny{$\*{3210}$}};
\node (34210) at (-2.0,5) {\tiny{$\*{34210}$}};
\node (23210) at (0.0,5) {\tiny{$23210$}};
\node (03210) at (1.0,5) {\tiny{$03210$}};
\node (04210) at (-1.0,5) {\tiny{$04210$}};
\node (10210) at (2.0,5) {\tiny{$10210$}};
\node (104210) at (0.5,6) {\tiny{$104210$}};
%\node (103210) at (3.5,6) {\tiny{$103210$}};
\node (010210) at (1.5,6) {\scriptsize{$\cdots$}}; %{\tiny{$010210$}};
\node (034210) at (-1.5,6) {\tiny{$034210$}};
%\node (023210) at (2.5,6) {\tiny{$023210$}};
\node (423210) at (-0.5,6) {\tiny{$423210$}};
%\node (123210) at (1.5,6) {\tiny{$123210$}};
\node (234210) at (-2.5,6) {\tiny{$234210$}};
\path (0) edge (10) edge (e);
\path (210) edge (10);
\path (010) edge (10);
\path (0210) edge (210) edge (010);
\path (4210) edge (210);
\path (3210) edge (210);
\path (34210) edge (4210) edge (3210);
\path (23210) edge (3210);
\path (03210) edge (3210) edge (0210);
\path (04210) edge (4210) edge (0210);
\path (10210) edge (0210);
\path (104210) edge (04210) edge (10210);
%\path (103210) edge (03210) edge (10210);
%\path (010210) edge (10210);
\path (034210) edge (34210) edge (04210) edge (03210);
%\path (023210) edge (23210) edge (03210);
\path (423210) edge (23210) edge (34210);
%\path (123210) edge (23210);
\path (234210) edge (34210) edge (23210);
\end{scope}
\begin{scope}[shift={(-3,-7)}]
\node[smallblacknode] (0) at (0,0) {\scriptsize{$0$}};
\node[smallwhitenode] (1) at (0,1) {\scriptsize{$1$}};
\node[smallwhitenode] (2) at (0,2) {\scriptsize{$2$}};
\node[smallwhitenode] (3) at (0,3) {\scriptsize{$3$}};
\node[smallwhitenode] (4) at (0,4) {\scriptsize{$4$}};
\begin{scope}[every edge/.style=graphedge]
\path (1) edge (0) edge (2);
\path (3) edge node[left] {$4$} (2) edge (4);
\end{scope}
\end{scope}
\begin{scope}[shift={(0,-7)}, xscale=1.8, yscale=0.6]
\node (e) at (0,0) {\tiny{$e$}};
\node (0) at (0.0,1) {\tiny{$0$}};
\node (10) at (0.0,2) {\tiny{$10$}};
\node (210) at (0.0,3) {\tiny{$210$}};
\node (3210) at (0.0,4) {\tiny{$3210$}};
\node (43210) at (-0.5,5) {\tiny{$\*{43210}$}};
\node (23210) at (0.5,5) {\tiny{$\*{23210}$}};
\node (243210) at (-0.5,6) {\tiny{$\*{243210}$}};
\node (123210) at (0.5,6) {\tiny{$123210$}};
\node (1243210) at (0.0,7) {\tiny{$1243210$}};
\node (0123210) at (1.0,7) {\tiny{$0123210$}};
\node (3243210) at (-1.0,7) {\tiny{$\*{3243210}$}};
\node (23243210) at (-1.0,8) {\tiny{$\*{23243210}$}};
\node (13243210) at (0.0,8) {\tiny{$13243210$}};
\node (01243210) at (1.0,8) {\tiny{$01243210$}};
\node (013243210) at (1.0,9) {\scriptsize{$\dots\qquad$}}; %{\tiny{$013243210$}};
\node (213243210) at (-1.0,9) {\tiny{$213243210$}};
\node (123243210) at (0.0,9) {\tiny{$123243210$}};
\path (0) edge (10) edge (e);
\path (210) edge (10);
\path (3210) edge (210);
\path (43210) edge (3210);
\path (23210) edge (3210);
\path (243210) edge (43210) edge (23210);
\path (123210) edge (23210);
\path (1243210) edge (243210) edge (123210);
\path (0123210) edge (123210);
\path (3243210) edge (243210);
\path (23243210) edge (3243210);
\path (13243210) edge (3243210) edge (1243210);
\path (01243210) edge (1243210) edge (0123210);
%\path (013243210) edge (13243210) edge (01243210);
\path (213243210) edge (13243210) edge (23243210);
\path (123243210) edge (23243210) edge (13243210);
\end{scope}
\begin{scope}[shift={(5,-7)}]
\node[smallblacknode] (0) at (0,0) {\scriptsize{$0$}};
\node[smallwhitenode] (1) at (0,1) {\scriptsize{$1$}};
\node[smallwhitenode] (2) at (0,2) {\scriptsize{$2$}};
\node[smallwhitenode] (3) at (0,3) {\scriptsize{$3$}};
\node[smallwhitenode] (4) at (0,4) {\scriptsize{$4$}};
\begin{scope}[every edge/.style=graphedge]
\path (1) edge (0) edge (2);
\path (3) edge node[left] {$4$} (2) edge node[left] {$4$} (4);
\end{scope}
\end{scope}
\begin{scope}[shift={(7.5,-7)}, xscale=1.4, yscale=0.75]
\node (e) at (0,0) {\tiny{$e$}};
\node (0) at (0.0,1) {\tiny{$0$}};
\node (10) at (0.0,2) {\tiny{$10$}};
\node (210) at (0.0,3) {\tiny{$210$}};
\node (3210) at (0.0,4) {\tiny{$3210$}};
\node (43210) at (-0.5,5) {\tiny{$\*{43210}$}};
\node (23210) at (0.5,5) {\tiny{$\*{23210}$}};
\node (243210) at (0.0,6) {\tiny{$\*{243210}$}};
\node (123210) at (1.0,6) {\tiny{$123210$}};
\node (343210) at (-1.0,6) {\tiny{$343210$}};
\node (2343210) at (-0.5,7) {\tiny{$2343210$}};
\node (1243210) at (0.5,7) {\tiny{$1243210$}};
\node (0123210) at (1.5,7) {\scriptsize{$\dots$}}; %{\tiny{$0123210$}};
\node (3243210) at (-1.5,7) {\tiny{$3243210$}};
\path (0) edge (e);
\path (10) edge (0);
\path (210) edge (10);
\path (3210) edge (210);
\path (43210) edge (3210);
\path (23210) edge (3210);
\path (243210) edge (43210) edge (23210);
\path (123210) edge (23210);
\path (343210) edge (43210);
\path (2343210) edge (343210) edge (243210);
\path (1243210) edge (243210) edge (123210);
%\path (0123210) edge (123210);
\path (3243210) edge (243210) edge (343210);
\end{scope}
\begin{scope}[shift={(-3,-15)}]
\node[smallblacknode] (0) at (0,0) {\scriptsize{$0$}};
\node[smallwhitenode] (1) at (0,1) {\scriptsize{$1$}};
\node[smallwhitenode] (2) at (0,2) {\scriptsize{$2$}};
\node[smallwhitenode] (3) at (0,3) {\scriptsize{$3$}};
\begin{scope}[every edge/.style=graphedge]
\path (1) edge (0) edge (2);
\path (3) edge node[left] {$5$} (2);
\end{scope}
\end{scope}
\begin{scope}[shift={(0,-15)}, xscale=2.2, yscale=0.55]
\node (e) at (0,0) {\tiny{$e$}};
\node (0) at (0.0,1) {\tiny{$0$}};
\node (10) at (0.0,2) {\tiny{$10$}};
\node (210) at (0.0,3) {\tiny{$210$}};
\node (3210) at (0.0,4) {\tiny{$3210$}};
\node (23210) at (0.0,5) {\tiny{$23210$}};
\node (323210) at (-0.5,6) {\tiny{$\*{323210}$}};
\node (123210) at (0.5,6) {\tiny{$\*{123210}$}};
\node (1323210) at (-0.5,7) {\tiny{$\*{1323210}$}};
\node (0123210) at (0.5,7) {\tiny{$0123210$}};
\node (01323210) at (0.5,8) {\tiny{$01323210$}};
\node (21323210) at (-0.5,8) {\tiny{$\*{21323210}$}};
\node (321323210) at (-0.5,9) {\tiny{$\*{321323210}$}};
\node (021323210) at (0.5,9) {\tiny{$021323210$}};
\node (0321323210) at (0.0,10) {\tiny{$0321323210$}};
\node (1021323210) at (1.0,10) {\tiny{$1021323210$}};
\node (2321323210) at (-1.0,10) {\tiny{$\*{2321323210}$}};
\node (12321323210) at (-1.0,11) {\tiny{$\*{12321323210}$}};
\node (02321323210) at (0.0,11) {\tiny{$02321323210$}};
\node (10321323210) at (1.0,11) {\tiny{$10321323210$}};
\node (210321323210) at (1.0,12) {\scriptsize{$\dots\qquad$}}; %{\tiny{$210321323210$}};
\node (102321323210) at (0.0,12) {\tiny{$102321323210$}};
\node (012321323210) at (-1.0,12) {\tiny{$012321323210$}};
\path (0) edge (10) edge (e);
\path (210) edge (10);
\path (3210) edge (210);
\path (23210) edge (3210);
\path (323210) edge (23210);
\path (123210) edge (23210);
\path (1323210) edge (323210) edge (123210);
\path (0123210) edge (123210);
\path (01323210) edge (1323210) edge (0123210);
\path (21323210) edge (1323210);
\path (321323210) edge (21323210);
\path (021323210) edge (21323210) edge (01323210);
\path (0321323210) edge (321323210) edge (021323210);
\path (1021323210) edge (021323210);
\path (2321323210) edge (321323210);
\path (12321323210) edge (2321323210);
\path (02321323210) edge (2321323210) edge (0321323210);
\path (10321323210) edge (0321323210) edge (1021323210);
%\path (210321323210) edge (10321323210) edge (02321323210);
\path (102321323210) edge (02321323210) edge (12321323210) edge (10321323210);
\path (012321323210) edge (12321323210) edge (02321323210);
\end{scope}
\begin{scope}[shift={(5,-15)}]
\node[smallblacknode] (0) at (0,0) {\scriptsize{$0$}};
\node[smallwhitenode] (1) at (0,1) {\scriptsize{$1$}};
\node[smallwhitenode] (2) at (0,2) {\scriptsize{$2$}};
\node[smallwhitenode] (3) at (0,3) {\scriptsize{$3$}};
\begin{scope}[every edge/.style=graphedge]
\path (1) edge (0) edge (2);
\path (3) edge node[left] {$6$} (2);
\end{scope}
\end{scope}
\begin{scope}[shift={(7.5,-15)}, xscale=1.5, yscale=0.72]
\node (e) at (0,0) {\tiny{$e$}};
\node (0) at (0.0,1) {\tiny{$0$}};
\node (10) at (0.0,2) {\tiny{$10$}};
\node (210) at (0.0,3) {\tiny{$210$}};
\node (3210) at (0.0,4) {\tiny{$3210$}};
\node (23210) at (0.0,5) {\tiny{$23210$}};
\node (323210) at (-0.5,6) {\tiny{$\*{323210}$}};
\node (123210) at (0.5,6) {\tiny{$\*{123210}$}};
\node (1323210) at (0.0,7) {\tiny{$\*{1323210}$}};
\node (0123210) at (1.0,7) {\tiny{$0123210$}};
\node (2323210) at (-1.0,7) {\tiny{$2323210$}};
\node (12323210) at (0.0,8) {\tiny{$12323210$}};
\node (01323210) at (1.0,8) {\tiny{$01323210$}};
\node (21323210) at (-1.0,8) {\tiny{$21323210$}};
\node (321323210) at (-1.5,9) {\tiny{$321323210$}};
\node (121323210) at (-0.5,9) {\tiny{$121323210$}};
\node (021323210) at (0.5,9) {\scriptsize{$\dots$}}; %{\tiny{$021323210$}};
%\node (012323210) at (1.5,9) {\tiny{$012323210$}};
\path (0) edge (10) edge (e);
\path (210) edge (10);
\path (3210) edge (210);
\path (23210) edge (3210);
\path (323210) edge (23210);
\path (123210) edge (23210);
\path (1323210) edge (323210) edge (123210);
\path (0123210) edge (123210);
\path (2323210) edge (323210);
\path (12323210) edge (2323210) edge (1323210);
\path (01323210) edge (1323210) edge (0123210);
\path (21323210) edge (1323210) edge (2323210);
\path (321323210) edge (21323210);
\path (121323210) edge (12323210) edge (21323210);
%\path (021323210) edge (12323210) edge (01323210);
%\path (012323210) edge (12323210) edge (01323210);
\end{scope}
\end{tikzpicture}
\caption{Examples of Proposition~\ref{prop:N} cases (a) with $N=k-l+2$ (top left) and $N=1$ (top right), (b) with $N=3$ (middle left) and $N=1$ (middle right), and (c) with $N=5$ (bottom left) and $N=1$ (bottom right). In each example, $u$, $v$ and the elements of the sets $N_i(u,v)$ for $i\geq0$ are highlighted in bold.}\label{fig:N}
\end{figure}
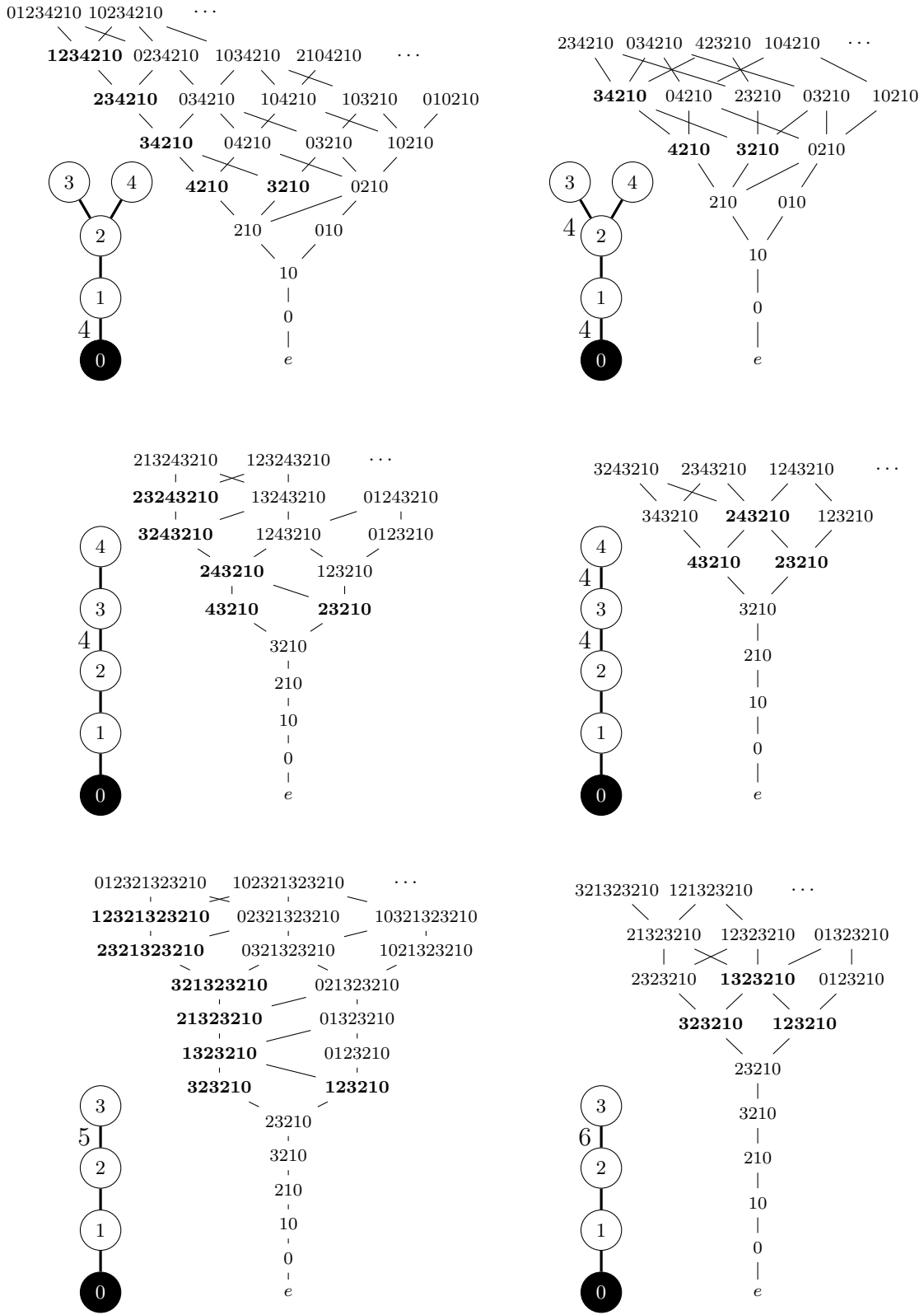

\begin{proof}[Proof of Proposition~\ref{prop:N}] We abbreviate Proposition~\ref{prop:unique} and Lemma~\ref{lem:semi} as $(*)$.

For (a), assume $m(s,s_k)=m(t,s_k)=3$. We have $N_0(u,v)=\{sts_k\dots s_0\}=\{tss_k\dots s_0\}$ and show by induction that $N_i(u,v)=\{s_{k-i+1}\dots s_ksts_k\dots s_0\}$ for $1\leq i\leq k-l+1$. If $w$ is semi-chainlike and covers $w_j=s_j\dots s_ksts_k\dots s_0$, then by $(*)$ the only possibility is $w=s_{j-1}\dots s_ksts_k\dots s_0$. However, if $m(s_j,s_{j-1})\geq4$ then $w$ also covers $s_{j-1}s_{j+1}\dots s_ksts_k\dots s_0\in W^J$ and $w$ is not semi-chainlike. This completes the induction and shows $N_{k-l+2}(u,v)$ is empty. If instead $m(s,s_k)\geq4$ then $w_k$ also covers $s_kss_k\dots s_0$, so $N_1(u,v)$ is empty and $N=1$, and the same holds if $m(t,s_k)\geq4$.

For (b), we have $N_0(u,v)=\{s_{k-1}ss_k\dots s_0\}=\{ss_{k-1}s_k\dots s_0\}$. Then by $(*)$, the only possible element of $N_1(u,v)$ is $s_ks_{k-1}ss_k\dots s_0$. This is not semi-chainlike and $N=1$ if and only if either $s_kss_k\dots s_0$ or $s_ks_{k-1}s_k\dots s_0$ are in $W^J$, which correspond to $m(s,s_k)\geq4$ and $m(s_k,s_{k-1})\geq5$ respectively. Otherwise, we have $N_2(u,v)=\{s_{k-1}s_ks_{k-1}ss_k\dots s_0\}$ and then $N_3(u,v)$ is empty as $s_{k-2}s_{k-1}s_ks_{k-1}ss_k\dots s_0$ covers $s_{k-2}s_ks_{k-1}ss_k\dots s_0\in W^J$ and is not semi-chainlike, so $N=3$.

For (c), we have $N_0(u,v)=\{s_ks_{k-2}s_{k-1}s_k\dots s_0\}=\{s_{k-2}s_ks_{k-1}s_k\dots s_0\}$. Then by $(*)$, the only possible element of $N_1(u,v)$ is $s_{k-1}s_ks_{k-2}s_{k-1}s_k\dots s_0$. This is not semi-chainlike and $N=1$ if and only if $s_{k-1}s_ks_{k-1}s_k\dots s_0\in W^J$, which corresponds to $m(s_k,s_{k-1})\geq 6$. Otherwise if $m(s_k,s_{k-1})=5$ we can continue the sequence, noting that $m(s_{k-1},s_{k-2})=3$ as $v$ is chainlike:
\begin{align*}
	N_1(u,v)&=\{s_{k-1}s_ks_{k-2}s_{k-1}s_k\dots s_0\}\\
	N_2(u,v)&=\{s_ks_{k-1}s_ks_{k-2}s_{k-1}s_k\dots s_0\}\\
	N_3(u,v)&=\{s_{k-1}s_ks_{k-1}s_ks_{k-2}s_{k-1}s_k\dots s_0\}\\
	N_4(u,v)&=\{s_{k-2}s_{k-1}s_ks_{k-1}s_ks_{k-2}s_{k-1}s_k\dots s_0\}
\end{align*}
Then $N_5(u,v)$ is empty since $s_{k-3}s_{k-2}s_{k-1}s_ks_{k-1}s_ks_{k-2}s_{k-1}s_k\dots s_0$ covers the element obtained by removing the leftmost $s_{k-2}$, so $N=5$.
\end{proof}

\section{Classifying isomorphic posets}\label{sec:sudoku}

Throughout this section we will denote by $(W,S)$ and $(\ol W,\ol S)$ two irreducible Coxeter systems with $J\subseteq S$ and $\ol J\subseteq\ol S$ such that there is a bijective map $W^J\to\ol W^{\ol J}$, $w\mapsto\ol w$ which is an isomorphism of the Bruhat poset, i.e. $u\leq v$ if and only if $\ol u\leq\ol v$. Since our definitions of chainlike elements, detectors, the $L$, $M$ and $N$ functions and the relation $\sim$ rely only on the structure of the poset, these are identical for $W^J$ and $\ol W^{\ol J}$ (in particular, $\overline{(u')}=\ol u'$ for all semi-chainlike $u\in W^J$). If we can show that for all $w\in C(W^J)$ the form of $w$ is the same as the form of $\ol w$, then Construction~\ref{thm:graph} implies the graphs of $(W,S)$ and $(\ol W,\ol S)$ are the same. Consequently, we aim to find all scenarios in which $w$ does \textit{not} have the same form as $\ol w$. To simplify the notation, we will use $s$ to denote generators in $S$ and $t$ to denote generators in $\ol S$, and we write $C=C(W^J)$, $\ol C=C(\ol W^{\ol J})$. 

\subsection{The basket case}

In addition to the known finite isomorphisms, there is one other collection of cases where $w\in C$ and $\ol w\in\ol C$ have different forms. This isn't an isomorphism between distinct posets, but rather an automorphism with $W^J\cong\ol W^{\ol J}$. Figure~\ref{fig:small_auto} shows the smallest case of this automorphism.

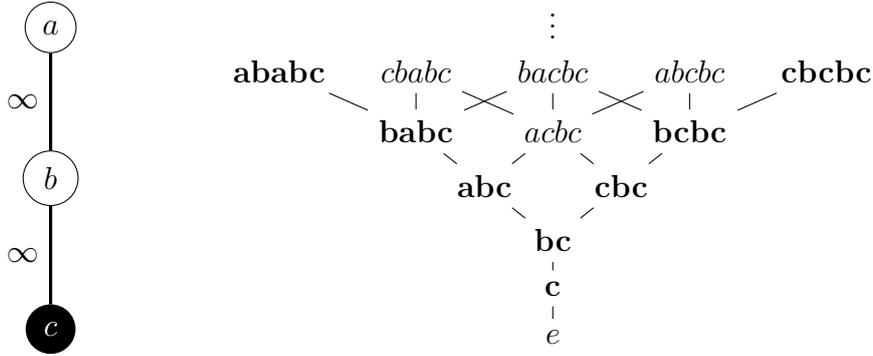
\begin{figure}[h]
\begin{center}
\begin{tikzpicture}[scale=1]
\begin{scope}[every node/.style={circle,draw}]
    \node (A) at (0,2) {$a$};
    \node (B) at (0,0) {$b$};
    \node [fill=black, text=white] (C) at (0,-2) {$c$};
\end{scope}
\begin{scope}[every edge/.style={draw=black,very thick}]
    \path (A) edge node[left] {$\infty$} (B);
    \path (B) edge node[left] {$\infty$} (C);
\end{scope}
\end{tikzpicture}\hspace{50pt}
\begin{tikzpicture}[xscale=0.9, yscale=0.8]
\begin{scope}
	\node (e) at (0,-0.4) {$e$};
	\node (c) at (0,0.4) {$\*c$};
	\node (bc) at (0,1.2) {$\*{bc}$};
	\node (abc) at (-1,2.1) {$\*{abc}$};
	\node (cbc) at (1,2.1) {$\*{cbc}$};
	\node (babc) at (-2,3) {$\*{babc}$};
	\node (acbc) at (0,3) {$acbc$};
	\node (bcbc) at (2,3) {$\*{bcbc}$};
	\node (ababc) at (-4,4) {$\*{ababc}$};
	\node (cbabc) at (-2,4) {$cbabc$};
	\node[label=above:{$\vdots$}] (bacbc) at (0,4) {$bacbc$};
	\node (abcbc) at (2,4) {$abcbc$};
	\node (cbcbc) at (4,4) {$\*{cbcbc}$};
\end{scope}
\begin{scope}[every edge/.style={draw=black}]
	\path (e) edge (c);
	\path (c) edge (bc);
	\path (bc) edge (abc) (bc) edge (cbc);
	\path (abc) edge (babc) (abc) edge (acbc) (cbc) edge (acbc) (cbc) edge (bcbc);
	\path (babc) edge (ababc) (babc) edge (cbabc) (babc) edge (bacbc);
	\path (acbc) edge (cbabc) (acbc) edge (bacbc) (acbc) edge (abcbc);
	\path (bcbc) edge (abcbc) (bcbc) edge (cbcbc) (bcbc) edge (bacbc);
\end{scope}
\end{tikzpicture}
\end{center}
\caption{The depicted poset has an automorphism $\phi$ which swaps all occurrences of $a$ with $c$ and vice versa, excluding the last $c$ in each reduced expression, for example $\phi(cbabc)=abcbc$. Chainlike elements are indicated in bold.}\label{fig:small_auto}
\end{figure}

Fortunately, the graphs that produce posets with this automorphism have a special structure that is very restricted, and they all contain a feature in the poset which we'll call a basket.

\begin{definition}
We say $u,v\in C(W^J)$ form a \textit{basket} if $u''=v''$, $u'$ detects $v$ and $v'$ detects $u$. It follows from Remark~\ref{rem:detectors} that a basket can occur in 3 ways:
\begin{itemize}
	\item I/I: $u$ and $v$ are simple and share the same first generator (square of nodes).
	\item II/II: $u$ and $v$ are of form (II) and $u',v'$ are simple (two labelled branches).
	\item III/II: $u$ is of form (III), $v,u'$ are of form (II) and $v'$ is simple (a label $\geq4$ above a label $\geq5$). If the forms of $u$ and $v$ are the other way round, we write II/III.
\end{itemize}
These are illustrated in Figure~\ref{fig:baskets}. If the forms of $u,u',v,v'$ are the same as the forms of $\ol u,\ol u',\ol v,\ol v'$ respectively, we say the basket is \textit{form-preserving}. For example, in Figure~\ref{fig:small_auto} $babc$ and $bcbc$ form a II/III basket, and with the given automorphism this basket is non-form-preserving as it is mapped to a III/II basket.
\end{definition}

\begin{figure}[h]\centering
\begin{tikzpicture}[scale=1]
\node at (0,4.2) {I/I};
\begin{scope}
	\node[blacknode] (a0) at (0,0) {$0$};
	\node[whitenode] (a1) at (0,1) {$1$};
	\node[whitenode] (a2) at (0,2) {$2$};
	\node[whitenode] (a3) at (-0.7,2.7) {$3$};
	\node[whitenode] (a4) at (0.7,2.7) {$4$};
	\node[whitenode] (a5) at (0,3.4) {$5$};
\end{scope}
\begin{scope}[every edge/.style=graphedge]
    \path (a1) edge (a0) edge (a2);
    \path (a3) edge (a2) edge (a5);
    \path (a4) edge (a2) edge (a5);
\end{scope}
\begin{scope}[shift={(0,-5)}]
	\node (a0) at (0,0.4) {$0$};
	\node (a10) at (0,1.2) {$10$};
	\node (a210) at (0,2) {$210$};
	\node (a3210) at (-0.8,2.7) {$3210$};
	\node (a4210) at (0.8,2.7) {$4210$};
	\node (a53210) at (-0.8,3.7) {$53210$};
	\node (a54210) at (0.8,3.7) {$54210$};
	\node at (0,3.7) {$\sim$};
\end{scope}
\begin{scope}[every edge/.style={draw=black}]
	\path (a10) edge (a0) edge (a210);
	\path (a3210) edge (a53210) edge (a210);
	\path (a4210) edge (a54210) edge (a210);
	\path[->]  (a3210) edge (a54210);
	\path[->]  (a4210) edge (a53210);
\end{scope}

\node at (5,4.2) {II/II};
\begin{scope}[shift={(5,0)}]
	\node[blacknode] (b0) at (0,0) {$0$};
	\node[whitenode] (b1) at (0,1) {$1$};
	\node[whitenode] (b2) at (0,2) {$2$};
	\node[whitenode] (b3) at (-0.9,2.9) {$3$};
	\node[whitenode] (b4) at (0.9,2.9) {$4$};
\end{scope}
\begin{scope}[every edge/.style=graphedge]
    \path (b1) edge (b0) edge (b2);
    \path (b3) edge node[below left] {$\geq4$} (b2);
    \path (b4) edge node[below right] {$\geq4$} (b2);
\end{scope}
\begin{scope}[shift={(5,-5)}]
	\node (b0) at (0,0.4) {$0$};
	\node (b10) at (0,1.2) {$10$};
	\node (b210) at (0,2) {$210$};
	\node (b3210) at (-0.8,2.7) {$3210$};
	\node (b4210) at (0.8,2.7) {$4210$};
	\node (b23210) at (-0.8,3.7) {$23210$};
	\node (b24210) at (0.8,3.7) {$24210$};
	\node at (0,3.7) {$\sim$};
\end{scope}
\begin{scope}[every edge/.style={draw=black}]
	\path (b10) edge (b0) edge (b210);
	\path (b3210) edge (b23210) edge (b210);
	\path (b4210) edge (b24210) edge (b210);
	\path[->]  (b3210) edge (b24210);
	\path[->]  (b4210) edge (b23210);
\end{scope}

\node at (10,4.2) {III/II};
\begin{scope}[shift={(10,0)}]
	\node[blacknode] (b0) at (0,0) {$0$};
	\node[whitenode] (b1) at (0,1) {$1$};
	\node[whitenode] (b2) at (0,2.15) {$2$};
	\node[whitenode] (b3) at (0,3.3) {$3$};
\end{scope}
\begin{scope}[every edge/.style=graphedge]
    \path (b1) edge (b0) edge node[left] {$\geq5$} (b2);
    \path (b3) edge node[left] {$\geq4$} (b2);
\end{scope}
\begin{scope}[shift={(10,-5)}]
	\node (c0) at (0,0.4) {$0$};
	\node (c10) at (0,1.2) {$10$};
	\node (c210) at (0,2) {$210$};
	\node (c3210) at (0.8,2.7) {$3210$};
	\node (c1210) at (-0.8,2.7) {$1210$};
	\node (c23210) at (0.8,3.7) {$23210$};
	\node (c21210) at (-0.8,3.7) {$21210$};
	\node at (0,3.7) {$\sim$};
\end{scope}
\begin{scope}[every edge/.style={draw=black}]
	\path (c10) edge (c0) edge (c210);
	\path (c3210) edge (c23210) edge (c210);
	\path (c1210) edge (c21210) edge (c210);
	\path[->]  (c3210) edge (c21210);
	\path[->]  (c1210) edge (c23210);
\end{scope}
\end{tikzpicture}
\caption{Examples of the 3 different ways a basket can occur. Arrows denote detections in the chainlike graphs (which look a bit like a weaving, hence the name `basket').}\label{fig:baskets}
\end{figure}
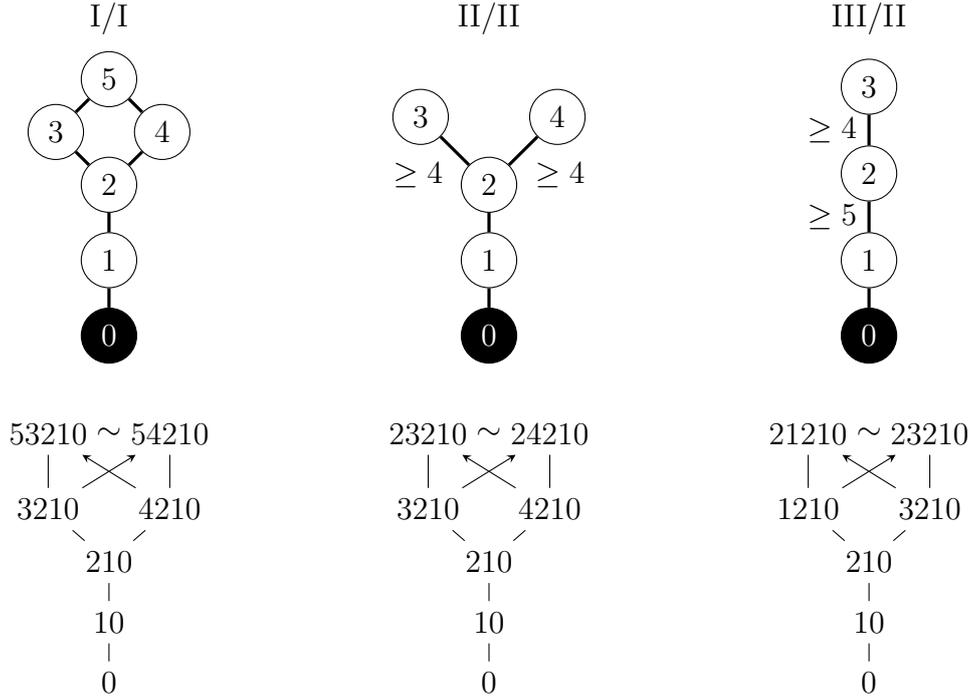

We will show that whenever $W^J$ contains a non-form-preserving basket, there is an automorphism similar to the one in Figure~\ref{fig:small_auto} which `corrects' the forms:

\begin{theorem}\label{thm:basket_complete}
If $W^J$ contains a non-form-preserving basket, then $\phi(w)$ has the same form as $\ol w$ for all $w\in C$ where $\phi$ is the automorphism in Proposition~\ref{prop:automorphism}, and so by Theorem~\ref{thm:graph} the pairs $(W,W_J)$ and $(\ol W,\ol W_{\ol J})$ are isomorphic.
\end{theorem}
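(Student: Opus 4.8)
The plan is to separate the argument into two independent pieces: a bookkeeping reduction showing it suffices to verify that $\phi$ corrects forms, and the verification itself. For the reduction, let $\mc G(P,f)$ denote the bw-Coxeter graph output by the construction of Theorem~\ref{thm:graph} from a poset $P$ together with an assignment $f$ of a form to each chainlike of $P$; this output depends only on $P$ and $f$. Applying Theorem~\ref{thm:graph} to $\ol W^{\ol J}$ and transporting along the isomorphism $w\mapsto\ol w$ gives that the graph of $(\ol W,\ol W_{\ol J})$ equals $\mc G(W^J,\,w\mapsto\text{form of }\ol w)$. Granting the statement, this equals $\mc G(W^J,\,w\mapsto\text{form of }\phi(w))$; and since $\phi$ is a poset automorphism by Proposition~\ref{prop:automorphism}, relabelling $W^J$ by $\phi$ identifies this with $\mc G(W^J,\,w\mapsto\text{form of }w)$, which is the graph of $(W,W_J)$. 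The two graphs therefore coincide, giving the isomorphism of pairs, i.e.\ case (7) of Theorem~\ref{thm:one}.

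It then remains to show that $\phi(w)$ and $\ol w$ have the same form for every $w\in C$. First I would record, from Proposition~\ref{prop:automorphism}, the explicit action of $\phi$ on chainlikes: it exchanges the two symmetric arms of the basket and propagates this swap upward, correcting I$\leftrightarrow$I, II$\leftrightarrow$II and III$\leftrightarrow$II as in Figure~\ref{fig:small_auto}, while fixing the form of every chainlike away from these arms. For such untouched $w$ one has $\phi(w)=w$, so it suffices to show its form equals that of $\ol w$; this is exactly the form-rigidity supplied by the detector and $M$, $N$ machinery developed above, since such $w$ lies in no form-ambiguous configuration and hence its form is recovered intrinsically from the poset and preserved by the isomorphism. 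For $w$ on the swapped arms I would compare two permutations of the arms: the one that $\phi$ realises, and the one that $w\mapsto\ol w$ is forced to realise by the hypothesis that some basket is non-form-preserving. The crux is that non-preservation pins down how $\ol{\cdot}$ acts on the basket, and $\phi$ is constructed precisely to reproduce that action, so the forms of $\phi(w)$ and $\ol w$ also match along the arms.

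The main obstacle I anticipate is global consistency: a non-form-preserving basket is an a priori purely local symmetry of $W^J$, and the real work is to show it forces the two arms to remain interchangeable all the way up the poset, so that the single automorphism $\phi$ simultaneously corrects every affected form and disturbs nothing else. This is where the severely restricted structure of the admitting graphs enters — the relevant $W$ are pinned to a short, essentially dihedral/right-angled list — and I expect most of the effort to lie in checking that the arm-swap encoded by $\phi$ is compatible with the $\sim$-classes and with steps~2 and~3 of Theorem~\ref{thm:graph}, so that no residual discrepancy between the form of $\phi(w)$ and the form of $\ol w$ can survive off the basket.
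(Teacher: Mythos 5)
Your reduction step is sound and matches what the paper does implicitly: since the output of Construction~\ref{thm:graph} depends only on the poset together with the form assignment, and $\phi$ is a poset automorphism, matching the form of $\phi(w)$ with that of $\ol w$ for every chainlike $w$ does force the two bw-Coxeter graphs to coincide. The gap is in the verification, which is where all the content of the theorem lives. For chainlikes off the arms you assert that $w$ ``lies in no form-ambiguous configuration and hence its form is recovered intrinsically from the poset''; but the form of a chainlike is in general \emph{not} determined by the poset (that is the entire difficulty of the classification), and nothing in your argument rules out, say, an off-arm form~(II) chainlike mapping to a form~(III) or simple one. The paper closes this by first proving that a non-form-preserving basket forces $m(s_{k-1},s_k)=m(s_k,s_{k+1})=\infty$ and the very rigid graph of Figure~\ref{fig:basket_case} (Propositions~\ref{prop:basket_structure} and~\ref{prop:basket_constraints}), so that every off-arm form~(II) chainlike satisfies the hypotheses of the Stacked Label Property (Corollary~\ref{cor:double_label}, itself resting on Proposition~\ref{prop:form2}); this pins down exactly which chainlikes are form~(II) on both sides, and forms~(I) and~(III) are then determined by elimination.

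Similarly, on the arms your claim that ``$\phi$ is constructed precisely to reproduce that action'' presupposes two things you have not established: that every non-form-preserving basket reduces to the III/II$\leftrightarrow$II/III configuration (the case analysis of Proposition~\ref{prop:basket_case}, in which the II/II$\leftrightarrow$I/I combination must be shown to produce an auxiliary III/II$\leftrightarrow$II/III basket and the III/II$\leftrightarrow$II/II combination must be excluded outright), and that the isomorphism acts on the arms by the explicit exchange $\overline{s_l\dots s_k\dots s_0}=t_{2k-l}\dots t_0$, which is what makes $\phi$ the correct automorphism. Without these, the definition of $\phi$ itself --- which requires the symmetric generators $s_{k+2},\dots,s_{2k}$ to exist and the graph to admit the involution $s_i\mapsto s_{2k-i}$ --- is not even available. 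In short, the architecture of your plan agrees with the paper's, but the three structural inputs (classification of the admitting graphs, reduction of basket types, and the Stacked Label Property) \emph{are} the proof, and they are exactly the steps you defer.
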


We will see that this covers almost all automorphisms which do not preserve chainlike forms. In particular, baskets appear as edge cases in many situations below and so proving this theorem covers all of these edge cases in advance. Here is a breakdown of the steps to proving Theorem~\ref{thm:basket_complete}:
\begin{itemize}
	\item Lemma~\ref{lem:A} is a technical result that will help in several places.
	\item Propositions~\ref{prop:basket_structure} and \ref{prop:basket_constraints} look at the case where a basket has form III/II in $W^J$ and II/III in $\ol W^{\ol J}$, showing the graph has a specific structure depicted in Figure~\ref{fig:basket_case}.
	\item Proposition~\ref{prop:automorphism} shows that the automorphism in Figure~\ref{fig:small_auto} above extends to any Coxeter graph with this structure.
	\item Proposition~\ref{prop:basket_case} shows all other non-form-preserving baskets reduce to this case.
	\item Proposition~\ref{prop:form2} is a powerful result for form (II) elements. A lot of the case-checking following this section is encapsulated in the proof of this statement.
	\item Corollary~\ref{cor:double_label} immediately follows and shows the graphs of $W^J$ and $\ol W^{\ol J}$ are identical up to relabelling nodes, completing the proof of Theorem~\ref{thm:basket_complete}.
\end{itemize}

\begin{lemma}\label{lem:A}
Suppose $u=s_l\dots s_k\dots s_0$ is detected by $x\vartriangleright s_l\dots s_0$, and $\ol u$ is simple. Then $l=k-1$, and $v=s_{k-1}x\in C$ so that $u,v$ form a basket.
\end{lemma}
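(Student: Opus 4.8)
The plan is to transport the hypothesis through the poset isomorphism, read off the forced graph of $\ol W$, and then compare $\sim$-classes on the two sides. Since detection, $\sim$ and $\ell$ are defined purely from the poset, $u$ being detected by $x$ gives that $\ol u$ is detected by $\ol x$, with $\ol x'$ the image of $x'=s_l\cdots s_0$. As $\ol u$ is simple, Remark~\ref{rem:detectors} leaves only the first row of the table: $\ol x$ is also simple, and writing $\ol u=t_m\cdots t_0$ (so $m=2k-l$, forced by $\ell(\ol u)=\ell(u)$) and $\ol x=t_\ast t_l\cdots t_0$ (so the leftmost generator of $\ol x'$ sits at index $l$, forced by $\ell(\ol x')=\ell(x')=l+1$), the graph of $\ol W$ contains the cycle $t_\ast-t_l-t_{l+1}-\cdots-t_m-t_\ast$ together with the tail $t_0-\cdots-t_l$. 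First I would record this picture.

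Next I would produce the partner $v$. Let $\ol v:=t_m\ol x$, the chainlike obtained by prepending the leftmost generator $t_m$ of $\ol u$ to $\ol x$; it runs around the cycle the other way, along $t_m-t_\ast-t_l-\cdots-t_0$, so it is simple, hence chainlike, and $\ol u\sim\ol v$ by Proposition~\ref{prop:link}(b). Pulling back, $v\in C$ with $v'=x$ and $v\sim u$; Proposition~\ref{prop:link}(a) then forces $v$ and $u$ to share a leftmost generator, so $v=s_l x$.

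The crux is to show $l=k-1$, which I would argue by contradiction, assuming $l\le k-2$ so that the cycle has even length $2(k-l+1)\ge6$. Let $t_{k+1}$ be the vertex antipodal to the junction $t_l$. Its two cycle-neighbours $t_k$ and $t_{k+2}$ give two distinct simple chainlikes of the same length $k+2$ and the same leftmost generator $t_{k+1}$: the element $\ol u_\ast=t_{k+1}t_k\cdots t_0$, which is $(k-1-l)$ steps down the chain below $\ol u$, and a second one running the opposite way around the cycle; these are $\sim$-equivalent. Pulling back, $u_\ast=s_{k-1}s_k\cdots s_0$ is of form (II), yet it must possess a distinct $\sim$-equivalent chainlike $c$ of the same length $k+2$ and leftmost generator $s_{k-1}$. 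But by Theorem~\ref{thm:forms} a chainlike with leftmost generator $s_{k-1}$ has length $\ge k+3$ if it is of form (III), is exactly $u_\ast$ if it is of form (II), and has length $k+2$ as a simple element only if a simple path of $k+1$ edges joins $s_{k-1}$ to the unique black vertex $s_0$ — that is, only if the graph of $W$ itself has a cycle through $s_{k-1}$. I expect this last possibility to be the main obstacle: ruling it out, or showing it merely feeds the same basket conclusion, is where the real work lies, and is presumably where the hypotheses that $u$ is exactly of form (II) and detected by this particular $x$ are used. Granting it, no such $c$ exists, a contradiction, so $l=k-1$.

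Finally, with $l=k-1$ we have $u=s_{k-1}s_k\cdots s_0=u_\ast$, $u'=s_k\cdots s_0$ and $u''=s_{k-1}\cdots s_0=x'=v''$, while $v=s_{k-1}x\in C$ with $v'=x$. The basket conditions are then immediate: $u''=v''$; the detector $v'=x$ detects $u$ by hypothesis; and $u'$ detects $v$ because $m(s_k,s_{k-1})\ge4$, because $u''=s_{k-1}\cdots s_0\le x=v'$, and because $s_k\not\le v$ gives $u'\not<v$. Hence $u$ and $v$ form a basket, completing the proof.
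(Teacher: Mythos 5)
Your opening and closing steps are sound and essentially coincide with the paper's: transporting detection shows $\ol x$ must be simple, $\ol v=t_{2k-l}\ol x$ is simple hence chainlike with $\ol u\sim\ol v$ by Proposition~\ref{prop:link}(b), pulling back gives $v=s_lx\in C$ via Proposition~\ref{prop:link}(a), and once $l=k-1$ is known your verification of the basket conditions is correct. The problem is the crux, the proof that $l=k-1$, where your argument has two genuine gaps. First, the ``second simple chainlike running the opposite way around the cycle,'' namely $t_{k+1}t_{k+2}\cdots t_{2k-l}\,t_\ast\,t_l\cdots t_0$, is not known to be simple (nor chainlike): simplicity would require $m(t_\ast,t_i)=2$ for $k+1\leq i\leq 2k-l-1$, but the hypotheses only give $m(t_\ast,t_i)=2$ for $i<l$ (from $\ol x$ simple) together with $m(t_\ast,t_l)\geq3$ and $m(t_\ast,t_{2k-l})\geq3$; nothing excludes further edges from $t_\ast$ into the cycle, and if any exist your partner element need not be simple, so the $\sim$-equivalence at the antipodal vertex is not established and no contradiction follows. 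Second, even granting that partner, you explicitly leave unresolved the case in which the pulled-back element $c$ is a simple chainlike of length $k+2$ with leftmost generator $s_{k-1}$ (arising from a cycle in the graph of $W$); ``granting it'' is granting exactly what needs to be proved. (The subsidiary claim that a form (II) chainlike of length $k+2$ with leftmost generator $s_{k-1}$ must equal $u_\ast$ is also unjustified, since such an element could be built on a different underlying simple chain.)

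The paper closes this step with a much shorter transport in the opposite direction, which you could substitute for your antipodal construction: the chainlike $y=s_{l+1}\cdots s_0$, one step further down the chain below $u$, detects $v=s_lx$, since $y'=s_l\cdots s_0\vartriangleleft x=v'$, $s_{l+1}\not\leq v$, and $m(s_l,s_{l+1})\geq3$. Transporting, $\ol y=t_{l+1}\cdots t_0$ detects $\ol v$, whose leftmost generator is $t_{2k-l}$, so $m(t_{2k-l},t_{l+1})\geq3$; since $\ol u$ is simple this forces $|(2k-l)-(l+1)|=1$, i.e.\ $l=k-1$. This requires no new elements of $\ol W^{\ol J}$ beyond $\ol v$ itself and sidesteps both of the issues above.
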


\begin{proof}
With $\ol u=t_{2k-l}\dots t_0$, we require $\ol x=tt_l\dots t_0$ simple and there is a chainlike element $\ol v=t_{2k-l}tt_l\dots t_0\in\ol C$ with $\ol u\sim\ol v$. Thus the corresponding element of $C$ is $v=s_lx$. If $x=s_{k-2}s_{k-1}\dots s_0$, then $v$ is only chainlike if $m(s_{k-1},s_{k-2})\geq5$ and $u,v$ form a II/III basket. If instead $x=ss_l\dots s_0$ is simple, then $v$ is only chainlike if $m(s,s_l)\geq4$. Then $s_{l+1}\dots s_0$ detects $v$ and so $t_{l+1}\dots t_0$ detects $\ol v$, thus $m(t_{2k-l},t_{l+1})\geq3$ which is only possible if $l=k-1$ in which case $u,v$ form a II/II basket.
\end{proof}

\begin{prop}\label{prop:basket_structure}
Suppose $u,v$ form a III/II basket while $\ol u,\ol v$ form a II/III basket, that is $u,v,\ol u,\ol v$ have reduced expressions as follows:
\begin{align*}
	u&=s_ks_{k-1}s_k\dots s_0 & v&=s_ks_{k+1}s_k\dots s_0\\
	\ol u&=t_kt_{k+1}s_k\dots t_0 & \ol v&=t_kt_{k-1}t_k\dots t_0
\end{align*}
Then $m(s_k,s_{k-1})=m(s_k,s_{k+1})=m(t_k,t_{k-1})=m(t_k,t_{k+1})=\infty$. Furthermore if $k\geq2$ then there exist more generators $s_{k+2},\dots,s_{2k}\in S$ and $t_{k+2},\dots,t_{2k}\in\ol S$ so that
\[\overline{s_l\dots s_k\dots s_0}=t_{2k-l}\dots t_0,\qquad\overline{s_{2k-l}\dots s_0}=t_l\dots t_k\dots t_0\]
for $0\leq l\leq k$.
\end{prop}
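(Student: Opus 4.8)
The plan is first to read the four Coxeter labels off the poset and then to exploit the ``shift by one'' that the form-swap forces. The leftmost generators are $s_k$ for both $u$ and $v$, while $u'=s_{k-1}s_k\cdots s_0$ and $v'=s_{k+1}s_k\cdots s_0$ have leftmost generators $s_{k-1}$ and $s_{k+1}$; the barred expressions are read similarly. Since $u'$ detects $v$ and $v'$ detects $u$ (and $\ol{u'}$ detects $\ol v$, $\ol{v'}$ detects $\ol u$), and $M(u,x)=m(s,t)$ for the leftmost generators $s,t$ whenever $x$ detects $u$, I would use the fact that $M$ is poset-intrinsic to get $M(v,u')=m(s_k,s_{k-1})=m(t_k,t_{k+1})=:a$ and $M(u,v')=m(s_k,s_{k+1})=m(t_k,t_{k-1})=:b$. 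The form hypotheses ($u$ and $\ol v$ of form (III), $v$ and $\ol u$ of form (II)) immediately give $a,b\ge 5$, so everything reduces to upgrading these to $\infty$.

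For that upgrade, the key observation is that $u$ is a form (III) element of alternating length $4$ over the edge $\{s_k,s_{k-1}\}$, whereas $\ol u$ is a form (II) element of alternating length $3$ over $\{t_k,t_{k+1}\}$. Each begins a unique tower of form (III) chainlikes obtained by repeatedly prepending the alternating generator, and by Theorem~\ref{thm:forms} a tower over an edge labelled $a$ reaches maximal alternating length $a-1$. Because the isomorphism preserves coverings and sends $u\mapsto\ol u$, it matches the chain $u\vartriangleleft w_1\vartriangleleft\cdots$ (lengths $4,5,\dots$) with $\ol u\vartriangleleft\ol{w_1}\vartriangleleft\cdots$ (lengths $3,4,\dots$), i.e. alternating length $j$ with length $j-1$. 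Were $a$ finite, the $W$-chain would stop at length $a-1$ (no chainlike covers a maximal form (III) element, again by Theorem~\ref{thm:forms}), while the $\ol W$-tower continues to length $a-1$; the preimage of that top element would then be a chainlike covering the top of the $W$-chain, a contradiction. Hence $a=\infty$, and the symmetric argument applied to $v$ and its form (III) tower over $\{s_k,s_{k+1}\}$ gives $b=\infty$, so all four labels are $\infty$.

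For the case $k\ge 2$ I would propagate the correspondence down the chain by induction on $l$ from $l=k-1$ to $l=0$, carrying the two statements $\overline{s_l\cdots s_k\cdots s_0}=t_{2k-l}\cdots t_0$ (form (II) to simple) and $\overline{s_{2k-l}\cdots s_0}=t_l\cdots t_k\cdots t_0$ (simple to form (II)); the base case $l=k-1$ is exactly the pair $u',v'$, whose images $\overline{u'}=(\ol u)'=t_{k+1}t_k\cdots t_0$ and $\overline{v'}=(\ol v)'=t_{k-1}t_k\cdots t_0$ were computed above. For the step, the form (II) element $\alpha_l=s_l\cdots s_k\cdots s_0$ has a unique chainlike cover; since $\overline{\alpha_l}$ is simple and $l<k-1$, Lemma~\ref{lem:A} forbids a detecting basket, forcing $m(s_{l-1},s_l)=3$ so that this cover is $\alpha_{l-1}=s_{l-1}\alpha_l$, again of form (II). Dually, $\overline{\alpha_l}=t_{2k-l}\cdots t_0$ is simple, so its unique cover can only be formed by prepending a \emph{new} generator $t_{2k-l+1}$ (the existing chain-neighbour would trigger a braid); matching covers then both produces $t_{2k-l+1}$ and yields $\overline{\alpha_{l-1}}=t_{2k-l+1}\cdots t_0$. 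The symmetric analysis of $\beta_l=s_{2k-l}\cdots s_0$ against $\overline{\beta_l}$ (applying Lemma~\ref{lem:A} in $\ol W$) produces $s_{2k-l+1}$ and the second identity. The induction terminates at $l=0$ because the black endpoints $t_0\in\ol S\setminus\ol J$ and $s_0$ admit no further cover.

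The $\infty$ argument is the conceptual heart, but I expect the main obstacle to lie in the bookkeeping of the third part: one must check at each level that the relevant chainlikes have \emph{exactly} one cover, that the newly produced generators are genuinely new (so no cycle arises, which would destroy simplicity of the chains), and that the form-swap is maintained rather than collapsing into a form-preserving configuration. Lemma~\ref{lem:A} is the crucial device that excludes intermediate baskets and pins every interior chain label to $3$, and the delicate point is verifying that its hypotheses ($\overline{\alpha_l}$ simple and $l<k-1$) persist throughout the induction.
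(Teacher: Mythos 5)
Your first part (the four $\infty$ labels) is essentially the paper's own argument: both proofs equate $m(s_k,s_{k-1})$ with $m(t_k,t_{k+1})$ via the poset-intrinsic function $M$ applied to the detecting pairs $(v,u')$ and $(\ol v,\ol u')$, and then observe that the form (III) towers above $u$ and $\ol u$ are offset by one alternation, which is incompatible with a finite label. That portion is correct.

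The second part is where your proposal has a genuine gap, at the sentence claiming that the unique chainlike cover of the simple element $\ol{\alpha_l}=t_{2k-l}\dots t_0$ ``can only be formed by prepending a new generator (the existing chain-neighbour would trigger a braid).'' The braid-move remark only excludes the cover $t_{2k-l-1}t_{2k-l}\dots t_0$ when $m(t_{2k-l},t_{2k-l-1})=3$; if that label is $\geq 4$, this word is a perfectly good form (II) chainlike covering $\ol{\alpha_l}$, and nothing you have said forces the unique cover to introduce a fresh generator $t_{2k-l+1}$ rather than being this form (II) element. The same issue recurs on the $\beta$ side. The paper's proof avoids this in two ways that your induction lacks. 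First, at the initial step the only form (II) cover of $\ol{u'}=t_{k+1}t_k\dots t_0$ is $t_kt_{k+1}t_k\dots t_0=\ol u$ itself, which is already the image of $u\neq s_{k-2}s_{k-1}s_k\dots s_0$, so the image of the latter is forced to be a new simple cover $t_{k+2}\dots t_0$. Second, and more importantly, the paper controls the labels by playing the two branches against each other: the hypothetical detector $x_i=s_{i-1}s_i\dots s_0$ cannot detect the \emph{other} branch's element $w_i=s_{k+i}\dots s_0$ (its leftmost generators commute, by simplicity of $w_i$), whereas $\ol x_i$, being a chainlike covering $t_i\dots t_0$, automatically detects $\ol w_i=t_i\dots t_k\dots t_0$. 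Your write-up carries the two chains $\alpha_l$ and $\beta_l$ in parallel but never uses one to constrain the other, and this cross-branch comparison is precisely the missing ingredient. Relatedly, your appeal to Lemma~\ref{lem:A} with the hypothesis ``$l<k-1$'' cannot launch the induction: at the first step the relevant element is $u'$ itself, for which Lemma~\ref{lem:A} returns the conclusion $l=k-1$ together with a basket rather than a contradiction, so that step too must be argued via the comparison with $v'$ as the paper does.
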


\begin{proof}
We cannot have $s_{k-1}v\in C$ since $m(s_k,s_{k-1})\geq5$, so all chainlikes above $u,v$ are of form (III). If $m(s_k,s_{k-1})$ is finite then the maximal chainlikes above $u,\ol u$ are
\[u_{\max}=\underbrace{\dots s_{k-1}s_ks_{k-1}s_ks_{k-1}}_{m(s_k,s_{k-1})-1\text{ terms}}s_{k-2}\dots s_0\qquad\qquad
\ol u_{\max}=\underbrace{\dots t_kt_{k+1}t_kt_{k+1}t_k}_{m(t_k,t_{k+1})-1\text{ terms}}t_{k-1}\dots t_0\]
Comparing lengths we have $m(t_k,t_{k+1})=m(s_k,s_{k-1})-1$, but this is a contradiction since $M(v,u')=m(s_k,s_{k-1})$ while $M(\ol v,\ol u')=m(t_k,t_{k+1})$. Thus we must have $m(s_k,s_{k-1})=m(t_k,t_{k+1})=\infty$, and $m(s_k,s_{k+1})=m(t_k,t_{k-1})=\infty$ as well by repeating with $\ol v$ and $v$.

Now, if $k\geq2$ then we cannot have $m(s_{k-1},s_{k-2})\geq4$, as $x=s_{k-2}s_{k-1}\dots s_0$ would not detect $v'$ but $\ol x\vartriangleright t_{k-1}\dots t_0$ would detect $\ol v'$. Then $s_{k-2}s_{k-1}s_k\dots s_0$ is chainlike and maps to $t_{k+2}t_{k+1}t_k\dots t_0$ for some $t_{k+2}\in\ol S$, and similarly $t_{k-2}t_{k-1}t_k\dots t_0$ is chainlike and maps to $s_{k+2}\dots s_0$ for some $s_{k+2}\in S$. Then inductively $m(s_i,s_{i-1})=m(t_i,t_{i-1})=3$ for $1\leq i\leq k-1$, since otherwise $x_i=s_{i-1}s_i\dots s_0$ does not detect $w_i=s_{k+i}\dots s_0$ but $\ol x_i\vartriangleright t_i\dots t_0$ detects $\ol w_i=t_i\dots t_k\dots t_0$, and similarly for $t_{i-1}t_i\dots t_0$. Thus we have $s_l\dots s_k\dots s_0$ maps to $t_{2k-l}\dots t_0$ and $s_{2k-l}\dots s_0$ maps to $t_l\dots t_k\dots t_0$ for $0\leq l\leq k$.
\end{proof}

\begin{prop}\label{prop:basket_constraints}
Suppose $u,v$ form a III/II basket while $\ol u,\ol v$ form a II/III basket, and let $s_0,\dots,s_{2k}$ be as in Proposition~\ref{prop:basket_structure}. For all $s\in S\setminus\{s_0,\dots,s_{2k}\}$ we have:
\begin{enumerate}[label=(\alph*)]
	\item $s\in J$;
	\item $m(s,s_i)=2$ for all $s_i\in\{s_0,\dots,s_{k-2},s_{k+2},\dots,s_{2k}\}$;
	\item If $m(s,s_{k-1})\geq3$ or $m(s,s_{k+1})\geq3$ then $m(s,s_{k-1})=m(s,s_{k+1})=\infty$.
\end{enumerate}
That is, the structure of the Coxeter graph is as given in Figure~\ref{fig:basket_case}.
\end{prop}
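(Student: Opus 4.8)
The plan is to fix an arbitrary extra generator $s\in S\setminus\{s_0,\dots,s_{2k}\}$ and read off its admissible bonds to the chain by transporting detection data across the isomorphism, using the flip established in Proposition~\ref{prop:basket_structure}. Recall from that proposition that the chain carries the two $\infty$-bonds $m(s_k,s_{k-1})=m(s_k,s_{k+1})=\infty$, that the left arms are matched index-for-index (so $\overline{s_j\cdots s_0}=t_j\cdots t_0$ for $0\le j\le k$), and that the isomorphism interchanges the straight-through simple chainlikes $s_{2k-l}\cdots s_0$ with the reflected form-(II) elements $t_l\cdots t_k\cdots t_0$ on the $\overline W$ side, and vice versa. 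Crucially, the structure on $\overline W$ is symmetric to that on $W$, so the whole argument may be rerun with $W$ and $\overline W$ swapped and the two arms interchanged; this lets me deduce every right-arm statement (the case $i\ge k+2$ in (b), and the $s_{k+1}$ half of (c)) from its left-arm counterpart, and I therefore concentrate on the left arm.

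For (b) with $i\le k-2$, I would assume $m(s,s_i)\ge 3$ and seek a contradiction. By Lemma~\ref{lem:leftmost} a minimal path from $s$ to a black node produces a simple chainlike $y=s\,s_i\cdots s_0\in C$ branching off the left arm. Since $s_i\cdots s_0$ is a common subword both of $y$ and of the straight-through chainlike $s_{2k}\cdots s_0$, the branch at height $i+1$ is visible through the straight-through family, and transporting it across the isomorphism it must appear as a branch off the reflected form-(II) element $\overline{s_{2k}\cdots s_0}=t_0\cdots t_k\cdots t_0$ at the matching interior index. Combining Theorem~\ref{thm:forms} with Proposition~\ref{prop:detector} and the identity $M(u,x)=m(s,t)$ then forces an extra bond $m(\overline s,t_j)\ge 3$ with $j\le k-2$; but the proof of Proposition~\ref{prop:basket_structure} already showed the $\overline t$-chain is an induced path with all interior bonds equal to $3$, and an additional bond from the extra generator $\overline s$ contradicts this induced-path structure exactly as in the inductive step there. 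Hence $m(s,s_i)=2$ for $i\le k-2$, and the reversed argument gives it for $i\ge k+2$.

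For (c) the centre is the delicate point. Suppose $m(s,s_{k+1})\ge 3$ (the $s_{k-1}$ case follows by the flip). On the $W$ side $s$ bonds into the \emph{right} arm, which the isomorphism realises through the straight-through \emph{simple} chainlikes, whereas on the $\overline W$ side the corresponding elements are \emph{form-(II)} elements; the single bond $m(s,s_{k+1})$ is thus read on one side as a detection among simple elements and on the other as a detection among form-(II) elements. Applying Proposition~\ref{prop:detector} and $M(u,x)=m(s,t)$ on both sides and comparing the lengths of the resulting maximal chainlike towers precisely as in Proposition~\ref{prop:basket_structure}, a finite value of $m(s,s_{k+1})$ produces the same off-by-one length mismatch that ruled out finiteness of the central bonds there. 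Hence $m(s,s_{k+1})=\infty$, and the flip yields $m(s,s_{k-1})=\infty$ as well. The only remaining freedom is a bond to the central fixed vertex $s_k$, which (b) and (c) deliberately leave unconstrained.

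Finally, for (a) I would argue that a black extra generator is incompatible with the flip. By (b) and (c) such an $s$ can attach only within $\{s_{k-1},s_k,s_{k+1}\}$; being black it is an atom distinct from $s_0$, so any bond it makes to the centre yields a second straight-through path ending at a black node, hence a second family of simple chainlikes through $s_k$ that is $\sim$-equivalent to the first. Transporting this family gives, via Lemma~\ref{lem:A} and Proposition~\ref{prop:black}, a detected configuration that would force $u$ or $v$ to be simple, contradicting their fixed forms (III) and (II); so $s\in J$. Assembling (a)--(c) gives exactly the graph of Figure~\ref{fig:basket_case}. I expect the centre of the chain to be the main obstacle throughout: the $\infty$-bonds make the detection towers infinite, so every contradiction must be extracted from finite truncations and from matching $M$-values across the isomorphism rather than from a naive length count, and part (a) in particular requires ruling out a \emph{second} basket hiding at the centre rather than just constraining a single extra bond.
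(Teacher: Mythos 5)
Your overall strategy --- transporting detection data across the isomorphism and exploiting the left/right flip between $W$ and $\ol W$ --- is the right one, but the decisive steps are missing or incorrect. In (b), your contradiction rests on the claim that a bond $m(\ol s,t_j)\geq3$ from the extra generator would ``contradict the induced-path structure'' established in Proposition~\ref{prop:basket_structure}. That proposition only constrains the bonds among $t_0,\dots,t_{2k}$ themselves; it says nothing about bonds from outside generators, and indeed Figure~\ref{fig:basket_case} shows such bonds are allowed at $t_{k\pm1}$. (Moreover the transported bond lands at $t_{2k-i}$ on the \emph{opposite} arm, since the detected form~(II) element $s_i\cdots s_k\cdots s_0$ maps to the simple element $t_{2k-i}\cdots t_0$ whose leftmost generator is $t_{2k-i}$, not at an index $\leq k-2$.) The mechanism that actually closes this case is Lemma~\ref{lem:A}: the branch $x=ss_i\cdots s_0$ detects a form~(II) element whose image is simple, and Lemma~\ref{lem:A} forces $i=k-1$ and produces a basket. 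Your argument never invokes it, and without it there is no contradiction for $i\leq k-2$.

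In (c) the gap is that you never establish that a bond to one of $s_{k\pm1}$ forces a bond, with label $\geq4$, to the other; this is both part of the stated conclusion ($m(s,s_{k-1})=m(s,s_{k+1})=\infty$, not just one of them) and the prerequisite for any tower comparison. The paper obtains it from Lemma~\ref{lem:A} (the basket at $s_{k-1}$ is II/II, so $m(s,s_{k-1})\geq4$) together with the transported $M$-values $m(s,s_{k+1})=M(v',x)=M(\ol v',\ol x)=m(t,t_{k-1})\geq4$, and then, rather than re-running a length count, constructs a \emph{new} III/II-versus-II/III basket from $u_1=ss_{k-1}x$ and $v_1=ss_{k+1}x$ and applies Proposition~\ref{prop:basket_structure} to that basket to conclude $\infty$; your ``off-by-one length mismatch'' has nothing to act on until the double bond is in place. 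Finally, for (a) the paper's contradiction is concrete: if $s\notin J$ bonds to $s_l$, then $t_{2k}\cdots t_{2k-l}\ol s$ is a simple chainlike with $t_{2k}\cdots t_{2k-l}\ol s\sim t_{2k}\cdots t_0=\ol{s_0\cdots s_k\cdots s_0}$, so its preimage would have to be a chainlike containing $s$ that is $\sim s_0\cdots s_k\cdots s_0$, which does not exist. Your claim that the configuration ``would force $u$ or $v$ to be simple'' is not the actual obstruction and is not justified; there is also a circularity in using (b) and (c) to restrict where a black generator can attach, since your proofs of (b) and (c) are written for $s\in J$.
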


\begin{figure}[h]
\begin{center}
\begin{tikzpicture}[yscale=0.85]
\begin{scope}[every node/.style={minimum size=0.5cm}]
	\node[circle,draw,fill=black] (0) at (-6.5,0) {}; \node at (-6.5,-0.5) {$s_0$};
	\node[circle,draw] (1) at (-5,0) {}; \node at (-5,-0.5) {$s_1$};
	\node (d-) at (-4,0) {$\cdots$};
	\node[circle,draw] (k-2) at (-3,0) {}; \node at (-3,-0.5) {$s_{k-2}$};
	\node[circle,draw] (k-1) at (-1.5,0) {}; \node at (-1.5,-0.5) {$s_{k-1}$};
	\node[circle,draw] (k) at (0,0) {}; \node at (0,-0.5) {$s_k$};
	\node[circle,draw] (k+1) at (1.5,0) {}; \node at (1.5,-0.5) {$s_{k+1}$};
	\node[circle,draw] (k+2) at (3,0) {}; \node at (3,-0.5) {$s_{k+2}$};
	\node (d+) at (4,0) {$\cdots$};
	\node[circle,draw] (2k-1) at (5,0) {}; \node at (5,-0.5) {$s_{2k-1}$};
	\node[circle,draw] (2k) at (6.5,0) {}; \node at (6.5,-0.5) {$s_{2k}$};
	\node[circle,draw] (s1) at (0,1.5) {};
	\node[circle,draw] (s2) at (0,3) {};
	\node (du) at (0,4) {$\vdots$};
	\node[circle,draw,align=center] (other) at (4,2.5) {any connected\\graph of\\white nodes};
\end{scope}
\begin{scope}[every edge/.style=graphedge]
	\path (1) edge (0) edge (d-);
	\path (k-2) edge (d-) edge (k-1);
	\path (k) edge node[above] {$\infty$} (k-1) edge node[above] {$\infty$} (k+1);
	\path (k+2) edge (d+) edge (k+1);
	\path (2k-1) edge (2k) edge (d+);
	\path[dotted] (s1) edge (k) edge (s2);
	\path[dotted] (s2) edge (du);
	\path[bend right=20] (s1) edge node[pos=0.2, above left] {$\infty$} (k-1);
	\path[bend left=20] (s1) edge node[pos=0.2, above right] {$\infty$} (k+1);
	\path[bend right=40] (s2) edge node[pos=0.25, above left] {$\infty$} (k-1);
	\path[bend left=40] (s2) edge node[pos=0.25, above right] {$\infty$} (k+1);
	\path[dotted] (k) edge[bend left=10] (other);
	\path[dotted] (s1) edge[bend left=10] (other);
	\path[dotted] (s2) edge[bend left=10] (other);
\end{scope}
\end{tikzpicture}
\end{center}
\caption{The `basket case'. Any number of nodes with $\infty$-labelled edges to both $s_{k-1}$ and $s_{k+1}$ are permitted, and these can have edges to each other, $s_k$, and other white nodes represented by `any graph of white nodes'. The nodes $s_0,\dots,s_{k-2},s_{k+2},\dots,s_{2k}$ have no other edges.}\label{fig:basket_case}
\end{figure}
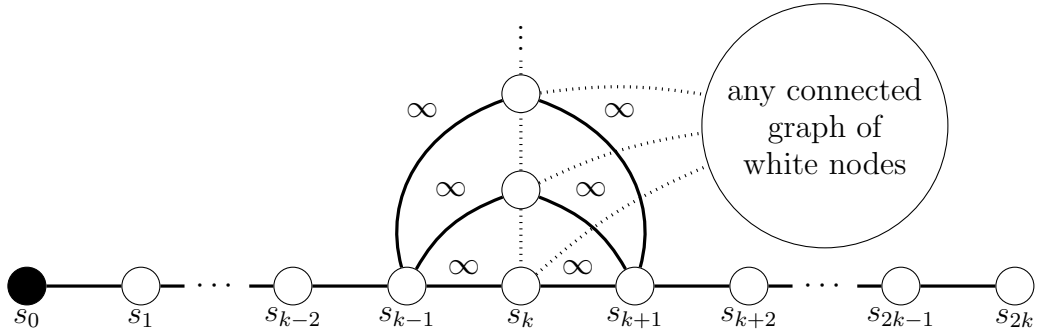

\begin{proof}
Let $s\in S\setminus\{s_0,\dots,s_{2k}\}$.
\begin{itemize}
\item Suppose $s\in J$ and $s$ has an edge to $s_l$ for some $0\leq l<k$. Assuming $l$ is minimal with this property, $x=ss_l\dots s_0$ is chainlike and detects $s_l\dots s_k\dots s_0$, so by Lemma~\ref{lem:A} we have $l=k-1$ and $u',s_{k-1}x$ form a basket. We must have $x$ simple (since $m(s_{k-1},s_{k-2})=3$ for $k\geq2$) so this is a II/II basket and $m(s,s_{k-1})\geq4$. Similarly $\ol x=tt_{k-1}\dots t_0$ is simple, and by Lemma~\ref{lem:A} again since $\ol x$ detects $\ol v'$ we have $m(t,t_{k-1})\geq4$. Hence,
\begin{align*}
	m(t,t_{k+1})=M(\ol u',\ol x)&=M(u',x)=m(s,s_{k-1})\geq4,\\
	m(s,s_{k+1})=M(v',x)&=M(\ol v',\ol x)=m(t,t_{k-1})\geq4.
\end{align*}
Then $v_1=ss_{k+1}x$ is chainlike and is detected by $y=s_{k-1}x$. Since $y\vartriangleright x$ and $y\sim u'$ we must have $\ol y=t_{k+1}\ol x$, and then since $v_1'\sim v'$ and $y$ detects $v$ we must have $\ol v_1=tt_{k-1}\ol x$. Symmetrically, $\ol u_1=tt_{k+1}x$ is chainlike with $u_1=ss_{k-1}x$. Then $u_1$ and $v_1$ form a III/II basket while $\ol u_1,\ol v_1$ are II/III, and so by Proposition~\ref{prop:basket_structure} $m(s,s_{k-1})=m(s,s_{k+1})=m(t,t_{k-1})=m(t,t_{k+1})=\infty$.
\item Suppose $s\in J$ has an edge to $s_{2k-l}$ for some $0\leq l<k$. The element $ss_{2k-l}\dots s_0$ cannot be chainlike since there is no other element covering $t_l\dots t_k\dots t_0$ it can map to (note that $t_kt_{k-1}t_k\dots t_0$ is already mapped to by $s_ks_{k+1}\dots s_0$), so $m(s,s_i)\geq3$ for some $i\leq k$. Then with $x=ss_i\dots s_0$ and $\ol x=tt_i\dots t_0$, we can see by the $M$ functions that $m(t,t_l)\geq3$ and this reduces to the above case.
\item Suppose $s\not\in J$ has an edge to $s_l$ for some $0\leq l\leq k$. Then $s$ detects $s_l\dots s_k\dots s_0$, so $t=\ol s$ detects $t_{2k-l}\dots t_0$ and there is an edge between $t_{2k-l}$ and $t$. Thus $t_{2k}\dots t_{2k-l}t$ is chainlike and $t_{2k}\dots t_{2k-l}t\sim t_{2k}\dots t_0$, but $t_{2k}\dots t_0$ corresponds to $s_0\dots s_k\dots s_0$ which cannot satisfy $\sim$ with any chainlike ending in $s$, so we have a contradiction. Similarly, if there is $s\not\in J$ connected by a chain of generators in $J$ to $s_k$ then we can find a simple $w\geq s$ so that $s_kw\sim s_k\dots s_0$ is simple, and so $t_k\ol w\sim t_k\dots t_0$ is also simple and the same argument using $t_{2k}$ holds. If $s$ connects to $s_{2k-l}$ instead, then the same argument applies to $t$ connecting to $t_l$.
\end{itemize}
This shows the Coxeter graph is as depicted in Figure~\ref{fig:basket_case}.
\end{proof}

\begin{prop}\label{prop:automorphism}
With $s_0,\dots,s_{2k}$ as above, the map $\phi:W^J\to W^J$ defined as follows is an automorphism:
\begin{itemize}
\item If $w\not>u''$, then $\phi(w)=w$.
\item If $w>u''$, choose any reduced expression for $w$ (which necessarily ends in the reduced expression for $u''$) and obtain $\phi(w)$ by replacing all occurrences of $s_{k+j}$ left of the rightmost $k$ generators in $w$ with $s_{k-j}$ for $-k\leq j\leq k$.
\end{itemize}
\end{prop}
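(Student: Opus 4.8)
The plan is to exhibit $\phi$ as a twist of the genuine group automorphism coming from the graph symmetry of Figure~\ref{fig:basket_case}. Let $\psi\colon S\to S$ be the involution swapping $s_{k+j}\leftrightarrow s_{k-j}$ for $1\le j\le k$ and fixing $s_k$ and every white node off the path. Propositions~\ref{prop:basket_structure} and~\ref{prop:basket_constraints} are exactly what is needed to see that $\psi$ preserves the Coxeter matrix: it reflects the two arms about $s_k$, both central labels are $\infty$, and each remaining white node relates identically to $s_{k-1}$ and $s_{k+1}$ while commuting with $s_0,\dots,s_{k-2},s_{k+2},\dots,s_{2k}$. Hence $\psi$ induces a length-preserving group automorphism $\Psi\colon W\to W$, which is therefore an automorphism of the Bruhat order on $W$. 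Since $\psi(s_0)=s_{2k}$ we have $\Psi(W^J)=W^{S\setminus\{s_{2k}\}}\neq W^J$, and repairing this discrepancy is exactly the role of the retained suffix in the definition of $\phi$.

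Write $c=s_{k-1}s_{k-2}\cdots s_0$ for the word of the ``rightmost $k$ generators'', so $u''=s_kc$. I would first prove that every reduced expression of every $w\in W^J$ with $w\ge c$ ends in $c$. The argument is an iterated right-descent peeling, writing $D_R(v)=\{s\in S:\ell(vs)<\ell(v)\}$. Because $S\setminus J=\{s_0\}$ (all of $s_1,\dots,s_{2k}$ and every other node are white), each nonidentity element of $W^J$ has $D_R=\{s_0\}$. Setting $w^{[0]}=w$ and $w^{[i+1]}=w^{[i]}s_i$, I claim $D_R(w^{[i]})=\{s_i\}$ for $0\le i\le k-1$: a right descent $t\neq s_i$ of $w^{[i+1]}$ must have $m(t,s_i)\ge3$ (else $t$ commutes past $s_i$, forcing $t\in D_R(w^{[i]})$), and the neighbours of $s_i$ for $i\le k-2$ are only $s_{i-1},s_{i+1}$; the option $t=s_{i-1}$ is killed by the braid $s_{i-1}s_is_{i-1}=s_is_{i-1}s_i$ (using $m(s_{i-1},s_i)=3$) together with $D_R(w^{[i-1]})=\{s_{i-1}\}$, leaving $t=s_{i+1}$, while $w\ge c$ and the lifting property give $s_i\le w^{[i]}$ so that $D_R(w^{[i]})\neq\emptyset$. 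This forces the last $k$ letters to be $c$; note that only these $k$ letters are forced, since $m(r,s_k)$ is unconstrained for white nodes $r$, so one should not over-claim that the word ends in $u''$. Consequently $w^{[k]}=wc^{-1}$ is well defined and $\phi(w)=\Psi(w^{[k]})\,c=\Psi(w)\,g$ with $g=\Psi(c)^{-1}c=s_{2k}\cdots s_{k+1}s_{k-1}\cdots s_0$; this manifestly does not depend on the chosen reduced word, and agrees with the stated recipe because $\psi$ fixes $c$ and $s_k$.

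To see $\phi$ is a length-preserving involution of $W^J$, the one nontrivial point is that $w^{[k]}$ extends reducedly by the \emph{opposite} arm, i.e.\ $w^{[k]}s_{k+1}\cdots s_{2k}$ is reduced (equivalently $\Psi(w^{[k]})\,c$ is reduced). The first step is automatic since $m(s_{k+1},s_{k-1})=2$ gives $s_{k+1}\notin D_R(w^{[k]})$, and the remaining steps run by the same braid-versus-commutation dichotomy, the white nodes causing no trouble since they commute with $s_{k+2},\dots,s_{2k}$. Granting this, $\ell(\phi(w))=\ell(w)$, the word $\psi(\underline{w^{[k]}})\,c$ is reduced and ends in $s_0\in S\setminus J$, and the same descent analysis shows $\phi(w)\in W^J$. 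Finally $\Psi^2=\mathrm{id}$ and $\phi(w)=w$ for $w\not>u''$ give $\phi\circ\phi=\mathrm{id}$, so $\phi$ is a bijection of $W^J$.

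For order-preservation, since $\phi$ is a length-preserving involution it suffices to prove $x\le w\Rightarrow\phi(x)\le\phi(w)$. The filter $U=\{w\ge u''\}$ and its complementary ideal are each $\phi$-stable, $\phi$ being the identity on the ideal and the ``suffix swap'' $\Psi(w^{[k]})\,(s_{k+1}\cdots s_{2k})\mapsto\Psi(w^{[k]})\,c$ on $U$. On $U$ this map preserves $\le$: peeling the forced arm-suffixes turns $\Psi(w)\le\Psi(w')$ into $\Psi(w^{[k]})\le\Psi((w')^{[k]})$ by the lifting property, and appending the common reduced suffix $c$ preserves $\le$ by the subword property. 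The delicate case, which I expect to be the main obstacle, is an $x\not>u''$ (so $\phi(x)=x$) lying below some $w\in U$: one must show $x\le\phi(w)$. Splitting a subword realizing $x\le w$ as $x=x_px_s$ with $x_p\le w^{[k]}$ and $x_s\le c$, it suffices to prove $x_p\le\Psi(w^{[k]})$, i.e.\ $\Psi(x_p)\le w^{[k]}$; here the hypothesis $x\not\ge u''$ is what forces $x_p$ to avoid the full arm-descent and hence to be $\psi$-harmless, using again the rigidity of Figure~\ref{fig:basket_case}. Both this step and the two-arm reducedness above rest on the same $\infty$-label and commutation structure, already visible in the smallest instance of Figure~\ref{fig:small_auto}.
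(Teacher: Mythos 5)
Your route is genuinely different from the paper's: the paper works directly with the letter substitution $s_i\mapsto s_{2k-i}$ on reduced words and argues it commutes with braid-moves, whereas you factor $\phi$ through the honest graph automorphism $\Psi$ after peeling the forced suffix $c=s_{k-1}\cdots s_0$, obtaining $\phi(w)=\Psi(w^{[k]})c$ on the filter $\{w>u''\}$. The descent-peeling argument for the forced suffix is correct, and you are right (and more careful than the paper's parenthetical) that only the last $k$ letters, not all of $u''$, are forced in every reduced expression; the identity $\phi(w)=\Psi(w)\Psi(c)^{-1}c$ then gives well-definedness for free. (Minor slip: $\psi$ does not fix $c$ --- $\Psi(c)=s_{k+1}\cdots s_{2k}$ --- you mean that the recipe leaves the suffix untouched.)

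Two steps are genuinely incomplete. First, the reducedness of $w^{[k]}s_{k+1}\cdots s_{2k}$: your hint settles only the first letter, and the ``same dichotomy'' does not run for $i\geq1$ because $s_{k+i+1}$ does not commute with $s_{k+i}$ and there is no previously established singleton descent set to bounce off. The claim is true and has a clean proof you should substitute: the two arms commute elementwise ($m(s_{k+i},s_j)=2$ for $i\geq1$, $j\leq k-1$), so $w^{[k]}(s_{k+1}\cdots s_{2k})c=w(s_{k+1}\cdots s_{2k})$, which has length $\ell(w)+k=\ell(w^{[k]})+2k$ because $w\in W^J$ and $s_{k+1}\cdots s_{2k}\in W_J$; this forces $\ell(w^{[k]}s_{k+1}\cdots s_{2k})=\ell(w^{[k]})+k$. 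Second, and more seriously, the case you yourself flag as the main obstacle ($x\not\geq u''$, $x\leq w$ with $w>u''$) is not closed, and the reduction you propose fails as stated: for an arbitrary subword realizing $x\leq w$ one cannot conclude $x_p\leq\Psi(w^{[k]})$. Concretely, take $w=u'=s_{k-1}s_ks_{k-1}s_{k-2}\cdots s_0$ and $x=c$; the subword using the leading $s_{k-1}$ gives $x_p=s_{k-1}$, and $s_{k-1}\not\leq\Psi(w^{[k]})=s_{k+1}s_k$. The conclusion $x\leq\phi(w)$ still holds (use the subword lying entirely in the suffix), but a proof must first choose the realizing subword to draw as many letters as possible from the copy of $c$ at the end, and then show the leftover prefix part involves only $\psi$-fixed generators; neither step is supplied. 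Since order-preservation across the boundary of the filter is exactly the content of the proposition, this gap must be filled before the argument is complete.
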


\begin{proof}
Let $w\in W^J$. Since $S\setminus J=\{s_0\}$, either $w=s_l\dots s_0$ for some $l\leq k$, or every reduced expression for $w$ ends in the reduced expression for $u''$. Write $\phi_S(s_i)=s_{2k-i}$ and $\phi_S(s)=s$ for all $s\in S\setminus\{s_0,\dots,s_{2k}\}$. By Proposition~\ref{prop:basket_constraints} we have $m(\phi_S(a),\phi_S(b))=m(a,b)$ for all $a,b\in S$, hence any reduced expression for $w$ and its corresponding expression for $\phi(w)$ are altered by braid-moves in the same locations. Thus $\phi$ preserves Bruhat order, and so $\phi$ is an automorphism.
\end{proof}

\begin{prop}\label{prop:basket_case}
Suppose $u,v$ form a basket which is non-form-preserving under $W^J\to\ol W^{\ol J}$. Then $W^J$ has the structure depicted in Figure~\ref{fig:basket_case} and $\phi(w)$ has the same form as $\ol w$ for $w\in\{u,u',v,v'\}$.
\end{prop}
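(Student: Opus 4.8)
The plan is to reduce an arbitrary non-form-preserving basket to the single configuration already treated—a III/II basket in $W^J$ whose image is a II/III basket in $\ol W^{\ol J}$—and then invoke Propositions~\ref{prop:basket_structure} and \ref{prop:basket_constraints} for the graph of Figure~\ref{fig:basket_case} and Proposition~\ref{prop:automorphism} for $\phi$. By the definition of a basket and Remark~\ref{rem:detectors}, $\{u,v\}$ is of type I/I, II/II or III/II; since $u''=v''$, the detections, the values $M(v,u')$, $M(u,v')$, $M(u',v')=2$ and all the $N$-values of Proposition~\ref{prop:N} are shared by $W^J$ and $\ol W^{\ol J}$, the image $\{\ol u,\ol v\}$ is again a basket of one of these three types.

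The engine of the reduction is Lemma~\ref{lem:A}. Since the basket hypothesis makes $u$ detected by $v'$ and $v$ detected by $u'$, both $u$ and $v$ are detected elements, so whenever one of them is non-simple in $W^J$ but simple in $\ol W^{\ol J}$ (or conversely), Lemma~\ref{lem:A} applies and forces the local data into a II/II or a II/III basket. I would then run through the pairs (type in $W^J$, type in $\ol W^{\ol J}$): the three diagonal matches with matching orientation (I/I$\to$I/I, II/II$\to$II/II, III/II$\to$III/II) are form-preserving and excluded by hypothesis, and the mixed pairings are eliminated by comparing the poset-invariant detection values—$M\ge4$ along a form-(II) branch, $M\ge5$ along a form-(III) branch—with the $N$-values, exactly as in Proposition~\ref{prop:basket_structure}. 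What survives, after possibly swapping $u\leftrightarrow v$ and interchanging the roles of $W$ and $\ol W$, is precisely a III/II basket in $W^J$ matched with a II/III basket in $\ol W^{\ol J}$, so $S\setminus J=\{s_0\}$ and the graph is that of Figure~\ref{fig:basket_case}.

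With the configuration normalised, Proposition~\ref{prop:automorphism} supplies the automorphism $\phi$, which interchanges the two arms $s_{k-1},s_{k-2},\dots$ and $s_{k+1},s_{k+2},\dots$ issuing from $s_k$ via $s_{k+j}\leftrightarrow s_{k-j}$. It remains only to check the four form equalities on the basket. Writing $u=s_ks_{k-1}s_k\cdots s_0$ (III), $u'=s_{k-1}s_k\cdots s_0$ (II), $v=s_ks_{k+1}s_k\cdots s_0$ (II) and $v'=s_{k+1}s_k\cdots s_0$ (I), the arm-swap gives $\phi(u)=s_ks_{k+1}s_k\cdots s_0$, $\phi(u')=s_{k+1}s_k\cdots s_0$, $\phi(v)=s_ks_{k-1}s_k\cdots s_0$ and $\phi(v')=s_{k-1}s_k\cdots s_0$, whose forms (II), (I), (III), (II) coincide with those read off for $\ol u,\ol u',\ol v,\ol v'$ in Proposition~\ref{prop:basket_structure}. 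The main obstacle is the middle paragraph: eliminating the mixed-type pairings—especially ruling out a II/II basket pairing with an I/I basket, where the detection $M$-values can agree—and applying Lemma~\ref{lem:A} in the correct direction; the $N$-function and the swap symmetries carry this load, and some care is needed with the degenerate cases $k=1$ (where $s_0=s_{k-1}$ is the unique black node, as in Figure~\ref{fig:small_auto}) and with trivial $u''$.
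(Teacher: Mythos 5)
Your overall architecture matches the paper's: reduce every non-form-preserving basket to the III/II--II/III configuration, then quote Propositions~\ref{prop:basket_structure}, \ref{prop:basket_constraints} and \ref{prop:automorphism}. But the middle paragraph, which you correctly identify as the crux, contains a genuine gap: the mixed-type pairings are \emph{not} eliminated, and they cannot be eliminated by comparing $M$- and $N$-values. The pairings (III/II in $W^J$)$\leftrightarrow$(II/II in $\ol W^{\ol J}$) and (II/II)$\leftrightarrow$(I/I) are consistent with all the poset invariants you list and do actually occur; the paper's proof does not derive a contradiction from them. Instead, in each such case it \emph{constructs a different pair} $u_1,v_1$ (or $u,v_1$) that forms a III/II basket in $W^J$ mapping to a II/III basket in $\ol W^{\ol J}$, by producing auxiliary chainlikes ($x=t_{k-1}t_k\dots t_0$ forcing a simple $x=ss_k\dots s_0$ with $m(s,s_k)\geq4$, then $v_1=s_kss_k\dots s_0$, etc.) and propagating label constraints such as $m(t_k,t_{k-1})\geq4$ and $m(s_k,s_{k-1})\geq5$ through the $M$- and $N$-functions. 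Only the pairing (III/II)$\leftrightarrow$(I/I) is outright impossible, and showing that requires its own argument (the paper uses $N(u',v')=1$ together with Proposition~\ref{prop:N} and two detection contradictions). Your statement ``what survives, after possibly swapping $u\leftrightarrow v$ and interchanging the roles of $W$ and $\ol W$, is precisely a III/II basket matched with a II/III basket'' is therefore false as a claim about the original $u,v$, and the reduction you need is a construction, not an exclusion.

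This gap propagates into your final paragraph: you verify the four form equalities only for $u,v$ themselves in the III/II--II/III configuration. When the original basket is of type III/II--II/II or II/II--I/I, the elements $u,u',v,v'$ and their images have different forms from the ones you list, and the verification must be done after transporting the structure obtained from the auxiliary basket $u_1,v_1$ back to the original one. Your appeal to Lemma~\ref{lem:A} is also not doing the work you ascribe to it: in the basket setting its conclusion is merely that a basket exists, which is already the hypothesis; the paper uses it elsewhere (e.g.\ in Proposition~\ref{prop:basket_constraints}) but the case analysis here rests on the explicit constructions above.
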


\begin{figure}[h]
\centering
\begin{tikzpicture}[xscale=2.2, yscale=2.1]
	\draw (0,0) node {$u''$}; \draw[thick] (0,-0.2) -- (0,-0.8) node[pos=0.5, left] {\small $s_k,t_k$};
	\draw (0,1) node {$u'$}; \draw[thick] (0,0.2) -- (0,0.8) node[pos=0.9, left] {\small $s_{k-1},t$};
	\draw (0,2) node {$u$}; \draw[thick] (0,1.2) -- (0,1.8) node[pos=0.5, left] {\small $s_k,t_k$}; %\draw[->] (0.8,1.2) -- (0.2,1.8);
	\draw (1,1) node {$v'$}; \draw[thick] (0.2,0.2) -- (0.8,0.8) node[pos=0.5, below right] {\small $s_{k+1},t_{k+1}$}; %\draw[->] (0.2,1.2) -- (0.8,1.8);
	\draw (1,2) node {$v$}; \draw[thick] (1,1.2) -- (1,1.8) node[pos=0.5, right] {\small $s_k,t_k$};
	%\draw (0.5,2) node {$\sim$}; \draw[densely dotted] (0.2,2) -- (0.4,2); \draw[densely dotted] (0.6,2) -- (0.8,2);
	\draw (-1,1) node {$x$}; \draw[thick, dashed] (-0.2,0.2) -- (-0.8,0.8) node[pos=0.5, below left] {\small $s,t_{k-1}$};
	\draw (-1,2) node {$v_1$}; \draw[thick, dashed] (-1,1.2) -- (-1,1.8) node[pos=0.5, left] {\small $s_k,t_k$};
\end{tikzpicture}\hfill
\begin{tikzpicture}[xscale=2.2, yscale=1.6]
	\draw (0,0) node {$u''$}; \draw[thick] (0,-0.2) -- (0,-0.8) node[pos=0.5, left] {\small $s_{k-1},t_{k-1}$};
	\draw (0,1) node {$u'$}; \draw[thick] (0,0.2) -- (0,0.8) node[pos=0.5, left] {\small $s_k,t_k$};
	\draw (0,2) node {$u$}; \draw[thick] (0,1.2) -- (0,1.8) node[pos=0.6, right] {\small $s_{k-1},t_{k+1}$}; %\draw[->] (0.8,1.2) -- (0.2,1.8);
	\draw (1,1) node {$v'$}; \draw[thick] (0.2,0.2) -- (0.8,0.8) node[pos=0.5, below right] {\small $s,t$}; %\draw[->] (0.2,1.2) -- (0.8,1.8);
	\draw (1,2) node {$v$}; \draw[thick] (1,1.2) -- (1,1.8) node[pos=0.5, right] {\small $s_{k-1},t_{k+1}$};
	%\draw (0.5,2) node {$\sim$}; \draw[densely dotted] (0.2,2) -- (0.4,2); \draw[densely dotted] (0.6,2) -- (0.8,2);
	\draw (-1,2) node {$x$}; \draw[thick, dashed] (-0.2,1.2) -- (-0.8,1.8) node[pos=0.5, below left] {\small $s_{k+1},t_{k-1}$};
	\draw (-1,3) node {$v_1$}; \draw[thick, dashed] (-1,2.2) -- (-1,2.8) node[pos=0.5, left] {\small $s_k,t_k$};
	\draw (0,3) node {$u_1$}; \draw[thick, dashed] (0,2.2) -- (0,2.8) node[pos=0.5, right] {\small $s_k,t_k$};
\end{tikzpicture}\hfill
\caption{Illustration of parts (a) (left) and (c) (right) of the proof of Proposition~\ref{prop:basket_case}. In (a), $u$ and $v$ imply the existence of $v_1$ so that $u$ and $v_1$ form a III/II-II/III basket. In (b), $u$ and $v$ imply the existence of $u_1$ and $v_1$ which form a III/II-II/III basket.}\label{fig:combos}
\end{figure}
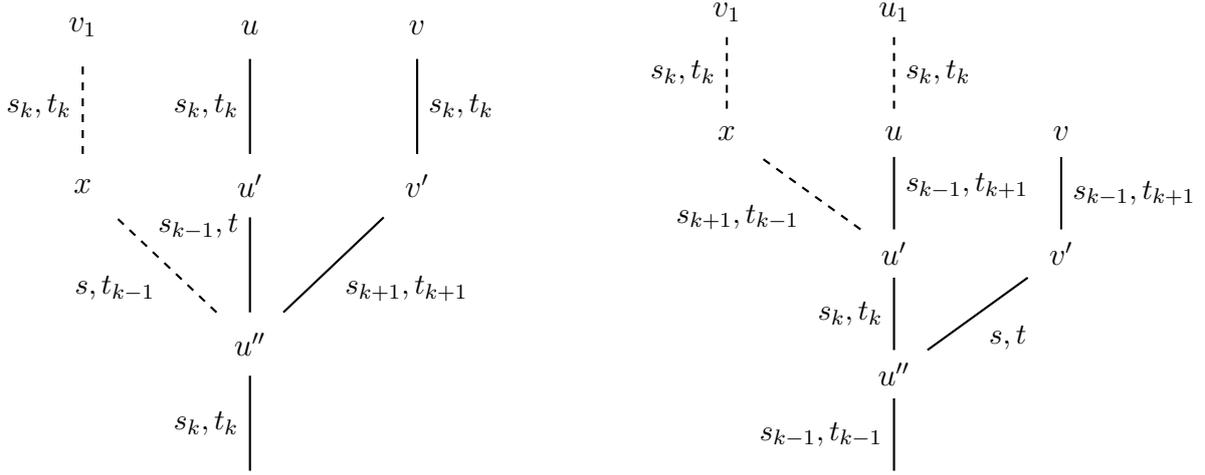

\begin{proof}
We consider all possible combinations of baskets $u,v,\ol u,\ol v$ and for each show that there are $u_1,v_1$ that form a III/II-II/III basket. Two of these are illustrated in Figure~\ref{fig:combos} for better visualisation.
\begin{enumerate}[label=(\alph*)]
\item Suppose $u,v$ form a III/II basket and $\ol u,\ol v$ form a II/II basket. We have
\[\qquad\quad u=s_ks_{k-1}s_k\dots s_0\qquad v=s_ks_{k+1}s_k\dots s_0\qquad \ol u=t_ktt_k\dots t_0\qquad\ol v=t_kt_{k+1}t_k\dots t_0\]
with $m(t_k,t)\geq4$ and $m(t_k,t_{k+1})\geq4$. If $m(t_k,t_{k-1})=3$ then $t_{k-1}\ol u\sim t_{k-1}\ol v\in\ol C$, but the only possible corresponding elements in $C$ are $s_{k-1}u\not\sim s_{k+1}v$, a contradiction, so $m(t_k,t_{k-1})\geq4$. This means $\ol x=t_{k-1}t_k\dots t_0\in\ol C$ must correspond to some simple $x=ss_k\dots s_0$. We then have
\[m(s,s_k)=M(v,x)=M(\ol v',\ol x)=m(t,t_k)\geq4\]
This means $v_1=s_kss_k\dots s_0$ is chainlike and $v_1\sim u$, hence we have $\ol v_1=t_kt_{k-1}t_k\dots t_0$. Then $u,v_1$ form a basket with $u$ and $\ol v_1$ of form (III). We have a III/II-II/III basket and can use the previous propositions.

\item Suppose $u,v$ form a III/II basket and $\ol u,\ol v$ form a II/II basket. We have
\[\qquad\quad u=s_ks_{k-1}s_k\dots s_0\qquad v=s_ks_{k+1}s_k\dots s_0\qquad \ol u=t_{k+2}tt_k\dots t_0\qquad\ol v=t_{k+2}\dots t_0\]
We have $N(u',v')=1$, so by Proposition~\ref{prop:N} in order to have the same result for $\ol u',\ol v'$ we require either $m(t_{k+1},t_k)\geq4$ or $m(t,t_k)\geq4$.
\begin{itemize}
	\item If $m(t_{k+1},t_k)\geq4$, then $\ol x=t_kt_{k+1}\dots t_0\vartriangleright\ol v'$ corresponds to $x=ss_{k+1}\dots s_0$ for some $s\in S$. Then $u'$ does not detect $x$ but $\ol u'$ detects $\ol x$, a contradiction.
	\item If $m(t,t_k)\geq4$, then $\ol x=t_ktt_k\dots t_0\vartriangleright\ol u'$ corresponds to $x=s_{k-2}s_{k-1}s_k\dots s_0$. Then $v'$ does not detect $x$ but $\ol v'$ detects $\ol x$, a contradiction.
\end{itemize}
Thus, a III/II-II/II basket is not possible.

\item Suppose $u,v$ form a II/II basket and $\ol u,\ol v$ form a I/I basket. We have
\[\qquad\quad\  u=s_{k-1}s_k\dots s_0\qquad v=s_{k-1}ss_{k-1}\dots s_0\qquad \ol u=t_{k+1}\dots t_0\qquad\ol v=t_{k+1}tt_{k-1}\dots t_0\]
We have $N(u',v')=1$, so to have the same result for $\ol u',\ol v'$ we require either $m(t,t_{k-1})\geq4$ or $m(t_k,t_{k-1})\geq4$. Assume the latter without loss of generality. Then $\ol x=t_{k-1}t_k\dots t_0\vartriangleright \ol u'$ corresponds to $x=s_{k+1}\dots s_0$ for some $s_{k+1}\in S$. Now,
\[m(t_k,t_{k+1})=M(\ol v,\ol u')=M(v,u')=m(s_k,s_{k-1})\geq4\]
Then $\ol u_1=t_kt_{k+1}\dots t_0\vartriangleright\ol u$ is chainlike. Taking $u_1=s_{k-2}s_{k-1}s_k\dots s_0$ gives a contradiction, since $w=s_{k-2}s_{k-1}ss_{k-1}\dots s_0$ would be chainlike with $w\sim u_1$ meaning that $w$ must map to $\ol w=t_kt_{k+1}tt_{k-1}\dots t_0$ which is not chainlike, so the only possibility is $u_1=s_ks_{k-1}s_k\dots s_0$ and $m(s_k,s_{k-1})\geq5$. Next, we have
\[m(s_k,s_{k+1})=M(u_1,x)=M(\ol u_1,\ol x)=m(t_k,t_{k-1})\geq4\]
This means $v_1=s_ks_{k+1}\dots s_0$ is chainlike and corresponds to $\ol v_1=t_kt_{k-1}t_k\dots t_0$, and $u_1,v_1$ form a III/II-II/III basket.
\end{enumerate}
\end{proof}

\begin{prop}\label{prop:form2}
Suppose $u=s_l\dots s_k\dots s_0$ is of form (II) and is detected by $x\vartriangleright s_l\dots s_0$ such that $x$ does not detect $w$ for $w<u$. If $u$ does not form a basket with any $v\in C$, then the form of $w$ is the same as the form of $\ol w$ for $w\leq u$.
\end{prop}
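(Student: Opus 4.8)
The plan is to reduce the statement to a single question about the top element $\ol u$ and then read the rest off the chain. First I would observe that $\{w\in W^J\mid w\le u\}$ is precisely the chain $e\vartriangleleft\dots\vartriangleleft u''\vartriangleleft u'\vartriangleleft u$: for any $w\le u$ the Chain Property gives a saturated chain from $w$ up to $u$, and since each of $u,u',u'',\dots$ is semi-chainlike, hence covers a unique element, this saturated chain must be a terminal segment of the chain below $u$; in particular every such $w$ is chainlike. The isomorphism carries this chain onto the chain below $\ol u$, preserving lengths, covers, $\sim$, and the functions $L,M,N$ and detection. With $u$ written in its form (II) parameters---simple part $s_k\dots s_0$ and prefix $s_l\dots s_{k-1}$---the chain below $u$ consists of form (II) elements at lengths $k+2,\dots,2k-l+1$ and simple elements at lengths $0,\dots,k+1$. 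It therefore suffices to show that $\ol u$ is again of form (II) with the \emph{same} parameters $k,l$: length preservation then puts the form (II)/(I) boundary at the same level on both chains, so the forms agree term by term.

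To identify $\ol u$ I would first rule out form (I): this is exactly Lemma~\ref{lem:A}, since a simple $\ol u$ together with the detector $x\vartriangleright s_l\dots s_0$ would produce $v=s_{k-1}x\in C$ forming a basket with $u$, contrary to hypothesis. To rule out form (III) I would exploit the hypothesis that $x$ detects no $w<u$, together with the no-basket hypothesis. The key structural fact is that the leftmost generators of the successive entries on the chain of a form (III) element repeat with period two. When $m\ge5$ this forces the detector $\ol x$ of a form (III) $\ol u$ to detect a strictly smaller chainlike with the same leftmost generator (the entry two steps down), so $x$ detects some $w<u$, a contradiction---except when $\ol x$ branches high within the braid, a configuration I would instead resolve by exhibiting a basket. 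The remaining value $m=4$ is the genuine boundary, where $\ol x$ sits at or below the simple base of the braid: either a cycle is forced, so that $\ol x$ detects the base and hence a $w<u$, or (pinned by lengths to $l=k-1$) the element $u$ forms a basket, again excluded.

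Once $\ol u$ is known to be of form (II), say with parameters $k',l'$, I would fix the parameters with the same no-lower-detection hypothesis. Because $\ol x\vartriangleright\ol{s_l\dots s_0}$ and lengths are preserved, $\ol x$ covers the length-$(l+1)$ element of $\ol u$'s chain. Comparing with the detectors listed in Remark~\ref{rem:detectors}, a form (II) detector gives $l'=l$ immediately, while a form (I) detector branching strictly below the prefix (that is, $l<l'$) would make $\ol x$ detect the simple element $t_{l'}\dots t_0<\ol u$, contradicting the hypothesis; so $l'=l$, and then length preservation $2k'-l'=2k-l$ forces $k'=k$. Thus $\ol u$ is of form (II) with parameters $k,l$, and by the first paragraph the forms agree along the whole chain $\{w\le u\}$.

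The real obstacle is the form (III) exclusion at $m=4$. Away from this value the cheap invariants suffice---the period-two pattern of leftmost generators together with the no-lower-detection hypothesis disposes of $m\ge5$, and the same hypothesis drives the parameter matching---but at $m=4$ the detector sits exactly on the base of the braid, produces no smaller detected element, and one must instead exhibit an explicit basket and appeal to the no-basket hypothesis. Doing this cleanly, while tracking the high landings of the detector within the braid and the two shapes of detector in Remark~\ref{rem:detectors} (the form (I) detector $ss_l\dots s_0$ and the form (II) detector $s_{l-1}s_l\dots s_0$), is where the bulk of the case analysis sits.
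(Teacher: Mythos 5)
Your skeleton coincides with the paper's: every $w\le u$ lies on the unique chain below $u$, so the problem reduces to identifying $\ol u$; form (I) is excluded by Lemma~\ref{lem:A} together with the no-basket hypothesis; a form (II) image with the wrong parameters is excluded by the no-lower-detection hypothesis (and, as you note, length preservation makes $l'-l$ even, so the borderline $l'=l+1$ cannot occur); and the period-two observation correctly handles a form (III) image when $\ell(x)\le\ell(u)-3$, while the cases $\ell(x)=\ell(u)-2$ also reduce to $\ol x$ detecting a smaller element of the chain. Up to here you are following the paper's proof.

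The gap is the sub-case you defer, and your proposed toolkit cannot close it. When $\ell(x)=\ell(u)-1$ one has $l=k-1$, $u=s_{k-1}s_k\dots s_0$ and $x\vartriangleright u''$; if $x=ss_{k-1}\dots s_0$ is simple, the no-basket hypothesis forces $m(s,s_{k-1})=3$, and one may further assume $m(s_{k-1},s_{k-2})=3$. In this configuration neither of your two mechanisms applies: a form (III) candidate $\ol u$ is \emph{not} detected below by $\ol x$ (since $\ol x$ covers $\ol u''$, its leftmost generator commutes with the leftmost generator of $\ol u'$, and it cannot detect $\ol u''$ itself because $\ol x'=\ol u''$), and there is no basket to exhibit because every relevant multiplicity equals $3$. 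The paper closes this case by an entirely different device: comparing the unique chainlike covering $u$ with its image forces $m(s_k,s_{k-1})=4$ on one side and $m(t_n,t_{n-1})\ge6$ on the other, so $w=s_ks_{k-1}ss_k\dots s_0$ is semi-chainlike, and an exhaustive enumeration (via Lemma~\ref{lem:semi}) of all elements covering a member of $M_2(\ol u,\ol x)$ shows none of them is semi-chainlike, so $\ol w$ has nowhere to go. Some argument of this kind is indispensable; without it the proof fails exactly where the statement is non-trivial. (A similar remark applies to the second sub-case of the form (II) detector $x=s_{k-2}s_{k-1}\dots s_0$, which the paper settles by a maximality argument on the chainlikes above $\ol x$, again outside the detection/basket dichotomy.)
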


\begin{proof}
First suppose $\ol u=t_j\dots t_n\dots t_0$ is of form (II) with $n\neq k$ and $\ol x=tt_l\dots t_0$ (either simple or form (II)). If $n<k$ then $j<l$ and $m(t_l,t_{l-1})=3$, so $\ol x$ is simple. However, to have $\ol x$ detecting $\ol u$ we require $m(t,t_j)\geq3$ contradicting $\ol x$ simple. If $n>k$ then $j\geq l+2$ and $m(t,t_j)\geq3$, so $\ol x$ detects $\ol w=t_j\dots t_0$ contradicting the conditions of the proposition. This shows that if $\ol u$ is of form (II) then we have the result. If instead $\ol u$ is simple then by Lemma~\ref{lem:A} $l=k-1$ and $u$ forms a basket.

Finally, suppose $\ol u$ is of form (III). We have a lot of case-checking to do:
\begin{enumerate}
	\item If $\ell(x)\leq\ell(u)-3$, then since $\ol u''$ has the same first generator as $\ol u$, $\ol x$ also detects $\ol u''$ and we have a contradiction.
	\item If $\ell(x)=\ell(u)-2$, then we consider 4 sub-cases:
	\begin{itemize}
		\item If $\ol u=t_nt_{n-1}t_n\dots t_0$ then $\ol x\vartriangleright t_{n-1}\dots t_0$, so $\ol x$ also detects $t_{n-1}t_n\dots t_0$.
		\item If $\ol u=t_{n-1}t_nt_{n-1}t_n\dots t_0$ then $\ol x\vartriangleright t_n\dots t_0$, so $\ol x$ also detects $t_nt_{n-1}t_n\dots t_0$.
		\item If $\ol u=t_nt_{n-1}t_nt_{n-1}t_n\dots t_0$ then the only possibility for $\ol x$ is $t_{n-2}t_{n-1}t_n\dots t_0$ which doesn't detect $\ol u$.
		\item If $\ol u>t_nt_{n-1}t_nt_{n-1}t_n\dots t_0$ then there are no possibilities for $\ol x$.
	\end{itemize}	
	\item Suppose $\ell(x)=\ell(u)-1$ so $u=s_{k-1}s_k\dots s_0$, and suppose $x=s_{k-2}s_{k-1}\dots s_0$. Then $m(s_{k-1},s_{k-2})=4$ since otherwise $u$ and $v=s_{k-1}s_{k-2}s_{k-1}\dots s_0$ form a basket. We look at 2 sub-cases:
	\begin{itemize}
		\item If $\ol u=t_nt_{n-1}t_n\dots t_0$ then $\ol x=tt_n\dots t_0$ is simple. Then $m(t,t_n)=M(\ol u,\ol x)=M(u,x)=m(s_{k-1},s_{k-2})=4$, so $\ol u$ and $\ol v=t_ntt_n\dots t_0$ form a basket.
		\item If $\ol u=t_{n-1}t_nt_{n-1}t_n\dots t_0$ then $\ol x=t_{n-2}t_{n-1}t_n\dots t_0$. Let $\ol x_0=t_j\dots t_n\dots t_0$ be the maximal chainlike with $\ol x_0\geq\ol x$, $0\leq j\leq n-2$. Since $m(s_{k-1},s_{k-2})=4$ we have that $x_0$ is of form (II), and since $n=k-2$ we have $x_0=s_{j+4}\dots s_k\dots s_0$. Since there is no chainlike above $x_0$, $m(s_{j+4},s_{j+3})\geq4$. Thus if $j<n-2$ then $x_0$ is detected by $s_{j+3}s_{j+4}\dots s_0$, contradicting the first part of this proof, and if $j=n-2$ then $j+4=k$ and $s_{j+3}s_{j+4}\dots s_0$ contradicts maximality of $x_0$.
	\end{itemize}
	\item Suppose $\ell(x)=\ell(u)-1$ so $u=s_{k-1}s_k\dots s_0$, and suppose $x=ss_{k-1}\dots s_0$ is simple. Then $m(s,s_{k-1})=3$ since otherwise $u$ and $s_{k-1}ss_{k-1}\dots s_0$ would form a basket. Moreover we can assume $m(s_{k-1},s_{k-2})=3$, since otherwise $s_{k-2}s_{k-1}\dots s_0\in C$ and this reduces to the case above, and we denote $u_1=s_{k-2}s_{k-1}s_k\dots s_0\in C$. We also denote $w=s_ks_{k-1}ss_k\dots s_0$ which covers $s_{k-1}ss_k\dots s_0\in M_2(u,x)$. We look at 2 sub-cases, the first of which is exemplified in Figure~\ref{fig:form2}.
	\begin{itemize}
		\item If $\ol u=t_nt_{n-1}t_n\dots t_0$ then $\ol x=tt_n\dots t_0$ is simple. Then $\ol u_1$ begins with $t_{n-1}t_nt_{n-1}t_nt_{n-1}$ and so $m(t_n,t_{n-1})\geq6$. Since $\ol u_1$ is the only chainlike covering $\ol u$, $s_ks_{k-1}s_k\dots s_0$ cannot be chainlike and so $m(s_k,s_{k-1})=4$. Thus $w$ as defined above is semi-chainlike. Now $M_2(\ol u,\ol x)=\{tt_n\ol u',t_nt\ol u'\}$, so by Lemma~\ref{lem:semi} the only possibilities for $\ol w$ covering an element of $M_2(\ol u,\ol x)$ are:
		\begin{itemize}
			\item the element(s) of $M_3(\ol u,\ol x)$, which cover both elements of $M_2(\ol u,\ol x)$;
			\item $\ol w=t_jtt_n\ol u'=tt_jt_nt_{n-1}t_n\dots t_0$ with $j\leq n-1$, which is either not in $\ol W^{\ol J}$ or also covers $t_jt_nt_{n-1}t_n\dots t_0$;
			\item $\ol w=t_jt_nt\ol u'=t_ntt_jt_{n-1}t_n\dots t_0$ with $j\leq n-2$, which is either not in $\ol W^{\ol J}$ or also covers $t_nt_jt_{n-1}t_n\dots t_0$;
			\item $\ol w=t_{n-1}t_nt\ol u'$ which also covers $t_{n-1}t_nt_{n-1}t_n\dots t_0$ as $m(t_n,t_{n-1})\geq6$.
		\end{itemize}
		Thus $w$ has no semi-chainlike element to map to, and we have a contradiction.
		\item If $\ol u=t_{n-1}t_nt_{n-1}t_n\dots t_0$ then $\ol x=t_{n-2}t_{n-1}t_n\dots t_0$. Then $\ol u_1$ begins with $t_nt_{n-1}t_nt_{n-1}t_nt_{n-1}$ and so $m(t_n,t_{n-1})\geq7$. Since $\ol u_1$ is the only chainlike covering $\ol u$, $s_ks_{k-1}s_k\dots s_0$ cannot be chainlike and so $m(s_k,s_{k-1})=4$. Thus $w$ as defined above is semi-chainlike. Now $M_2(\ol u,\ol x)=\{tt_{n-1}\ol u',t_{n-1}t\ol u'\}$, so by Lemma~\ref{lem:semi} the only possibilities for $\ol w$ are:
		\begin{itemize}
			\item the element(s) of $M_3(\ol u,\ol x)$, which cover both elements of $M_2(\ol u,\ol x)$;
			\item $\ol w=t_jtt_{n-1}\ol u'=tt_jt_{n-1}t_nt_{n-1}t_n\dots t_0$ with $j\leq n-2$, which is either not in $\ol W^{\ol J}$ or also covers $t_jt_{n-1}t_nt_{n-1}t_n\dots t_0$;
			\item $\ol w=t_jt_{n-1}t\ol u'=t_{n-1}tt_jt_nt_{n-1}t_n\dots t_0$ with $j\leq n-3$, which is either not in $\ol W^{\ol J}$ or also covers $t_{n-1}t_jt_nt_{n-1}t_n\dots t_0$;
			\item $\ol w=t_{n-2}t_{n-1}t\ol u'$ which also covers $t_{n-2}t_{n-1}t_nt_{n-1}t_n\dots t_0$.
			\item $\ol w=t_ntt_{n-1}\ol u'$ which also covers $t_nt_{n-1}t_nt_{n-1}t_n\dots t_0$ as $m(t_n,t_{n-1})\geq7$;
			\item $\ol w=t_nt_{n-1}t\ol u'$ which also covers $t_nt_{n-1}t_nt_{n-1}t_n\dots t_0$ as $m(t_n,t_{n-1})\geq7$.
		\end{itemize}
		Thus $w$ has no semi-chainlike element to map to, and we have a contradiction.
	\end{itemize}
\end{enumerate}
Every case leads to a contradiction, so $\ol u$ is not of form (III) and the proof is complete.
\end{proof}

\vspace{-15pt}

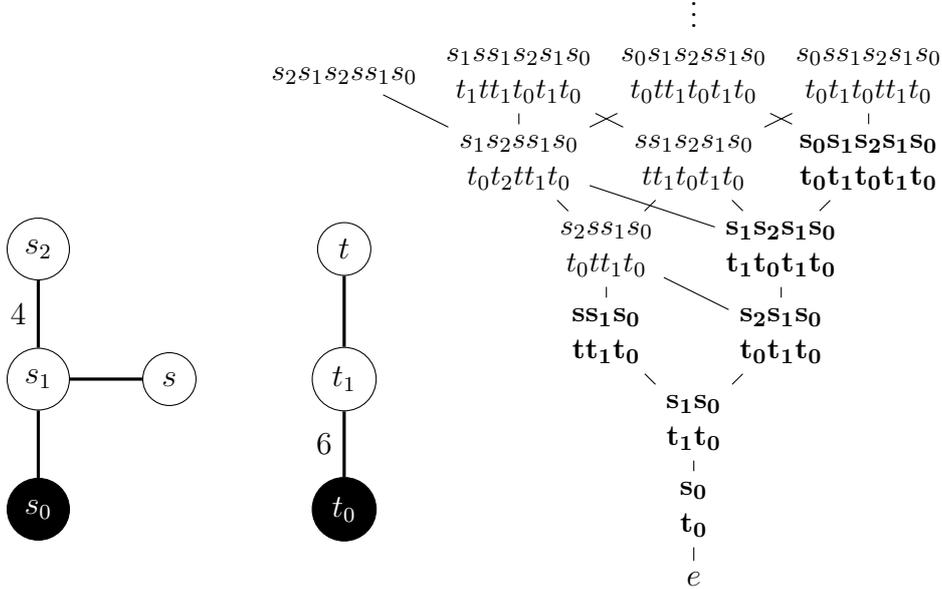
\begin{figure}[h]
\begin{center}
\begin{tikzpicture}[scale=1.15]
\begin{scope}[shift={(-2,1)}]
\begin{scope}
	\node[blacknode] (s0) at (0,0) {$s_0$};
	\node[whitenode] (s1) at (0,1.5) {$s_1$};
	\node[whitenode] (s2) at (0,3) {$s_2$};
	\node[whitenode] (s3) at (1.5,1.5) {$s$};
\end{scope}
\begin{scope}[every edge/.style={draw=black,very thick}]
    \path (s1) edge (s0) edge node[left] {$4$} (s2) edge (s3);
\end{scope}
\end{scope}
\begin{scope}[shift={(1.5,1)}]
\begin{scope}
	\node[blacknode] (t0) at (0,0) {$t_0$};
	\node[whitenode] (t1) at (0,1.5) {$t_1$};
	\node[whitenode] (t2) at (0,3) {$t$};
\end{scope}
\begin{scope}[every edge/.style={draw=black,very thick}]
    \path (t1) edge node[left] {$6$} (t0) edge (t2);
\end{scope}
\end{scope}
\begin{scope}[shift={(5.5,0)}]
\begin{scope}[every node/.style={align=center}]
	\node (e) at (0,0.2) {$e$};
	\node (0) at (0,1) {\small{$\*{s_0}$}\\\small{$\*{t_0}$}};
	\node (10) at (0,2) {\small{$\*{s_1s_0}$}\\\small{$\*{t_1t_0}$}};
	\node (310) at (-1,3) {\small{$\*{ss_1s_0}$}\\\small{$\*{tt_1t_0}$}};
	\node (210) at (1,3) {\small{$\*{s_2s_1s_0}$}\\\small{$\*{t_0t_1t_0}$}};
	\node (2310) at (-1,4) {\small{$s_2ss_1s_0$}\\\small{$t_0tt_1t_0$}};
	\node (1210) at (1,4) {\small{$\*{s_1s_2s_1s_0}$}\\\small{$\*{t_1t_0t_1t_0}$}};
	\node (12310) at (-2,5) {\small{$s_1s_2ss_1s_0$}\\\small{$t_0t_2tt_1t_0$}};
	\node (31210) at (0,5) {\small{$ss_1s_2s_1s_0$}\\\small{$tt_1t_0t_1t_0$}};
	\node (01210) at (2,5) {\small{$\*{s_0s_1s_2s_1s_0}$}\\\small{$\*{t_0t_1t_0t_1t_0}$}};
	\node (212310) at (-4,6) {\small{$s_2s_1s_2ss_1s_0$}};
	\node (131210) at (-2,6) {\small{$s_1ss_1s_2s_1s_0$}\\\small{$t_1tt_1t_0t_1t_0$}};
	\node[label=above:{$\vdots$}] (012310) at (0,6) {\small{$s_0s_1s_2ss_1s_0$}\\\small{$t_0tt_1t_0t_1t_0$}};
	\node (031210) at (2,6) {\small{$s_0ss_1s_2s_1s_0$}\\\small{$t_0t_1t_0tt_1t_0$}};
\end{scope}
\begin{scope}[every edge/.style={draw=black}]
	\path (0) edge (e) edge (10);
	\path (310) edge (10) edge (2310);
	\path (210) edge (10) edge (2310) edge (1210);
	\path (2310) edge (31210) edge (12310);
	\path (1210) edge (31210) edge (12310) edge (01210);
	\path (12310) edge (212310) edge (131210) edge (012310);
	\path (31210) edge (131210) edge (031210);
	\path (01210) edge (012310) edge (031210);
\end{scope}
\end{scope}
\end{tikzpicture}
\end{center}
\caption{Example of (4) in the proof of Proposition~\ref{prop:form2} with $k=2$, $n=1$. The two posets are drawn overlapping to show their difference, which is in the element $s_2s_1s_2ss_1s_0$ identified in the proof as $w$.}\label{fig:form2}
\end{figure}

\begin{corollary}[Stacked Label Property]\label{cor:double_label} Suppose $u=s_l\dots s_k\dots s_0$ of form (II) does not comprise a non-form-preserving basket with any $v\in C$. If either:
\begin{enumerate}[label=(\alph*)]
\item $m(s_j,s_{j-1})\geq4$ for some $j\leq l$; or
\item $l=0$ and $\ol u$ is of form (II),
\end{enumerate}
then $\ol u=t_l\dots t_k\dots t_0$ for some distinct $\{t_0,\dots,t_k\}\in\ol S$.
\end{corollary}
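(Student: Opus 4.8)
The plan is to reduce the whole statement to Proposition~\ref{prop:form2}. Its conclusion—that $w$ and $\ol w$ have the same form for every $w\le u$—combined with one length count pins down $\ol u$ completely. Indeed, suppose we have shown that $\ol u$ is of form (II), say $\ol u=t_{l'}\dots t_{k'}\dots t_0$, and that the forms agree all the way down the chain $e\vartriangleleft\dots\vartriangleleft u$. The chain below $u$ carries exactly $k-l$ form (II) elements (those of length $>k+1$) and the chain below $\ol u$ carries exactly $k'-l'$, so form-agreement gives $k-l=k'-l'$; together with $\ell(\ol u)=\ell(u)$, i.e. $2k'-l'=2k-l$, this forces $k'=k$ and $l'=l$, whence $\ol u=t_l\dots t_k\dots t_0$. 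So the entire content is the verification of the two hypotheses of Proposition~\ref{prop:form2}: the existence of a detector $x\vartriangleright s_l\dots s_0$ of $u$ that detects no $w<u$, and the absence of a basket.

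First I would dispose of the basket hypothesis. Inspecting the three basket shapes, a form (II) element can be a top corner of a basket only when its unique lower cover is simple, i.e. only when $l=k-1$. Hence for $l<k-1$ the element $u$ lies in no basket and the basket hypothesis of Proposition~\ref{prop:form2} is automatic; for $l=k-1$, any basket containing $u$ is form-preserving by assumption, and then $\ol u$ is already of form (II) with $\ol{u'}$ simple (form-preservation includes $u'$), giving $l'=k'-1$, so the length count finishes directly.

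For case (a) I would induct on $l-j$, where $j\le l$ is maximal with $m(s_j,s_{j-1})\ge4$. In the base case $j=l$ we have $m(s_l,s_{l-1})\ge4$, so $x=s_{l-1}s_l\dots s_0$ is a form (II) chainlike with $x'=s_l\dots s_0$ detecting $u$ by Remark~\ref{rem:detectors}; one checks that $x$ detects no $w<u$ (detection forces the detected chainlike to branch strictly above $s_l\dots s_0$ with leftmost generator adjacent to $s_{l-1}$, and $u$ is the least such element), so Proposition~\ref{prop:form2} applies. When $j<l$, maximality gives $m(s_l,s_{l-1})=3$, so $\hat u=s_{l-1}u$ is of form (II) with parameter $l-1$, still satisfies (a) with the same $j$, and has $(l-1)-j<l-j$; by induction $\ol{\hat u}=t_{l-1}\dots t_k\dots t_0$, and since $\hat u'=u$ and the isomorphism commutes with taking lower covers, $\ol u=\ol{\hat u'}=\ol{\hat u}'=t_l\dots t_k\dots t_0$.

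For case (b), where $l=0$ and $\ol u$ is assumed of form (II), I would argue via chainlike covers. If $s_0$ has a neighbour other than $s_1$, the detector $x\vartriangleright s_0$ exists and Proposition~\ref{prop:form2} applies as before. Otherwise, using that $l=0$ forces $m(s_0,s_1)=3$ when $k\ge2$, the case analysis in the proof of Theorem~\ref{thm:forms} shows that $u$ admits no chainlike cover; hence neither does $\ol u$. But a form (II) element $\ol u$ of parameter $l'\ge1$ with $m(t_{l'},t_{l'-1})=3$ does admit the cover $t_{l'-1}\ol u$, while one with $m(t_{l'},t_{l'-1})\ge4$ satisfies (a), so applying case (a) in the symmetric (reverse) direction would force $u$ to have parameter $l'\neq0$; both are impossible, so $l'=0$ and $k'=k$ by length. (The residual case $k=1$ is settled by the length equation alone, since then $(k',l')=(1,0)$ is the only option.) The main obstacle throughout is the honest construction and verification of the detector—in particular confirming that it detects no smaller chainlike—and the degenerate sub-case of (b) in which no detector for $u$ exists, which is precisely where the extra hypothesis ``$\ol u$ is of form (II)'' is indispensable.
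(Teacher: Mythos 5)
Your proposal is correct and takes essentially the same route as the paper: handle the form-preserving basket case up front via the basket classification, build the detector $s_{j-1}s_j\dots s_0$ from the stacked label and feed it to Proposition~\ref{prop:form2}, and in case (b) run the machinery in the reverse direction using the absence of chainlike covers of $u$ to force $m(t_{l'},t_{l'-1})\geq4$. Your induction on $l-j$ in case (a) and the explicit chain-position/length count are just careful unrollings of what the paper compresses into ``detects $u$ and no chainlikes below $u$, so the result follows by Proposition~\ref{prop:form2}.''
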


\begin{proof}
If $u$ is part of a form-preserving basket then by the classification of baskets we are done. For (a), assume $j$ maximal with this property. Then $x=s_{j-1}s_j\dots s_0$ is chainlike and detects $u$ and no chainlikes below $u$, so the result follows by Proposition~\ref{prop:form2}. For (b), suppose that $\ol u=t_j\dots t_n\dots t_0$ with $n>k$. Then $j>0$ and $m(t_j,t_{j-1})\geq4$ since there are no chainlikes above $u$, so $\ol x=t_{j-1}t_j\dots t_0$ is chainlike and detects $\ol u$ and no chainlikes below $\ol u$ and Proposition~\ref{prop:form2} gives a contradiction.
\end{proof}

\begin{proof}[Proof of Theorem~\ref{thm:basket_complete}]
Let $u,v$ form the non-form-preserving basket. We already have $\phi(w)$ the same form as $\ol w$ for $w\geq u',w\geq v'$ or $w\leq u''$. If $w\in C$ is not any of these, then it contains generators in the additional graph of white nodes in Figure~\ref{fig:basket_case} and $\phi(w)=w$. However, if $w$ is of form (II) then Corollary~\ref{cor:double_label} applies due to the $\infty$ label between $s_k$ and $s_{k-1}$. Repeating for the direction $\ol W^{\ol J}\to W^J$, we have $w\in C$ is of form (II) if and only if $\ol w$ is of form (II). Then $w$ is of form (III) if and only if it is not of form (II) and is greater than some form (II) element, and $w$ is simple otherwise, so the forms of all elements of $C$ and $\ol C$ are determined and Theorem~\ref{thm:graph} applies.
\end{proof}

\subsection{Labels of 5 or greater}\label{subsec:5}

With Theorem~\ref{thm:basket_complete} proven we can now assume that the isomorphism $W^J\to\ol W^{\ol J}$ contains no non-form-preserving baskets. We will call such a pair of isomorphic posets \textit{basket-free}. First we focus on form (III) elements, and classify all isomorphisms for which a form (III) chainlike maps to a form (I) or (II) chainlike. We start with edge labels greater than or equal to 6.

\begin{prop}\label{prop:form3}
Let $W^J\to\ol W^{\ol J}$ be basket-free, and suppose $u$ is of form (III) with $u=s_{k-1}s_ks_{k-1}s_k\dots s_0$ (so $m(s_k,s_{k-1})\geq6)$ and $k\geq2$. Then the form of $w\in C$ is the same as the form of $\ol w$ for $w\leq u$ or $w\geq u$.
\end{prop}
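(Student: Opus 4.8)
The heart of the matter is to show that $\ol u$ is again of form (III); once this is known, the forms of all chainlikes comparable to $u$ follow from the rigidity of the spiral. Throughout I write $u=s_{k-1}s_ks_{k-1}s_ks_{k-1}s_{k-2}\dots s_0$ and record the top of the chain beneath it: $u'=s_ks_{k-1}s_ks_{k-1}s_{k-2}\dots s_0$ (form (III)), $u''=s_{k-1}s_ks_{k-1}s_{k-2}\dots s_0$ (form (II)), $u'''=s_ks_{k-1}\dots s_0$ (simple). Because $k\geq2$, the detector table in Remark~\ref{rem:detectors} supplies a form (II) element $x$ detecting $u$, namely $x=s_{k-2}s_{k-1}s_k\dots s_0$ when $m(s_{k-1},s_{k-2})=3$ and $x=s_{k-2}s_{k-1}\dots s_0$ when $m(s_{k-1},s_{k-2})\geq4$. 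A direct check (using $x'=u''$, and that $u$ is the only element of the chain $\dots\vartriangleleft u'\vartriangleleft u$ whose leftmost generator can be adjoined to $x$) shows $x$ detects no $w<u$. Since detection, the relation $\sim$, and the functions $M,N$ depend only on the poset, $\ol x$ detects $\ol u$ and detects no $\ol w<\ol u$.

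First I rule out $\ol u$ being of form (II). If it were, then in the reverse isomorphism $\ol W^{\ol J}\to W^J$ the element $\ol u$ is of form (II), is detected by $\ol x$ (which detects nothing below it), and cannot sit in a form-preserving basket, since its preimage $u$ is of form (III); so Proposition~\ref{prop:form2} applied in this reverse direction forces the form of $u$ to equal the form of $\ol u$, i.e. $u$ of form (II), a contradiction. If instead $\ol u$ lay in a basket at all, that basket would be non-form-preserving (as $u$ is not of form (II)), contradicting basket-freeness. Either way $\ol u$ is not of form (II).

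Next I rule out $\ol u$ being simple, which I expect to be the main obstacle. A simple element can be detected only in the cyclic case of Remark~\ref{rem:detectors}, so if $\ol u=t_n\dots t_0$ were simple then its detector $\ol x=t\,t_j\dots t_0$ would force a cycle $t-t_j-\dots-t_n-t$ in the graph of $(\ol W,\ol W_{\ol J})$. The plan is to extract from this cycle a detector of $\ol u''$ (equivalently, to pull back to a detector of $u''$), or directly a basket on a pair $\{\ol u,\ol v\}$, so that Lemma~\ref{lem:A} together with basket-freeness produces a contradiction; the delicate point is that the repeated leftmost generator $s_{k-1}$ of the form (III) element $u$ cannot be reconciled with the all-distinct generators of a simple image without creating exactly such a non-form-preserving basket. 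This case analysis — tracking how the cyclic detector interacts with the form (II) element $u''$ and the branch it produces over $u''$ — is where most of the work lies.

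With $\ol u$ shown to be of form (III), it remains to transfer forms to every chainlike comparable to $u$. A coatom argument (each of $u,u'$ covers exactly one element, and each chainlike $\geq u$ covers only spiral elements) shows that the chainlikes $\leq u$ are exactly the chain $e\vartriangleleft\dots\vartriangleleft u''\vartriangleleft u'\vartriangleleft u$, and that the chainlikes $\geq u$ are exactly the remaining form (III) elements of the spiral, forming a single chain with a definite top where the braid relation at length $m(s_k,s_{k-1})$ first applies. Comparing these rigid up- and down-chains under the isomorphism, together with the intrinsic values $M(u',x)=2$ and $N(u',x)=1$ at the branch over $u''$ (Proposition~\ref{prop:N}), pins down $\ol u=t_{k-1}t_kt_{k-1}t_kt_{k-1}t_{k-2}\dots t_0$ with $m(t_k,t_{k-1})=m(s_k,s_{k-1})$; hence every $\ol w$ with $w\leq u$ or $w\geq u$ has the same form as $w$, as required.
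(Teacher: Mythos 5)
Your skeleton --- rule out $\ol u$ of form (II) by a reverse application of Proposition~\ref{prop:form2}, rule out $\ol u$ simple, then fix the form (III) shape --- is the same as the paper's, but as written the proof is incomplete, and one preliminary claim is false. The false claim: in the branch $m(s_{k-1},s_{k-2})\geq4$ your detector is $x=s_{k-2}s_{k-1}s_{k-2}s_{k-3}\dots s_0$, whose $x'=s_{k-1}s_{k-2}\dots s_0$ lies strictly below $(u'')'=s_ks_{k-1}s_{k-2}\dots s_0$; hence $x$ detects the form (II) element $u''$ as well as $u$, so ``$x$ detects no $w<u$'' fails there (your parenthetical $x'=u''$ holds only when $m(s_{k-1},s_{k-2})=3$). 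This undercuts the reverse application of Proposition~\ref{prop:form2} to $\ol u$ in that branch. The paper instead turns the extra detection to its advantage, applying Proposition~\ref{prop:form2} \emph{forwards} to $u''$, which pins down $\ol u''$ and hence $\ol u$ immediately.

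The two remaining steps are the real content and you leave both as sketches. For $\ol u$ simple you offer only a ``plan''; the argument that is actually needed (and is short once found) is: with $\ol u=t_{k+3}\dots t_0$ and $\ol x=tt_{k+1}\dots t_0$ simple, detection gives $m(t,t_{k+3})\geq3$, so $\ol v=t_{k+3}\ol x$ is a simple chainlike with $\ol v\sim\ol u$; pulling back, $v$ must begin with $s_{k-1}$ and cover $x$, i.e.\ $v=s_{k-1}s_{k-2}s_{k-1}s_k\dots s_0$, which is not chainlike since $m(s_{k-1},s_{k-2})=3$ permits a braid move on its prefix --- a contradiction. Finally, knowing $\ol u$ is of form (III) does not yet give the conclusion: $\ol u$ could be a form (III) element with only four alternating letters sitting directly over a \emph{form (II)} element $\ol u'$ (the spiral starting one level higher in $\ol W$), in which case $u'$ and $\ol u'$ would have different forms and the statement would fail at $w=u'$. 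Your appeal to $M(u',x)=2$ and $N(u',x)=1$ does not exclude this configuration. The paper needs a separate argument: since $u$ is the only chainlike covering $u'$, there is no form (II) chainlike over $\ol u'$, forcing $m(t_k,t_{k-1})\geq4$, whence $t_{k-1}t_k\dots t_0$ detects $\ol u'$ and Proposition~\ref{prop:form2} would make $u'$ of form (II), a contradiction. Until these two cases are argued concretely, the proposal is a correct outline rather than a proof.
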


\begin{proof}
If $m(s_{k-1},s_{k-2})\geq4$ then $x_0=s_{k-2}s_{k-1}\dots s_0$ is chainlike and detects both $u$ and $u''=s_{k-1}s_k\dots s_0$, so Proposition~\ref{prop:form2} gives $\ol u''=t_{k-1}t_k\dots t_0$. Then we cannot have $\ol u=t_{k-3}\dots t_k\dots t_0$ as $\ol x_0$ would not detect $\ol u$, so we must have $\ol u=t_{k-1}t_kt_{k-1}t_k\dots t_0$ and the result follows.

If on the other hand $m(s_{k-1},s_{k-2})=3$ then $x=s_{k-2}s_{k-1}s_k\dots s_0\in C$ is chainlike and detects $u$. If $\ol u$ is of form (II), then Proposition~\ref{prop:form2} implies $u$ is of form (II), a contradiction. Now suppose $\ol u$ is simple, so $\ol u=t_{k+3}\dots t_0$ and $\ol x=tt_{k+1}\dots t_0$ is simple. Then $\ol v=t_{k+3}x$ is chainlike with $\ol v\sim\ol u$, but this means $v=s_{k-1}s_{k-2}s_{k-1}s_k\dots s_0$ which cannot be chainlike, a contradiction. Thus $\ol u$ is of form (III).

A possibility for $\ol u$ of form (III) is to have $\ol u=t_{k+1}t_kt_{k+1}\dots t_0$ and $x=tt_{k+1}\dots t_0$ simple. Since $u$ is the only chainlike covering $u'$ there are no form (II) chainlikes above $\ol u'=t_kt_{k+1}\dots t_0$ and so $m(t_k,t_{k-1})\geq4$. But then $t_{k-1}t_k\dots t_0$ detects $\ol u'$ so $u'$ is of form (II) by Proposition~\ref{prop:form2}, a contradiction. The only remaining possibility is that $\ol u=t_{k-1}t_kt_{k-1}t_k\dots t_0$, and the result follows.
\end{proof}

Now we look at labels of 5, for which we use the $N$ function.

\begin{prop}\label{prop:n5}
Let $W^J\to\ol W^{\ol J}$ be basket-free and suppose $u=s_ks_{k-1}s_k\dots s_0$ is of form (III), $m(s_k,s_{k-1})=5$ and $k\geq3$. Then the form of $w\in C$ is the same as the form of $\ol w$ for $w\leq u$ or $w\geq u$.
\end{prop}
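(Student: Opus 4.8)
The plan is to follow the template of Proposition~\ref{prop:form3}, but with one essential change: the form~(III) element $u=s_ks_{k-1}s_k\dots s_0$ here has leftmost generator $s_k$, and by the table in Remark~\ref{rem:detectors} such an element is detected only by simple elements, so the detector argument used for the label $\ge6$ case is not directly available and the $N$-function must carry the load. I first split on $m(s_{k-1},s_{k-2})$. In the case $m(s_{k-1},s_{k-2})\ge4$, the element $x_0=s_{k-2}s_{k-1}\dots s_0$ is a form~(II) chainlike detecting $u'=s_{k-1}s_ks_{k-1}\dots s_0$ and no chainlike below it; since $W^J\to\ol W^{\ol J}$ is basket-free, Proposition~\ref{prop:form2} applies to $u'$ and shows $w$ and $\ol w$ share a form for every $w\le u'$, so in particular $\ol u'$ is of form~(II). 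The element $u$ and those above it are then pinned down by recovering $m(s_k,s_{k-1})=5$ as an $M$-value, forcing $\ol u=t_kt_{k-1}t_k\dots t_0$ of form~(III).

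The substantive case is $m(s_{k-1},s_{k-2})=3$, where the detector machinery fails. Here $v=s_{k-2}s_{k-1}s_k\dots s_0$ is a form~(II) chainlike with $u'=v'$, so Proposition~\ref{prop:N}(c) gives $N(u,v)=5$. As $w\mapsto w'$ and $N$ are defined from the poset alone, $\ol u'=\ol v'$ and $N(\ol u,\ol v)=5$. Running through Proposition~\ref{prop:N}, the value $5$ is attainable only in case~(a) (with $\ol u,\ol v$ both simple and $N=\ol k-\ol l+2=5$) or case~(c) (with $\{\ol u,\ol v\}$ of forms (III),(II) and $m=5$), since case~(b) only ever yields $1$ or $3$. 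Case~(c) is exactly the conclusion sought, the form~(III) and form~(II) roles of $\ol u,\ol v$ being fixed by matching their chainlike covers (and hence the elements above them) to those of $u,v$; this also settles the forms for $w\ge u$. Thus the whole proposition reduces to excluding case~(a).

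This exclusion is the crux, and it is the sole place the hypothesis $k\ge3$ enters. In case~(a) the shared predecessor $\ol u'=\ol v'$ is a \emph{simple} chainlike $t_{\ol k}\dots t_0$; comparing with $\ell(u')=k+2$ gives $\ol k=k+1$, and $N=\ol k-\ol l+2=5$ gives $\ol l=k-2$. By Proposition~\ref{prop:N}(a) the index $\ol l$ is minimal with $m(t_i,t_{i+1})=3$ throughout $\ol l\le i<\ol k$; since $k\ge3$ we have $\ol l=k-2\ge1$, so minimality forces $m(t_{k-3},t_{k-2})\ge4$, a high-label edge three steps below the fork at $t_{\ol k}$. (At $k=2$ one gets instead $\ol l=0$, minimality is vacuous, and no such edge is forced; this freedom is precisely the exceptional isomorphism $(H_3,H_2)\leftrightarrow(D_6,D_5)$, which is why the statement must exclude $k=2$.) The forced edge produces a second chainlike cover $t_{k-3}t_{k-2}\dots t_0$ of the simple element $t_{k-2}\dots t_0$, which transports back to a form~(II) branch low in $W$; I then intend to reconcile this low branch against the label-$5$ edge that sits at the \emph{top} of the chain $s_0-\dots-s_{k-1}-s_k$ (the two separated by the single label-$3$ edge $s_{k-2}-s_{k-1}$), which should be impossible once $k\ge3$.

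I expect this last reconciliation to be the main obstacle. Unlike the label $\ge6$ situation of Proposition~\ref{prop:form3}, the $N$-chains in cases~(a) and~(c) have equal length and identical local covering data, so the two posets agree on every invariant used so far; the distinction only emerges from the secondary edge $m(t_{k-3},t_{k-2})\ge4$ and its incompatible placement relative to the top feature. The natural route is an induction on $k$, applying the present statement (or Proposition~\ref{prop:form2}/Corollary~\ref{cor:double_label}) to the strictly smaller sub-configuration cut out by this forced edge, and then using Theorem~\ref{thm:graph} to conclude that a graph carrying a fork at the top cannot reproduce the same poset as one carrying the label-$5$ edge at the top; making this step airtight, rather than the routine case~(c) bookkeeping, is where the bulk of the work will lie.
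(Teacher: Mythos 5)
Your scaffolding matches the paper's: split on $m(s_{k-1},s_{k-2})$, and in the main case $m(s_{k-1},s_{k-2})=3$ take $v=s_{k-2}s_{k-1}s_k\dots s_0$, read off $N(u,v)=5$, and reduce to the two cases of Proposition~\ref{prop:N} that can return the value $5$. You also correctly extract, in case (a), the forced edge $m(t_{k-3},t_{k-2})\geq4$ from minimality of $\ol l=k-2$, and correctly explain why $k=2$ escapes. But the proof is not complete: the step you yourself flag as ``where the bulk of the work will lie'' --- turning that forced edge into a contradiction --- is precisely the missing content, and the route you sketch (induction on $k$ plus Theorem~\ref{thm:graph}) is not viable as stated, because Theorem~\ref{thm:graph} takes as \emph{input} the forms of all chainlikes, which is exactly what is undetermined here. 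The paper closes case (a) in one move with the detector machinery you never invoke: since everything below $u'$ is a single chain matched level by level, the chainlike $\ol x=t_{k-3}t_{k-2}\dots t_0$ corresponds to some chainlike $x\vartriangleright s_{k-2}\dots s_0$; by Theorem~\ref{thm:forms} any such $x$ has leftmost generator adjacent to $s_{k-2}$, so $x$ detects $v$ by Proposition~\ref{prop:detector}, whereas $\ol x$ cannot detect $\ol v=tt_{k+1}\dots t_0$ because the branch generator $t$ is joined only to $t_{k+1}$. Detection is defined purely from the poset, so this is an immediate contradiction; no induction is needed.

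A second gap: you dismiss the sub-case of (c) in which the forms of $\ol u,\ol v$ are swapped ($\ol u$ of form (II), $\ol v$ of form (III)) as routine, settled by ``matching chainlike covers''. Matching covers alone does not fix the roles --- it is consistent with the swap provided $m(s_{k-3},s_{k-2})\geq4$ and $m(t_{k-3},t_{k-2})\geq4$. The paper needs a genuine argument: a detector mismatch ($s_{k-3}s_{k-2}\dots s_0$ would fail to detect $u$ while its image would detect $\ol u$) forces $m(s_{k-2},s_{k-3})=3$, whence $v_1=s_{k-3}s_{k-2}s_{k-1}s_k\dots s_0$ is chainlike, its image must be the five-term alternating word $t_{k-1}t_kt_{k-1}t_k\dots t_0$, forcing $m(t_k,t_{k-1})\geq6$, and Proposition~\ref{prop:form3} finishes. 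Finally, a smaller point: in your first case you apply Proposition~\ref{prop:form2} to $u'$ uniformly for $m(s_{k-1},s_{k-2})\geq4$, but when $m(s_{k-1},s_{k-2})\geq5$ the element $u'$ \emph{does} form a basket with $s_{k-1}s_{k-2}s_{k-1}\dots s_0$, so the hypothesis of that proposition fails; the paper treats this sub-case separately, concluding directly from basket-freeness.
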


\begin{proof}
If $m(s_{k-1},s_{k-2})\geq5$, then $u'=s_{k-1}s_k\dots s_0$ forms a basket with $s_{k-1}s_{k-2}s_{k-1}\dots s_0$. Since the isomorphism is basket-free we have $\ol u'=t_{k-1}t_k\dots t_0$ and $m(t_{k-1},t_{k-2})\geq5$, so necessarily $\ol u=t_kt_{k-1}t_k\dots t_0$ and we are done. Now consider $m(s_{k-1},s_{k-2})=4$ and let $x=s_{k-2}s_{k-1}\dots s_0$. With $u'$ detected by $x$, we have $\ol u'=t_{k-1}t_k\dots t_0$ by Proposition~\ref{prop:form2}. If $\ol u=t_{k-2}\ol u'$ then since there are no chainlikes above $u$, $m(t_{k-2},t_{k-3})=3$. But then $\ol x_1=t_{k-3}t_{k-2}\dots t_0$ detects $\ol u$, so $u$ must be of form (II) by Proposition~\ref{prop:form2}, a contradiction. Finally, suppose $m(s_{k-1},s_{k-2})=3$ and let $v=s_{k-2}s_{k-1}s_k\dots s_0$. We have $M(u,v)=2$ and $N(u,v)=5$. We have 2 cases which satisfy $N(\ol u,\ol v)=5$:
\begin{itemize}
	\item Suppose $\ol u=t_{k+2}\dots t_0,\ol v=tt_{k+1}\dots t_0$ are simple. To have $N(\ol u,\ol v)=5$ we require $m(t_{k-2},t_{k-3})\geq4$ and $m(t_{k-1},t_{k-2})=m(t_k,t_{k-1})=m(t_{k+1},t_k)=3$, so $\ol x=t_{k-3}t_{k-2}\dots t_0$ is chainlike and corresponds to $x\in C$ with $x\vartriangleright s_{k-2}\dots s_0$ for some $s$. But then $x$ detects $v$ while $\ol x$ cannot detect $\ol v$, a contradiction.
	\item Suppose $\ol u=t_{k-2}t_{k-1}t_k\dots t_0$ and $\ol v=t_kt_{k-1}t_k\dots t_0$. We have $m(s_{k-2},s_{k-3})=3$, since otherwise $x=s_{k-3}s_{k-2}\dots s_0$ does not detect $u$ but $\ol x\vartriangleright t_{k-2}\dots t_0$ detects $\ol u$. But then $v_1=s_{k-3}\dots s_k\dots s_0$ is chainlike and we then must have $\ol v_1=t_{k-1}t_kt_{k-1}t_k\dots t_0$, and we apply Proposition~\ref{prop:form3} to obtain a contradiction.
\end{itemize}
The only case left with $N=5$ is $\ol u,\ol v$ having equivalent generator expressions to $u,v$.
\end{proof}

The above two propositions cover most cases, but break down if $k$ is too small. Now we consider the $k=1$ and $k=2$ exceptional cases individually:

\begin{prop}\label{prop:i2m}
Let $W^J\to\ol W^{\ol J}$ be basket-free and suppose $u=s_1s_0s_1s_0$ is of form (III) (so $k=1$). If there is $s\in S\setminus\{s_0,s_1\}$ with an edge to $s_0$ or $s_1$, then the form of $w\in C$ is the same as the form of $\ol w$ for $w\leq u$ or $w\geq u$.
\end{prop}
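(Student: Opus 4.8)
The plan is to reduce the whole statement to a single claim about $u'=s_0s_1s_0$, and then dispatch that claim using the pendant generator $s$. Since $m(s_0,s_1)\ge5$, the element $u'$ is of form (II), while $u''=s_1s_0$ and $u'''=s_0$ are simple. By the cover analysis of a form-(II) element in Theorem~\ref{thm:forms}, together with Lemma~\ref{lem:semi} applied to $s's_0s_1s_0$ for any generator $s'\notin\{s_0,s_1\}$, the element $u=s_1u'$ is the \emph{unique} chainlike covering $u'$; hence $\ol u$ is the unique chainlike covering $\ol{u'}$. Now a length-$3$ chainlike is either simple or of form (II), and the shortest form-(II) element has length $3$, so if I can show $\ol{u'}$ is of form (II) then necessarily $\ol{u'}=t_0t_1t_0$. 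This forces $\ol{u''}=t_1t_0$ and $\ol{u'''}=t_0$ to be simple (matching the forms of $u'',u'''$), makes the unique cover $\ol u=t_1t_0t_1t_0$ of form (III), and identifies the chainlikes above $u$ and above $\ol u$ as the form-(III) $I_2$-chains $\dots s_1s_0s_1s_0$ and $\dots t_1t_0t_1t_0$, which correspond under the isomorphism; comparing the lengths of these chains even yields $m(s_0,s_1)=m(t_0,t_1)$. Thus every $w\le u''$ and every $w\ge u$ is form-preserved, and it remains only to rule out $\ol{u'}$ being simple.

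If $s$ is adjacent to $s_0$ with $s\in J$, this is immediate: $x=ss_0$ is a simple chainlike detecting $u'$ but detecting no chainlike strictly below $u'$ (the candidates $s_1s_0,s_0,e$ are not detected by $x$). If $u'$ lay in a non-form-preserving basket we would contradict basket-freeness, so $u'$ lies in no such basket and Proposition~\ref{prop:form2} applies to $u'$, forcing $\ol{u'}$ to be of form (II). The situation where $s$ is adjacent to $s_1$ with $m(s,s_0)\ge3$ also makes $s$ adjacent to $s_0$ and reduces to this, and the case of a second black node adjacent to $s_0$ is handled by the same mechanism after replacing $x$ by the appropriate short chainlike through $s$ and invoking Proposition~\ref{prop:black}.

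The delicate case, and the main obstacle, is when $s$ is adjacent \emph{only} to $s_1$ with $m(s,s_0)=2$: here no detector of the form-(II) element $u'$ is available, and deleting $s$ returns the exceptional total-order $I_2(m)$-quotient in which forms are genuinely not preserved, so the pendant must be used essentially. The plan is to detect $u$ itself: when $s\in J$ the word $x=ss_1s_0$ is simple (as $m(s,s_0)=2$) and detects $u$ with $M(u,x)=m(s,s_1)\ge3$. Suppose for contradiction that $\ol{u'}$ is simple; then its unique cover $\ol u$ is of form (I) or (II). If $\ol u$ is of form (II), then $\ol x=t't_1t_0$ detects $\ol u$ and no chainlike below it, and $\ol u$ lies in no basket (a form-preserving basket would force $\ol u$ and its preimage $u$ to share a form, contradicting $u$ of form (III)); Proposition~\ref{prop:form2} applied along the inverse isomorphism then makes $u$ of form (II), a contradiction. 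If $\ol u$ is simple, then $\ol x$ is a length-$3$ detector of a simple element, which by Remark~\ref{rem:detectors} forces a $4$-cycle $t'-t_1-t_2-t_3-t'$ in the graph of $\ol W$; this produces a second simple chainlike $t_3t't_1t_0$ of length $4$ sharing the leftmost generator of $\ol u$, hence $\ol u$-equivalent under $\sim$ by Proposition~\ref{prop:link}, whereas $u$ admits no $\sim$-partner of its length in the pendant-path graph of $W$ — a contradiction. Either way $\ol{u'}$ is of form (II), which by the first paragraph completes the proof; the variant $s\notin J$ runs identically after replacing $x$ by the black node $s$ and using Proposition~\ref{prop:black}. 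The hardest part is precisely this last configuration, since squeezing out a contradiction requires tracking the detector together with the $M$- and $\sim$-data simultaneously, rather than simply reading off a form from a single detector.
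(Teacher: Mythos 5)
Your overall architecture parallels the paper's: reduce to showing the relevant element of $\ol W^{\ol J}$ is not simple, then use detectors, Proposition~\ref{prop:form2} and baskets. The reduction in your first paragraph and the cases where $s$ is adjacent to $s_0$ are essentially sound (the black-node case via Proposition~\ref{prop:black} is left very terse, but it can be made to work by pulling back the length-$2$ chainlike $t_2t$ and invoking that proposition in $W^J$, where $u'=s_0s_1s_0$ is known to be non-simple). The genuine problem is in the sub-case you yourself flag as the hardest: $s$ adjacent only to $s_1$, $s\in J$, and $\ol u$ simple.

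There you deduce from the $4$-cycle that $\ol v=t_3t't_1t_0$ is a second simple chainlike with $\ol u\sim\ol v$, and then claim that ``$u$ admits no $\sim$-partner of its length in the pendant-path graph of $W$.'' This claim is both unjustified and false in general. First, nothing in the hypotheses restricts $W$ to the pendant path $s-s_1-s_0$; arbitrary further generators are allowed. Second, even within $\{s,s_1,s_0\}$: if $m(s,s_1)\geq4$ then $v=s_1ss_1s_0$ is a form-(II) chainlike of length $4$, and computing maximal common subwords gives $L(u,v)=L(u',v)=L(u,v')=5$, i.e.\ $u\sim v$. So a $\sim$-partner of length $4$ can exist and your intended contradiction evaporates. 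The paper closes this case differently: $\ol u=t_3t_2t_1t_0$ and $\ol v=t_3\ol x$ form a I/I basket ($\ol u''=\ol v''=t_1t_0$, $\ol u'$ detects $\ol v$, and $\ol v'=\ol x$ detects $\ol u$), whose preimage is a basket containing the form-(III) element $u$ and is therefore non-form-preserving, contradicting basket-freeness. Your argument can be repaired along the same lines by using the extra information you dropped, namely $\ol v'=\ol x$: the preimage $v$ must then be a chainlike covering $x=ss_1s_0$ with leftmost generator $s_1$, which forces $v=s_1ss_1s_0$ and $m(s,s_1)\geq4$, and then $u,v$ form a III/II basket sent to a I/I basket. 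As written, however, this step is a gap.
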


\begin{prop}\label{prop:h3}
Let $W^J\to\ol W^{\ol J}$ be basket-free and suppose $u=s_2s_1s_2s_1s_0$ is of form (III) (so $k=2$) and $m(s_1,s_2)=5$. If $m(s_0,s_1)\geq4$ or there is $s\in S\setminus\{s_0,s_1,s_2\}$ with an edge to $s_0$, $s_1$ or $s_2$, then the form of $w\in C$ is the same as the form of $\ol w$ for $w\leq u$ or $w\geq u$.
\end{prop}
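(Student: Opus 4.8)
As in Propositions~\ref{prop:form3} and \ref{prop:n5}, the whole statement reduces to showing that $\ol u$ is of form (III); once this is known, the fact that $u=s_2s_1s_2s_1s_0$ is a maximal chainlike (no chainlike lies above it, since $m(s_1,s_2)=5$ forbids a fifth braid letter) together with the already-established Propositions~\ref{prop:form3} and \ref{prop:n5} forces the forms of all $w\geq u$ and $w\leq u$ to be preserved. So I would fix the relevant lower chainlikes: $u'=s_1s_2s_1s_0$ is of form (II) with $l=1,k=2$, $u''=s_2s_1s_0$ is simple, and I set $v=s_0s_1s_2s_1s_0$. The point of the hypothesis is that it excludes the genuine exceptional pair $(H_3,H_2)\leftrightarrow(D_6,D_5)$: this proposition is exactly the $k=2$, label-$5$ endpoint where the generic argument of \ref{prop:n5} (which consumes the generator $s_{k-3}$) breaks down, so the hypothesis generator $s$ must play the role that $s_{k-3}$ played there.

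\textbf{The case $m(s_0,s_1)\geq4$.} Here $v$ is \emph{not} chainlike (a form (II) element must have an unlabelled left chain), and instead I would apply Corollary~\ref{cor:double_label}(a) directly to $u'$: with $l=1$ and $m(s_1,s_0)\geq4$ at $j=1\leq l$, the Stacked Label Property gives $\ol{u'}=t_1t_2t_1t_0$ of form (II). Since $m(s_0,s_1)\geq4$ also forbids prepending $t_0$ to obtain a second form (II) cover, the unique chainlike covering $\ol{u'}$ (uniqueness being a poset invariant, as $u$ is the unique chainlike over $u'$) is $t_2t_1t_2t_1t_0$, of form (III). Hence $\ol u$ is of form (III), as required.

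\textbf{The case $m(s_0,s_1)=3$.} Now $v=s_0s_1s_2s_1s_0$ is of form (II) with $u'=v'$, $M(u,v)=m(s_2,s_0)=2$, and by Proposition~\ref{prop:N}(c) we have $N(u,v)=5$. By poset invariance $N(\ol u,\ol v)=5$, so Proposition~\ref{prop:N} leaves exactly three possibilities for the forms of $(\ol u,\ol v)$: the form-preserving case (III),(II); the swapped case (II),(III); and the both-simple case (I),(I). The two chainlikes over $\ol{u'}$ are $t_0t_1t_2t_1t_0$ (form (II)) and $t_2t_1t_2t_1t_0$ (form (III)), so it suffices to exclude the swapped and both-simple cases using the hypothesis generator $s$. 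When $s$ has an edge to $s_0$, the element $x=ss_0$ detects $v$, covers $s_0=s_l\cdots s_0$, and detects no chainlike below $v$; since $v'=u'$ is of form (II) the element $v$ lies in no basket, so Proposition~\ref{prop:form2} applies to $v$ and gives $\ol v$ of form (II), hence $\ol u$ of form (III). When $s$ has an edge to $s_2$, the simple element $x=ss_2s_1s_0$ of length $4$ detects $u$ (Remark~\ref{rem:detectors}, form (III) row), whereas $t_0t_1t_2t_1t_0$ (form (II), $l=0$) admits detectors only of length $\leq2$; this rules out the swapped image, and the both-simple image is excluded by a Lemma~\ref{lem:A}/cyclic-detector argument together with the label constraints forced by $N=5$.

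\textbf{The main obstacle.} The hard case is when $s$ attaches only at the branch vertex $s_1$ (with $m(s_0,s_1)=3$): then neither $u$ nor $v$ is detected by anything (detectors of $u$ need a leftmost letter adjacent to $s_2$, detectors of $v$ one adjacent to $s_0$), so the length-of-detector argument is unavailable, and $u,v$ are a priori interchangeable, both being undetected maximal chainlikes over the common predecessor $u'$. Here the plan is to use that $x=ss_1s_0$ \emph{does} detect $u'$: by Lemma~\ref{lem:A} combined with the basket-free hypothesis, $\ol{u'}$ cannot be simple, so $\ol{u'}=t_1t_2t_1t_0$ of form (II); and Proposition~\ref{prop:form2} applied to $u'$ reconstructs the graph on $\{s_0,s_1,s_2\}$ in the image, pinning the image of the label-$5$ edge to $t_1t_2$ and the black node to $t_0$. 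The final and most delicate step is to convert this rigidity into the assignment $\ol u=t_2t_1t_2t_1t_0$: since a graph isomorphism cannot interchange the black node $s_0$ with the white node $s_2$, nor a label-$3$ edge with the label-$5$ edge, the symmetry exchanging $u$ and $v$ is not realised by any automorphism of $\ol W^{\ol J}$, which forces $\ol u$ to be the form (III) cover. Making this last comparison precise—ruling out a non-graph (basket-type) automorphism swapping $u$ and $v$ when $s$ is a leaf at $s_1$—is where the real work lies, and I expect it to require a direct examination of the covers of $u$ and of $v$ in the spirit of the case analysis in Proposition~\ref{prop:form2}.
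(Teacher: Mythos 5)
Your reduction is sound and matches the paper's: the case $m(s_0,s_1)\geq4$ is handled exactly as the paper does it (the paper applies Proposition~\ref{prop:form2} to $u'$ via the detector $s_0s_1s_0$, which is what Corollary~\ref{cor:double_label}(a) packages), and the $N(u,v)=5$ computation correctly narrows $(\ol u,\ol v)$ to the form-preserving, both-simple, and swapped configurations. But the proposal does not prove the hardest part, and you say so yourself. Two concrete problems. First, your sub-case arguments quietly assume more than the hypothesis gives: applying Proposition~\ref{prop:form2} to $v$ with $x=ss_0$ requires $x\vartriangleright s_0$ (so $s\in J$) and that $x$ detect no chainlike below $v$ (so $m(s,s_1)=m(s,s_2)=2$); the sub-cases $s\notin J$, or $s$ adjacent to several of $s_0,s_1,s_2$, fall outside your three headings, and the paper needs separate arguments for them (Proposition~\ref{prop:black} when $s\notin J$ in the both-simple case, explicit cover-counting when $s$ is adjacent to both $s_0$ and $s_2$, etc.). Second, and more seriously, in the case where $s$ attaches only at $s_1$ your argument for excluding the swapped image $\ol u=t_0t_1t_2t_1t_0$, $\ol v=t_2t_1t_2t_1t_0$ is circular: "a graph isomorphism cannot interchange the black node with the white node" presupposes that the poset isomorphism is graph-induced, which is exactly what is in question. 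The swapped configuration is by construction a candidate isomorphism that is \emph{not} graph-induced, and everything you have established ($\ol{u'}=t_1t_2t_1t_0$ of form (II), its two covers being $t_0\ol{u'}$ and $t_2\ol{u'}$) is perfectly consistent with it.

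The paper closes this gap by exhibiting, for each adjacency pattern of $s$, an explicit element above $u$ or $v$ whose number of covered elements cannot be matched by any candidate image. For instance, when $s\in J$ and $m(s,s_1)=3$ it takes $w=s_1s_2s_1s_2ss_1s_0$, which is semi-chainlike with $w'=s_2s_1s_2ss_1s_0$, and checks that both candidates for $\ol w$, namely $t_1t_0tt_1t_2t_1t_0$ and $t_0t_1tt_1t_2t_1t_0$, cover more than one element; when $s\notin J$, or $m(s,s_1)\geq4$, or $s$ is adjacent to $s_0$ and $s_2$, it uses different witnesses and counts their covers against the (at most six) candidate reduced expressions for $\ol w$. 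This is precisely the "direct examination of the covers of $u$ and of $v$" you anticipate would be needed, but anticipating it is not supplying it: without these explicit witnesses the swapped case — the only configuration in which the conclusion could actually fail — remains open, so the proof is incomplete.
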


\begin{proof}[Proof of Proposition~\ref{prop:i2m}]
If $\ol u$ is not of form (III) then $\ol u$ is either $t_3t_2t_1t_0$ or $t_1t_2t_1t_0$. In either case $t_2t_1t_0$ is chainlike for some $t_0,t_1,t_2\in\ol S$ distinct. If there is $s\in S\setminus\{s_0,s_1\}$ with $m(s,s_0)\geq3$, then either $s$ or $ss_0$ is chainlike and so there is $t\in\ol S\setminus\{t_0,t_1\}$ with $m(t,t_0)\geq3$. But then $s$ or $ss_0$ detects $s_0s_1s_0$, so $t$ or $tt_0$ detects $t_2t_1t_0$ and $m(t,t_2)\geq3$. Thus $\ol u'\sim\ol v$ where either $\ol v=t_2tt_0$, in which case $u'$ and $v$ form a basket, or $\ol v=t_2t$, a contradiction as there is no $v\in C$ of length 2 with $v\sim u'=s_0s_1s_0$.

Now suppose there is $s\in S$ with $m(s,s_0)=2$ but $m(s,s_1)\geq3$. If $s\not\in J$, then $s_1s\in C$ and similarly $t_1t\in C$. Also, $s$ does not detect $s_0s_1s_0$ so $t$ does not detect $t_2t_1t_0$, so $m(t_2,t)=2$ and $t_2t_1t\in\ol C$. Then $\ol u'\sim\ol v=t_2t_1t$, but this implies $v\in C$ starts with $s_0$ and ends with $s$, a contradiction. If instead $s\in J$, then $x=ss_1s_0$ detects $u$. If $\ol u$ is of form (II) then we have a contradiction by Proposition~\ref{prop:form2}, and if $\ol u=t_3t_2t_1t_0$ is simple then $\ol u$ and $\ol v=t_3\ol x$ form a basket. Hence we must have $\ol u=t_1t_0t_1t_0$ and the result follows.
\end{proof}

\begin{proof}[Proof of Proposition~\ref{prop:h3}]
If $m(s_0,s_1)\geq4$, then $x=s_0s_1s_0$ detects $u'=s_1s_2s_1s_0$ and we are done by Proposition~\ref{prop:form2}. So assume $m(s_0,s_1)=3$ so that $v=s_0s_1s_2s_1s_0\in C$. If $\ol u$ is not of form (III) then we have two possible cases.

Firstly, we could have $\ol u=t_4t_3t_2t_1t_0,\ol v=t_5t_3t_2t_1t_0$ for some distinct $t_0,\dots,t_5\in\ol S$. If $s\not\in J$ then Proposition~\ref{prop:black} leads to a contradiction, so suppose $s\in J$. If $m(s,s_0)\geq3$ or $m(s,s_1)\geq3$, then we can apply Lemma~\ref{lem:A} to $v$ with $x=ss_0$ or $v'$ with $x=ss_1s_0$. If instead $m(s,s_2)\geq3$ and $m(s,s_0)=m(s,s_1)=2$ then $x=ss_2s_1s_0$ detects $u$, but then $\ol u$ and $t_4\ol x$ form a basket.

Secondly, we could have $\ol u=t_0t_1t_2t_1t_0,\ol v=t_2t_1t_2t_1t_0$. We cannot have $x=ss_2s_1s_0$ chainlike as then $x$ detects $u$ but $\ol x\vartriangleright t_2t_1t_0$ cannot detect $\ol u$. Thus there is a chainlike $x\in\{s,ss_0,ss_1s_0\}$ with the reduced expression for $\ol x$ starting with $t\in\ol S$, a generator having the same edges to $t_0,t_1,t_2$ as $s$ has to $s_0,s_1,s_2$ respectively. Furthermore since $u$ begins with $s_2$ but $\ol u$ begins with $s_0$, by seeing whether $x$ detects $u$ or $v$ we have $m(s,s_0)=m(s,s_2)=m(t,t_0)=m(t,t_2)$. We split this into a number of cases and obtain a contradiction in each.
\begin{itemize}
	\item If $s$ has an edge to $s_0,s_2$ but not $s_1$, consider $w=s_0s_1s_2s_1ss_0$ which has $w\vartriangleright v$ and $w>s$. This covers 4 elements ($v$, $s_0s_1s_2ss_0$, $s_0s_2s_1ss_0$, $s_1s_2s_1ss_0$), but the possibilities for $\ol w$ are $tt_2t_1t_2t_1t_0$, $t_2t_1tt_2t_1t_0$, $t_2t_1t_2t_1tt_0$ and $t_2t_1t_2t_1t_0t$ (if $t\not\in J$), which all cover 2 or 3 elements.
	\item If $s$ has an edge to $s_0,s_1$ and $s_2$, consider $w=s_0s_1s_2ss_1s_0$ which has $w\vartriangleright v$ and $w>s$. This covers 5 elements ($v$, $s_0s_1s_2ss_0$, $s_0s_1ss_1s_0$, $s_0s_2ss_1s_0$, $s_1s_2ss_1s_0$), but the possibilities for $\ol w$ are $tt_2t_1t_2t_1t_0$, $t_2tt_1t_2t_1t_0$, $t_2t_1tt_2t_1t_0$, $t_2t_1t_2tt_1t_0$ and $t_2t_1t_2t_1tt_0$, which all cover 2 to 4 elements.
	\item Suppose $s$ has an edge to $s_1$ but not $s_0,s_2$.
	\begin{itemize}
		\item If $s\not\in J$, consider $w=s_0s_1s_2s_1s_0s$ which has $w\vartriangleright v$ and $w>s$. This covers 4 elements ($v$, $s_0s_1s_2s_1s$, $s_0s_2s_1s_0s$, $s_1s_2s_1s_0s$) but the possibilities for $\ol w$ are $t_2tt_1t_2t_1t_0$, $t_2t_1t_2tt_1t_0$ and $t_2t_1t_2t_1t_0t$, which all cover 2 to 3 elements.
		\item If $s\in J$ and $m(s,s_1)\geq4$, consider $w=s_0s_1s_2ss_1s_0$ which has $w\vartriangleright v$ and $w>s$. This covers 3 elements ($v$, $s_0s_1ss_1s_0$, $s_1s_2ss_1s_0$), 2 of which are chainlike, and the possibilities for $\ol w$ are $t_2tt_1t_2t_1t_0$ and $t_2t_1t_2tt_1t_0$. The former covers only 2 elements, and the latter only one chainlike $\ol v$.
		\item If $s\in J$ and $m(s,s_1)=3$, consider $w=s_1s_2s_1s_2ss_1s_0$ which has $\ell(w)=7$ and $w\not>v$. This is semi-chainlike with $w'=s_2s_1s_2ss_1s_0$, but the possibilities for $\ol w$ are $t_1t_0tt_1t_2t_1t_0$ and $t_0t_1tt_1t_2t_1t_0$, which both cover more than 1 element.
	\end{itemize}
\end{itemize}
Thus we have shown $\ol u$ is of form (III), and we cannot have $\ol u=t_0t_1t_0t_1t_0$ by Proposition \ref{prop:i2m}, so we have the required result.
\end{proof}

\begin{remark}
These propositions along with the results above state that if a bw-Coxeter graph contains an edge labelled with 5 or greater, and the corresponding poset has an isomorphism to another poset not preserving the form of a chainlike of form (II) or (III) produced by that edge, then the graph must be either $(I_2(m),A_1)$ with $m\geq5$ or $(H_3,H_2)$. It is then easy to check that the only possible isomorphisms for these are:
\begin{itemize}
	\item $(H_3,H_2)\leftrightarrow(D_6,D_5)$;
	\item $(H_3,H_2)$ with an automorphism swapping the two longest chainlike elements;
	\item $(I_2(n),A_1)\leftrightarrow(A_{n-1},A_{n-2})$ for $n\geq5$;
	\item $(I_2(2n),A_1)\leftrightarrow(B_n,B_{n-1})$ for $n\geq3$.
\end{itemize}
All of these (aside from the automorphism) are covered in Section~\ref{subsec:classification}. Note that while this method reveals the $(H_3,H_2)$ automorphism, it doesn't guarantee that there are no other automorphisms, only that these unknown automorphisms preserve the forms of the chainlikes. For example, the Bruhat poset of $I_2(m)$, $m\geq3$ generated by $\{s_0,s_1\}$ has an automorphism swapping $s_0s_1$ with $s_1s_0$ and leaving other elements fixed.
\end{remark}

\subsection{Completing the classification}\label{subsec:completing}

Now we check cases with labels of $4$. Since we have covered all possibilities involving form (III) chainlikes we assume from this point forward that, in addition to $W^J\to\ol W^{\ol J}$ being basket-free, $u$ is of form (III) if and only if $\ol u$ is of form (III) for all $u\in C$. An immediate consequence is that if $u$ and $\ol u$ are both of form (II) then, by counting the number of non-form-(III) chainlikes above $u$ and $\ol u$ and applying Corollary~\ref{cor:double_label} if necessary, $w$ and $\ol w$ have the same form for all $w\leq u$ or $w\geq u$. This means we only have one case left to consider: an isomorphism which maps $u_0=s_0\dots s_k\dots s_0$ to $\ol u_0=t_{2k}\dots t_0$. If $s_0,\dots,s_k$ are the only generators in $S$ then this is the isomorphism $(B_{k+1},B_k)\to(A_{2k+1},A_{2k})$. If any $x\in C$ detects any $s_l\dots s_k\dots s_0\leq u_0$, then we can apply either Proposition~\ref{prop:black} or Lemma~\ref{lem:A}, so the only way to add generators to $S$ while maintaining the isomorphism is to connect new generators only to $s_k$. The next 2 propositions deal with this case.

\begin{prop}\label{prop:ba}
With $u_0=s_0\dots s_k\dots s_0$ and $\ol u_0=t_{2k}\dots t_0$ as above, suppose $k\geq2$. Then there cannot be any $s\in S\setminus\{s_0,\dots,s_k\}$ with $m(s,s_i)\geq3\iff i=k$, and so by the above remarks $S=\{s_0,\dots,s_k\}$.
\end{prop}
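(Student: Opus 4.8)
The plan is to assume for contradiction that such an $s$ exists, write $\mu=m(s,s_k)\ge3$, and play off a poset-invariant computed on the two sides. Throughout set $a=s_k\cdots s_0$ (the simple base reached from $u_0$ by applying $'$ exactly $k$ times) and $b=s_{k-1}a=s_{k-1}s_k\cdots s_0$, the form (II) element covering $a$ on the unique chain to $u_0$; their images are $\ol a=t_k\cdots t_0$ and $\ol b=t_{k+1}t_k\cdots t_0$, both of form (I) since $\ol u_0$ is. The first, case-independent step is to see what the label-$4$ edge $s_{k-1}$–$s_k$ becomes on the right: on the left $b$ has the single chainlike cover $s_{k-2}b$ (prepending $s_k$ would need $m(s_k,s_{k-1})\ge5$, but it is $4$), so $\ol b$ has a unique chainlike cover too, and counting covers of $\ol a$ in the same way shows the label $4$ has \emph{unfolded} into $m(t_{k-1},t_k)=m(t_k,t_{k+1})=3$ with $\ol a$ unlabelled (its interior labels matching $m(s_i,s_{i+1})=3$). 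This discrepancy between a $4$ on the left and a $3$ on the right is the engine of every case.

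First suppose $s\notin J$. Then $t:=\ol s$ is a black node adjacent to $t_k$, and the length-two simple chainlike $s_ks$ (which satisfies $s_ks\sim a$) maps to $t_kt$; I would compare their chainlike covers. On the left $s_k$ has only the neighbours $s_{k-1},s$, so $s_ks$ has one cover, plus the form (II) cover $ss_ks$ exactly when $\mu\ge4$; on the right $t_k$ has the three neighbours $t_{k-1},t_{k+1},t$, and because $s$ fails to detect $s_{k-1}\cdots s_0$ and $b$ (as $m(s,s_{k-1})=2$), so does $t$ fail to detect their images, giving $m(t,t_{k\pm1})=2$ and hence the two covers $t_{k-1}t_kt,t_{k+1}t_kt$ together with $tt_kt$ when $\mu\ge4$. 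Since $\mu=m(s,s_k)=m(t,t_k)$, the counts $1/2$ on the left never match $2/3$ on the right.

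Now suppose $s\in J$, and put $w=sa=ss_k\cdots s_0$, a simple chainlike that is a sibling of $b$ (both cover $a$); its image $\ol w=t'\ol a$ is simple with a fresh leftmost generator $t'$ which, as $w$ has the unique chainlike cover $w_2=s_kw$, meets only $t_k$. When $\mu=3$ I would use $N$: Proposition~\ref{prop:N}(b) applied to the form (II) element $b$ and the simple $w$ gives $N(b,w)=3$, while $\ol b,\ol w$ are \emph{both} simple, so Proposition~\ref{prop:N}(a) (base $\ol a$ unlabelled, $l=0$) gives $N(\ol b,\ol w)=k-l+2=k+2$. As $N$ is read off the poset it is preserved by the isomorphism, forcing $3=k+2$, i.e. $k=1$; this is exactly where the hypothesis $k\ge2$ is indispensable, in the same spirit as the $N$-computation separating $(H_3,H_2)$ from the family $(D_n,D_{n-1})$. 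When $\mu\ge4$ the element $w_2=s_kss_k\cdots s_0$ is a bona fide form (II) chainlike and $b$ detects it with $M(w_2,b)=m(s_k,s_{k-1})=4$; on the right $\ol b$ detects $\ol{w_2}$, whose leftmost generator is $t_k$ (since $t'$ meets only $t_k$), so $M(\ol{w_2},\ol b)=m(t_k,t_{k+1})=3$, and invariance of $M$ forces $4=3$. Combining the three cases with the remark preceding the proposition yields $S=\{s_0,\dots,s_k\}$.

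The step I expect to be the genuine obstacle is the unfolding of the first paragraph together with choosing the right invariant afterwards. When $s\in J$ the colour-matched generator appears symmetrically on both sides, so straightforward cover-counting of $a$ or $w$ returns equal counts and sees nothing; the discrepancy surfaces only in a second-order quantity — the length $k+2$ of the alternating $N$-chain when $\mu=3$, or the detection label $M$ when $\mu\ge4$ — and the delicate point is to verify that the label $4$ really does unfold to $3$ on the image side, which rests on the cover counts of $a$ and $b$ and on $s$ meeting only $s_k$.
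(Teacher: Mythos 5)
There is a genuine gap, and it sits exactly where you flagged the ``genuine obstacle'': the claim in your first paragraph that the label $4$ on $s_{k-1}$--$s_k$ must unfold into $m(t_{k-1},t_k)=m(t_k,t_{k+1})=3$. Counting chainlike covers of $\ol b$ does give $m(t_k,t_{k+1})=3$ (since $s_kb$ is not chainlike when $m(s_k,s_{k-1})=4$), but counting covers of $\ol a$ gives nothing for $m(t_k,t_{k-1})$: once $s\in J$ is present, $a=s_k\cdots s_0$ has \emph{two} chainlike covers, $b$ and $v=sa$, so $\ol a$ also has two, and nothing prevents the second one from being the form (II) element $t_{k-1}t_kt_{k-1}\cdots t_0$, which forces $m(t_k,t_{k-1})=4$ rather than $3$. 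This is not a removable technicality --- it is the candidate ``branch-swapping'' automorphism that the proposition exists to kill (the paper's Figure~\ref{fig:bauto} is precisely this configuration), and both of your $s\in J$ sub-arguments collapse there: with $\ol v=t_{k-1}t_kt_{k-1}\cdots t_0$ one computes $N(\ol u,\ol v)=3=N(u,v)$ via Proposition~\ref{prop:N}(b) (no contradiction in your $\mu=3$ branch), and your $\mu\ge4$ branch presupposes $\ol w=t'\ol a$ simple with a fresh $t'$, which is exactly what fails. The paper splits on the form of $\ol v$: the simple case is dispatched by $N$ much as you do, but the form (II) case requires a genuinely different argument --- exhibiting a semi-chainlike element $\ol w=t_{k+1}t_kt_{k+2}t_{k-1}t_{k+1}t_k\cdots t_0$ whose would-be preimage $w$ admits no semi-chainlike realisation (checked via Lemma~\ref{lem:semi}). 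Your proposal contains no substitute for this step.

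Two smaller points. In the $s\notin J$ case your cover count of $s_ks$ versus $t_kt$ tacitly assumes $s_k$ has no neighbours beyond $s_{k-1},s$ and $t_k$ none beyond $t_{k-1},t_{k+1},t$; the proposition does not grant this (several extra generators may be attached to $s_k$), so the counts $1/2$ versus $2/3$ are not forced. The paper avoids this entirely: $t_{2k}\cdots t_kt$ is chainlike and $\sim t_{2k}\cdots t_0$, so its preimage would have to be a chainlike starting with $s_0\notin J$ and ending with the black node $s$, which the classification of Theorem~\ref{thm:forms} forbids for length $\ge2$. Finally, note that in the hard case one cannot even assume $m(t_k,t_{k-1})=4$ is impossible a priori; the paper has to \emph{derive} the contradiction from a length-$(2k+6)$ element, which is why no first-order or second-order invariant of the covers of $\ol a$ and $\ol b$ suffices.
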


If $s\in J$ there are two branches of chainlikes above $s_k\dots s_0$, one starting with $ss_k\dots s_0$ and the other with $s_{k-1}s_k\dots s_0$. The most complex part of the proof is to verify that the map swapping these branches does not extend to an automorphism of the poset (except when $k=1$ in which $(B_3,A_2)$ has this automorphism), and this is illustrated in Figure~\ref{fig:bauto}.

\begin{proof}
If $s\not\in J$ and $t=\ol s$ then we see that $\ol u_1=t_{2k}\dots t_kt$ is chainlike and $\ol u_1\sim t_{2k}\dots t_0$, a contradiction as $u_1$ would have to start with $s_0$ and end with $s$. If instead $s\in J$, let $u=s_{k-1}s_k\dots s_0$ and $v=ss_k\dots s_0$. If $\ol v$ is simple, then $N(\ol u,\ol v)=k+2\geq4$ while $N(u,v)\in\{1,3\}$, a contradiction, so assume $\ol v$ is of form (II). We have $m(t_k,t_{k-1})=4$, and also must have $m(t_i,t_{i+1})=3$ for $i\geq k$ since otherwise $t_it_{i+1}\dots t_0$ would either map to a form (III) element in $W^J$ or not have any chainlike to map to. Denoting $s_{k+1}=s$ and $u_2=s_{k-2}s_{k-1}s_k\dots s_0$ (so $\ol u_2=t_{k+2}\dots t_0$), we have
\begin{align*}
	N_1(u,v)&=\{s_ks_{k-1}s_{k+1}s_k\dots s_0\} & N_1(\ol u,\ol v)&=\{t_kt_{k-1}t_{k+1}t_k\dots t_0\} \\
	M_2(u_2,v)&=\{s_{k-2}s_{k-1}s_{k+1}s_k\dots s_0\} & M_2(\ol u_2,\ol v)&=\{t_{k+2}t_{k-1}t_{k+1}t_k\dots t_0\}
\end{align*}
Consider $\ol w=t_{k+1}t_kt_{k+2}t_{k-1}t_{k+1}t_k\dots t_0$. We have $\ol w\in\ol W^{\ol J}$ and $\ol w$ is semi-chainlike, and moreover $\ol w'$ covers the unique elements of $N_1(\ol u,\ol v)$ and $M_2(\ol u_2,\ol v)$. Then we must have $w'=s_ks_{k-2}s_{k-1}s_{k+1}s_k\dots s_0$. By Lemma~\ref{lem:semi} the possibilities for $w$ are $s_{k-3}w'$, $s_{k-1}w'$ and $s_{k+1}w'$ which it is easy to check are all either not semi-chainlike or not in $W^J$. Thus we have a contradiction.
\end{proof}

\begin{figure}[h]
\begin{center}
\begin{tikzpicture}[scale=0.8]
\begin{scope}
	\node[blacknode] (0) at (0,0) {$0$};
	\node[whitenode] (1) at (0,2) {$1$};
	\node[whitenode] (2) at (0,4) {$2$};
	\node[whitenode] (3) at (0,6) {$3$};
	\node[whitenode] (4) at (0,8) {$4$};
\end{scope}
\begin{scope}[every edge/.style={draw=black,very thick}]
    \path (1) edge (0) edge node[left] {$4$} (2);
    \path (3) edge (2) edge (4);
\end{scope}
\end{tikzpicture}\hspace{50pt}
\begin{tikzpicture}
\begin{scope}
	\node (e) at (0,1) {$e$};
	\node (0) at (0,1.8) {$\*0$};
	\node (10) at (0,2.6) {$\*{10}$};
	\node (210) at (0,3.4) {$\*{210}$};
	\node (3210) at (-1,4.2) {$\*{3210}$};
	\node (1210) at (1,4.2) {$\*{1210}$};
	\node (43210) at (-2,5) {$\*{43210}$};
	\node (13210) at (0,5) {$13210$};
	\node (01210) at (2,5) {$\*{01210}$};
	\node (143210) at (-2,5.9) {$143210$};
	\node (213210) at (0,5.9) {$213210$};
	\node (013210) at (2,5.9) {$013210$};
	\node (2143210) at (-3,7) {$2143210$};
	\node (0143210) at (-1,7) {$0143210$};
	\node (1213210) at (1,7) {$1213210$};
	\node (0213210) at (3,7) {$0213210$};
	\node (32143210) at (-4,8.1) {$32143210$};
	\node (12143210) at (-2,8.1) {$12143210$};
	\node[label=above:{$\vdots$}] (02143210) at (0,8.1) {$02143210$};
	\node (10213210) at (2,8.1) {$10213210$};
	\node (01213210) at (4,8.1) {$01213210$};
\end{scope}
\begin{scope}[every edge/.style={draw=black}]
	\path (0) edge (e) edge (10);
	\path (210) edge (10) edge (3210) edge (1210);
	\path (3210) edge (43210) edge (13210);
	\path (1210) edge (13210) edge (01210);
	\path (43210) edge (143210);
	\path (13210) edge (143210) edge (213210) edge (013210);
	\path (01210) edge (013210);
	\path (143210) edge (2143210) edge (0143210);
	\path (213210) edge (2143210) edge (1213210) edge (0213210);
	\path (013210) edge (0143210) edge (0213210);
	\path (2143210) edge (32143210) edge (12143210) edge (02143210);
	\path (0143210) edge (02143210);
	\path (1213210) edge (12143210) edge (10213210) edge (01213210);
	\path (0213210) edge (02143210) edge (10213210) edge (01213210);
\end{scope}
\node[draw,rectangle] at (-3.5,2.5) {$\begin{aligned}
	\text{In }W^J:&\phantom{\ol W^{\ol J}}\\
	u&=1210\\
	v&=3210\\
	N_1(u,v)&=213210\\
	M_2(u_2,v)&=013210
	\end{aligned}$};
\node[draw,rectangle] at (3.5,2.5) {$\begin{aligned}
	\text{In }\ol W^{\ol J}:&\phantom{W^J}\\
	\ol u&=3210\\
	\ol v&=1210\\
	N_1(\ol u,\ol v)&=213210\\
	M_2(\ol u_2,\ol v)&=143210
	\end{aligned}$};
\end{tikzpicture}
\end{center}
\caption{Example of the proof of Proposition~\ref{prop:ba} with $k=2$. The graph on the right may be interpreted as either of the two posets depending on the choice of $u$ and $v$, as shown in the legends. Notice how $\overline w=32143210$ has no corresponding semi-chainlike element in $W^J$.}\label{fig:bauto}
\end{figure}
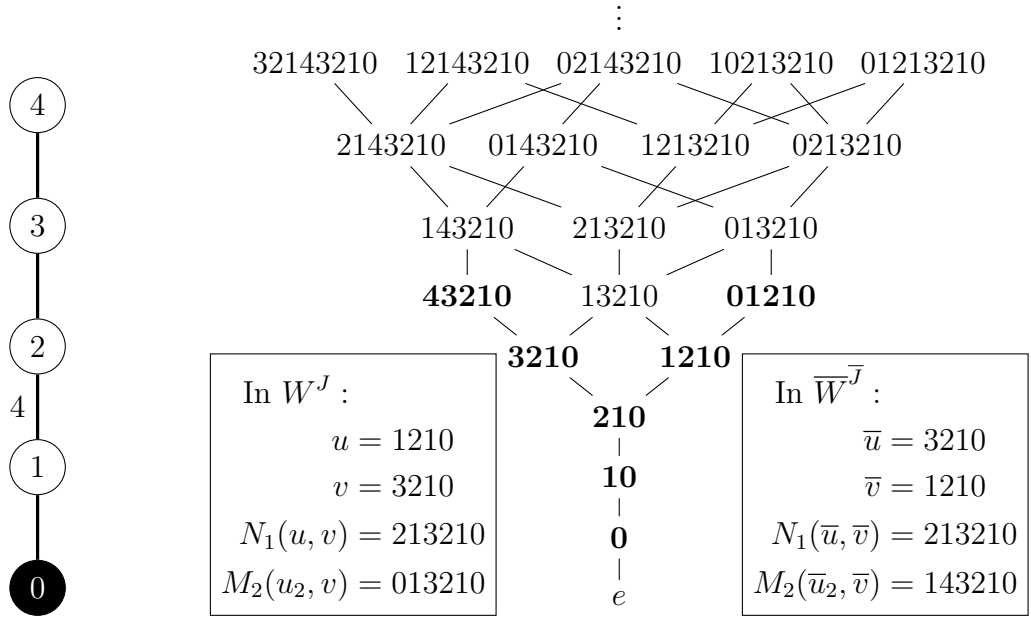

Finally, we work through the $k=1$ case separately:

\begin{prop}
If $u_0$ and $\ol u_0$ as above are given by $u_0=s_0s_1s_0$ and $\ol u_0=tt_1t_0$ (so $k=1$), then $(W,W_J)=(B_{n+1},A_n)$ for some $n\geq1$.
\end{prop}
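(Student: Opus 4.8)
The aim is to recover the full bw-Coxeter graph of $(W,W_J)$ from the local data at $u_0$. Since we are in the ``labels of $4$'' regime of Section~\ref{subsec:completing}, every label present is at most $4$, so the fact that $u_0=s_0s_1s_0$ is of form (II) forces $m(s_0,s_1)=4$ exactly, with $s_0\in S\setminus J$ and $s_1\in J$; writing the simple image as $\ol u_0=t_2t_1t_0$ we get $\ol{u_0'}=t_1t_0$ and $\ol{u_0''}=t_0$, so $t_0$ is the atom matching $s_0$ and $t_1,t_2\in\ol J$. The target $(B_{n+1},A_n)$ is exactly the assertion that $s_0$ is a leaf joined by the lone label-$4$ edge to $s_1$, and that the graph of $W_J=\langle S\setminus\{s_0\}\rangle$ is an unlabelled path having $s_1$ as an endpoint. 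So the whole task is to prove $W_J$ is of type $A_n$ and that $s_0$ meets it only at the end $s_1$.

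First I would show $s_0$ is a leaf. If some $s\ne s_1$ were adjacent to $s_0$, then $x=ss_0\vartriangleright s_0=u_0''$ detects $u_0$, and since $\ol u_0$ is simple Lemma~\ref{lem:A} produces a basket $u_0,s_0x$, contradicting basket-freeness. Next I would exclude any cycle or any labelled edge inside $W_J$: a cycle makes some simple chainlike be detected by another chainlike, and as its image is again simple Lemma~\ref{lem:A} again yields a basket; an interior label-$4$ edge creates a form (II) chainlike $y$ that is detected by a neighbouring chainlike, whereupon Proposition~\ref{prop:form2} and Corollary~\ref{cor:double_label} force $\ol y$ to have the same form, and chasing this back through $u_0\mapsto\ol u_0$ gives either a non-form-preserving basket or the detection of an element below $u_0$ already excluded above. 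Thus $W_J$ is simply-laced with an acyclic graph.

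The heart of the matter is to rule out branching, i.e.\ to show the acyclic simply-laced graph of $W_J$ is in fact a single path with $s_1$ at one end. If instead $s_1$ (or a white node further along) had two neighbours in $W_J$, then over a common lower element sit three chainlikes---the form (II) element $u_0$ and two simple ones $s_2s_1s_0$ and $s_2's_1s_0$---and, exactly as in Proposition~\ref{prop:ba} and the branch-swapping picture of Figure~\ref{fig:bauto}, I would choose a short semi-chainlike $w$ lying over the pairwise joins of these elements, count the covers $\{z\in W^J:z\vartriangleleft w\}$, and show via Lemma~\ref{lem:semi} that its image $\ol w$ would be forced to cover a different number of elements in $\ol W^{\ol J}$ (the forced structure of $\ol W$ near the simple element $\ol u_0$, together with invariance of $M$ and $N$, pins down the candidates for $\ol w$). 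This covering-number mismatch is the step I expect to be the main obstacle, since it requires selecting the right witness $w$ and enumerating all semi-chainlikes above both posets; it is precisely the mechanism that already defeated the would-be automorphism in Proposition~\ref{prop:ba}.

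Once cycles, interior labels and branching are all excluded, the graph of $W_J$ is an unlabelled path and $s_1$ is one of its ends, so attaching the label-$4$ leaf $s_0$ gives exactly the bw-Coxeter graph of $(B_{n+1},A_n)$ with $n=|S|-1\ge1$. The degenerate case $S=\{s_0,s_1\}$ is $n=1$, i.e.\ $(B_2,A_1)=(B_2,B_1)$, consistent with the reduction in the remark preceding the proposition.
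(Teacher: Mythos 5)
Your target structure (a leaf $s_0$ joined by a $4$-edge to the end of an unlabelled white path) is right, but the proof has genuine gaps at its load-bearing points, and in each place the mechanism you propose is not the one that works. The heart of the argument --- excluding a second white neighbour of $s_1$ or of any $s_i$ --- is exactly the step you leave as an expectation rather than carry out, and the witness-counting strategy of Proposition~\ref{prop:ba}/Figure~\ref{fig:bauto} is not what is needed here. The paper first shows, by comparing $M$ and $N$ values across the isomorphism, that at most one chainlike besides $u_0$ covers $s_1s_0$, and then kills branching over $s_i\dots s_0$ for $i\geq2$ by a direct $N$-computation: two simple chainlikes $u,v$ with $u'=v'=s_i\dots s_0$ have $N(u,v)=i+1$ in $W^J$ (the $4$-label at $m(s_0,s_1)$ forces $l=1$ in Proposition~\ref{prop:N}(a)), while $N(\ol u,\ol v)$ takes a different value unless $m(t_0,t_1)\geq4$. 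This exposes the second, structural omission: you never analyse $\ol W^{\ol J}$, in particular whether the form (II) element $t_0t_1t_0$ occurs among the covers of $t_1t_0$. That dichotomy ($m(t_0,t_1)\geq4$ versus $m(t_0,t_1)=3$) is what produces the contradictions in both the no-labels and no-branching steps, and in the first alternative it yields the degenerate sub-case $n=2$ (where $\ol v_0=t_0t_1t_0$ and the chain must terminate); your argument has no branch for it.

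Two further errors are concrete. Lemma~\ref{lem:A} hypothesises that the detected element is of form (II), so it cannot be invoked for a simple chainlike detected by another chainlike --- which is precisely the configuration a cycle creates (the one row of the table in Remark~\ref{rem:detectors} with $u$ simple); cycles must instead be excluded through the branching analysis. And an interior $4$-label on an unbranched chain does not create a form (II) element detected by a neighbouring chainlike (nothing branches off below it), so your label-exclusion mechanism has nothing to act on; the paper instead applies Corollary~\ref{cor:double_label}(a) to $u_1=s_1\dots s_j\dots s_0$ --- the hypothesis being met by the $4$-label at $m(s_0,s_1)$, not by the interior label --- and then derives $m(t_0,t_1)\geq4$ from the fact that no chainlike covers $u_1$, contradicting the case assumption $m(t_0,t_1)=3$. (A smaller point: for a black node adjacent to $s_0$ you need Proposition~\ref{prop:black} rather than Lemma~\ref{lem:A}, since $ss_0$ is not chainlike when $s\notin J$.)
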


\begin{proof}
First we show there is at most one chainlike covering $s_1s_0$ not equal to $u_0$. Suppose there are $s_a,s_b\in J$ with $v_a=s_as_1s_0$ and $v_b=s_bs_1s_0$ chainlike and distinct from $u_0$, so then $N(u_0,v_a)=N(u_0,v_b)\in\{1,3\}$. At most one of $\ol v_a,\ol v_b$ is non-simple, so assume $\ol v_a=t_at_1t_0$ is simple. If $m(t,t_1)=4$ then $t_1tt_1t_0\vartriangleright\ol u_0$ is chainlike but there is no chainlike covering $u_0$, a contradiction. Moreover, if $m(t_a,t_1)=4$ then $\ol v_1=t_1t_at_1t_0$ is chainlike and detected by $\ol u_0$ giving $v_1=s_1s_as_1s_0$, so $m(t,t_1)=M(\ol v_1,\ol u_0)=M(v_1,u_0)=m(s_1,s_0)=4$, the same contradiction. This means $m(t,t_1)=m(t_a,t_1)=3$, but then $N(\ol u_0,\ol v_a)=2$. Thus there is at most one simple chainlike $v_0$ covering $s_1s_0$.

Now suppose $s_n\dots s_0$ is a maximal simple chainlike above $s_1s_0$ with $n\geq2$, meaning $v_0=s_2s_1s_0$. We have $m(s_1,s_2)=3$ since otherwise $s_1s_2s_1s_0$ is detected by $u_0$ giving a contradiction by Lemma~\ref{lem:A}. If $m(t_0,t_1)\geq4$ then we must have $\ol v_0=t_0t_1t_0$, since otherwise $\ol u_0$, $\ol v_0$ and $t_0t_1t_0$ are distinct elements covering $t_1t_0$ and we have a contradiction as above. Since there is no chainlike covering $\ol v_0$, we must have $n=2$ in this case.

If instead $m(t_0,t_1)=3$ and $\ol v_0=t_2t_1t_0$ is simple, we have the following results:
\begin{itemize}
\item We show there are no labelled edges in the chainlikes $s_i\dots s_0$. Suppose $j\geq2$ is minimal with $m(s_j,s_{j-1})\geq4$. By Corollary~\ref{cor:double_label}, $\ol u_1=t_1\dots t_j\dots t_0$. We are assuming there are no chainlikes of form (III) above $u_1$, so since $s_0\dots s_j\dots s_0\not\in C$ we require $m(t_0,t_1)\geq4$, a contradiction.
\item We show there are no branches in the chainlikes $s_i\dots s_0$. Suppose $u,v$ are two simple chainlike elements with $u'=v'=s_i\dots s_0$, $i\geq2$. By Proposition~\ref{prop:ba} and remarks above $\ol u$ and $\ol v$ must also be simple, so to have $N(u,v)=N(\ol u,\ol v)$ we require $m(t_0,t_1)\geq4$, a contradiction.
\end{itemize}

Thus the chainlikes above $s_0$ are those of the Coxeter pair $(W,W_J)=(B_{n+1},A_n)$. All we have left to show is that there is no black node $x\in S\setminus J$ connecting to $s_i$ for some $i\geq1$. Assume $i$ is minimal, so $\ol x$ connects to $t_i$ where $t_i\dots t_0=\overline{s_i\dots s_0}$. But then either $tt_1\dots t_i\ol x$ or $t\ol x$ is chainlike giving $\ol v\in C$ with $\ol v\sim\ol u_0$, so $v$ must begin with $s_0$ and end with $x$, a contradiction.
\end{proof}

\begin{remark}
The above shows that the only remaining unchecked isomorphisms are:
\begin{itemize}
	\item $(B_n,B_{n-1})\leftrightarrow(A_{2n-1},A_{2n-2})$ for $n\geq2$ (denoting $B_1=A_1$);
	\item $(B_n,A_{n-1})\leftrightarrow(D_{n+1},A_n)$ for $n\geq3$;
	\item $(B_2,A_1)$ with an automorphism swapping the two longest chainlike elements.
\end{itemize}
All of these (aside from the automorphism) are covered in Section~\ref{subsec:classification}. This completes the classification, written up in the following theorem which expands on Theorem~\ref{thm:one}.
\end{remark}

\begin{theorem}\label{thm:final}
Suppose $(W,S),(\ol W,\ol S)$ are Coxeter systems with connected graphs and $J\subset S$, $\ol J\subset\ol S$. The Bruhat posets $W^J$ and $\ol W^{\ol J}$ are isomorphic if and only if $(W,W_J),(\ol W,\ol W_{\ol J})$ are of one of the following forms:
\begin{itemize}
	\item $(I_2(n),A_1)\leftrightarrow(A_{n-1},A_{n-2})$ for $n\geq4$;
	\item $(B_n,B_{n-1})\leftrightarrow(A_{2n-1},A_{2n-2})$ for $n\geq3$;
	\item $(B_n,B_{n-1})\leftrightarrow(I_2(2n),A_1)$ for $n\geq3$;
	\item $(B_n,A_{n-1})\leftrightarrow(D_{n+1},A_n)$ for $n\geq3$;
	\item $(H_3,H_2)\leftrightarrow(D_6,D_5)$;
	\item There is a bijection $\sigma:S\to\ol S$ with $m(s_1,s_2)=m(\sigma(s_1),\sigma(s_2))$ and $s_1\in J\iff\sigma(s_1)\in\ol J$ for all $s_1,s_2\in S$, that is the pairs $(W,W_J)$ and $(\ol W,\ol W_{\ol J})$ are isomorphic;
	\item $S=J$ and $\ol S=\ol J$, that is $W^J$ and $\ol W^{\ol J}$ are trivial.
\end{itemize}
Moreover, $W^J$ has an automorphism which is not the identity when restricted to $C(W^J)$ if and only if $(W,W_J)$ is of one of the following forms:
\begin{itemize}
	\item $(B_2,A_1)$, with the automorphism swapping the longest two chainlike elements;
	\item $(H_3,H_2)$, with the automorphism swapping the longest two chainlike elements;
	\item The graph of $(W,W_J)$ is as depicted in Figure 6;
	\item There is a non-trivial bijection $\sigma:S\to S$ with $m(s_1,s_2)=m(\sigma(s_1),\sigma(s_2))$ and $s_1\in J\iff\sigma(s_1)\in J$ for all $s_1,s_2\in S$, that is there is an automorphism of the bw-Coxeter graph of $(W,W_J)$.
\end{itemize}
\end{theorem}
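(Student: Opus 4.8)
The plan is to assemble the machinery of Sections~\ref{sec:chainlike} and~\ref{sec:sudoku} into one case analysis, with Theorem~\ref{thm:graph} as the organising principle: as soon as every chainlike $w\in C(W^J)$ carries the same form (I), (II) or (III) as its image $\ol w$, the construction of Theorem~\ref{thm:graph} reproduces identical bw-Coxeter graphs, which is precisely the isomorphic-pairs conclusion. For the easy direction of the equivalence I would cite Section~\ref{subsec:classification}: the five exceptional families produce isomorphic posets by the propositions there, while the isomorphic-pair and trivial cases are immediate. The substance is the forward direction, where I must show that any isomorphism $W^J\to\ol W^{\ol J}$ that is not an isomorphism of pairs falls into one of the five exceptional families.

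For the forward direction I would split on where a form first fails to be preserved. If the isomorphism admits a non-form-preserving basket, Theorem~\ref{thm:basket_complete} supplies an automorphism $\phi$ with $\phi(w)$ and $\ol w$ of equal form for every $w$, so that the composite $W^J\xrightarrow{\phi}W^J\to\ol W^{\ol J}$ is form-preserving and the pairs are isomorphic; simultaneously Proposition~\ref{prop:basket_constraints} forces the graph into the shape of Figure~\ref{fig:basket_case}. This reduces everything to the basket-free setting, where I would feed in the form-(III) results in order of decreasing label: Propositions~\ref{prop:form3} and~\ref{prop:n5} settle labels $\ge6$ and $=5$ in the generic ranges of $k$, and Propositions~\ref{prop:i2m} and~\ref{prop:h3} mop up the small $k=1,2$ exceptions, the net effect being that a form-(III) chainlike can cross forms only when the entire graph is $(I_2(m),A_1)$ with $m\ge5$ or $(H_3,H_2)$; these yield the three families involving $I_2$ or $H_3$ (cases (1), (3) and (5) of Theorem~\ref{thm:one}). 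With form-(III) now preserved in both directions, the only remaining freedom is a label-$4$ mismatch carrying a form-(II) chainlike across forms, and here Proposition~\ref{prop:ba} together with the final proposition of Section~\ref{subsec:completing} force the two $B$-families $(B_n,B_{n-1})\leftrightarrow(A_{2n-1},A_{2n-2})$ and $(B_n,A_{n-1})\leftrightarrow(D_{n+1},A_n)$ (cases (2) and (4)).

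For the automorphism statement I would re-run the same dichotomy with $\ol W^{\ol J}=W^J$. If the self-isomorphism preserves all forms on $C(W^J)$, Theorem~\ref{thm:graph} shows it permutes the equivalence classes of $\sim$ by a graph automorphism $\sigma$, and I would argue $\sigma$ must be non-trivial (otherwise the map would fix $C(W^J)$ pointwise), landing in the graph-automorphism bullet. If instead some form is not preserved, the forward classification applies verbatim to a self-map: a non-form-preserving basket produces the Figure~\ref{fig:basket_case} graph, a form-(III) mismatch can only be $(H_3,H_2)$ --- since the quotient of $(I_2(m),A_1)$ is a total order and so has no non-trivial automorphism --- and a label-$4$ mismatch leaves only the remaining small exception $(B_2,A_1)$ recorded in the statement; the two isolated small automorphisms are then confirmed by direct inspection of the two small posets.

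The hard part, I expect, is not a fresh estimate --- Proposition~\ref{prop:form2}, Theorem~\ref{thm:basket_complete} and the form-(III) propositions carry the analytic weight --- but the bookkeeping that promotes their \emph{local} conclusions (``$w$ and $\ol w$ share a form for $w\le u$ or $w\ge u$'') to a \emph{global} statement about the whole connected graph, guaranteeing that no independent mismatching feature survives elsewhere once one exceptional configuration is detected. The single genuinely new point is the automorphism reduction: I must check that a form-preserving self-isomorphism which is non-trivial on $C(W^J)$ really descends to a non-trivial symmetry of the reconstructed bw-Coxeter graph, i.e.\ that the data produced by Theorem~\ref{thm:graph} pins down the chainlikes finely enough that a trivial induced $\sigma$ forces the map to fix $C(W^J)$ pointwise.
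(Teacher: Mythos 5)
Your proposal is correct and follows essentially the same route as the paper: the paper's own proof of Theorem~\ref{thm:final} is a two-line appeal to the remarks closing Sections~\ref{subsec:5} and~\ref{subsec:completing} together with Section~\ref{subsec:classification}, and your assembly (baskets via Theorem~\ref{thm:basket_complete}, then labels $\geq5$ via Propositions~\ref{prop:form3}--\ref{prop:h3}, then label $4$ via Proposition~\ref{prop:ba} and the final proposition, then Theorem~\ref{thm:graph} to reconstruct the graph) is exactly that argument made explicit. The two points you flag as needing care --- promoting the local form-preservation statements to a global one, and checking that a form-preserving automorphism nontrivial on $C(W^J)$ induces a nontrivial graph automorphism --- are genuine but are likewise left implicit in the paper, so they do not distinguish your approach from the paper's.
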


\begin{proof}[Proof of Theorem~\ref{thm:final} and Theorem~\ref{thm:one}]
Both results follow from the remarks above and the results in Section~\ref{subsec:classification}.
\end{proof}

\section{Extensions and observations}\label{sec:observations}

Here we detail the main application which is extending the result in \cite{Co} for blocks of category $\mc O$, and discuss directions for further study.

\subsection{Blocks of category $\mc O$}\label{subsec:categoryproof}

Let $\mf g$ be a symmetrizable Kac-Moody algebra over $\C$ with a root space decomposition with Weyl group $\mc W$ (a crystallographic Coxeter group), Borel subalgebra $\mf b$ and Cartan subalgebra $\mf h$. We define the usual partial order on $\mf h^*$ by $\lambda\geq\mu$ if and only if $\lambda-\mu$ is a linear combination of simple roots with positive integer coefficients.

There are two definitions for the category $\mc O$ corresponding to $\mf g$ in the literature, and Theorem~\ref{thm:two} applies to both. We distinguish these following the notation of \cite{BS}:
\begin{itemize}
	\item $\mc O$ will denote the category of $\mf g$-modules $M$ that are $\mf h$-semisimple with finite-dimensional weight spaces, and such that there are finitely many weights $\lambda_1,\dots,\lambda_n$ depending on $M$ so that every weight $\lambda$ of $M$ has $\lambda\leq\lambda_i$ for some $i$.
	\item $\widehat{\mc O}$ will denote the category of $\mf g$-modules that are $\mf h$-semisimple and locally $\mf b$-finite.
\end{itemize}

Both $\mc O$ and $\widehat{\mc O}$ contain simple objects $L(\lambda)$ parametrised by $\lambda\in\mf h^*$. Since $\mf g$ is symmetrizable it admits a non-degenerate, symmetric, invariant bilinear form $(\cdot,\cdot)$ on $\mf h^*$ with an element $\rho\in\mf h^*$ such that $(\rho,\alpha)=1$ for any simple root $\alpha$. Let $\sim$ be the equivalence relation on $\mf h^*$ generated by $\lambda\sim\mu$ whenever there are $n\in\N$ and a positive root $\alpha$ such that $\lambda-\mu=n\alpha$ and $2(\lambda+\rho,\alpha)=n(\alpha,\alpha)$. The equivalence classes $\Lambda\in\mf h^*/\sim$ give a decomposition of $\mc O$ (resp. $\widehat{\mc O}$) into blocks $\mc O_\Lambda$ (resp. $\widehat{\mc O}_\Lambda$) consisting of modules $M$ with $[M:L(\lambda)]\neq0\implies\lambda\in\Lambda$. We call $\lambda\in\Lambda$ \textit{dominant} (resp. \textit{antidominant}) if $\lambda\geq\mu$ (resp. $\lambda\leq\mu$) for all $\mu\in\Lambda$, and similarly call $\mc O_\Lambda$ and $\widehat{\mc O}_\Lambda$ dominant or antidominant if $\Lambda$ contains a dominant or antidominant weight.

As in \cite{Fi} we denote by $\Stab(\lambda)$ the subgroup of $\mc W$ for which the weight $\lambda$ is fixed by the dot-action $w\cdot\lambda=w(\lambda+\rho)-\rho$, and $\mc W(\Lambda)$ the subgroup generated by reflections $s_\alpha$ where $\alpha$ is a root with $2(\lambda+\rho,\alpha)\in\Z(\alpha,\alpha)$ for some $\lambda\in\Lambda$ (in the finite-dimensional case $\mc W(\Lambda)$ and $\Stab(\lambda)$ are often written as $\mc W_{[\lambda]}$ and $\mc W_\lambda$, see e.g. \cite{Hu}). We say $\Lambda$ \textit{lies outside the critical hyperplanes} if all of these $\alpha$ above are real roots, and in this case we in fact have $\Lambda=\mc W(\Lambda)\cdot\lambda$ for $\lambda\in\Lambda$. Moreover, $\mc W(\Lambda)$ is a Coxeter group of which $\Stab(\lambda)$ is a parabolic subgroup, and the partial order on $\Lambda$ given by $u\cdot\lambda\leq v\cdot\lambda$ when $L(u\cdot\lambda)$ is a subquotient of the standard Verma module $M(v\cdot\lambda)$ is exactly the Bruhat order on $\mc W(\Lambda)/\Stab(\lambda)$ in the antidominant case, or its dual in the dominant case (see e.g. \cite{KK}, \cite[Section 5.2]{Hu}).

In \cite{Co} it is shown that for any finite-dimensional semisimple Lie algebras $\mf g$ and $\mf g'$ (i.e. Kac-Moody algebras with $\mc W$ finite), two blocks $\mc O_\Lambda,\mc O'_{\Lambda'}$ of their respective $\mc O$ categories are equivalent if and only if their corresponding posets $\mc W(\Lambda)/\Stab(\lambda)$ and $\mc W'(\Lambda')/\Stab(\lambda')$ are isomorphic. But we have now proven Theorem~\ref{thm:one} which shows there are no non-trivial isomorphisms between infinite posets, and consequently we can extend the proof of Theorem 4.2.1 in \cite{Co} to all symmetrizable Kac-Moody algebras (although not quite all blocks). First we restate a result from \cite{BS} making use of their analysis of upper finite and lower finite highest weight categories:

\begin{theorem}\label{thm:semiinfinite}
If there is a dominant weight $\lambda\in\Lambda$ then $\mc O_\Lambda$ is the full subcategory of $\widehat{\mc O}_\Lambda$ consisting of all modules $M$ such that $[M:L(\lambda)]<\infty$, and $\mc O_\Lambda$ is upper finite. If there is an antidominant weight $\lambda\in\Lambda$ then $\mc O_\Lambda$ is the full subcategory of $\widehat{\mc O}_\Lambda$ consisting of all modules of finite length, and $\mc O_\Lambda$ is lower finite. In both cases the standard and costandard objects are the Verma modules and dual Verma modules corresponding to elements of $\Lambda$.
\end{theorem}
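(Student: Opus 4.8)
The plan is to obtain Theorem~\ref{thm:semiinfinite} as a translation of the structural results of \cite{BS} on upper finite and lower finite highest weight categories, using Fiebig's description of $\widehat{\mc O}_\Lambda$ in \cite{Fi}. The starting point, valid because $\Lambda$ lies outside the critical hyperplanes, is that $\Lambda=\mc W(\Lambda)\cdot\lambda$ and that $\widehat{\mc O}_\Lambda$ is a highest weight category whose standard objects are the Verma modules $M(\mu)$ and whose costandard objects are the dual Vermas $M(\mu)^\vee$, indexed by $\mu\in\Lambda\cong\mc W(\Lambda)/\Stab(\lambda)$ and ordered by the Bruhat order in the antidominant case and by its opposite in the dominant case (see \cite{KK}, \cite[Section 5.2]{Hu}). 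Granting this, the only thing left to prove is that the defining finiteness conditions of $\mc O$ cut out exactly the stated full subcategory of $\widehat{\mc O}_\Lambda$, and that this subcategory inherits the asserted finiteness type; the claim about standard and costandard objects then follows because the Verma and dual Verma modules are constructed compatibly in both categories.

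First I would record two elementary facts about the index set. Since $\lambda$ is a global extremum of $\Lambda$ in the dominance order on $\mf h^*$, every $\mu\in\Lambda$ satisfies $\mu\geq\lambda$ (antidominant case) or $\mu\leq\lambda$ (dominant case); and for any two weights $\nu_1\leq\nu_2$ the set $\{\mu\in\mf h^*\mid \nu_1\leq\mu\leq\nu_2\}$ is finite, because a difference in the dominance order admits only finitely many splittings into nonnegative combinations of simple roots. Each $L(\mu)$ is $\mf h$-semisimple with finite-dimensional weight spaces. These facts let me pass between the local conditions (finite-dimensional weight spaces, support bounded above) that define $\mc O$ and the global conditions (finite length, or $[M:L(\lambda)]<\infty$) that appear in the statement.

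For the antidominant case, where $\lambda$ is minimal, I would argue as follows. If $M\in\widehat{\mc O}_\Lambda$ has finite length, its finitely many composition factors $L(\mu)$ force the weights of $M$ to be bounded above and its weight spaces to be finite-dimensional, so $M\in\mc O_\Lambda$. Conversely, if $M\in\mc O_\Lambda$ then its weights are bounded above by some $\lambda_i$, so the highest weights $\mu$ of its composition factors lie in the finite set $\{\mu\in\Lambda\mid \lambda\leq\mu\leq\lambda_i\text{ for some }i\}$, and each occurs with finite multiplicity since the weight spaces are finite-dimensional; hence $M$ has finite length. This identifies $\mc O_\Lambda$ with the finite-length objects of $\widehat{\mc O}_\Lambda$, and as the index poset is lower finite (every Bruhat interval is finite by the Subword Property), $\mc O_\Lambda$ is lower finite in the sense of \cite{BS}. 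The dominant case is formally dual: $\lambda$ is maximal, every weight of any $M\in\widehat{\mc O}_\Lambda$ is automatically $\leq\lambda$, and the poset is upper finite.

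The main obstacle I anticipate is precisely the dominant case, namely showing that the single requirement $[M:L(\lambda)]<\infty$ forces every weight space of $M$ to be finite-dimensional. A naive weight count only shows that, for fixed $\nu$, finitely many distinct $L(\mu)$ with $\nu\leq\mu\leq\lambda$ contribute to $M_\nu$, but each could a priori appear with infinite multiplicity. Controlling the multiplicities of the non-extremal $L(\mu)$ by that of the top factor $L(\lambda)$ is exactly the content of the upper finite highest weight category theory of \cite{BS}, where the relevant propagation is governed by the standard (Verma) filtrations of projectives rather than by counting weights. I would therefore not reprove this propagation but invoke the corresponding comparison theorem of \cite{BS} directly, reducing our contribution to the poset-theoretic bookkeeping above and to verifying that the standard and costandard objects match under the inclusion $\mc O_\Lambda\hookrightarrow\widehat{\mc O}_\Lambda$.
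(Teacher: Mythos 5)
Your proposal is correct and matches the paper's route: the paper's entire proof is the remark that the statement is an extension of \cite[Theorem 6.4]{BS} whose proof carries over from the affine integral setting to all symmetrizable Kac--Moody algebras and dominant/antidominant weights, and you likewise reduce everything to the upper/lower finite highest weight machinery of \cite{BS}. Your version merely makes explicit the elementary weight-combinatorial bookkeeping (finiteness of dominance-order intervals, the antidominant finite-length identification) and correctly isolates the dominant-case multiplicity propagation as the one step that genuinely requires \cite{BS}, which is a reasonable expansion of the paper's terse citation.
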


This is an extension of \cite[Theorem 6.4]{BS}. While the original theorem is stated in the context of only affine Kac-Moody algebras and integral weights, the proof therein extends naturally to all symmetrizable Kac-Moody algebras and dominant/antidominant weights. Now we complete the proof of Theorem~\ref{thm:two}:

\begin{proof}[Proof of Theorem~\ref{thm:two}]
It is shown in \cite{Co} that finite highest weight categories with a simple-preserving duality have a unique poset structure, which in the case blocks of category $\mc O$ is exactly the Bruhat order on $\mc W(\Lambda)/\Stab(\lambda)$. This is extended to the lower and upper finite cases in \cite{Co2}, thus by Theorem~\ref{thm:semiinfinite} if $\mc O_\Lambda\simeq\mc O'_{\Lambda'}$ are equivalent and there are $\lambda\in\Lambda,\lambda'\in\Lambda'$ both dominant or both antidominant then $\mc W(\Lambda)/\Stab(\lambda)\cong\mc W'(\Lambda')/\Stab(\lambda')$.

We also have from \cite[Theorem 4.1]{Fi} that an isomorphism between Coxeter pairs $(\mc W(\Lambda),\Stab(\lambda))\cong(\mc W'(\Lambda'),\Stab(\lambda'))$ induces an equivalence $\widehat{\mc O}_\Lambda\simeq\widehat{\mc O}'_{\Lambda'}$ where $\lambda,\lambda'$ are both dominant or both antidominant (this extends the well-known result of \cite{So}), and consequently by Theorem~\ref{thm:semiinfinite} we also have an equivalence $\mc O_\Lambda\simeq\mc O'_{\Lambda'}$. So, denoting the unique dominant (or antidominant) block of $\mc O$ up to equivalence corresponding to the Coxeter pair $(W,W_J)$ by $\mc O(W,W_J)$, we have the following implications so far:
\[(W,W_J)\cong(U,U_K)\implies\mc O(W,W_J)\simeq\mc O(U,U_K)\implies W^J\cong U^K\]
To complete the proof we must verify that in every case where $(W,W_J)\not\cong(U,U_K)$ but $W^J\cong U^K$ the categories are equivalent, i.e. cases (1)-(6) in Theorem~\ref{thm:one}. For case (6) where $\mc W(\Lambda)=\Stab(\lambda)$, we have $\widehat{\mc O}_\Lambda\simeq\C\lmod$. The remaining cases involve finite groups, so these are already discussed in \cite{Co}. In particular, recalling that the Weyl group of a Kac-Moody algebra must be crystallographic:
\begin{itemize}
	\item Cases (1) and (3) involve $I_2(m)$ for which the only crystallographic cases are $B_2=I_2(4)$ and $G_2=I_2(6)$. These cases can be computed explicitly, see e.g. \cite{St}.
	\item Cases (2) and (4) are known in the literature, see e.g. \cite{ES}.
	\item Case (5) is irrelevant since $H_3$ is non-crystallographic.
\end{itemize}
This completes the proof.
\end{proof}

\subsection{Other blocks and extensions}
While Theorem~\ref{thm:two} only applies to dominant and antidominant blocks, there are some partial results towards other blocks in the literature. Blocks of affine Kac-Moody algebras for instance are organised into positive level (dominant), negative level (antidominant) or critical level, and a study of critical level blocks can be found in \cite{Fi2} and others.

Theorem~\ref{thm:two} is relevant only for crystallographic Coxeter systems since these are the ones that appear as Weyl groups of Kac-Moody algebras. Is there a way to extend the definition of $\mc O(W,W_J)$ to allow for $(W,S)$ non-crystallographic? We propose that this might be done using Soergel modules, for instance via the construction in \cite{St}. In the finite case with $\lambda\in\Lambda$ integral and dominant, a functor $\V_\lambda:\mc O_\Lambda\to C^\lambda\lmod$ fully faithful on projectives can be constructed, where $C$ is the algebra of coinvariants of the Weyl group. This is used to construct an equivalent category to $\mc O_\lambda$ with quivers, since the images of projectives under this functor are found as direct summands of modules of the form $C\otimes_{C^{s_1}}C\otimes_{C^{s_2}}\dots\otimes_{C^{s_n}}\C$ with $s_1\dots s_n$ reduced. As described in \cite{EMTW} $C$ is expressed in terms of a representation of $W$, and this need not be the root system of a Lie algebra. In fact, every finite Coxeter system has a geometric representation, so one could in theory construct an analogue of category $\mc O$ for the noncrystallographic systems $I_2(m)$, $H_3$ and $H_4$.

Assuming this constructed category is a highest weight category, the results of this paper may extend to this new case. However this is only a conjecture; in particular it remains to be checked that the isomorphisms $(I_2(n),A_1)\leftrightarrow(A_{n-1},A_{n-2})$ for $n=5$, $n\geq7$ and $(H_3,H_2)\leftrightarrow(D_6,D_5)$ give equivalences of these categories, which in the latter case would involve a lengthy computation.

\subsection{Oddities of the exceptional cases}
There are some notable commonalities of the exceptional isomorphisms (1)-(5) in Theorem~\ref{thm:one}:
\begin{itemize}
	\item All of the exceptional isomorphisms are finite, meaning there are no non-trivial isomorphisms between $W^J$ and $U^K$ if either $W$ or $U$ is infinite (it is proven in \cite{De} that any non-trivial quotient $W^J$ is finite if and only if $W$ is). 
	\item All of the exceptional isomorphisms have exactly one black node, that is $|J|=|S|-1$ and $W_J$ is a maximal proper parabolic subgroup.
\end{itemize}
These seemingly simple facts are disproportionately difficult to prove, relying on almost all of the working out in Sections~\ref{sec:chainlike} and \ref{sec:sudoku}. It would be worthwhile to find a more direct proof of these results. Moreover, while almost all the exceptional isomorphisms can be sorted into two infinite families (total orders and the $B_n\leftrightarrow D_n$ case), the case $(H_3,H_2)\leftrightarrow(D_6,D_5)$ remains a strange outlier, the significance of which is not clear.

\section{Appendix}

The common names of finite Coxeter systems are listed in the figure below.

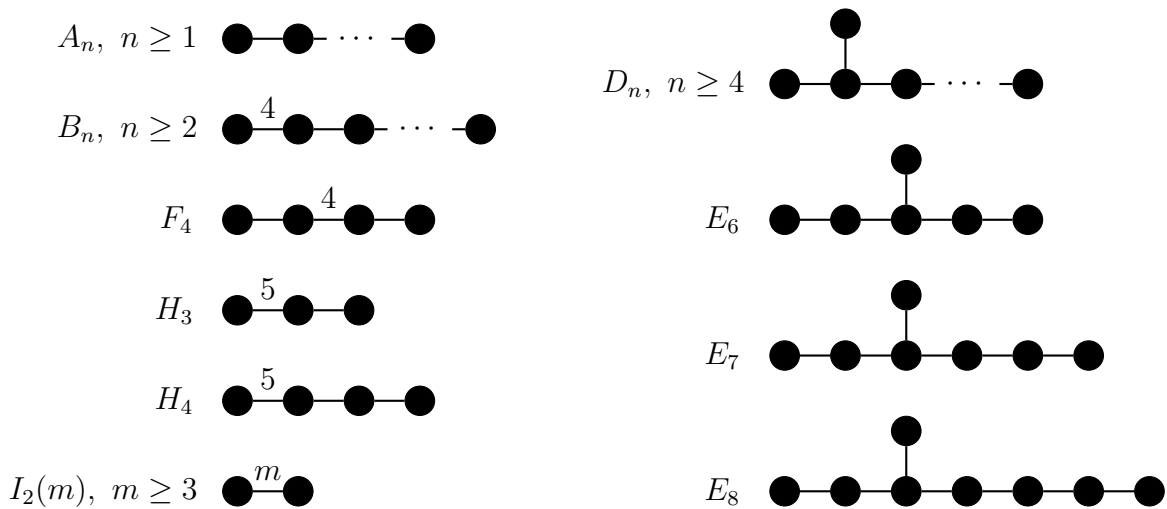
\begin{figure}[h]
\begin{tikzpicture}[scale=0.8]
\begin{scope}
	\begin{scope}[every node/.append style=miniblacknode]
		\node[label=left:{$A_n,\ n\geq1\ $}] (1) at (0,0) {}; \node (2) at (1,0) {}; \node (n) at (3,0) {};
	\end{scope}
	\node (d) at (2,0) {$\cdots$};
	\begin{scope}[every edge/.style={draw=black, thick}]
		\path (1) edge (2); \path (2) edge (d); \path (d) edge (n);
	\end{scope}
\end{scope}
\begin{scope}[shift={(0,-1.5)}]
	\begin{scope}[every node/.append style=miniblacknode]
		\node[label=left:{$B_n,\ n\geq2\ $}] (1) at (0,0) {}; \node (2) at (1,0) {}; \node (3) at (2,0) {}; \node (n) at (4,0) {};
	\end{scope}
	\node (d) at (3,0) {$\cdots$};
	\begin{scope}[every edge/.style={draw=black, thick}]
		\path (1) edge node[above] {$4$} (2); \path (2) edge (3); \path (3) edge (d); \path (d) edge (n);
	\end{scope}
\end{scope}
\begin{scope}[shift={(0,-3)}]
	\begin{scope}[every node/.append style=miniblacknode]
		\node[label=left:{$F_4\ $}] (1) at (0,0) {}; \node (2) at (1,0) {}; \node (3) at (2,0) {}; \node (4) at (3,0) {};
	\end{scope}
	\begin{scope}[every edge/.style={draw=black, thick}]
		\path (1) edge (2); \path (2) edge node[above] {$4$} (3); \path (3) edge (4);
	\end{scope}
\end{scope}
\begin{scope}[shift={(0,-4.5)}]
	\begin{scope}[every node/.append style=miniblacknode]
		\node[label=left:{$H_3\ $}] (1) at (0,0) {}; \node (2) at (1,0) {}; \node (3) at (2,0) {};
	\end{scope}
	\begin{scope}[every edge/.style={draw=black, thick}]
		\path (1) edge node[above] {$5$} (2); \path (2) edge (3);
	\end{scope}
\end{scope}
\begin{scope}[shift={(0,-6)}]
	\begin{scope}[every node/.append style=miniblacknode]
		\node[label=left:{$H_4\ $}] (1) at (0,0) {}; \node (2) at (1,0) {}; \node (3) at (2,0) {}; \node (4) at (3,0) {};
	\end{scope}
	\begin{scope}[every edge/.style={draw=black, thick}]
		\path (1) edge node[above] {$5$} (2); \path (2) edge (3); \path (3) edge (4);
	\end{scope}
\end{scope}
\begin{scope}[shift={(0,-7.5)}]
	\begin{scope}[every node/.append style=miniblacknode]
		\node[label=left:{$I_2(m),\ m\geq3\ $}] (1) at (0,0) {}; \node (2) at (1,0) {};
	\end{scope}
	\begin{scope}[every edge/.style={draw=black, thick}]
		\path (1) edge node[above] {$m$} (2);
	\end{scope}
\end{scope}
\begin{scope}[shift={(9,-0.75)}]
	\begin{scope}[every node/.append style=miniblacknode]
		\node[label=left:{$D_n,\ n\geq4\ $}] (1) at (0,0) {}; \node (2) at (1,0) {}; \node (3) at (2,0) {}; \node (0) at (1,1) {}; \node (n) at (4,0) {};
	\end{scope}
	\node (d) at (3,0) {$\cdots$};
	\begin{scope}[every edge/.style={draw=black, thick}]
		\path (1) edge (2); \path (2) edge (3) edge (0); \path (3) edge (d); \path (d) edge (n);
	\end{scope}
\end{scope}
\begin{scope}[shift={(9,-3)}]
	\begin{scope}[every node/.append style=miniblacknode]
		\node[label=left:{$E_6\ $}] (1) at (0,0) {}; \node (2) at (1,0) {}; \node (3) at (2,0) {}; \node (4) at (3,0) {}; \node (5) at (4,0) {}; \node (0) at (2,1) {};
	\end{scope}
	\begin{scope}[every edge/.style={draw=black, thick}]
		\path (1) edge (2); \path (2) edge (3); \path (3) edge (4) edge (0); \path (4) edge (5);
	\end{scope}
\end{scope}
\begin{scope}[shift={(9,-5.25)}]
	\begin{scope}[every node/.append style=miniblacknode]
		\node[label=left:{$E_7\ $}] (1) at (0,0) {}; \node (2) at (1,0) {}; \node (3) at (2,0) {}; \node (4) at (3,0) {}; \node (5) at (4,0) {}; \node (6) at (5,0) {}; \node (0) at (2,1) {};
	\end{scope}
	\begin{scope}[every edge/.style={draw=black, thick}]
		\path (1) edge (2); \path (2) edge (3); \path (3) edge (4) edge (0); \path (4) edge (5); \path (5) edge (6);
	\end{scope}
\end{scope}
\begin{scope}[shift={(9,-7.5)}]
	\begin{scope}[every node/.append style=miniblacknode]
		\node[label=left:{$E_8\ $}] (1) at (0,0) {}; \node (2) at (1,0) {}; \node (3) at (2,0) {}; \node (4) at (3,0) {}; \node (5) at (4,0) {}; \node (6) at (5,0) {}; \node (7) at (6,0) {}; \node (0) at (2,1) {};
	\end{scope}
	\begin{scope}[every edge/.style={draw=black, thick}]
		\path (1) edge (2); \path (2) edge (3); \path (3) edge (4) edge (0); \path (4) edge (5); \path (5) edge (6); \path (6) edge (7);
	\end{scope}
\end{scope}
\end{tikzpicture}
\caption{The finite irreducible Coxeter systems and their common names. In each case the index $n$ is the number of generators $|S|$. Note that $I_2(m)$ is also labelled $A_2,B_2,H_2,G_2$ for $m=3,4,5,6$ respectively.}\label{fig:finite}
\end{figure}

\end{document}